\renewcommand{\sectionmark}[1]{\markright{Section~\thesection}}
\renewcommand{\sectionmark}[1]{\markright{\thesection\;#1}}
\newenvironment{plainfootnotes}{%
  \deffootnote[0em]{0em}{0em}{}
}{%
  \deffootnote[1em]{1.5em}{1em}{\textsuperscript{\thefootnotemark}}
}
\newenvironment{enumeratearabic}{%
\begin{enumerate}[label=(\arabic*), leftmargin=0pt,labelindent=2em,itemindent=!]
}{%
\end{enumerate}
}
\newenvironment{enumerateroman}{%
\begin{enumerate}[label=(\roman*), leftmargin=0pt,labelindent=2em,itemindent=!]
}{%
\end{enumerate}
}
\newenvironment{enumeratearabic*}{%
\begin{enumerate*}[label=(\arabic*)] %
}{%
\end{enumerate*}
}
\newenvironment{enumerateroman*}{%
\begin{enumerate*}[label=(\roman*)] %
}{%
\end{enumerate*}
}
\numberwithin{equation}{section}
\newtheorem{theoremcounter}{theoremcounter}[section]
\theoremstyle{plain}
\newtheorem{lemma}[theoremcounter]{Lemma}
\newtheorem{proposition}[theoremcounter]{Proposition}
\newtheorem{maintheoremcounter}{maintheoremcounter}
\newtheorem{maintheorem}[maintheoremcounter]{Theorem}
\theoremstyle{definition}
\newtheorem{definition}[theoremcounter]{Definition}
\theoremstyle{remark}
\newtheorem{remark}[theoremcounter]{Remark}
\newtheorem{remarks}[theoremcounter]{Remarks}
\newtheorem*{mainremark}{Remark}
\newtheorem*{mainremarks}{Remarks}
\newtheorem*{remarkcomputation}{Computation}
\let\cal\undefined
\newcommand{\texpdf}[2]{#1}
\newcommand{\texpdf}[2]{\texorpdfstring{#1}{#2}}
\newcommand{\tx}{\ensuremath{\text}}
\newcommand{\tbf}{\bfseries}
\newcommand{\thdash}{\nbd th}
\newcommand{\nbd}{\nobreakdash-\hspace{0pt}}
\newcommand{\bboard}{\ensuremath{\mathbb}}
\newcommand{\cal}{\ensuremath{\mathcal}}
\renewcommand{\frak}{\ensuremath{\mathfrak}}
\newcommand{\bbJ}{\ensuremath{\bboard J}}
\newcommand{\bbL}{\ensuremath{\bboard L}}
\newcommand{\cF}{\ensuremath{\cal{F}}}
\newcommand{\cJ}{\ensuremath{\cal{J}}}
\newcommand{\cM}{\ensuremath{\cal{M}}}
\newcommand{\cO}{\ensuremath{\cal{O}}}
\newcommand{\frake}{\ensuremath{\frak{e}}}
\newcommand{\frakf}{\ensuremath{\frak{f}}}
\newcommand{\rmf}{\ensuremath{\mathrm{f}}}
\newcommand{\rms}{\ensuremath{\mathrm{s}}}
\newcommand{\rmt}{\ensuremath{\mathrm{t}}}
\newcommand{\rmA}{\ensuremath{\mathrm{A}}}
\newcommand{\rmC}{\ensuremath{\mathrm{C}}}
\newcommand{\rmG}{\ensuremath{\mathrm{G}}}
\newcommand{\rmJ}{\ensuremath{\mathrm{J}}}
\newcommand{\rmL}{\ensuremath{\mathrm{L}}}
\newcommand{\rmM}{\ensuremath{\mathrm{M}}}
\newcommand{\rmR}{\ensuremath{\mathrm{R}}}
\newcommand{\td}{\tilde}
\newcommand{\wtd}{\widetilde}
\newcommand{\ov}{\overline}
\newcommand{\wht}{\widehat}
\newcommand{\ra}{\ensuremath{\rightarrow}}
\newcommand{\lra}{\ensuremath{\longrightarrow}}
\newcommand{\mto}{\ensuremath{\mapsto}}
\newcommand{\lmto}{\ensuremath{\longmapsto}}
\newcommand{\amid}{\ensuremath{\mathop{\mid}}}
\newcommand{\ZZ}{\ensuremath{\mathbb{Z}}}
\newcommand{\QQ}{\ensuremath{\mathbb{Q}}}
\newcommand{\RR}{\ensuremath{\mathbb{R}}}
\newcommand{\CC}{\ensuremath{\mathbb{C}}}
\renewcommand{\Re}{\ensuremath{\mathrm{Re}}}
\renewcommand{\Im}{\ensuremath{\mathrm{Im}}}
\newcommand{\isdiv}{\amid}
\renewcommand{\pmod}[1]{\ensuremath{\;(\mathrm{mod}\, #1)}}
\newcommand{\sgn}{\ensuremath{\mathrm{sgn}}}
\newenvironment{psmatrix}{\left(\begin{smallmatrix}}{\end{smallmatrix}\right)}
\newcommand{\Mat}[1]{\ensuremath{\mathrm{Mat}_{#1}}}
\newcommand{\MatT}[1]{\ensuremath{\mathrm{Mat}^\rmt_{#1}}}
\newcommand{\GL}[1]{\ensuremath{\mathrm{GL}_{#1}}}
\newcommand{\SL}[1]{\ensuremath{\mathrm{SL}_{#1}}}
\newcommand{\Sp}[1]{\ensuremath{\mathrm{Sp}_{#1}}}
\newcommand{\SO}[1]{\ensuremath{\mathrm{SO}_{#1}}}
\newcommand{\T}{\ensuremath{\rmt}}
\newcommand{\rT}{\ensuremath{\,{}^\T\!}}
\newcommand{\tr}{\ensuremath{\mathrm{tr}}}
\newcommand{\diag}{\ensuremath{\mathrm{diag}}}
\renewcommand{\det}{\ensuremath{\mathrm{det}}}
\newcommand{\lspan}{\ensuremath{\mathop{\mathrm{span}}}}
\newcommand{\HS}{\mathbb{H}}
\renewcommand{\ss}{ss}
\newcommand{\sk}{\ensuremath{{\mathrm{skew}}}}
\newcommand{\skmd}{\ensuremath{{\sk,\mathrm{md}}}}
\newcommand{\cJsk}{\ensuremath{{\cJ\sk}}}
\newcommand{\Jsk}{\ensuremath{{\rmJ\sk}}}
\newcommand{\rmEll}{\ensuremath{\mathrm{Ell}}}
\newcommand{\trace}{\ensuremath{{\mathrm{trace}}}}
\newcommand{\HJ}{\HS^\rmJ}
\newcommand{\GSp}[1]{\ensuremath{\mathrm{GSp}_{#1}}}
\newcommand{\Ga}{\ensuremath{\Gamma}}
\newcommand{\ga}{\ensuremath{\gamma}}
\newcommand{\GaJ}{\ensuremath{\Gamma^\rmJ}}
\newcommand{\GJ}{\ensuremath{\rmG^\rmJ}}
\newcommand{\gaJ}{\ensuremath{\gamma^\rmJ}}
\newcommand{\MAsk}{\ensuremath{\mathrm{Maass}^\sk}}
\newcommand{\rot}{\ensuremath{\mathrm{rot}}}
\newcommand{\trans}{\ensuremath{\mathrm{trans}}}
\newcommand{\up}{\ensuremath{\mathrm{up}}}
\newcommand{\down}{\ensuremath{\mathrm{down}}}
\newcommand{\hol}{\ensuremath{\mathrm{hol}}}
\newcommand{\indef}{\ensuremath{\mathrm{indef}}}
\newcommand{\Klim}{\ensuremath{\mathrm{Klim}}}
\newcommand{\xb}{\ensuremath{\breve{x}}}
\newcommand{\yb}{\ensuremath{\breve{y}}}
\newcommand{\taub}{\ensuremath{\breve{\tau}}}
\newcommand{\nb}{\ensuremath{\breve{n}}}
\newcommand{\Li}[1]{\ensuremath{\mathrm{Li}_{#1}}}
\newcommand{\headertitle}{{\normalfont%
The skew-Maa\ss\ lift
}}
\newcommand{\headerauthors}{%
M.~Raum, O.~K.~Richter
}
\begin{document}

\begin{plainfootnotes}
\begin{flushleft}
{\fontfamily{lms}\sffamily
\hspace{20pt}{\huge%
The skew-Maa\ss\ lift~I%
}%
}
\\{\fontfamily{lms}\sffamily
  \hspace{20pt}%
  The case of harmonic Maa\ss-Jacobi forms
}
\\[.6em]\hspace{20pt}{\large%
Martin Raum%
\footnote{The first author was partially supported by Vetenskapsr\aa det Grant~2015-04139.} and
Olav K.\ Richter%
\footnote{The second author was partially supported by Simons Foundation Grant \#412655.}
}
\\[1.2em]
\end{flushleft}
\end{plainfootnotes}

\thispagestyle{scrplain}

{\small
\noindent
{\tbf Abstract:}
The classical Maa\ss\ lift is a map from holomorphic Jacobi forms to holomorphic scalar-valued Siegel modular forms.  Automorphic representation theory predicts a non-holomorphic and vector-valued analogue for Hecke eigenforms. This paper is the first part of a series of papers.  In this series of papers, we provide an explicit construction of the non-holomorphic Maa\ss\ lift that is linear and also applies to non-eigenforms.  In this first part, we develop new techniques to study Fourier series expansions of Siegel modular forms, which allow us to construct a Maa\ss\ lift from harmonic Maa\ss-Jacobi forms to scalar-valued Maa\ss-Siegel forms.
\\[.35em]
\textsf{\textbf{%
Maass lift of harmonic Maa\ss-Jacobi forms%
\hspace{0.3em}{\tiny$\blacksquare$}\hspace{0.3em}%
Saito-Kurokawa lift%
\hspace{0.3em}{\tiny$\blacksquare$}\hspace{0.3em}%
Kohnen limit process%
\hspace{0.3em}{\tiny$\blacksquare$}\hspace{0.3em}%
real-analytic Siegel modular forms%
\hspace{0.3em}{\tiny$\blacksquare$}\hspace{0.3em}%
Maa\ss-Siegel forms
}}
\\[0.15em]
\noindent
\textsf{\textbf{%
MSC Primary:
11F46%
\hspace{0.3em}{\tiny$\blacksquare$}\hspace{0.3em}%
MSC Secondary:
11F30, 11F50
}}
}

\vspace{-2.5em}
\renewcommand{\contentsname}{}
\setcounter{tocdepth}{2}
\tableofcontents
\vspace{.5em}

\addcontentsline{toc}{section}{Introduction}
\markright{Introduction}
\lettrine[lines=2,nindent=.2em]{\tbf S}{aito} and Kurokawa~\cite{kurokawa-1978} independently conjectured the existence of a mapping from elliptic modular forms of weight~$2k-2$ to degree~$2$ Siegel modular forms of weight~$k$. This conjecture was resolved in a series of papers by Maa\ss~\cite{maass-1979a,maass-1979b,maass-1979c}, Andrianov~\cite{andrianov-1979}, and Zagier~\cite{zagier-1981}.

Piatetski-Shapiro~\cite{piatetski-shapiro-1983} reinterpreted the Maa\ss\ lift and provided a representation theoretic construction. His work (see also~\cite{miyazaki-2004,schmidt-2005} and Example~5.13 of~\cite{bruinier-2002a}) shows that holomorphic elliptic modular forms give rise to two Saito-Kurokawa lifts: One of them is scalar-valued and holomorphic. The other one, which is vector-valued and not holomorphic, is the focus of this paper and its sequel of papers.

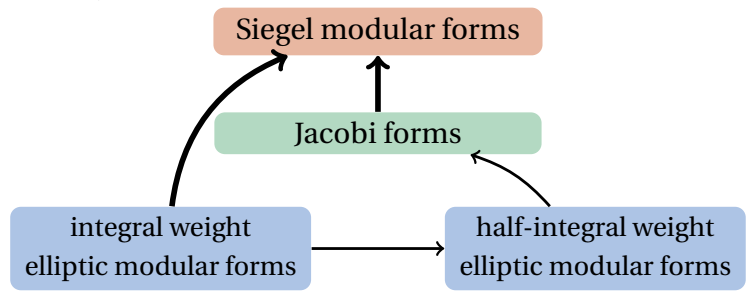
\begin{wrapfigure}{r}{0.6\textwidth}
\caption{The Saito-Kurokawa lift and Maa\ss\ lift}
\label{fig:saito-kurokawa-maass-lift}
\begin{tikzpicture}

\node[text centered, fill=ForestGreen!30, rectangle, rounded corners, inner sep=0.3em]
(jacobi-forms)
{\begin{minipage}{11em}\begin{center}
Jacobi forms
\end{center}\end{minipage}};

\node[below=3em of jacobi-forms] (dummybelow) {};

\node[left=2em of dummybelow, fill=RoyalBlue!30, rectangle, rounded corners]
(elliptic-integral)
{\begin{minipage}{10em}\begin{center}
\small%
integral weight\\
elliptic modular forms
\end{center}\end{minipage}};

\node[right=2em of dummybelow, fill=RoyalBlue!30, rectangle, rounded corners]
(elliptic-half-integral)
{\begin{minipage}{10em}\begin{center}
\small%
half-integral weight\\
elliptic modular forms
\end{center}\end{minipage}};

\node[above=2em of jacobi-forms, fill=BrickRed!30, rectangle, rounded corners]
(siegel-modular)
{\begin{minipage}{11em}\begin{center}
Siegel modular forms
\end{center}\end{minipage}};

\path[line width=0.10em]
(elliptic-half-integral) edge[->, bend right=15] (jacobi-forms)
(elliptic-integral) edge[->] (elliptic-half-integral);

\path[line width=0.2em]
(jacobi-forms) edge[->] (siegel-modular)
(elliptic-integral) edge[->, bend left=30] (siegel-modular);
\end{tikzpicture}
\end{wrapfigure}
Recall that the Saito-Kurokawa conjecture was proved by combining two isomorphisms and the Maa\ss\ lift, which we highlight in Figure~\ref{fig:saito-kurokawa-maass-lift}. Integral weight and half-integral weight elliptic modular forms are connected via the Shimura-Shintani lifts. The link between half-integral weight elliptic modular forms and Jacobi forms is given by the theta-decomposition of Jacobi forms. Finally, the Maa\ss\ lift sends Jacobi forms to Siegel modular forms. This yields the Saito-Kurokawa lift from elliptic modular forms of integral weight to Siegel modular forms.

Let us introduce some notation to recall the holomorphic Maa\ss\ lift in more detail. The Siegel upper half space~$\HS^{(2)}$ of degree~$2$ consists of complex matrices~$Z = \begin{psmatrix} \tau & z \\ z & \tau' \end{psmatrix}$ with positive definite imaginary part. The holomorphic Maa\ss\ lift maps a Jacobi form~$\phi$ of weight~$k$ and  index~$1$ to the Siegel modular form
\begin{gather}
\label{eq:introduction:maass-lift}
\sum_{m = 0}^\infty
\big( \phi \big|_{k,1}\, V_m \big) \,
\exp(2 \pi i\, m \tau')
\tx{,}
\end{gather}
where $V_m$ is a Jacobi-Hecke operator that changes the Jacobi index from~$1$ to~$m$, if $m \ge 1$, and $V_0$ is the Hecke-like operator on page~43 of~\cite{eichler-zagier-1985}. The series in~\eqref{eq:introduction:maass-lift} is a priori a formal series. Its convergence follows from a formula for the Fourier series coefficients of $\phi | V_m$ that is polynomial in the Fourier indices. Modularity of~\eqref{eq:introduction:maass-lift} is a consequence of two observations: Firstly, the product $\big( \phi | V_m \big) \exp(2 \pi i\, m \tau')$ transforms like a Siegel modular form under the embedded Jacobi group. Secondly, a formula for the Fourier series coefficients of $\phi | V_m$ implies that the series in~\eqref{eq:introduction:maass-lift} is invariant when interchanging $\tau$ and $\tau'$. The corresponding transformation in the Siegel modular group together with the embedded Jacobi group generate the full Siegel modular group.
\vspace{.5\baselineskip}

The goal of this paper and its sequence of papers is to extend the construction of the holomorphic Maa\ss\ lift to the non-holomorphic setting. We call this new lift the \emph{skew-Maa\ss\ lift}. In this first part, we construct the skew-Maa\ss\ lift from harmonic Maa\ss-Jacobi forms to skew-harmonic Siegel modular forms, where throughout this paper, $k > 3$ is an odd integer.

In order to explain how we arrive at the analogue of~\eqref{eq:introduction:maass-lift}, we contrast Fourier-Jacobi expansions of holomorphic Siegel modular forms and general real-analytic Siegel modular forms:
\begin{gather}
\label{eq:introduction:fourier-jacobi}
\sum_{m = 0}^\infty
\phi_m(\tau, z)\, \exp(2 \pi i\, m \tau')
\tx{,}
\qquad
\sum_{m \in \ZZ} \underbrace{\wht\phi_m(\tau, z, y') \exp(2 \pi i\, m x')}_{=:\, \wtd\phi_m(\tau, z, \tau')}
\tx{,}
\end{gather}
where $\tau' = x' + iy'$. We call $\wht\phi_m$ the $m$\thdash\ Fourier-Jacobi coefficient and $\wtd\phi_m$ the $m$\thdash\ Fourier-Jacobi term. In the case of holomorphic Siegel modular forms the Fourier-Jacobi coefficients $\wht\phi_m(\tau, z, y')$ factor as products $\phi_m(\tau, z) \exp(-2 \pi m y')$, and it is equally common to refer to $\phi_m$ as a (holomorphic) Fourier-Jacobi coefficient. The holomorphic Fourier-Jacobi coefficients $\phi_m$ are Jacobi forms of index~$m$. In particular, if $m < 0$, then $\phi_m = 0$. In comparison, Fourier-Jacobi coefficients~$\wtd\phi_m$ of real-analytic Siegel modular forms are in general not zero for $m < 0$.

Skew Maa\ss\ lifts of harmonic Maa\ss-Jacobi forms belong to the space~$\rmM^\sk_k$ (see Definition~\ref{def:skew-harmonic-siegel-maass} and~\cite{bringmann-raum-richter-2011}), which is a space of real-analytic, scalar-valued Siegel modular forms. It is difficult to directly inspect Fourier-Jacobi expansions of forms in~$\rmM^\sk_k$.  While the differential equations that they satisfy have already been examined by Maa\ss~\cite{maass-1953}, only integral representations~\cite{shimura-1982} and explicit solutions in a few special cases~\cite{bringmann-raum-richter-2011} are available. Our main tool is the Kohnen limit process. If $m>0$ (and $k>3$), then \cite{bringmann-raum-richter-2011} established that Kohnen's limit process~$\Klim(\wtd\phi_m)$ for Fourier-Jacobi coefficients $\wtd\phi_m$ of skew-harmonic Maa\ss-Siegel forms belongs to $\rmJ^\sk_{k,m}$, the space of skew-holomorphic Jacobi forms of weight~$k$ and index~$m$. In this paper, we extend this Kohnen limit process to~$m < 0$ (see Section~\ref{ssec:kohnen-limit-negative-m}), which provides a map into a space~$\cJ^\sk_{2-k,|m|}$ of certain real-analytic Jacobi forms (see Section~\ref{ssec:harmonic-jacobi-forms}). This space is isomorphically related by differential operators to the space of harmonic Maa\ss-Jacobi forms~$\bbJ_{3-k,|m|}$, which can then be mapped to $\rmJ^\sk_{k,|m|}$ via the Jacobi $\xi$-operator (see~\cite{bringmann-richter-2010} for dual weights in the theory of Jacobi forms). Furthermore, we introduce a family of Kohnen limit processes for~$m=0$, which are maps to certain scalar-valued and vector-valued elliptic modular forms (see Section~\ref{sec:kohnen-limit-zero-m}).

The discussion so far in the cases $m \ne 0$ is summarized in Figure~\ref{fig:spaces-of-siegel-modular-forms}. It displays the interplay among various spaces of Siegel modular forms and Jacobi forms that we use in our construction. The top is the space of skew-harmonic Maa\ss-Siegel forms. Via the Fourier-Jacobi expansion in~\eqref{eq:introduction:fourier-jacobi} it maps to the middle, which is the space $\rmA\rmJ^\sk_{k,m}$ of abstract Fourier-Jacobi coefficients (see Definition~\ref{def:abstract-fourier-jacobi-term-skew-harmonic}). The Kohnen limit process provides an injective map from $\rmA\rmJ^\sk_{k,m}$ to $\cJ^\sk_{2-k,|m|} \cong \bbJ_{3-k,|m|}$ (if $m < 0$) and $\rmJ^\sk_{k,|m|} = \rmJ^\sk_{k,m}$ (if $m > 0$) at the bottom of Figure~\ref{fig:spaces-of-siegel-modular-forms} (see Propositions~\ref{prop:kohnen-limit-positive-m-injective} and~\ref{prop:kohnen-limit-negative-m-injective}).
\begin{figure}[ht]
\caption{Spaces of Siegel modular forms and Jacobi forms for $m \ne 0$}
\label{fig:spaces-of-siegel-modular-forms}
\begin{center}
\begin{tikzpicture}
\matrix(m)[matrix of math nodes,
column sep=12em, row sep=4em,
text height=1.5em, text depth=1.25ex]{%
& \rmM^{\sk}_k & \\
& \rmA\rmJ^\sk_{k,m} & \\
\cJ^{\sk}_{2-k,|m|} & \bbJ_{3-k,|m|} & \rmJ^{\sk}_{k,|m|} \\
};

\path
(m-3-2) edge[left hook->>] node[below] {$\rmR^\bbJ_{3-k}$} (m-3-1)
(m-3-2) edge[right hook->] node[below] {$\xi^\rmJ_{3-k}$} (m-3-3)

(m-1-2) edge[->] (m-2-2)

(m-2-2) edge[left hook->] node[right=2.4em] {$\underset{(m < 0)}{\Klim}$} (m-3-1)
(m-2-2) edge[right hook->] node[left=2.4em] {$\underset{(m > 0)}{\Klim}$} (m-3-3)

(m-3-1) edge[right hook->,bend left = 30] node[above left=.2em] {$\underset{(m < 0)}{\Klim^{-1}}$} (m-2-2)
(m-3-3) edge[left hook->,bend right = 30] node[above right=.2em] {$\underset{(m > 0)}{\Klim^{-1}}$} (m-2-2)
;
\end{tikzpicture}
\end{center}
\end{figure}
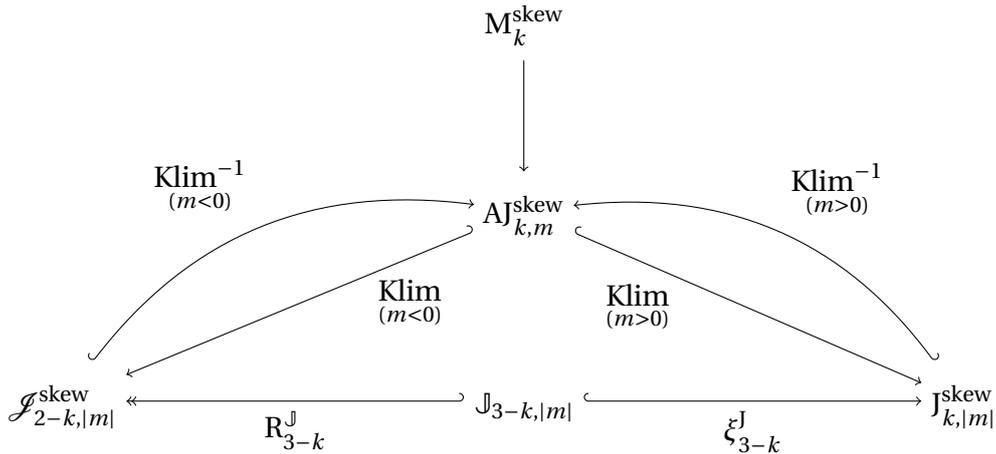

Note that the space $\bbJ_{3-k, |m|}$ of Maa\ss-Jacobi forms consists only of Eisenstein series (see Remark~\ref{rem:almost-skew-harmonic-jacobi-forms}).

Now we address three of the four key points of our work. Figure~\ref{fig:spaces-of-siegel-modular-forms} displays the Fourier-Jacobi expansion and the associated Kohnen limit processes for $\rmM^\sk_k$ if~$m \ne 0$. Specifically, we extend the Kohnen limit processes to the space of abstract Fourier-Jacobi coefficients~$\rmA\rmJ^\sk_{k,m}$. The first key point is to recognize that the Kohnen limit process for $m > 0$ and $m < 0$ maps into two different spaces in a natural way (see Section~\ref{sec:kohnen-limit-nonzero-m}). If $m>0$, then one recovers skew-holomorphic Jacobi forms as in~\cite{bringmann-raum-richter-2011}, while if $m<0$, then one discovers the space of real-analytic Maa\ss-Jacobi forms $\cJ^{\sk}_{2-k,|m|}$. That space of Jacobi forms is isomorphic to the space of harmonic Maa\ss-Jacobi forms $\bbJ_{3-k,|m|}$, which is linked to the space~$\rmJ^{\sk}_{k,|m|}$ via the Jacobi $\xi$-operator. This is displayed in the bottom row of Figure~\ref{fig:spaces-of-siegel-modular-forms}. As a consequence, the skew-Maa\ss\ lift to skew-harmonic Maa\ss-Siegel forms originates from Maa\ss-Jacobi forms as opposed to skew-holomorphic Jacobi forms. In particular, we do not expect to encounter lifts of cusp forms. This is confirmed by the second part of Theorem~\ref{mainthm:skew-maass-lift}, which asserts that~$\rmM^\sk_k$ consists only of Eisenstein series. Hence $\bbJ^\sk_{3-k, |m|}$ and $\rmM^\sk_k$ cannot serve as the preimage and target space to the skew-Maa\ss\ lift of cusp forms, respectively.

The second key point is to invert the Kohnen limit process for $m \ne 0$. The spaces of abstract Fourier-Jacobi coefficients~$\rmA\rmJ^\sk_{k,m}$ are isomorphic to the space of skew-holomorphic Jacobi forms~$\rmJ^\sk_{k,m}$, if $m > 0$, and the space of Maa\ss-Jacobi forms~$\bbJ_{3-k,|m|}$, if $m < 0$. In particular, we establish the bent arrows at the left and right of Figure~\ref{fig:spaces-of-siegel-modular-forms}, which allows us to proceed with the proof of the skew-Maa\ss\ lift as in the holomorphic case. This is a delicate task, which we accomplish by using the covariance of the Kohnen limit process and multiplicity-one for Fourier series coefficients.

We next use Hecke operators to map $\rmJ^{\sk}_{k,1}$ to $\rmJ^{\sk}_{k,m}$ for $m > 0$, and $\bbJ_{3-k,1}$ to $\bbJ_{3-k,|m|}$ for~$m < 0$. These correspond to maps from $\rmA\rmJ^\sk_{k,\pm 1}$ to $\rmA\rmJ^\sk_{k,m}$ for $m \ne 0$. The usual Jacobi-Hecke operators~$V_m$ for~$m > 0$ easily extend to skew-holomorphic Jacobi forms and are readily available for Maa\ss-Jacobi forms. We use the associated Fourier series coefficient formula to define $V_m$ for the abstract Fourier-Jacobi coefficients of index~$m \ne 0$. Observe that the same formulas could be obtained from the view on Jacobi-Hecke operators in~\cite{gritsenko-1995}. It allows us to define two partial Fourier-Jacobi expansions:
\begin{gather}
\label{eq:introduction:skew-maass-lift-nonzero-m}
  \sum_{m=1}^\infty
  \Big( \Klim_{k,-1}^{-1\,\sk}(\phi) \Big) \big|^\sk_k\, V_m
\qquad\tx{and}\qquad
  - 4 (k-2)!
  \sum_{m=1}^\infty
  \Big( \Klim_{k,1}^{-1\,\sk}(\xi^\rmJ_{3-k}\, \phi) \Big) \big|^\sk_k\, V_m
\tx{,}
\end{gather}
where $\xi^\rmJ_{3-k}$ is the Jacobi~$\xi$-operator in~\eqref{eq:def:jacobi-xi-operator}. By construction, both expansions in~\eqref{eq:introduction:skew-maass-lift-nonzero-m} are invariant under the embedded Jacobi group. It remains to patch them in such a way that the resulting Fourier series expansion transforms like a Siegel modular form under the map that interchanges $\tau$ and $ \tau'$. Covariance properties of the Kohnen limit process reduce the corresponding compatibility of Fourier series expansions to a normalizing factor~$-4 (k-2)!$ relating $\Klim^{-1}_{+1}$ and $\Klim^{-1}_{-1}$, and the definition of a $0$\thdash\ Fourier-Jacobi coefficient. We discover that the inverse Kohnen limit processes are compatible with Fourier series expansions, and therefore allow us to control Fourier series coefficients of skew-harmonic Maa\ss-Siegel forms sufficiently well to define the $0$\thdash\ Fourier-Jacobi coefficient of the skew-Maa\ss\ lift. This leads us to the next key point.

The third key point is the Hecke-like operator~$V_0$, which we employ to obtain the $0$\thdash\ Fourier-Jacobi coefficient of the skew-Maa\ss\ lift. In this paper, it consists of two parts. The first part accounts for the contribution of Eisenstein series and parallels~$V_0$ in the holomorphic case~\cite{eichler-zagier-1985}. We compute the Fourier series expansion of the skew-harmonic Eisenstein series and thus obtain an explicit expression for~$V_0$ analogous to~\cite{eichler-zagier-1985}. The second part of $V_0$ is novel in the sense that it does not appear in the holomorphic case. It arises from Fourier series coefficients whose index is indefinite and isotropic as a quadratic form. Observe that in the holomorphic case, Fourier series coefficients of indefinite index vanish automatically by the Koecher principle. The second part of~$V_0$ will also appear in the sequel to this paper.  It will be responsible for the phenomenon that the skew-Maa\ss\ lift of a cusp form is not necessarily a cusp form. This agrees with the fact from automorphic representation theory that the nonholomorphic Saito-Kurokawa lift of holomorphic elliptic cusp forms under certain conditions is residual. We also remark here already that in the case of cusp forms, there will be a third part of~$V_0$, which reflects the theory of constant terms in automorphic representation theory~\cite{moeglin-waldspurger-1994}.

\begin{maintheorem}[The skew-Maa\ss\ lift]
\label{mainthm:skew-maass-lift}
Let $\phi \in \bbJ_{3-k,1}$ be a harmonic Maa\ss-Jacobi form (of moderate growth) of weight\/~$3-k$ and index~$1$, $\xi^\rmJ_{3-k}$ be the Jacobi~$\xi$-operator in~\eqref{eq:def:jacobi-xi-operator}, $\Klim^{-1}_{k,\pm 1}$ be the inverse Kohnen limit processes in~\eqref{eq:def:inverse-kohnen-limit-positive-m} and~\eqref{eq:def:inverse-kohnen-limit-negative-m-maass-jacobi}, and $V_m$ be the Hecke operator in~\eqref{eq:def:V-hecke-operator-positive-l-abstract-fourier-jacobi-coefficient}. Then
\begin{multline}
\label{eq:def:skew-maass-lift-maass-jacobi}
  \MAsk_k(\phi)
\;:=\;
  \sum_{m = 0}^\infty
  \Klim^{-1}_{k,-1}(\phi) \big|^\sk_k\, V_m
  \,-\,
  4 (k-2)!
  \sum_{m = 1}^\infty
  \Klim^{-1}_{k,1}(\xi^\rmJ_{3-k}\,\phi) \big|^\sk_k\, V_m
\\
\in\;
  \rmM^{\sk}_k
\tx{.}
\end{multline}
The skew-Maa\ss\ lift is an isomorphism of spaces of Eisenstein series:
\begin{gather*}
  \MAsk_k :\,
  \bbJ_{3-k,1}
\lra
  \rmM^\sk_k
\tx{,}\quad
  E^{\bbJ}_{3-k,1}
\lmto
  \frac{(-1)^{\frac{1-k}{2}} \Gamma(k-\frac{1}{2})\, \zeta(k)}{(2 \pi)^k}\,
  E^\sk_k
\tx{.}
\end{gather*}
In particular, $\rmM^\sk_k$ is one-dimensional and spanned by~$E^\sk_k$.
\end{maintheorem}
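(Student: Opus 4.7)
The plan is to follow the classical strategy for the Maa\ss\ lift: first verify that the formal series in~\eqref{eq:def:skew-maass-lift-maass-jacobi} converges and transforms correctly under the embedded Jacobi group, then verify invariance under the involution interchanging $\tau$ and $\tau'$, which together generate the Siegel modular group. Finally, establish the isomorphism assertion by identifying the image of the Eisenstein series generator and leveraging injectivity of the Kohnen limit process to force one-dimensionality of $\rmM^\sk_k$.

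Convergence and Jacobi invariance are essentially automatic from the construction. Each summand $\Klim^{-1}_{k,\pm 1}(\cdot) \big|^\sk_k V_m$ lies in $\rmA\rmJ^\sk_{k,\mp m}$ by Definition~\ref{def:abstract-fourier-jacobi-term-skew-harmonic}, so the partial sums transform correctly under the embedded Jacobi group. Convergence follows from the polynomial bound on Fourier series coefficients supplied by the explicit formula~\eqref{eq:def:V-hecke-operator-positive-l-abstract-fourier-jacobi-coefficient} for $V_m$, just as in the holomorphic case.

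The principal obstacle is $S$-invariance, where $S$ swaps $\tau$ and $\tau'$. The strategy is to work at the level of Fourier series coefficients, indexed by half-integral symmetric matrices $T = \begin{psmatrix} n & r/2 \\ r/2 & m \end{psmatrix}$; the involution sends the $T$-coefficient to its $n \leftrightarrow m$ swap. I expand the $T$-coefficient of $\MAsk_k(\phi)$ via the coefficient formula for $V_m$ (in direct analogy with Proposition~2 of~\cite{eichler-zagier-1985}), obtaining a divisor sum of Fourier coefficients of $\Klim^{-1}_{k,-1}(\phi)$ for $m > 0$ and of $\Klim^{-1}_{k,1}(\xi^\rmJ_{3-k}\,\phi)$ for $m < 0$. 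The covariance of the Kohnen limit process through $\xi^\rmJ_{3-k}$, established earlier in the paper, relates these two branches of Fourier coefficients; the normalizing factor $-4(k-2)!$ is pinned down by precisely this covariance so that the resulting expression depends on $T$ only through $\det T$ and the divisibility class of $\gcd(n,r,m)$, yielding the required $n \leftrightarrow m$ symmetry. For indices $T$ with $\det T \le 0$, the two constituents of $V_0$ discussed before the theorem come into play: the Eisenstein-type part handles $T$ with $nm = 0$ in analogy with~\cite{eichler-zagier-1985}, while the novel indefinite isotropic part supplies the swap-symmetric coefficients at indefinite $T$ that vanish in the holomorphic case by the Koecher principle but must be present and matched here.

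For the isomorphism claim, Remark~\ref{rem:almost-skew-harmonic-jacobi-forms} reduces matters to computing the image of the single generator $E^\bbJ_{3-k,1}$. Inserting the explicit Fourier expansions of $E^\bbJ_{3-k,1}$ and $\xi^\rmJ_{3-k}\, E^\bbJ_{3-k,1}$ into~\eqref{eq:def:skew-maass-lift-maass-jacobi} and comparing with the known Fourier expansion of $E^\sk_k$, I match a single normalized Fourier coefficient (such as the constant term or the $T = \begin{psmatrix} 1 & 1/2 \\ 1/2 & 1 \end{psmatrix}$ coefficient) to pin down the scalar $(-1)^{(1-k)/2} \Gamma(k-\tfrac{1}{2}) \zeta(k)/(2\pi)^k$. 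To conclude that $\rmM^\sk_k$ is one-dimensional, I use that any $F \in \rmM^\sk_k$ is determined by its Fourier-Jacobi coefficients $\wtd\phi_m \in \rmA\rmJ^\sk_{k,m}$; injectivity of $\Klim$ (Propositions~\ref{prop:kohnen-limit-positive-m-injective} and~\ref{prop:kohnen-limit-negative-m-injective}) together with the fact that $\rmJ^\sk_{k,m}$ and $\bbJ_{3-k,|m|}$ are themselves spanned by Eisenstein series forces each $\wtd\phi_m$ to be a scalar multiple of the corresponding Eisenstein image, and the $S$-invariance constraint then forces all these scalars to coincide with a single multiple of $\MAsk_k(E^\bbJ_{3-k,1})$, yielding $\dim \rmM^\sk_k = 1$.
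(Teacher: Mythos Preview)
Your proposal diverges from the paper's proof in two places, and the second contains a genuine gap.

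\textbf{Modularity.} You propose to establish $S$-invariance of $\MAsk_k(\phi)$ for general $\phi$ by direct manipulation of Fourier coefficients, relying on a ``covariance through $\xi^\rmJ_{3-k}$'' to match the $m>0$ and $m<0$ branches. The paper bypasses this entirely: since $\bbJ_{3-k,1}$ is one-dimensional (Remark~\ref{rem:almost-skew-harmonic-jacobi-forms}), it suffices to treat $\phi = E^\bbJ_{3-k,1}$, and for this $\phi$ one shows directly that $\MAsk_k(E^\bbJ_{3-k,1})$ equals the stated scalar multiple of $E^\sk_k$ by comparing Fourier--Jacobi coefficients via the Kohnen limit process (Lemmas~\ref{la:kohnen-limit-positive-m-eisenstein-series} and~\ref{la:kohnen-limit-negative-m-eisenstein-series}, \eqref{eq:almost-skew-harmonic-eisenstein-series-xi-image}, and the definition of $V_0$). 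Since $E^\sk_k$ is already known to lie in $\rmM^\sk_k$, modularity follows for free. Your abstract Fourier-coefficient symmetry argument could perhaps be made to work, but the normalizations in~\eqref{eq:def:inverse-kohnen-limit-negative-m} and Lemma~\ref{la:kohnen-limit-negative-m-indefinite-fourier-coefficients} make this considerably more delicate than the holomorphic case, and the paper's route is shorter.

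\textbf{One-dimensionality.} Here your argument fails: you assert that $\rmJ^\sk_{k,m}$ is spanned by Eisenstein series, but this is false in general --- skew-holomorphic Jacobi forms of weight $k$ and index $m$ can include cusp forms. So you cannot conclude that each $\wtd\phi_m$ for $m>0$ is an Eisenstein multiple. The paper's argument is asymmetric in $m$: one first subtracts a multiple of $E^\sk_k$ to kill the $\begin{psmatrix}0&0\\0&0\end{psmatrix}_{(0)}$ coefficient, then uses $\Klim[0,0]$ together with the one-dimensionality of the relevant elliptic Maa\ss\ space to force certain degenerate coefficients to vanish. This feeds into the $m<0$ side, where $\cJ^\sk_{2-k,|m|}$ \emph{is} Eisenstein-only (Remark~\ref{rem:almost-skew-harmonic-jacobi-forms}), and Lemma~\ref{la:almost-skew-harmonic-jacobi-form-vanishing} then kills $\Klim^\sk_{k,m}(\wtd\phi_m)$ for all $m<0$. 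Injectivity of $\Klim$ then forces all indefinite-index coefficients of $F$ to vanish. Only \emph{now} can one handle $m>0$: with indefinite coefficients gone and positive-definite coefficients already zero by Proposition~\ref{prop:multiplicity-one-for-skew-harmonic-fourier-coefficients-nonzero-nonnegative}, the skew-holomorphic Jacobi form $\Klim^\sk_{k,m}(\wtd\phi_m)$ has Fourier support only on $D=0$, hence vanishes by the theta decomposition. The remaining zero-index pieces are then dispatched via $\Klim[0,1]$ and Proposition~\ref{prop:multiplicity-one-for-skew-harmonic-fourier-coefficients-zero}. The order matters: the $m<0$ side, where your Eisenstein-only claim \emph{is} valid, must be used to bootstrap the $m>0$ side.
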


\begin{mainremarks}
\begin{enumeratearabic}
\item
Let $\phi \in \bbJ_{3-k,1}$ be a harmonic Maa\ss-Jacobi form, $\sum_m \wtd\phi_m(Z)$ be the Fourier-Jacobi expansion of its skew-Maa\ss\ lift in Theorem~\ref{mainthm:skew-maass-lift}, $\Klim^\sk_{k,-1}$ be the Kohnen limit process defined in~\eqref{eq:def:kohnen-limit-negative-m}, and $\rmR^\bbJ_{3-k}$ be the isomorphism in Proposition~\ref{prop:harmonic-and-almost-skew-harmonic-jacobi-forms}. Then $\rmR^\bbJ_{3-k} \phi = \Klim^\sk_{k,-1}(\wtd\phi_{-1})$.

\item
Kohnen asked on page~128 of~\cite{kohnen-1993} which real-analytic Siegel modular forms correspond to skew-holomorphic Jacobi forms.  Theorem~\ref{mainthm:skew-maass-lift} improves the initial answer that was given in~\cite{bringmann-raum-richter-2011}. The skew-Maa\ss\ lift of skew-holomorphic Jacobi forms in the last paper of this series will provide a complete answer to Kohnen's question.

\item
The mere definition of the skew-Maa\ss\ lift depends on several ingredients from Sections~\ref{sec:kohnen-limit-nonzero-m} and~\ref{sec:kohnen-limit-zero-m}. In the last part of this series of papers, we will discover that it naturally extends to the definition of skew-Maa\ss\ lifts of cuspidal skew-holomorphic Jacobi forms. 

\item
Our proof of Theorem~\ref{mainthm:skew-maass-lift} relies heavily on the computation of the Fourier series coefficients of~$E^\sk_k$. Our method can be applied to all Siegel Eisenstein series and it does provide a novel access to the Fourier series coefficients of Siegel Eisenstein series. These are directly related to Siegel series~(see p.~306 of~\cite{maass-1971}) and have recently been studied by $p$-adic methods~\cite{ikeda-katsurada-2016-preprint}.
\end{enumeratearabic}
\end{mainremarks}

We now discuss the case $m = 0$ of the Kohnen limit process, displayed in Figure~\ref{fig:asymptotics-of-siegel-modular-forms}. Its top row coincides with the top row of Figure~\ref{fig:spaces-of-siegel-modular-forms}. We define Kohnen limit processes, $\Klim[0,0]$, $\Klim[0,1]$, and $\Klim[N]$ for $N \in \ZZ_{> 0}$, which exhibit the first three terms of the asymptotic expansion of a Siegel modular form with respect to~$y'$. They map $\rmM^\sk_k$ to $\cM_{1-k}(k-\frac{1}{2})$, $\rmM_{k-1}$, and $\rmM_{k-1}(\rho_N)$, respectively. These are spaces of elliptic Maa\ss\ forms with spectral parameter~$k-\frac{1}{2}$ (i.e., Laplace eigenvalue~$\frac{1}{2}(k - \frac{1}{2})$) and elliptic modular forms, respectively. None of the maps $\Klim[N]$ are injective.
\begin{figure}[ht]
\caption{Spaces of Siegel modular forms and elliptic modular forms for $m = 0$}
\label{fig:asymptotics-of-siegel-modular-forms}
\begin{center}
\begin{tikzpicture}
\matrix(m)[matrix of math nodes,
column sep=6.5em, row sep=4em,
text height=1.5em, text depth=1.25ex]{%
& \rmM^{\sk}_k & \\
& \rmA\rmJ^{\sk}_{k,0} & \\
\cM_{1-k}(k-\frac{1}{2}) & \rmM_{k-1} & \rmM_{k-1}(\rho_N) \\
};

\path
(m-1-2) edge[->] (m-2-2)
(m-2-2) edge[->] node[above left] {$\underset{(m=0)}{\Klim[0,0]}$} (m-3-1)
(m-2-2) edge[->] node[below right] {$\underset{(m=0)}{\Klim[0,1]}$} (m-3-2)
(m-2-2) edge[->] node[above right] {$\underset{(m=0, N \ge 1)}{\Klim[N]}$} (m-3-3)
;
\end{tikzpicture}
\end{center}
\end{figure}
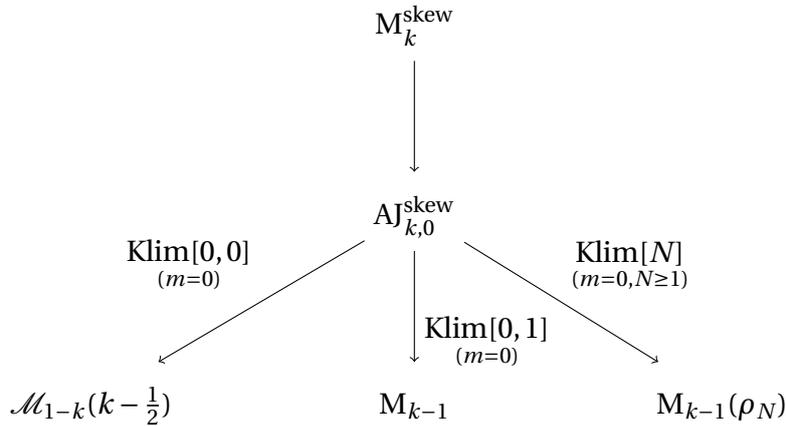
\vspace{.5\baselineskip}

The fourth key point of this paper is our definition of the Kohnen limit processes for Jacobi index~$m = 0$. They differ significantly from the case of nonzero~$m$, in that we define three of them, and $\Klim[N]$, the most intricate one, relies on the limit $y' \ra 0$. While it seems possible to define such a Kohnen limit process for $m=0$ via $y' \ra \infty$, the resulting objects are highly discontinuous. They bare a certain similarity to the local harmonic weak Maa\ss\ forms in~\cite{bringmann-kane-kohnen-2015}, but it is unclear to us in how far this similarity is conceptual or coincidental. The Kohnen limit process of index~$m = 0$ will be studied in more detail in the second paper of this series of papers.

The paper is organized as follows. In Section~\ref{sec:elliptic-modular-forms}, we review some aspects of elliptic modular forms. In Section~\ref{sec:harmonic-siegel-maass-forms}, we revisit skew-harmonic Maa\ss-Siegel forms and their Fourier series expansions. Section~\ref{sec:jacobi-forms} contains all material on Jacobi forms and Fourier-Jacobi coefficients. Sections~\ref{sec:kohnen-limit-nonzero-m} and~\ref{sec:kohnen-limit-zero-m} are the heart of this paper and they are dedicated to the Kohnen limit process. In Section~\ref{sec:kohnen-limit-nonzero-m}, we define the Kohnen limit processes for $m\not=0$, and we give the corresponding inverse Kohnen limit processes. In Section~\ref{sec:kohnen-limit-zero-m}, we investigate the Kohnen limit process for $m = 0$, which is the most subtle point of this paper.  Finally, in Section~\ref{sec:maass-lift}, we define the Hecke-like operator~$V_0$, and we prove Theorem~\ref{mainthm:skew-maass-lift}.

\paragraph{Acknowledgments}

We thank Wee Teck Gan, Werner Hoffmann, \"Ozlem \.Imamo\u glu, and Rainer Weissauer for useful discussions and helpful comments. We are grateful to the referees for a careful reading and for making detailed suggestions that improved this paper.

\section{Elliptic modular forms}
\label{sec:elliptic-modular-forms}

We start by revisiting some facts about elliptic modular forms. Let $\tau = x + iy \in \HS$ be a variable in the Poincar\'e upper half space, and write $e(a) := \exp(2 \pi i a)$ for any~$a \in \CC$. Throughout the paper, $\zeta$ denotes the Riemann $\zeta$\nbd function, $\Gamma$ the $\Gamma$-function, and~$W_{\kappa,\mu}$ the $W$-Whittaker function.

\subsection{Vector-valued modular forms}
\label{ssec:elliptic-vector-valued-modular-forms}

A finite dimensional, complex right representation of $\SL{2}(\ZZ)$ is called an arithmetic type\footnote{Usually, the notion of arithmetic types requires left representations, but for our purpose it is more convenient to consider right representations.}. Any arithmetic type~$\rho$ gives rise to slash actions
\begin{gather*}
  \big( f \big|_{\kappa,\rho}\, \gamma \big) (\tau)
:=
  (c \tau + d)^{-\kappa}\,
  f\big( \frac{a \tau + b}{c \tau + d} \big) \rho(\gamma)
\tx{,}\qquad
  \kappa \in \ZZ
\tx{,}\;
  \gamma
=
  \begin{psmatrix} a & b \\ c & d \end{psmatrix}
\end{gather*}
on functions~$f :\, \HS \ra V(\rho)$, where $V(\rho)$ is the representation space of~$\rho$. In the classical setting, when $\rho$ is the trivial representation, we suppress it from our notation and simply write $( f \big|_\kappa\, \gamma) (\tau)$.
\begin{definition}
\label{def:vector-valued-elliptic-modular-form}
A holomorphic function $f :\, \HS \ra V(\rho)$ is a holomorphic modular form of weight~$\kappa$ and type~$\rho$ if it satisfies
\begin{enumerateroman}
\item
$f \big|_{\kappa,\rho}\, \gamma = f$ for all $\gamma \in \SL{2}(\ZZ)$.

\item
$f(\tau) = \cO(1)$ as $y \ra \infty$.
\end{enumerateroman}
\end{definition}
Real-analytic modular forms of weight~$\kappa$ and type~$\rho$ are defined analogously.

\subsection{Eisenstein series of level \texpdf{$1$}{1}}

For $\kappa \in 2 \ZZ$ and $s \in \CC$ with $\kappa + \Re(s) > 2$, the level~$1$ real-analytic Eisenstein series is defined by
\begin{gather}
\label{eq:def:elliptic-eisenstein-series}
  E_\kappa(s, \tau)
:=
  \sum_{\Gamma_\infty \backslash \SL{2}(\ZZ)}
  y^s \big|_\kappa\,\ga
\tx{,}\quad\tx{where}\quad
  \Gamma_\infty
:=
  \big\{ \begin{psmatrix} \pm 1 & \ast \\ 0 & \pm 1 \end{psmatrix} \in \SL{2}(\ZZ) \big\}
\tx{.}
\end{gather}
For general $s \in \CC$ we define $E_\kappa(s,\tau)$ by analytic continuation. If $s=0$ and~$\kappa > 2$, then the Eisenstein series is holomorphic, and we write $E_\kappa(\tau) := E_\kappa(0, \tau)$. We give the known Fourier series expansion of $E_\kappa(s, \tau)$ (see, for example, pages~181 and~210 of~\cite{maass-1964}) in the following Lemma:
\begin{lemma}
\label{la:elliptic-eisenstein-series-fourier-expansion}
Let $\kappa \in 2\ZZ$ and $s \in \CC$.  We have the expansion
\begin{multline*}
  E_\kappa(s, \tau)
\,=\,
  y^s
  +
  \frac{(-1)^{\frac{\kappa}{2}} 2^{2-\kappa-2s} \pi \Gamma(\kappa + 2s - 1)}
       {\Gamma(\kappa + s) \Gamma(s)}
  \frac{\zeta(\kappa + 2s - 1)}{\zeta(\kappa + 2s)}\,
  y^{1 - \kappa - s}
\\
  +\,
  \frac{(-1)^{\frac{\kappa}{2}} 2^\kappa \pi^{\kappa+s}}{\zeta(\kappa + 2s)}
  \sum_{m \in \ZZ \setminus \{0\}}
  \frac{|m|^{-s}\, \sigma_{\kappa + 2s - 1}(|m|)}
       {\Gamma(\frac{1+\sgn(m)}{2}\kappa + s)}\,
  \big( 4 \pi |m| y \big)^{\frac{-\kappa}{2}}
  W_{\frac{\sgn(m)\kappa}{2}, \frac{\kappa+2s-1}{2}}\big( 4 \pi |m| y \big)
  e( |m| x )
\tx{.}
\end{multline*}
In the holomorphic case, we have the expansion
\begin{gather*}
  E_\kappa(\tau)
\,=\,
  1
  \,+\,
  \frac{(2 \pi i)^\kappa}{\Gamma(\kappa) \zeta(\kappa)}
  \sum_{n=1}^\infty \sigma_{\kappa-1}(n)\, e(n \tau)
\tx{.}
\end{gather*}
\end{lemma}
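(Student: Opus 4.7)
The plan is to derive the expansion by the classical Hecke--Maa\ss\ unfolding method. I would first rewrite the coset sum as a sum over primitive pairs $(c,d) \in \ZZ^2$ (modulo sign, absorbed into $\Gamma_\infty$) and separate the $c = 0$ contribution, which equals $y^s$, obtaining
\begin{gather*}
  E_\kappa(s,\tau)
\,=\,
  y^s
\,+\,
  y^s \sum_{c \ge 1} \sum_{\substack{d \in \ZZ \\ (c,d) = 1}}
  (c\tau+d)^{-\kappa} \, |c\tau+d|^{-2s}
\tx{.}
\end{gather*}
For the inner sum I would group $d$ by its residue $r$ modulo~$c$ coprime to~$c$, writing $d = r + cq$ with $q \in \ZZ$, so that the $q$-sum is periodic in $x = \Re(\tau)$ of period~$1$.

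Poisson summation then converts the $q$-sum into a Fourier series in~$x$. Up to the phase $e(mr/c)$ and the factor $c^{-\kappa}|c|^{-2s}$, the $m$\thdash\ Fourier coefficient is
\begin{gather*}
  I_m(y)
\,:=\,
  \int_\RR (v + iy)^{-\kappa}\, (v^2 + y^2)^{-s}\, e(-mv)\, dv
\tx{.}
\end{gather*}
For $m = 0$, the substitution $v = yt$ reduces $I_0(y)$ to a beta-type integral that produces $y^{1-\kappa-2s}$ times the explicit combination $(-1)^{\kappa/2} 2^{2-\kappa-2s} \pi\, \Gamma(\kappa + 2s - 1)/(\Gamma(\kappa+s)\Gamma(s))$ occurring in the lemma. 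For $m \ne 0$, the same substitution identifies $I_m(y)$ with a standard Mellin--Barnes representation of the $W$-Whittaker function, yielding $W_{\sgn(m)\kappa/2,\, (\kappa+2s-1)/2}(4 \pi |m| y)$ together with the prescribed Gamma factor $\Gamma(\tfrac{1 + \sgn(m)}{2}\kappa + s)^{-1}$.

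The last step is to perform the sum over $c$ and~$r$. For $m = 0$ the $r$-sum gives Euler's totient $\varphi(c)$, and $\sum_{c \ge 1} \varphi(c)\, c^{-\kappa - 2s} = \zeta(\kappa + 2s - 1)/\zeta(\kappa + 2s)$ produces the claimed second summand. For $m \ne 0$ the $r$-sum is a Ramanujan sum, and the identity $\sum_{c \ge 1} c^{-\kappa-2s} \sum_{r} e(mr/c) = |m|^{1-\kappa-2s}\, \sigma_{\kappa+2s-1}(|m|)/\zeta(\kappa+2s)$ combines with the Whittaker integral to give the stated $W$-Whittaker term. The main source of error in this computation will be the careful bookkeeping of the sign $\sgn(m)$ through the Whittaker integral, so that both the first subscript of $W$ and the Gamma factor indexed by $\tfrac{1+\sgn(m)}{2}$ come out correctly.

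The holomorphic case follows by specializing to $s = 0$. The factor $1/\Gamma(s)$ kills both the $y^{1-\kappa-s}$ term and all terms with $m < 0$ (where $\Gamma(\tfrac{1+\sgn(m)}{2}\kappa + s) = \Gamma(s)$). For $m > 0$ one applies the closed form $W_{\kappa/2,\,(\kappa-1)/2}(y) = y^{\kappa/2} e^{-y/2}$ together with $(-1)^{\kappa/2} 2^\kappa \pi^\kappa = (2\pi i)^\kappa$ (valid because $\kappa$ is even), and the $m$\thdash\ term collapses to $(2\pi i)^\kappa\, \sigma_{\kappa-1}(m)\, e(m\tau)/(\Gamma(\kappa)\zeta(\kappa))$, matching the stated holomorphic expansion. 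Since this lemma is classical and used only as a reference in the sequel, I would ultimately cite Maa\ss's book for the detailed calculation rather than record it in full.
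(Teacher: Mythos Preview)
Your proposal is correct and aligns with the paper's treatment: the paper does not prove this lemma at all but simply cites it as known, referring to pages~181 and~210 of Maa\ss's book~\cite{maass-1964}. Your outline of the Hecke--Maa\ss\ unfolding argument is the standard derivation and is accurate, and your closing remark that you would ultimately cite Maa\ss\ rather than reproduce the computation is precisely what the paper does.
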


\subsection{A specific arithmetic type and the associated Eisenstein series}

For $N \in \ZZ_{\ge 1}$, consider the $\CC$ vector space~$V(\rho_N)$ with basis $\frake_{\alpha, \beta}$, for $\alpha, \beta \in \frac{1}{N} \ZZ \slash \ZZ$ that satisfy $\alpha \ZZ + \beta \ZZ = \frac{1}{N} \ZZ$, i.e., \@ $\gcd(N\alpha, N\beta, N) = 1$ for any representative in~$\QQ$ of $\alpha$ and $\beta$. It yields an arithmetic type~$\rho_N$ via the natural right representation of $\SL{2}(\ZZ)$ arising from the right action on pairs $(\alpha, \beta)$. Observe that the basis of~$V(\rho_N)$ may be equally indexed by pairs $(c,d)$ of co-prime integers mod~$N$, and we denote these bases elements by $\frakf_{c,d}$.

Let $N \in \ZZ_{\ge 1}$, $\kappa \in 2 \ZZ$, and $s \in \CC$. Then the Eisenstein series
\begin{gather}
\label{eq:def:vector-valued-elliptic-eisenstein-series}
  E_{\kappa,N}(s, \tau)
:=
  \sum_{\gamma \in \Gamma_\infty \backslash \SL{2}(\ZZ)}
  \frakf_{0,1}\,  y^s \big|_{\kappa,\rho_N}\,\ga
\tx{}
\end{gather}
is a modular form of weight~$\kappa$ and type~$\rho_N$. We write
\begin{gather*}
\label{eq:def:vector-valued-elliptic-eisenstein-series-holomorphic}
  E_{\kappa,N}(\tau)
:=
  E_{\kappa,N}(0, \tau)
\tx{.}
\end{gather*}

\section{Harmonic Maa\ss-Siegel forms of degree \texpdf{$2$}{2}}
\label{sec:harmonic-siegel-maass-forms}

In this paper, we investigate scalar-valued Siegel modular forms of degree~$2$. For more details see~\cite{freitag-1983,klingen-1990, maass-1953, maass-1971, bringmann-raum-richter-2011}.

\subsection{Preliminaries}
\label{ssec:siegel-modular-forms-preliminaries}

Let us introduce necessary notation to define Siegel modular forms.

\paragraph{The symplectic group}

The symplectic group is given by
\begin{gather*}
  \Sp{2}(\RR)
:=
  \big\{ g \in \Mat{4}(\RR) \,:\,
   \rT g J^{(2)} g = J^{(2)}
  \big\}
\tx{,}\quad
  J^{(2)}
:=
  \begin{psmatrix}
  0 & 0 & -1 & 0 \\
  0 & 0 & 0 & -1 \\
  1 & 0 & 0 & 0 \\
  0 & 1 & 0 & 0
  \end{psmatrix}
\tx{.}
\end{gather*}
We usually write elements~$g\in\Sp{2}(\RR)$ as $g = \begin{psmatrix} A & B \\ C & D \end{psmatrix}$ with $A, B, C, D \in \Mat{2}(\RR)$. For $U \in \GL{2}(\RR)$ and $g = \begin{psmatrix} a & b \\ c & d \end{psmatrix} \in \SL{2}(\RR)$, set
\begin{gather}
\label{eq:def:rot-up-down-embedding}
  \rot(U)
:=
  \begin{psmatrix}
  U & 0 \\ 0 & \rT U^{-1}
  \end{psmatrix}
\tx{,}\qquad
  \up(g)
:=
  \begin{psmatrix}
  a & 0 & b & 0 \\
  0 & 1 & 0 & 0 \\
  c & 0 & d & 0 \\
  0 & 0 & 0 & 1
  \end{psmatrix}
\tx{,}\qquad
  \down(g)
:=
  \begin{psmatrix}
  1 & 0 & 0 & 0 \\
  0 & a & 0 & b \\
  0 & 0 & 1 & 0 \\
  0 & c & 0 & d
  \end{psmatrix}
\tx{.}
\end{gather}
Let~$R$ be a ring. Write~$\MatT{2}(R) := \{ M \in \Mat{2}(R) \,:\, \rT M = M \}$ for the set of symmetric $2 \times 2$ matrices with entries in~$R$. If $B \in \MatT{2}(R)$ with entries $B_{11}$, $B_{12}$, and $B_{22}$, then set
\begin{gather}
  \trans(B)
:=
  \begin{psmatrix}
  1 & 0 & B_{11} & B_{12} \\
  0 & 1 & B_{12} & B_{22} \\
  0 & 0 & 1 & 0 \\
  0 & 0 & 0 & 1
  \end{psmatrix}
\tx{.}
\end{gather}

Let $\Gamma^{(2)} := \Sp{2}(\ZZ)$ be the full Siegel modular group, and
\begin{gather}
\Gamma^{(2)}_\infty
:=
\big\{
\begin{psmatrix} A & B \\ 0 & D \end{psmatrix} \in \Gamma^{(2)}
\big\}
\end{gather}
be the Siegel parabolic subgroup of $\Gamma^{(2)}$.

\paragraph{The Siegel upper half space}

Recall from the introduction that
\begin{gather*}
\HS^{(2)}
:=
\big\{ Z \in \MatT{2}(\CC) \,:\, \Im(Z) \tx{ positive definite} \big\}
\text{,}
\end{gather*}
where $Z = \begin{psmatrix} \tau & z \\ z & \tau' \end{psmatrix} \in \HS^{(2)}$ is a typical variable. Throughout, we denote the real part of $Z$ by $X = \begin{psmatrix} x & u \\ u & x' \end{psmatrix}$, and its imaginary part by $Y = \begin{psmatrix} y & v \\ v & y' \end{psmatrix}$.
The Siegel upper half space carries an action of the symplectic group by
\begin{gather*}
g Z
:=
(A Z + B) (C Z + D)^{-1}
\tx{.}
\end{gather*}

\paragraph{Slash actions}
The classical weight~$k$ slash action is defined for $F :\, \HS^{(2)} \ra \CC$ and $k \in \ZZ$:
\begin{gather*}
\big( F \big|_k\, g \big) (Z)
:=
\det(C Z + D)^{-k}\, F(g Z)
\tx{.}
\end{gather*}
The definition of skew-harmonic Maa\ss-Siegel forms in~\cite{bringmann-raum-richter-2011} requires the weight~$k$ skew slash action, which is also defined for $F :\, \HS^{(2)} \ra \CC$ and $k \in \ZZ$:
\begin{gather}
\label{eq:def:siegel-skew-slash-action}
\big( F \big|^\sk_k\, g \big) (Z)
:=
|\det(C Z + D)|^{-1} \ov{\det(C Z + D)}^{1-k}\, F(g Z)
\tx{.}
\end{gather}
Observe that multiplication by~$\det(Y)^{k-\frac{1}{2}}$ shows that the slash actions $\big|^\sk_k$ and $\big|_{1-k}$ are equivalent (see also Remark~2.4 of~\cite{bringmann-richter-raum-2016}).

\subsection{Skew-harmonic Maa\ss-Siegel forms}

In this section, we briefly recall the notion of harmonic Maa\ss-Siegel form in~\cite{bringmann-raum-richter-2011}. We call these objects skew-harmonic Maa\ss-Siegel forms to emphasize their skew weight $(\frac{1}{2}, k-\frac{1}{2})$.

\paragraph{The matrix-valued Laplace operator}
Let
\begin{gather*}
\partial_Z
:=
\begin{pmatrix} \partial_{\tau} & \frac{1}{2}\partial_z \\
  \frac{1}{2}\partial_z & \partial_{\tau'}
\end{pmatrix}
\quad \tx{and} \quad
\partial_{\ov{Z}}
:=
\begin{pmatrix} \partial_{\ov{\tau}} & \frac{1}{2}\partial_{\ov{z}} \\
  \frac{1}{2}\partial_{\ov{z}} & \partial_{\ov{\tau'}}
\end{pmatrix}
\tx{,}
\end{gather*}
where $\partial_{w}:=\frac{\partial}{\partial w} = \frac{1}{2}\big( \frac{\partial}{\partial a} - i\frac{\partial}{\partial b} \big)$ and $\partial_{\ov{w}} := \frac{\partial}{\partial \ov{w}} = \frac{1}{2} \big( \frac{\partial}{\partial a} + i\frac{\partial}{\partial b} \big)$ for any complex variable $w = a+ib$.  Maass~\cite{maass-1953} introduced the matrix-valued Laplace operator
\begin{gather}
\label{eq:def:matrix-valued-laplace-operator}
\Omega^\sk_k 
:=
- 4 Y \rT \big(Y \partial_{\ov Z} \big) \partial_Z
- i (2 k - 1) Y \partial_Z
+ i Y \partial_{\ov Z}
\tx{.}
\end{gather}
This matrix-valued Laplace operator and also the differential operator
\begin{gather}
\label{eq:def:xi-operator}
\xi^\sk_k
:=
\det(Y)^{k-\frac{3}{2}}
\det (Z - \ov{Z}) (\partial_\tau \partial_{\tau'} - \tfrac{1}{4} \partial_z^2)
\tx{}
\end{gather}
play a vital role in the study of skew-harmonic Maa\ss-Siegel forms in~\cite{bringmann-raum-richter-2011}.  Observe that  \cite{bringmann-raum-richter-2011} denotes $\Omega^\sk_k$ and   $\xi^\sk_k$ by $\Omega_{\frac{1}{2},k-\frac{1}{2}}$ and $\xi^{(2)}_{\frac{1}{2},k-\frac{1}{2}}$, respectively.

\paragraph{The definition of skew-harmonic Maa\ss-Siegel forms}

Recall from the introduction that~$k > 3$ is odd.
\begin{definition}
\label{def:skew-harmonic}
A smooth function $F :\, \HS^{(2)} \ra \CC$ is weight~$k$ skew-harmonic if 
\begin{gather}
\Omega^\sk_k\, F = 0
\quad\tx{and}\quad
\xi^\sk_k\, F = 0
\tx{.}
\end{gather}
\end{definition}

\begin{definition}
\label{def:skew-harmonic-siegel-maass}
A weight~$k$ skew-harmonic function $F :\, \HS^{(2)} \ra \CC$ is a skew-harmonic Maa\ss-Siegel form of weight~$k$ if it satisfies
\begin{enumerateroman}
\item
$F \big|^\sk_k\, \gamma = F$ for all $\gamma \in \Ga^{(2)}$.

\item
$F(Z) = \cO(\tr(Y)^a)$ for some $a \in \RR$ as $\tr(Y) \ra \infty$.
\end{enumerateroman}
\end{definition}
We write $\rmM^\sk_k$ for the space of skew-harmonic Maa\ss-Siegel forms of weight~$k$.
\begin{remark}
Definition~\ref{def:skew-harmonic-siegel-maass} differs slightly from the definition of harmonic Maa\ss-Siegel forms in~\cite{bringmann-raum-richter-2011}, which does not require the skew-harmonic condition $ \xi^\sk_k\, F = 0$.  However, the main result of~\cite{bringmann-raum-richter-2011} also relies on this additional condition, which guarantees multiplicity-one for Fourier series coefficients of nonzero index~$T$ of harmonic Maa\ss-Siegel forms provided that~$T$ is indefinite or positive semi-definite (see also Proposition~\ref{prop:multiplicity-one-for-skew-harmonic-fourier-coefficients-nonzero-nonnegative} below).
\end{remark}

\paragraph{Eisenstein series}

Recall that~$k>3$ is odd. The Eisenstein series
\begin{gather}
\label{eq:def:skew-harmonic-siegel-eisenstein-series}
  E^\sk_k
:=
  \sum_{\gamma \in \Gamma^{(2)}_\infty \backslash \Gamma^{(2)}}
  1 \big|^\sk_k\, \gamma
\end{gather}
is the primary example of a weight~$k$ skew-harmonic Maa\ss-Siegel form in~\cite{bringmann-raum-richter-2011}.

\subsection{Skew-harmonic Fourier series coefficients}

Recall that every real-analytic Siegel modular form~$F$ has a Fourier series expansion of the form
\begin{gather}
  \sum_{T \in \MatT{2}(\frac{1}{2}\ZZ)} \!\!\!
  c(F;\, T;\, Y)\, e(T X)
\tx{,}
\end{gather}
where throughout $e(TX) := \exp(2 \pi i\, \trace(T X))$.

Fourier series coefficients of holomorphic Siegel modular forms satisfy multiplicity-one: For every~$T \in \MatT{2}(\RR)$ there is (up to multiplication by scalars) a unique function $a^\hol(T;Y)$ such that $a^\hol(T;Y) e(TX)$ is holomorphic. In the skew-harmonic setting, the analogue remains true if~$T$ is non-degenerate (i.e., $\det(T) \ne 0$), but for arbitrary~$T$ it no longer holds.  We recall the relevant result from~\cite{maass-1953,bringmann-raum-richter-2011} ($T$ indefinite or positive semi-definite) in Proposition~\ref{prop:multiplicity-one-for-skew-harmonic-fourier-coefficients-nonzero-nonnegative}, and we give the remaining cases in Propositions~\ref{prop:multiplicity-one-for-skew-harmonic-fourier-coefficients-negative} and \ref{prop:multiplicity-one-for-skew-harmonic-fourier-coefficients-zero}.

\paragraph{Non-degenerate and nonzero positive semi-definite Fourier indices}

If $F$ is a skew-harmonic Maa\ss-Siegel form, then the following multiplicity-one result for its Fourier series coefficients $c(F;\, T;\, Y)$ was established in~\cite{maass-1953,bringmann-raum-richter-2011}.
\begin{proposition}[{\cite{maass-1953,bringmann-raum-richter-2011}}]
\label{prop:multiplicity-one-for-skew-harmonic-fourier-coefficients-nonzero-nonnegative}
Let $T \in \MatT{2}(\RR) \setminus \{0\}$ be indefinite or positive semi-definite. If the function $a^\sk_k(T;\,Y) e(TX)$ is weight~$k$ skew-harmonic and also of moderate growth, then $a^\sk_k(T;\,Y)$ is uniquely determined up to scalar multiples. Furthermore, if\/~$T$ is not positive definite, then one can choose this scalar normalization in such a way that $a^\sk_k(T;\,Y)$ is nonzero, and depends only on $\trace(T Y)$, $\det(T Y)$, and the signature of\/~$T$. If\/~$T$ is positive definite, then $a^\sk_k(T;\,Y) = 0$.
\end{proposition}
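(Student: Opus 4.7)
The approach is to separate the exponential factor, use the covariance of the two differential operators under the natural $\GL{2}(\RR)$-action to reduce $T$ to a normal form, and then analyze the resulting ODE system. Substituting the ansatz $F(Z) = a^\sk_k(T;\,Y)\, e(TX)$ into $\Omega^\sk_k F = 0$ and $\xi^\sk_k F = 0$ yields, after cancelling the phase $e(TX)$, three scalar equations from the symmetric matrix operator $\Omega^\sk_k$ together with one scalar equation from $\xi^\sk_k$. The unknown $a^\sk_k(T;\,Y)$ depends on the three real entries of $Y$, so the system is over-determined, but consistent by construction.

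Next I would exploit $\GL{2}(\RR)$-covariance. Under $Z \mapsto U Z\, \rT U$ both $\Omega^\sk_k$ and $\xi^\sk_k$ are intertwined while $T$ is conjugated to $\rT U^{-1} T U^{-1}$. This reduces to three orbit representatives indexed by signature, which may be taken as $\diag(1,-1)$, $\diag(1,0)$, and $\bbone$. The residual invariance under the stabilizer of each representative forces $a^\sk_k(T;\,Y)$ to be a function only of the joint invariants $\trace(TY)$ and $\det(TY) = \det(T)\det(Y)$, together with the (discrete) signature of~$T$. The PDE system then collapses to an ODE system in these two scalar variables, whose solutions are combinations of Whittaker-type functions. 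Moderate growth in $\tr(Y)$ selects a one-dimensional subspace in each of the indefinite and rank-one semi-definite cases, which gives the uniqueness statement; non-vanishing of a chosen representative can be checked either by a direct computation with the explicit Whittaker solution or by exhibiting it as a Fourier coefficient of the Eisenstein series $E^\sk_k$ from~\eqref{eq:def:skew-harmonic-siegel-eisenstein-series}.

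The main obstacle is the positive definite case, where one must rule out \emph{all} nonzero moderate-growth solutions. Here the decisive observation is that, up to the non-vanishing prefactor $\det(Y)^{k-3/2}\det(Z-\overline{Z})$, the operator $\xi^\sk_k$ is the scalar operator $\det(\partial_Z)$. A direct computation shows that the holomorphic Fourier term $e(TZ) = e(TX)\,\exp(-2\pi \trace(TY))$ satisfies $\det(\partial_Z)\,e(TZ) = -4\pi^2 \det(T)\, e(TZ)$, which is nonzero whenever $\det(T) \ne 0$, so the holomorphic branch is eliminated by $\xi^\sk_k F = 0$. The antiholomorphic-type branch $e(TX)\,\exp(2\pi \trace(TY))$ does satisfy $\det(\partial_Z) F = 0$, but for $T > 0$ it grows exponentially in $\tr(Y)$ and is therefore excluded by moderate growth. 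Since these two branches span the joint solution space of $\Omega^\sk_k F = 0$ in the positive definite case, the conclusion $a^\sk_k(T;\,Y) = 0$ follows. Verifying cleanly that no further solutions lurk in the intersection $\ker \Omega^\sk_k \cap \ker \xi^\sk_k$ for $T$ positive definite is the delicate step, and it is precisely where the additional hypothesis $\xi^\sk_k F = 0$ (absent from the original definition in~\cite{bringmann-raum-richter-2011}) is indispensable.
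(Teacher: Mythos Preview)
The paper does not supply its own proof of this proposition; it is quoted from \cite{maass-1953,bringmann-raum-richter-2011}. The method in those references is exactly the one exhibited in the paper's proof of Proposition~\ref{prop:multiplicity-one-for-skew-harmonic-fourier-coefficients-negative}: Maa\ss's equations~(33)--(37) of \cite{maass-1953} reduce the system $\Omega^\sk_k F = 0$, $\xi^\sk_k F = 0$ for $F = a(Y)e(TX)$ to explicit second-order ODEs in $\text{\texttt u} = \trace(TY)$ (and, in the non-degenerate case, involving the auxiliary pair $\phi,\psi$), and one then isolates the moderate-growth solutions. Your overall plan---$\GL{2}(\RR)$-reduction to a normal form, passage to invariants, and ODE analysis---is therefore on the right track and matches the cited approach.

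There is, however, a genuine gap in your treatment of the positive definite case. You assert that the solution space of $\Omega^\sk_k F = 0$ is spanned by the ``holomorphic branch'' $e(TZ)$ and the ``antiholomorphic-type branch'' $e(TX)\exp(2\pi\trace(TY))$, and then eliminate each in turn. But neither of these is a solution of $\Omega^\sk_k F = 0$. For the holomorphic branch, $\partial_{\ov Z}\,e(TZ) = 0$ kills the first and third terms of~\eqref{eq:def:matrix-valued-laplace-operator}, leaving
\[
  \Omega^\sk_k\, e(TZ)
  \;=\;
  -\,i(2k-1)\,Y\,\partial_Z\,e(TZ)
  \;=\;
  2\pi(2k-1)\,YT\,e(TZ),
\]
which is a nonzero matrix whenever $T \ne 0$. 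The actual joint kernel of $\Omega^\sk_k$ on functions of the form $a(Y)e(TX)$ is described by Maa\ss's confluent-hypergeometric system, and its moderate-growth solutions are genuine Whittaker-type functions of $\trace(TY)$ and $\det(TY)$, not the elementary exponentials you wrote down. The correct argument for $T>0$ proceeds as in the negative-definite part of the proof of Proposition~\ref{prop:multiplicity-one-for-skew-harmonic-fourier-coefficients-negative}: one solves Maa\ss's ODE for $\phi$, finds that the moderate-growth constraint already cuts the space down, and then checks (this is where $\xi^\sk_k F = 0$ enters, cf.\ Remark~(b) to Theorem~3 of \cite{bringmann-raum-richter-2011}) that the surviving $\psi$-component is incompatible with moderate growth after inserting into Maa\ss's equation~(35). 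Your instinct that $\xi^\sk_k$ is the decisive extra condition is correct, but it acts on the Whittaker solutions, not on $e(TZ)$.
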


\begin{remark}
Fourier series coefficients of index~$T = \begin{psmatrix} 0 & 0 \\ 0 & 0 \end{psmatrix}$ violate multiplicity-one, and can occur in the Fourier series expansion of skew-harmonic Siegel modular forms. Observe that Theorem~3~(a) of~\cite{bringmann-raum-richter-2011} incorrectly states that the ``constant term'' of elements of~$\rmM^\sk_k$ is independent of~$Y$, which however does not impact any of the other results of~\cite{bringmann-raum-richter-2011}. Under our assumption that $k > 3$ the Fourier series coefficients of index~$T = \begin{psmatrix} 0 & 0 \\ 0 & 0 \end{psmatrix}$ of forms in~$\rmM^\sk_k$ satisfy multiplicity two due to their invariance under~$\rot(\GL{2}(\ZZ))$ (see Proposition~\ref{prop:multiplicity-one-for-skew-harmonic-fourier-coefficients-zero}).
\end{remark}

\paragraph{Non-zero negative semi-definite Fourier indices}

The case of negative semi-definite Fourier indices violates multiplicity-one, if the Fourier index~$T$ is degenerate. In that case, we can recover a multiplicity-two result. We give the details in the following Proposition.
\begin{proposition}
\label{prop:multiplicity-one-for-skew-harmonic-fourier-coefficients-negative}
Let $T \in \MatT{2}(\RR) \setminus \{0\}$ be negative semi-definite and degenerate. If the function $a^\sk_k(T;\,Y) e(T X)$ is weight~$k$ skew-harmonic and also of moderate growth, then $a^\sk_k(T;\,Y)$ is a linear combination of
\begin{align*}
&
  \det(Y)^{\frac{3}{2} - k}\,
  \trace(T Y)^{k - 2} e^{-\trace(T Y)}
\qquad\tx{and}
\\&
  \big( 4 \pi \trace(T Y) \big)^{-\frac{k}{2}}
  W_{\frac{k-1}{2}, \frac{k-1}{2}}\big( 4 \pi |\trace(T Y)| \big)
\tx{.}
\end{align*}

Let $T \in \MatT{2}(\RR)$ be negative definite. If the function $a^\sk_k(T;\,Y) e(T X)$ is weight~$k$ skew-harmonic and also of moderate growth, then $a^\sk_k(T;\,Y)$ is unique up to scalar multiples.
\end{proposition}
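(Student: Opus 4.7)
The plan is to reduce~$T$ to a canonical form using covariance of the skew-harmonic differential operators, derive the resulting system of partial differential equations for $a^\sk_k(T;\,Y)$, and identify its solution space with a Whittaker-type ODE.

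First, I would use that $\Omega^\sk_k$ and $\xi^\sk_k$ are covariant under the skew slash action, so that $F \mapsto F\,\big|^\sk_k\,\rot(U)$ sends skew-harmonic functions with Fourier index~$T$ to skew-harmonic ones with index~$\rT U T U$. Every degenerate, nonzero, negative semi-definite $T$ is $\GL{2}(\RR)$-equivalent to $-\mu\,\diag(1,0)$ for some $\mu > 0$, and every negative definite $T$ to $-\mu\, I_2$. So it suffices to prove the claim for these two normal forms.

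Second, writing $F(Z) = a(Y)\, e(TX)$ and substituting into $\Omega^\sk_k F = 0$ and $\xi^\sk_k F = 0$, I would obtain three scalar PDEs (from the symmetric matrix~$\Omega^\sk_k$) and one further PDE (from~$\xi^\sk_k$) for $a(Y)$ as a function of the three real variables $y,\, y',\, v$. In the degenerate case, $\det T = 0$ substantially simplifies the $\xi^\sk_k$-equation. Guided by the shape of the two stated solutions, I then make the separation ansatz $a(Y) = \det(Y)^\alpha\, h(\sigma)$ with $\sigma := \trace(T Y)$. Matching powers of $\det(Y)$ in the PDE system forces $\alpha \in \{0,\, \tfrac{3}{2} - k\}$, and the remaining equation for~$h$ becomes a second-order confluent hypergeometric ODE whose two independent solutions are exactly $\sigma^{k-2} e^{-\sigma}$ (for $\alpha = \tfrac{3}{2}-k$) and $(4\pi\sigma)^{-k/2}\, W_{(k-1)/2,(k-1)/2}(4\pi|\sigma|)$ (for $\alpha = 0$). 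Direct substitution verifies that both extend to solutions of the full matrix-valued system.

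The main obstacle is to argue that every moderately growing solution~$a(Y)$ is necessarily of the separated form $\det(Y)^\alpha\, h(\sigma)$, rather than carrying additional $v$-dependence beyond what is already encoded through $\det Y = y y' - v^2$. I would address this by exploiting the off-diagonal entries of $\Omega^\sk_k F = 0$, whose mixed-derivative content combined with moderate growth as $v \to \pm\infty$ (with $y, y'$ fixed) rules out any oscillatory or polynomially diverging transverse modes. For the negative definite case, the same reduction applies, but now $\det T \ne 0$ contributes an additional constant to the $h$-ODE that eliminates the $\sigma^{k-2} e^{-\sigma}$ branch; moderate growth in the resulting classical Whittaker equation then selects a unique solution up to scalar multiples.
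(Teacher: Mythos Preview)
Your overall strategy is reasonable, but there is a concrete gap at precisely the point you flag as the ``main obstacle.'' You propose to rule out non-separated solutions by invoking moderate growth ``as $v \to \pm\infty$ with $y, y'$ fixed.'' But this limit does not exist in the domain: $Y > 0$ forces $\det Y = y y' - v^2 > 0$, so $|v| < \sqrt{y y'}$ is bounded once $y, y'$ are fixed. There is no transverse infinity to exploit, and hence no growth condition in that direction to impose. The paper avoids this difficulty entirely by citing Maa\ss's 1953 analysis (his equations~(33)--(37)), which already establishes that every solution decomposes as $\det(Y)^{\frac{3}{2}-k}\phi(\texttt{u}) + \psi(\texttt{u})$ with $\texttt{u} = \trace(-TY)$ and $\phi,\psi$ satisfying explicit second-order ODEs. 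This structural reduction is the nontrivial input, and your sketch does not supply a valid replacement for it.

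For the negative definite case your ansatz is also too narrow. Maa\ss's decomposition for nondegenerate~$T$ is not simply $\det(Y)^\alpha h(\trace TY)$; it couples two functions via $\psi'(\texttt{u}) = \phi(\texttt{u})/\texttt{u}$ (equations~(35)--(37) of~\cite{maass-1953}). After ruling out the exponentially growing solution for~$\phi$, one still has two candidates for~$\psi$: the constant~$1$ and $\Gamma(k,\texttt{u})$. Eliminating the constant is not automatic from a Whittaker-type ODE; the paper appeals to separate estimates (Lemmas~3.2--3.3 of~\cite{raum-2012d}) to show that inserting $\psi = 1$ into Maa\ss's formula~(35) produces a function violating moderate growth. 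Your claim that ``moderate growth in the resulting classical Whittaker equation then selects a unique solution'' skips this step.
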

\begin{remark}
If $T<0$ and $a^\sk_k(T;\,Y) e(T X)$ is weight~$k$ skew-harmonic and also of moderate growth, then one can show that~$a^\sk_k(T;\,Y) e(T X)$ is almost anti-holomorphic.
\end{remark}

\begin{proof}
For the semi-definite and degenerate case, we use~(33) and~(34) of~\cite{maass-1953}. Using Maass's notation, weight~$k$ skew-harmonic functions $a^\sk_k(\frac{1}{2\pi} T;\,Y)\, e(\frac{1}{2\pi}T X)$ can be obtained from
\begin{gather*}
  a^\sk_k(T;\,Y)
\;=\;
  \det(Y)^{\frac{3}{2} - \alpha - \beta} \phi(\text{\texttt u})
  \,+\,
  \psi(\text{\texttt u})
\tx{,}
\end{gather*}
where $\tx{\texttt u} = \trace(-T Y)$, $\alpha = k - \frac{1}{2}$, $\beta = \frac{1}{2}$, and $\phi$ and $\psi$ satisfy the following differential equations:
\begin{gather*}
\begin{alignedat}{2}
  \text{\texttt u}\phi''
&
  +
  (3-\alpha-\beta) \phi'
  +
  (\alpha-\beta-\text{\texttt u}) \phi
&&{} = 0
\tx{,}
\\
  \text{\texttt u} \psi''
&
  +
  (\alpha+\beta) \psi'
  +
  (\alpha-\beta-\text{\texttt u})\psi
&&{} = 0
\tx{.}
\end{alignedat}
\end{gather*}
Exactly as in Theorem~3 of~\cite{bringmann-raum-richter-2011} (see also Remark~(b) to Theorem~3), we find that the space of solutions of moderate growth to each is one-dimensional, and spanned by
\begin{gather*}
  \text{\texttt u}^{k - 2} e^{-\text{\texttt u}}
\quad\tx{and}\quad
  ( 2 \text{\texttt u} )^{-\frac{k}{2}}
  W_{\frac{k-1}{2}, \frac{k-1}{2}}\big( 2 \text{\texttt u} \big)
\tx{,}
\end{gather*}
respectively. When replacing $T$ by $2 \pi T$, this settles the case of semi-definite~$T$.

Now consider the case that $T<0$. Starting with~(35), (36), and~(37) of~\cite{maass-1953}, we have to determine the solutions to
\begin{gather*}
  \phi''(\text{\texttt u})
=
  \Big( 1 + \frac{2(1-k)}{\text{\texttt u}} + \frac{(k-1)(k-2)}{\text{\texttt u}^2} \Big) \phi(\tx{\texttt u})
\tx{.}
\end{gather*}
Using, for example, Sage~\cite{sage-8.3}, one verifies that they are linear combinations of
\begin{gather*}
  \text{\texttt u}^{k - 1} e^{-\text{\texttt u}}
\quad\tx{and}\quad
  \text{\texttt u}^{k - 1} e^{-\text{\texttt u}}\, \gamma\big( 3-2k, -2\text{\texttt u} \big)
\tx{,}
\end{gather*}
where $\gamma(s,u)$ is the lower incomplete gamma function. Observe that $\tx{\texttt u} = \trace(-T Y) > 0$. Therefore, by the relation of $\gamma(s,u)$ and the $M$-Whittaker function in~8.5.1 of~\cite{nist-dlmf} and the asymptotic behavior of the $M$-Whittaker function in~13.14.20 of op.\@ cit.\@, the second solution grows exponentially. Hence it can be ruled out. From the first solution, we find that the solutions to
\begin{gather*}
  \psi'(\tx{\texttt u})
=
  \frac{1}{\tx{\texttt u}} \phi(\tx{\texttt u})
\end{gather*}
are linear combinations of
\begin{gather*}
  1
\quad\tx{and}\quad
   \int_{\tx{\texttt u}}^\infty
   t^{k - 1} e^{-t}\; d\!t
=
  \Gamma(k, \tx{\texttt u})
\tx{.}
\end{gather*}
The first one, as in the case of~$T > 0$ treated in~\cite{bringmann-raum-richter-2011}, yields a function~$a^\sk_k(T;\,Y)$ that violates the moderate growth condition (see Lemma~3.2 and~3.3 in~\cite{raum-2012d}), after inserting it into~(35) of~\cite{maass-1953}. This leaves us with an at most one-dimensional space of solutions.
\end{proof}
\begin{remark}
In the case of negative definite~$T$, we do not prove here that $a^\sk_k(T Y)$ is possibly nonzero. This will only follow from our discussion of the Kohnen limit process for index~$m < 0$ in Section~\ref{ssec:kohnen-limit-negative-m}.
\end{remark}

\paragraph{Zero Fourier indices}

The Fourier series coefficients of index~$T = \begin{psmatrix} 0 & 0 \\ 0 & 0 \end{psmatrix}$ are the most subtle, as they lead to infinite multiplicities. In the context of the present paper, we simplify the exposition by restricting to Fourier coefficients of skew-harmonic Maa\ss-Siegel forms, which allow for a further Fourier expansion in terms of a new set of coordinates. Specifically, we introduce the coordinate $\taub = \xb + i \yb \in \HS$ parameterizing~$Y \slash \sqrt{\det(Y)}$ as in~(30) of~\cite{maass-1953}. In contrast to~\cite{maass-1953}, we swap the top left and bottom-right components for better compatibility with the action of $\GL{2}(\ZZ)$ via~$\rot$ in Proposition~\ref{prop:rot-slash-action-on-zeroth-fourier-coefficient}:
\begin{gather}
\label{eq:Y-coordinates:deduction}
  Y
\;=:\;
  \sqrt{\det(Y)}\,
  \begin{pmatrix} 1 \slash \sqrt{\yb} & 0 \\ \xb \slash \sqrt{\yb} & \sqrt{\yb} \end{pmatrix}
  \begin{pmatrix} 1 \slash \sqrt{\yb} & \xb \slash \sqrt{\yb} \\ 0 & \sqrt{\yb} \end{pmatrix}
\tx{.}
\end{gather}
For later use, we record the associated coordinate transforms:
\begin{gather}
\label{eq:def:Y-coordinates}
\begin{alignedat}{3}
  y &= \sqrt{\det(Y)} \big\slash \yb
\tx{,}\quad&
  y' &= \sqrt{\det(Y)} (\xb^2 + \yb^2) \big\slash \yb
\tx{,}\quad&
  v &= \sqrt{\det(Y)} \xb \big\slash \yb
\tx{;}
\\
  \det(Y) &= y y' - v^2
\tx{,}\quad&
  \yb &= \sqrt{y y' - v^2} \big\slash y
\tx{,}\quad&
  \xb &= v \slash y
\tx{.}
\end{alignedat}
\end{gather}

We next state how the action of $\rot(\GL{2}(\RR))$ on functions of~$Y$ can be expressed in terms of M\"obius transformations on~$\taub$. We set
\begin{gather}
  \Im\big( \HS^{(2)} \big)
:=
  \big\{ Y \in \MatT{2}(\RR) \,:\, Y > 0 \big\}
\tx{,}
\end{gather}
and consider a function $\breve{F} :\, \Im(\HS^{(2)}) \ra \CC$ as a function~$F$ on $\HS^{(2)}$ via $F(X + iY) = \breve{F}(Y)$. In particular, the skew slash action of $\rot(\GL{2}(\ZZ))$ extends to functions on~$\Im(\HS^{(2)})$.

\begin{proposition}
\label{prop:rot-slash-action-on-zeroth-fourier-coefficient}
Let $\breve{F} :\, \Im(\HS^{(2)}) \ra \CC$ and $U \in \SL{2}(\RR)$. Then
\begin{gather}
  \big( \breve{F} \big|^\sk_k\, \rot(U) \big)(\det(Y), \taub)
=
  \det(U)^{k-1}\,
  \breve{F}\big( \det(Y), \breve{U} \taub \big)
\quad\tx{with}\quad
  \breve{U}
=
  \begin{psmatrix} 0 & 1 \\ 1 & 0 \end{psmatrix} U \begin{psmatrix} 0 & 1 \\ 1 & 0 \end{psmatrix}
\tx{,}
\end{gather}
where $\breve{U} \in \SL{2}(\RR)$ acts on $\taub$ by M\"obius transformations. Furthermore, we have
\begin{gather}
  \big(
  \breve{F} \big|^\sk_k\,
  \rot\big( \begin{psmatrix} -1 & 0 \\ 0 & 1 \end{psmatrix} \big)
  \big)(\det(Y), \taub)
=
  (-1)^{k-1}\,
  \breve{F}\big( \det(Y), -\ov{\taub} \big)
\tx{.}
\end{gather}
\end{proposition}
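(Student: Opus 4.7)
The proof naturally splits into computing the skew slash factor and describing the induced action on the coordinate~$\taub$. Applying~\eqref{eq:def:siegel-skew-slash-action} with $\rot(U) = \begin{psmatrix} U & 0 \\ 0 & \rT U^{-1} \end{psmatrix}$ yields $C = 0$ and $D = \rT U^{-1}$, so $\det(CZ + D) = \det(U)^{-1}$ is real. The skew slash factor therefore simplifies to $|\det(U)|\, \det(U)^{k-1}$, which equals $\det(U)^{k-1} = 1$ when $U \in \SL{2}(\RR)$ and $(-1)^{k-1}$ when $U = \begin{psmatrix} -1 & 0 \\ 0 & 1 \end{psmatrix}$. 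In both cases the symplectic action sends $Z \mapsto UZ\,\rT U$, hence $Y \mapsto UY\,\rT U$ on imaginary parts; since $\det(U) = \pm 1$ this preserves $\det(Y)$, so only the coordinate~$\taub$ is affected.

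For the first formula, the plan is to use~\eqref{eq:Y-coordinates:deduction} to identify $Y\slash\sqrt{\det(Y)}$ with the positive definite symmetric matrix $M(\taub) := L(\taub)\,\rT L(\taub)$ of determinant~$1$, where $L(\taub)$ is the lower triangular factor in~\eqref{eq:Y-coordinates:deduction}. The central claim to be established is
\begin{gather*}
  U\, M(\taub)\, \rT U \,=\, M(\breve{U}\taub)
  \qquad\tx{for every}\qquad
  U \in \SL{2}(\RR),\; \taub \in \HS.
\end{gather*}
Both sides define $\SL{2}(\RR)$-actions on the space of positive definite symmetric $2\times 2$ matrices of determinant~$1$, which is a homogeneous space with stabilizer $\SO{2}$ of the base point~$I_2$; it therefore suffices to verify the identity at $\taub = i$, where $M(i) = I_2$. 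Comparing the entries of $M(\gamma \cdot i)$ with those of $\gamma\,\rT\gamma$ for $\gamma = \begin{psmatrix} a & b \\ c & d \end{psmatrix}$ reveals that the two symmetric matrices differ precisely by swapping their diagonal entries, i.e., by conjugation with $J = \begin{psmatrix} 0 & 1 \\ 1 & 0 \end{psmatrix}$. This accounts for the twist $\breve{U} = JUJ$. Substituting $Y = \sqrt{\det(Y)}\, M(\taub)$ and combining with the factor $\det(U)^{k-1} = 1$ yields the first formula.

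For the second formula, a direct calculation suffices. With $U = \begin{psmatrix} -1 & 0 \\ 0 & 1 \end{psmatrix}$, one has $UY\,\rT U = \begin{psmatrix} y & -v \\ -v & y' \end{psmatrix}$: only $v$ changes sign, while $y$ and $y'$ are unchanged. By~\eqref{eq:def:Y-coordinates}, $\yb = \sqrt{yy'-v^2}\slash y$ is therefore unchanged and $\xb = v\slash y \mapsto -\xb$, so $\taub = \xb + i\yb \mapsto -\xb + i\yb = -\ov{\taub}$. Combined with the skew slash factor $(-1)^{k-1}$ established above, this proves the second formula. The principal obstacle is the first formula: one must pinpoint the outer twist $U \mapsto \breve{U} = JUJ$ relating the Möbius action of $\SL{2}(\RR)$ on $\HS$ to its conjugation action on positive definite symmetric matrices of determinant~$1$. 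Both realize the same homogeneous space $\SL{2}(\RR)\slash\SO{2}$, but the parametrization~\eqref{eq:Y-coordinates:deduction}, whose convention swaps the roles of top-left and bottom-right entries compared to~(30) of~\cite{maass-1953}, is what forces the appearance of~$\breve{U}$.
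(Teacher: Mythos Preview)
Your proof is correct and follows essentially the same approach as the paper: both reduce the first formula to the identification of $\HS$ with the space of positive definite symmetric matrices of determinant~$1$ via the factorization~\eqref{eq:Y-coordinates:deduction}, and both verify the equivariance by checking at the base point $\taub = i$ (equivalently, identifying the map $g \mapsto J g J\,J\rT g J$ as the relevant embedding). Your treatment of the second formula via the explicit coordinate transforms in~\eqref{eq:def:Y-coordinates} is a direct variant of the paper's computation with the Cholesky-type factorization. One small point of phrasing: when you say ``it therefore suffices to verify the identity at $\taub = i$'', you implicitly mean for all~$U$, which is indeed what you do; the transitivity argument then propagates this to all~$\taub$.
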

\begin{proof}
If $U \in \GL{2}(\ZZ)$, then
\begin{gather*}
  \big( \breve{F} \big|^\sk_k\, \rot(U) \big)(Y)
=
  \det(U)^{k-1}\, \breve{F}(U Y \rT U)
\tx{.}
\end{gather*}
Hence it suffices to express $U Y \rT U$ in terms of the coordinates in~\eqref{eq:def:Y-coordinates}.

Consider the case $U \in \SL{2}(\ZZ)$.  Recall the isomorphism of $\SL{2}(\RR)$-sets $\HS \cong \SL{2}(\RR) \slash \SO{2}(\RR)$ arising from the map $\SL{2}(\RR) \ra \HS,\, g \mto g i$. In particular, for any $g \in \SL{2}(\RR)$, we have $g i = \tau$ if and only if there exists a $h \in \SO{2}(\RR)$ such that $g h = \begin{psmatrix} \sqrt{y} & \xb \slash \sqrt{y} \\ 0 & 1 \slash \sqrt{y} \end{psmatrix}$. Composing this isomorphism with the map $\SL{2}(\RR) \mto \MatT{2}(\RR),\, g \mto s g s \, s \rT g s$, with $s:= \begin{psmatrix} 0 & 1 \\ 1 & 0 \end{psmatrix}$, we obtain an $\SL{2}(\RR)$-equivariant embedding
\begin{gather*}
  \taub
\lmto
  \begin{pmatrix} 1 \slash \sqrt{\yb} & 0 \\ \xb \slash \sqrt{\yb} & \sqrt{\yb} \end{pmatrix}
  \begin{pmatrix} 1 \slash \sqrt{\yb} & \xb \slash \sqrt{\yb} \\ 0 & \sqrt{\yb} \end{pmatrix}
=
  \begin{pmatrix}
  1 \slash \yb & \xb \slash \yb \\
  \xb \slash \yb & \xb \slash \yb + \yb
  \end{pmatrix}
\tx{,}
\end{gather*}
where $\SL{2}(\RR)$ acts on $\taub$ by M\"obius transformations and on the matrices on the right-hand side by $(g,m) \mto s g s\, m\, s \rT g s$. This establishes the first statement.

If $U = \begin{psmatrix} -1 & 0 \\ 0 & 1 \end{psmatrix}$, then
\begin{multline*}
  \begin{pmatrix} -1 & 0 \\ 0 & 1 \end{pmatrix}
  Y
  \begin{pmatrix} -1 & 0 \\ 0 & 1 \end{pmatrix}
=
  \sqrt{\det(Y)}\,
  \begin{pmatrix} -1 \slash \sqrt{\yb} & 0 \\ \xb \slash \sqrt{\yb} & \sqrt{\yb} \end{pmatrix}
  \begin{pmatrix} -1 \slash \sqrt{\yb} & \xb \slash \sqrt{\yb} \\ 0 & \sqrt{\yb} \end{pmatrix}
\\
=
  \sqrt{\det(Y)}\,
  \begin{pmatrix} 1 \slash \sqrt{\yb} & 0 \\ -\xb \slash \sqrt{\yb} & \sqrt{\yb} \end{pmatrix}
  \begin{pmatrix} -1 & 0 \\ 0 & 1 \end{pmatrix}\,
  \begin{pmatrix} -1 & 0 \\ 0 & 1 \end{pmatrix}
  \begin{pmatrix} 1 \slash \sqrt{\yb} & -\xb \slash \sqrt{\yb} \\ 0 & \sqrt{\yb} \end{pmatrix}
\tx{.}
\end{multline*}
This confirms the second statement, since $-\ov{\taub} = -\xb + i \yb$.
\end{proof}

Consider the Fourier series coefficient $c\big(F;\, \begin{psmatrix} 0 & 0 \\ 0 & 0 \end{psmatrix};\, Y\big)$ of a skew-harmonic Siegel modular form~$F$. Modular invariance of~$F$ implies that
\begin{gather*}
  c\big(F;\, \begin{psmatrix} 0 & 0 \\ 0 & 0 \end{psmatrix};\, Y\big)
\;=\;
  c\big(F;\, \begin{psmatrix} 0 & 0 \\ 0 & 0 \end{psmatrix};\, Y\big)
  \big|^\sk_k\, \rot(U)
\tx{.}
\end{gather*}
Express $Y$ in terms of $\det(Y)$ and $\taub$ as in~\eqref{eq:Y-coordinates:deduction}. Then Proposition~\ref{prop:rot-slash-action-on-zeroth-fourier-coefficient} (with $U = \begin{psmatrix} 1 & 0 \\ 1 & 1 \end{psmatrix}$) asserts that
\begin{multline*}
  c\big( F;\, \begin{psmatrix} 0 & 0 \\ 0 & 0 \end{psmatrix};\, \det(Y), \taub \big) 
=
  c\big( F;\, \begin{psmatrix} 0 & 0 \\ 0 & 0 \end{psmatrix};\, \det(Y), \taub \big) 
  \big|^\sk_k\, \rot\big( \begin{psmatrix} 1 & 0 \\ 1 & 1 \end{psmatrix} \big)
\\
=
  c\big( F;\,
  \begin{psmatrix} 0 & 0 \\ 0 & 0 \end{psmatrix};\,
  \det(Y), \begin{psmatrix} 1 & 1 \\ 0 & 1 \end{psmatrix} \taub
  \big) 
=
  c\big( F;\, \begin{psmatrix} 0 & 0 \\ 0 & 0 \end{psmatrix};\, \det(Y), \taub + 1 \big) 
\tx{,}
\end{multline*}
i.e., $c\big(F;\, \begin{psmatrix} 0 & 0 \\ 0 & 0 \end{psmatrix};\, Y\big)$ is periodic with respect to~$\xb$.

Multiplicity-one fails for~$T = \begin{psmatrix} 0 & 0 \\ 0 & 0 \end{psmatrix}$ (cf.\@ the initial remarks in Section~\ref{sec:kohnen-limit-zero-m}). The next proposition provides a replacement for it via a Fourier series expansion with respect to~$\xb$.
\begin{proposition}
\label{prop:multiplicity-one-for-skew-harmonic-fourier-coefficients-zero}
If the function $a^\sk_k(\begin{psmatrix} 0 & 0 \\ 0 & 0 \end{psmatrix};\,Y)$ is weight~$k$ skew-harmonic and also of moderate growth, then it can be written as a linear combination of
\begin{gather}
\label{eq:prop:multiplicity-one-for-skew-harmonic-fourier-coefficients-zero:det-factors}
  1
\quad\tx{and}\quad
  \det(Y)^{\frac{1-k}{2}}
  c\big( \begin{psmatrix} 0 & 0 \\ 0 & 0 \end{psmatrix}_{(1)}, \bullet;\, \taub \big)
\tx{,}
\end{gather}
where $c\big( \begin{psmatrix} 0 & 0 \\ 0 & 0 \end{psmatrix}_{(1)}, \bullet;\, \taub \big)$ has eigenvalue $(k-1)(2-k)$ under the elliptic weight\/~$0$ Laplace operator~$\Delta_0 = - y^2 (\partial_{\xb}^2 + \partial_{\yb}^2)$, i.e., $c\big( \begin{psmatrix} 0 & 0 \\ 0 & 0 \end{psmatrix}_{(1)}, \bullet;\, \taub \big)$ has spectral parameter $k-1$ or $2-k$.

Furthermore, assume that
\begin{gather}
\label{eq:prop:multiplicity-one-for-skew-harmonic-fourier-coefficients-zero:translation-invariance}
  c\big( \begin{psmatrix} 0 & 0 \\ 0 & 0 \end{psmatrix}_{(1)}, \bullet;\, \taub \big)
  \big|^\sk_k \rot\big( \begin{psmatrix} 1 & 0 \\ b & 1 \end{psmatrix} \big)
=
  c\big( \begin{psmatrix} 0 & 0 \\ 0 & 0 \end{psmatrix}_{(1)}, \bullet;\, \taub \big)
\end{gather}
for all $b \in \ZZ$. Then $c\big( \begin{psmatrix} 0 & 0 \\ 0 & 0 \end{psmatrix}_{(1)}, \bullet;\, \taub \big)$ has the following Fourier expansion:
\begin{multline}
\label{eq:prop:multiplicity-one-for-skew-harmonic-fourier-coefficients-zero:sub-fourier-expansion}
  c\big( \begin{psmatrix} 0 & 0 \\ 0 & 0 \end{psmatrix}_{(1)}, 0_{(0)} \big)\,
  \yb^{k-1}
  \,+\,
  c\big( \begin{psmatrix} 0 & 0 \\ 0 & 0 \end{psmatrix}_{(1)}, 0_{(1)} \big)\,
  \yb^{2-k}
\\
  +\,
  \sum_{\nb \in \ZZ \setminus \{0\}}
  c\big( \begin{psmatrix} 0 & 0 \\ 0 & 0 \end{psmatrix}_{(1)}, \nb \big)\,
  \big(4 \pi |\nb| \yb \big)^{\frac{1}{2}}
  K_{k-\frac{3}{2}}(2 \pi |\nb| \yb)
  e(\nb \xb)
\tx{.}
\end{multline}

Moreover, if
\begin{gather*}
  c\big( \begin{psmatrix} 0 & 0 \\ 0 & 0 \end{psmatrix}_{(1)}, \bullet;\, \taub \big)
  \big|^\sk_k \rot(U)
=
  c\big( \begin{psmatrix} 0 & 0 \\ 0 & 0 \end{psmatrix}_{(1)}, \bullet;\, \taub \big)
\end{gather*}
for all $U \in \GL{2}(\ZZ)$, then
\begin{gather}
\label{eq:prop:multiplicity-one-for-skew-harmonic-fourier-coefficients-zero:eisenstein}
  c\big( \begin{psmatrix} 0 & 0 \\ 0 & 0 \end{psmatrix}_{(1)}, \bullet;\, \taub \big)
=
  c\big( \begin{psmatrix} 0 & 0 \\ 0 & 0 \end{psmatrix}_{(1)}, \bullet \big)\,
  E_0(k-1, \taub)
=
  c\big( \begin{psmatrix} 0 & 0 \\ 0 & 0 \end{psmatrix}_{(1)}, \bullet \big)\,
  \sum_{\gamma \in \Gamma_\infty \backslash \SL{2}(\ZZ)} \Im(\gamma \taub)^{k-1}
\tx{,}
\end{gather}
where as before $\gamma \taub$ is the usual M\"obius transformation. In particular,
\begin{gather}
\label{eq:prop:multiplicity-one-for-skew-harmonic-fourier-coefficients-zero:eisenstein-coefficients}
\begin{alignedat}{2}
  c\big( \begin{psmatrix} 0 & 0 \\ 0 & 0 \end{psmatrix}_{(1)}, 0_{(0)} \big)
&=
  c\big( \begin{psmatrix} 0 & 0 \\ 0 & 0 \end{psmatrix}_{(1)}, \bullet \big)\,
\tx{;}
\\
  c\big( \begin{psmatrix} 0 & 0 \\ 0 & 0 \end{psmatrix}_{(1)}, 0_{(1)} \big)
&=
  c\big( \begin{psmatrix} 0 & 0 \\ 0 & 0 \end{psmatrix}_{(1)}, \bullet \big)\,
  \frac{\sqrt{\pi} \Gamma(k-\frac{3}{2}) \zeta(2k - 3)}
       {\Gamma(k-1) \zeta(2k-2)}
\tx{;}
\\
  c\big( \begin{psmatrix} 0 & 0 \\ 0 & 0 \end{psmatrix}_{(1)}, \nb \big)
&=
  c\big( \begin{psmatrix} 0 & 0 \\ 0 & 0 \end{psmatrix}_{(1)}, \bullet \big)\,
  \frac{\pi^{k-\frac{3}{2}}}{\Gamma(k-1) \zeta(2k-2)}\,
  |\nb|^{1-k} \sigma_{2k - 3}(|\nb|)
\tx{,}\quad
&&
  \tx{if $\nb \ne 0$.}
\end{alignedat}
\end{gather}
\end{proposition}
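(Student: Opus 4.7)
The plan is to begin by computing the restriction of the operators $\Omega^\sk_k$ and $\xi^\sk_k$ from~\eqref{eq:def:matrix-valued-laplace-operator} and~\eqref{eq:def:xi-operator} to $X$-independent functions, and then to re-express them in the coordinates $(\det Y, \taub)$ of~\eqref{eq:def:Y-coordinates}. With a separated ansatz $a^\sk_k(\begin{psmatrix} 0 & 0 \\ 0 & 0 \end{psmatrix};\,Y) = f(\det Y)\, g(\taub)$, I would reduce $\xi^\sk_k = 0$ to an Euler-type equation in $\det Y$ with indicial roots $\alpha \in \{0,\, \tfrac{1-k}{2}\}$, and then $\Omega^\sk_k = 0$ forces $g$ to be an eigenfunction of the weight-$0$ Laplace operator $\Delta_0$ with eigenvalue $(k-1)(2-k)$ in the branch $\alpha = \tfrac{1-k}{2}$; for $\alpha = 0$ the factor $g$ must be constant. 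This is the direct analog for the fully degenerate index $T = 0$ of Maass's analysis~\cite{maass-1953}, carried out for the partially degenerate case in Proposition~\ref{prop:multiplicity-one-for-skew-harmonic-fourier-coefficients-negative}. The two spectral parameters $k-1$ and $2-k$ are the roots $s$ of $s(1-s) = (k-1)(2-k)$.

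Next, to establish the Fourier expansion under~\eqref{eq:prop:multiplicity-one-for-skew-harmonic-fourier-coefficients-zero:translation-invariance}, I would apply Proposition~\ref{prop:rot-slash-action-on-zeroth-fourier-coefficient} with $U = \begin{psmatrix} 1 & 0 \\ b & 1 \end{psmatrix}$, for which $\breve{U} = \begin{psmatrix} 1 & b \\ 0 & 1 \end{psmatrix}$, so the hypothesis translates into $1$-periodicity of $c$ in $\xb$. Expanding $c = \sum_{\nb \in \ZZ} c_\nb(\yb)\, e(\nb \xb)$ and substituting into $-\yb^2 (\partial_\xb^2 + \partial_\yb^2) c = (k-1)(2-k)\, c$ yields, for each $\nb$, the ODE
\[
  \yb^2 g''(\yb) + \big( (k-1)(2-k) - 4\pi^2 \nb^2 \yb^2 \big) g(\yb) = 0 .
\]
For $\nb = 0$ this is Euler's equation with indicial roots $k-1$ and $2-k$. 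For $\nb \ne 0$ the substitution $g(\yb) = \yb^{1/2} h(2\pi |\nb|\yb)$ gives the modified Bessel equation of order $k - \tfrac{3}{2}$, whose unique moderate-growth solution up to scalar is $\yb^{1/2} K_{k - 3/2}(2 \pi |\nb| \yb)$. This produces the asserted normalization in~\eqref{eq:prop:multiplicity-one-for-skew-harmonic-fourier-coefficients-zero:sub-fourier-expansion}.

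For the last assertion, I would observe that full $\rot(\GL{2}(\ZZ))$-invariance, once passed through Proposition~\ref{prop:rot-slash-action-on-zeroth-fourier-coefficient}, makes $c(\taub)$ an $\SL{2}(\ZZ)$-invariant weight-$0$ Maass form of moderate growth with Laplace eigenvalue $(k-1)(2-k) < 0$. Since this lies strictly below the cuspidal threshold $\tfrac{1}{4}$, no cusp forms can contribute, so $c$ must lie in the Eisenstein span at $s = k-1 > 2$, which is a regular point. More concretely, subtracting a suitable scalar multiple of $E_0(k-1, \taub)$ kills the $\yb^{k-1}$-coefficient of the constant term in~\eqref{eq:prop:multiplicity-one-for-skew-harmonic-fourier-coefficients-zero:sub-fourier-expansion}, producing a rapidly decreasing (hence $L^2$) eigenfunction whose eigenvalue is below $\tfrac14$ and therefore must vanish. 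Thus $c$ is proportional to $E_0(k-1, \taub)$, and the Fourier coefficients in~\eqref{eq:prop:multiplicity-one-for-skew-harmonic-fourier-coefficients-zero:eisenstein-coefficients} follow by specializing Lemma~\ref{la:elliptic-eisenstein-series-fourier-expansion} at $\kappa = 0$, $s = k-1$, using the Legendre duplication formula for the constant term and the identity $W_{0, \mu}(z) = (z/\pi)^{1/2} K_\mu(z/2)$ for the oscillating terms.

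The main obstacle will be the first step: explicitly carrying out the separation of $\Omega^\sk_k$ and $\xi^\sk_k$ on $X$-independent functions in the coordinates $(\det Y, \taub)$, and verifying that both branches $\alpha \in \{0, \tfrac{1-k}{2}\}$ appear exactly in the coupling asserted. The degeneracy at $T = 0$ — where the $\alpha = 0$ branch decouples from the Laplace eigenvalue equation in $\taub$ — makes this a limit case of the semi-definite analysis in Propositions~\ref{prop:multiplicity-one-for-skew-harmonic-fourier-coefficients-nonzero-nonnegative} and~\ref{prop:multiplicity-one-for-skew-harmonic-fourier-coefficients-negative}, and care is needed to exclude spurious $\taub$-dependence on the constant branch. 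Once this separation is established, the remainder reduces to standard spectral theory on $\SL{2}(\ZZ) \backslash \HS$.
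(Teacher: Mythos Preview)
Your proposal is correct and follows essentially the same route as the paper's proof: the paper dispatches the first part by citing equations~(31) and~(32) of Maa\ss~\cite{maass-1953} (which already give the separation into $\det(Y)$-powers times Laplace eigenfunctions in~$\taub$) and then notes that the additional condition $\xi^\sk_k F = 0$ eliminates the $\det(Y)^{3/2-k}$ branch, and for the last part it cites Theorem~31 of~\cite{maass-1964} for the one-dimensionality of the relevant Maa\ss\ eigenspace. Your plan simply re-derives these cited results by hand --- the separation of variables in the $(\det Y,\taub)$ coordinates and the spectral argument that an $L^2$ eigenfunction with eigenvalue $(k-1)(2-k)<0$ must vanish --- so be aware that Maa\ss's analysis of $\Omega^\sk_k$ alone already produces three branches and it is $\xi^\sk_k$ that discards one, rather than $\xi^\sk_k=0$ directly yielding exactly the two indicial roots $\{0,\tfrac{1-k}{2}\}$.
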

\begin{proof}
Note that \eqref{eq:prop:multiplicity-one-for-skew-harmonic-fourier-coefficients-zero:det-factors} follows directly from~(31) and~(32) of~\cite{maass-1953}, since $\det(Y)^{\frac{3}{2}-k}$ does not vanish under $\xi^\sk_k$. 
Using Proposition~\ref{prop:rot-slash-action-on-zeroth-fourier-coefficient}, we see that~\eqref{eq:prop:multiplicity-one-for-skew-harmonic-fourier-coefficients-zero:translation-invariance} implies translation invariance with respect to~$\xb$. Together with Equation~\eqref{eq:prop:multiplicity-one-for-skew-harmonic-fourier-coefficients-zero:det-factors} this yields~\eqref{eq:prop:multiplicity-one-for-skew-harmonic-fourier-coefficients-zero:sub-fourier-expansion}. Finally, to establish~\eqref{eq:prop:multiplicity-one-for-skew-harmonic-fourier-coefficients-zero:eisenstein}, we can use the fact that $c\big( \begin{psmatrix} 0 & 0 \\ 0 & 0 \end{psmatrix}_{(1)}, \bullet;\, \taub \big)$ is an elliptic Maa\ss\ form with Laplace eigenvalue~$(k-1)(2-k)$.
The space of such Maa\ss\ forms is one-dimensional by Theorem~31 of~\cite{maass-1964} and spanned by $E_0(k-1,\taub)$. The Fourier series expansion of~$E_0(k-1,\taub)$ can be recovered from Lemma~\ref{la:elliptic-eisenstein-series-fourier-expansion} by using the relation $K_\nu(z) = \sqrt{\pi \slash 2 z} W_{0,\nu}(2z)$.
\end{proof}

\paragraph{Normalization of and notation for Fourier series coefficients}

For the remainder of this paper, if~$T \ne 0$ is non-degenerate or positive semi-definite, we set
\begin{gather}
\label{eq:def:fourier-coefficients-siegel-modular-forms-type-skew}
  a^\sk_k(T Y)
:=
  a^\sk_k(T;\,Y)
\tx{,}
\end{gather}
suppressing the dependence on the signature of~$T$ from our notation. For convenience, we will also use the notation
\begin{gather}
  a^\sk_k(T Z)
:=
  a^\sk_k(T Y) e(T X)
\tx{.}
\end{gather}

For positive semi-definite~$T$, we choose the normalization
\begin{gather}
\label{eq:def:fourier-coefficients-skew-positive-semi-definite}
  a^\sk_k(T Y)
:=
  \big( 4 \pi \trace(TY) \big)^{-\frac{k}{2}}
  W_{\frac{1-k}{2}, \frac{k-1}{2}}\big( 4 \pi \trace(TY) \big)
\tx{.}
\end{gather}

If $T$ is negative semi-definite and degenerate, then we have to distinguish two Fourier coefficients. We follow a convention from Eisenstein series and amend the index~$T$ with subscripts~$(0)$ and~$(1)$:
\begin{gather}
\label{eq:def:fourier-coefficients-skew-negative-semi-definite}
\begin{alignedat}{2}
  a^\sk_k(T_{(0)} Y)
\;&:=\;
  a^\sk_k(T_{(0)}; Y)
\;&&:=\;
  \big( 4 \pi |\trace(T Y)| \big)^{-\frac{k}{2}}
  W_{\frac{k-1}{2}, \frac{k-1}{2}}\big( 4 \pi |\trace(T Y)| \big)
\quad\tx{and}
\\
  a^\sk_k(T_{(1)} Y)
\;&:=\;
  a^\sk_k(T_{(1)}; Y)
\;&&:=\;
  \det(Y)^{\frac{3}{2} - k}\,
  |4 \pi \trace(T Y)|^{k - 2} e^{-2 \pi |\trace(T Y)|}
\tx{.}
\end{alignedat}
\end{gather}
Also in this case, we write
\begin{gather}
  a^\sk_k(T_{(0)} Z)
:=
  a^\sk_k(T_{(0)} Y) e(T X)
\quad\tx{and}\quad
  a^\sk_k(T_{(1)} Z)
:=
  a^\sk_k(T_{(1)} Y) e(T X)
\tx{.}
\end{gather}

For~$T = \begin{psmatrix} 0 & 0 \\ 0 & 0 \end{psmatrix}$, we agree on the following notation for the functions in Proposition~\ref{prop:multiplicity-one-for-skew-harmonic-fourier-coefficients-zero}:
\begin{gather}
\label{eq:def:fourier-coefficients-skew-zero}
\begin{aligned}
  a^\sk_k\big( \begin{psmatrix} 0 & 0 \\ 0 & 0 \end{psmatrix}_{(0)};\, Y \big)
\;&:=\;
  1
\tx{,}
\\
  a^\sk_k\big( \begin{psmatrix} 0 & 0 \\ 0 & 0 \end{psmatrix}_{(1)}, 0_{(0)};\, Y \big)
\;&:=\;
  \det(Y)^{\frac{1-k}{2}} \yb^{k-1}
\;=\;
  y^{1-k}
\tx{,}
\\
  a^\sk_k\big( \begin{psmatrix} 0 & 0 \\ 0 & 0 \end{psmatrix}_{(1)}, 0_{(1)};\, Y \big)
\;&:=\;
  \det(Y)^{\frac{1-k}{2}} \yb^{2-k}
\;=\;
  \det(Y)^{\frac{3}{2}-k} y^{k-2}
\tx{,}
\\
  a^\sk_k\big( \begin{psmatrix} 0 & 0 \\ 0 & 0 \end{psmatrix}_{(1)}, \nb;\, Y \big)
\;&:=\;
  \det(Y)^{\frac{1-k}{2}}\,
  (4 \pi |\nb| \yb)^{\frac{1}{2}} K_{k-\frac{3}{2}}(2 \pi |\nb| \yb) e(\nb \xb)
\quad
  \tx{($\nb \ne 0$).}
\end{aligned}
\end{gather}
To emphasize the role of the last three coefficients as Fourier coefficients of a modular form that depend only on~$Y$, we set
\begin{gather}
\label{eq:def:fourier-coefficients-skew-zero:sub-coefficients}
\begin{aligned}
  a^\sk_k\big( \begin{psmatrix} 0 & 0 \\ 0 & 0 \end{psmatrix}_{(1)}, 0_{(0)};\, \yb \big)
\;&:=\;
  \yb^{k-1}
\tx{,}
\\
  a^\sk_k\big( \begin{psmatrix} 0 & 0 \\ 0 & 0 \end{psmatrix}_{(1)}, 0_{(1)};\, \yb \big)
\;&:=\;
  \yb^{2-k}
\tx{,}
\\
  a^\sk_k\big( \begin{psmatrix} 0 & 0 \\ 0 & 0 \end{psmatrix}_{(1)}, \nb;\, \yb \big)
\;&:=\;
  (4 \pi |\nb| \yb)^{\frac{1}{2}} K_{k-\frac{3}{2}}(2 \pi |\nb| \yb)
\quad
  \tx{($\nb \ne 0$).}
\end{aligned}
\end{gather}
Note that the last coefficient does not feature~$e(\nb \xb)$, while the corresponding one in~\eqref{eq:def:fourier-coefficients-skew-zero} does.

We do not give an explicit formula for $a^\sk_k(T Y)$ if $T= \begin{psmatrix} n & r \slash 2 \\ r \slash 2 & m \end{psmatrix}$ is indefinite or negative definite. If~$T$ is indefinite and $m > 0$, we normalize it via the Kohnen limit process~$\Klim^\sk_{k,m}$ defined in Section~\ref{sec:kohnen-limit-nonzero-m}:
\begin{gather}
\label{eq:def:fourier-coefficients-skew-indefinite-positive-m}
  \Klim^\sk_{k,m}\Big(
  a^\sk_k\big( \begin{psmatrix} n & r \slash 2 \\ r \slash 2 & m \end{psmatrix} Y \big)
  \Big)
\;=\;
  a^\Jsk_m\big( n, r; y \big)
  e(r i v)
\tx{,}
\end{gather}
where $a^\Jsk_m(n, r; y)$ will be defined in~\eqref{eq:skew-jacobi-fourier-coefficient}. Since $a^\sk_k(T Y)$ depends only on the product of~$T$ and~$Y$, since the Kohnen limit process is injective for positive~$m$ (see Proposition~\ref{prop:kohnen-limit-positive-m-injective}), and since the Kohnen limit process is covariant with respect to the action of~$\diag(1,a,1,1 \slash a)$ (see Proposition~\ref{prop:kohnen-limit-positive-m-covariance}), this fixes a unique normalization for all indefinite~$T$. Similarly, for negative definite~$T$ (and consequentially~$m < 0$), we choose a normalization by
\begin{gather}
\label{eq:def:fourier-coefficients-skew-negative-definite}
  \Klim^\sk_{k,m}\Big(
  a^\sk_k\big( \begin{psmatrix} n & r \slash 2 \\ r \slash 2 & m \end{psmatrix} Y \big)
  \Big)
\;=\;
  a^{\cJ\sk}_{2-k,|m|}(-n,-r; y)\, e(r i v)
\tx{,}
\end{gather}
where $a^{\cJ\sk}_{2-k,|m|}(-n,-r; y)$ will be defined in~\eqref{eq:def:fourier-coefficients-almost-skew-Jacobi-nonzero}.

\section{Jacobi forms}
\label{sec:jacobi-forms}

\subsection{Preliminaries}

Eichler and Zagier's  book~\cite{eichler-zagier-1985} is the standard reference to the classical theory of Jacobi forms. Following our global convention, we assume throughout that $k > 3$ is an odd integer, although several definitions and statements in this section extend to $k \in \ZZ$.

\paragraph{The Jacobi group}

Let $\GJ := \SL{2}(\RR)  \ltimes (\RR^2 \ltimes \RR)$ be the real Jacobi group (see~\cite{eichler-zagier-1985} or \cite{berndt-schmidt-1998}, for details), and let $g^\rmJ=\big(\begin{psmatrix} a & b \\ c & d \end{psmatrix},\, \lambda, \mu, \kappa \big)\in\GJ$ be a typical element. The extended Jacobi group as a set is
\begin{gather*}
\SL{2}(\ZZ) \ltimes ( \ZZ^2 \ltimes \ZZ )
\subset
\GJ
\tx{.}
\end{gather*}
The usual Jacobi group $\GaJ$ is the quotient of the extended Jacobi group by its center:
\begin{gather*}
\GaJ
=
\SL{2}(\ZZ) \ltimes \ZZ^2
\tx{.}
\end{gather*}
We write $\ga^\rmJ=\big(\begin{psmatrix} a & b \\ c & d \end{psmatrix},\, \lambda, \mu\big)\in\GaJ$ for a typical element. Let
\begin{gather*}
  \GaJ_\infty
:=
  \big\{
  \big( \begin{psmatrix} a & b \\ 0 & d \end{psmatrix}, 0, \mu \big) \in \GaJ
  \big\}
\end{gather*}
be the Jacobi parabolic subgroup of $\GaJ$.

Recall that the real Jacobi group can be embedded into~$\Sp{2}(\RR)$ (see page~74 of~\cite{eichler-zagier-1985}) by
\begin{gather}
\label{eq:embedding-of-jacobi-group}
  \up :\,
  \GJ \ra \Sp{2}(\RR)
,\;
  \big(
  \begin{psmatrix} a & b \\ c & d \end{psmatrix},\,
  \lambda, \mu, \kappa
  \big)
\lmto
  \begin{psmatrix}
  a & 0 & b & \mu \\
  a \lambda + c \mu & 1 & b \lambda + d \mu & \kappa \\
  c & 0 & d & - \lambda \\
  0 & 0 & 0 & 1
  \end{psmatrix}
\tx{,}
\end{gather}
extending the embedding $\up :\, \SL{2}(\RR) \ra \Sp{2}(\RR)$ in~\eqref{eq:def:rot-up-down-embedding}.

The real Jacobi group acts on the Jacobi upper half space
\begin{gather*}
\HS^\rmJ
:=
\HS \times \CC
=
\big\{
(\tau, z) \in \CC^2 \,:\,\
\Im(\tau) > 0
\big\}
\end{gather*}
by
\begin{gather*}
\big(
\begin{psmatrix} a & b \\ c & d \end{psmatrix},\,
\lambda, \mu, \kappa
\big) \cdot (\tau, z)
:=
\big( \frac{a \tau + b}{c \tau + d},\, \frac{z + \lambda \tau + \mu}{c \tau + d} \big)
\tx{.}
\end{gather*}
This action factors through~$\GaJ$. 

\paragraph{Classical slash actions}

If $m \in \CC$, then
\begin{gather*}
\alpha^\rmJ_m\big( g^\rmJ, (\tau, z) \big)
:=
  e\Big(m \Big(
  \frac{-c(z + \lambda \tau + \mu)^2}{c\tau + d}
  + 2 \lambda z + \lambda^2 \tau
  + \lambda \mu + \kappa
  \Big)\Big)
\end{gather*}
is a $1$-cocyle with values in $\rmC^\infty( \HS^\rmJ )$. If $m \in \ZZ$, then it factors through $\GaJ$.

The skew-Jacobi slash action of the real Jacobi group is defined for $\phi :\, \HS^\rmJ \ra \CC$ and $m \in \CC$:
\begin{gather*}
\big( \phi \big|^\Jsk_{k, m}\, g^\rmJ \big)(\tau, z)
:=
|c \tau + d|^{-1}
\ov{(c \tau + d)}^{1-k}
\alpha^\rmJ_m\big(g^\rmJ, (\tau, z) \big) \,
\phi( g^\rmJ (\tau, z) )
\tx{.}
\end{gather*}
If $m\in\ZZ$, then this action factors through~$\GaJ$. 

\paragraph{Extension of the slash action}

Observe~\eqref{eq:embedding-of-jacobi-group} and consider the real Jacobi group as a subgroup of~$\Sp{2}(\RR)$. Write $\GL{2}^\downarrow(\RR) \subset \GL{2}(\RR)$ for the lower triangular matrices of positive determinant. These matrices can be embedded into~$\Sp{2}(\RR)$ via the map~$\rot$ in~\eqref{eq:def:rot-up-down-embedding}. Explicitly, viewed as a subgroup of $\Sp{2}(\RR)$, the group $\GJ \rot\big(\GL{2}^\downarrow(\RR) \big) \subset \Sp{2}(\RR)$ consists of matrices of the form
\begin{gather*}
\begin{psmatrix}
* & 0 & * & * \\
* & * & * & * \\
* & 0 & * & * \\
0 & 0 & 0 & *
\end{psmatrix}
\tx{.}
\end{gather*}

We wish to assign a meaning to
\begin{gather*}
\phi \big|^\Jsk_{k,m} g
\quad
\tx{for any}\;
g \in \GJ\rot\big(\GL{2}^\downarrow(\RR)\big) \subset \Sp{2}(\RR)
\tx{.}
\end{gather*}
Note that representatives for the quotient $\GJ \backslash \GJ \rot(\GL{2}^\downarrow(\RR))$ are given by diagonal matrices $g = \diag(1,a,\,1,d)$ with $a,d \in \RR_{> 0}$ and $ad = 1$. For any such $g$, $\phi :\, \HJ \ra \CC$, and $m \in \CC$ set
\begin{gather}
\label{eq:rot-quotient-action}
  \big( \phi \big|^\Jsk_{k,m} g \big) (\tau, z)
:=
  a^k \phi(\tau, a z)
\tx{,}
\end{gather}
which is not an action in the strict sense. Specifically, we have the covariance
\begin{gather*}
\big( \phi \big|^\Jsk_{k,m} g^\rmJ \big) \big|^\Jsk_{k,m} g
\;=\;
\big( \phi \big|^\Jsk_{k,m} g \big)
\big|^\Jsk_{k,a^2 m} (g^{-1} g^\rmJ\, g)
\quad
\tx{for all}\;
g^\rmJ \in \GJ
\tx{.}
\end{gather*}

If $g' \in \GJ \rot(\GL{2}^\downarrow(\RR))$, then decompose $g'= g^\rmJ g$ with $g^\rmJ \in \GJ$ and $g = \diag(1,a,\,1,d)$ as before, and define
\begin{gather}
\label{eq:slash-extension}
\phi \big|^\Jsk_{k,m} g'
:=
\big( \phi \big|^\Jsk_{k,m} g^\rmJ \big) \big|^\Jsk_{k,m} g
\tx{.}
\end{gather}

\paragraph{Differential operators}

Recall that $k > 3$ is throughout an odd integer. In this section, we consider arbitrary integral weights, which we denote by~$\kappa$. The heat operator of index~$m \in \CC^\times$ is defined by
\begin{gather*}
  \bbL_m
:=
  8 \pi i m \partial_\tau
  -
  \partial_z^2
\tx{.}
\end{gather*}

The skew-Jacobi raising operator
\begin{gather}
\label{eq:def:jacobi-raising-operator}
  \rmR^{\rmJ\sk}_\kappa
\;:=\;
  \rmR^{\rmJ\sk}
\;:=\;
  \partial_{\ov{\tau}}
  + v y^{-1} \partial_{\ov{z}}
  + \tfrac{i}{2}(\kappa-\tfrac{1}{2}) y^{-1}
\tx{,}
\end{gather}
denoted by~$X^{\mathrm{sk};\kappa,m}_-$ in~\cite{bringmann-raum-richter-2015}, is covariant with respect to the skew-Jacobi slash action. More precisely, if $\phi \in \rmC^\infty( \HS^\rmJ )$, then
\begin{gather*}
  \rmR^{\rmJ\sk}_\kappa \big( \phi \big|^\Jsk_{\kappa,m}\, g^\rmJ \big)
=
  \big( \rmR^{\rmJ\sk}_\kappa \phi \big) \big|^\Jsk_{\kappa+2,m}\, g^\rmJ
\quad
\tx{for all $g^\rmJ \in \GJ$.}
\end{gather*}
Similarly, the skew-Jacobi lowering operator
\begin{gather}
\label{eq:def:jacobi-lowering-operator}
  \rmL^{\rmJ\sk}_\kappa
\;:=\;
  \rmL^{\rmJ\sk}
\;:=\;
  y^2 \partial_\tau + y v \partial_z + 2 i m v^2 - \tfrac{i}{4} y
\tx{,}
\end{gather}
denoted by~$X^{\mathrm{sk};\kappa,m}_+$ in~\cite{bringmann-raum-richter-2015}, satisfies
\begin{gather*}
  \rmL^{\rmJ\sk} \big( \phi \big|^\Jsk_{\kappa,m}\, g^\rmJ \big)
=
  \big( \rmL^{\rmJ\sk} \phi \big) \big|^\Jsk_{\kappa-2,m}\, g^\rmJ
\quad
\tx{for all $g^\rmJ \in \GJ$.}
\end{gather*}
As differential operators, we have the following commutation law:
\begin{gather}
\label{eq:jacobi-differential-operator-commutator}
  \big[ \rmL^{\rmJ\sk},\, \rmR^{\rmJ\sk} \big]
=
  \rmL^\Jsk_{\kappa+2} \rmR^\Jsk_\kappa
  \,-\,
  \rmR^\Jsk_{\kappa-2} \rmL^\Jsk_\kappa
=
  \tfrac{1}{4} \big( \tfrac{3}{2} - \kappa \big)
\tx{.}
\end{gather}

The definition of harmonic Maa\ss-Jacobi forms will require the following operators:
\begin{gather}
\label{eq:def:jacobi-usual-casmir-lowering-raising-operator}
  \Delta^\rmJ_\kappa
:=
  \rmR^\rmJ_{\kappa-2} \rmL^\rmJ
\tx{,}\quad
  \rmR^\rmJ_\kappa
:=
  y^{-\frac{3}{2} - \kappa}\, \rmL^\Jsk_{1-\kappa}\, y^{\kappa - \frac{1}{2}}
\tx{,}\quad
  \rmL^\rmJ_\kappa
:=
  y^{\frac{5}{2} - \kappa}\, \rmR^\Jsk_{1-\kappa} \, y^{\kappa - \frac{1}{2}}
\tx{.}
\end{gather}
Observe that the last two operators agree with the corresponding ones in~\cite{bringmann-raum-richter-2015} only up to nonzero scalar multiples. The first one, the weight~$\kappa$ cubic Jacobi-Casimir operator, only agrees with the one in~\cite{bringmann-richter-2010,bringmann-raum-richter-2015} for functions that are holomorphic in~$z$. The Jacobi $\xi$-operator acting on $\bbJ_{3-k,m}$, can be expressed as
\begin{gather}
\label{eq:def:jacobi-xi-operator}
  \xi^\rmJ_{3-k}
:=
  -2i\,
  \rmR^\Jsk_{k-2} \circ y^{\frac{5}{2}-k}
\tx{.}
\end{gather}

For $h \in \ZZ_{> 0}$, we set $\rmR^{\rmJ\sk\,h}_\kappa := \rmR^{\rmJ\sk}_{\kappa+2h-2} \circ\, \cdots \,\circ \rmR^{\rmJ\sk}_\kappa$. By abuse of notation, we sometimes suppress the weight from the notation, and simply write $\rmR^{\rmJ\sk\,h}$. We use analogous notation for the other covariant differential operators in~\eqref{eq:def:jacobi-lowering-operator} and~\eqref{eq:def:jacobi-usual-casmir-lowering-raising-operator}.

\subsection{Skew-holomorphic Jacobi forms}

Skoruppa~\cite{skoruppa-1990, skoruppa-1990b} introduced skew-holo\-morphic Jacobi forms.
\begin{definition}
\label{def:skew-holomorphic-jacobi-forms}
Let $m \in \ZZ$. A smooth function $\phi :\, \HJ \ra \CC$ is a skew-holomorphic Jacobi form of weight~$k$ and index~$m$ if
\begin{enumerateroman}
\item
$\phi \big|^\Jsk_{k,m}\, \gaJ = \phi$ for all $\gaJ \in \GaJ$.

\item
\label{def:skew-holomorphic-jacobi-forms:it:analytic-type}
$\bbL_m\,\phi = 0$ and $\phi(\tau, z)$ is holomorphic in~$z$.

\item
\label{def:skew-holomorphic-jacobi-forms:it:growth}
For all $\lambda, \mu \in \QQ$, the function $\phi \big|^\Jsk_{k,m} \big( \begin{psmatrix} 1 & 0 \\ 0 & 1 \end{psmatrix}, \lambda, \mu, 0 \big)(\tau, 0) = \cO(1)$ as~$y \ra \infty$.
\end{enumerateroman}
If, in addition, the asymptotic in~\ref{def:skew-holomorphic-jacobi-forms:it:growth} vanishes, then $\phi$ is a cusp form.
\end{definition}
We write $\rmJ^\sk_{k,m}$ for the space of skew-holomorphic Jacobi forms of weight~$k$ and index~$m$. Note that if $m < 0$, then $\rmJ^\sk_{k,m} = \{0\}$.

The skew-holomorphic Jacobi-Eisenstein
\begin{gather}
\label{eq:def:skew-holomorphic-jacobi-eisenstein-series}
E^{\rmJ\sk}_{k,m}(\tau, z)
:=
\sum_{\gamma \in \GaJ_\infty \backslash \GaJ}
1 \big|^\Jsk_{k,m}\,\gamma
\end{gather}
is a basic example of a form in~$\rmJ^\sk_{k,m}$ if $m > 0$.

\paragraph{Fourier series expansions}

Skew-holomorphic Jacobi forms have Fourier series expansions of the form
\begin{gather}
\label{eq:skew-jacobi-fourier-expansion}
\phi(\tau, z)
=
\sum_{\substack{n, r \in \ZZ \\ D:= 4 m n  - r^2 \le 0}}
\hspace{-1.5em}
  c(\phi;\, n, r)\,
  a^\Jsk_m(n,r; y)
  e( n x + r z)
\tx{,}
\end{gather}
where
\begin{gather}
\label{eq:skew-jacobi-fourier-coefficient}
  a^\Jsk_m(n,r; y)
:=
  \exp\big( - 2 \pi (n - \tfrac{D}{2 m}) y \big)
\end{gather}
is as in Section~\ref{ssec:siegel-modular-forms-preliminaries}. If the Fourier series expansion in~\eqref{eq:skew-jacobi-fourier-expansion} is only over $D<0$, then $\phi$ is a skew-holomorphic Jacobi cusp form of weight~$k$ and index~$m$.

We use the action in~\eqref{eq:slash-extension} of $\rot\big( \GL{2}^\downarrow(\RR) \big) \subset \rot\big( \GL{2}(\RR) \big)$ to compare Fourier series coefficients of different skew-holomorphic Jacobi forms.
\begin{lemma}
\label{la:skew-holomorphic-jacobi-forms-matching-of-fourier-coefficients}
Suppose that $U \in \GL{2}^\downarrow(\RR)$ such that
\begin{gather*}
  \rT U \begin{psmatrix} n & r \slash 2 \\ r \slash 2 & m \end{psmatrix} U
=
  \begin{psmatrix} m & r \slash 2 \\ r \slash 2 & n \end{psmatrix}
\tx{.}
\end{gather*}
Then
\begin{gather*}
  m^{\frac{-k}{2}}
  a^\Jsk_m(n,r; y)
  e(r i v)
  \big|^\Jsk_{k,m}
  \rot(U)
=
  n^{\frac{-k}{2}}
a^\Jsk_n(m,r; y)
e(r i v)
\tx{,}
\end{gather*}
where $z = u + iv$ is as in Section~\ref{ssec:siegel-modular-forms-preliminaries}. In particular, if $\phi\in\rmJ^\sk_{k,m}$ and $\phi'\in\rmJ^\sk_{k,n}$, then $c(\phi;\, n, r) = c(\phi';\, m, r)$ if and only if
\begin{gather*}
  c(\phi;\, n, r)
  m^{\frac{-k}{2}}
  a^\Jsk_m(n,r; y)
  e(r i v)
  \big|^\Jsk_{k,m}
  \rot(U)
=
  c(\phi';\, m, r)
  n^{\frac{-k}{2}}
  a^\Jsk_n(m,r; y)
  e(r i v)
\tx{.}
\end{gather*}
\end{lemma}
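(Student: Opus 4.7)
The proof amounts to a direct computation using the decomposition of $\rot(U)$ in $\GJ \cdot \rot(\GL{2}^\downarrow(\RR))$. First, I would pin down $U$ from the constraint $\rT U T U = T'$: since $\det T = \det T'$, we get $(\det U)^2 = 1$, so $\det U = 1$ by positivity. Writing $U = \begin{psmatrix} a_0 & 0 \\ c_0 & d_0 \end{psmatrix}$ with $a_0 d_0 = 1$, the $(2,2)$-entry of $\rT U T U = T'$ yields $d_0^2 = n/m$, the $(1,2)$-entry then forces $c_0 = 0$, and the $(1,1)$-entry becomes automatic. Hence $U = \diag(\alpha, 1/\alpha)$ with $\alpha = \sqrt{m/n}$ (up to a common sign that leaves $\rot(U)$ unchanged).

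Second, I would decompose $\rot(U) = g^\rmJ \cdot g$ along the lines of the paragraph preceding~\eqref{eq:rot-quotient-action}. A direct matrix multiplication gives $\rot(U) = \up(\diag(\alpha, 1/\alpha)) \cdot \diag(1, 1/\alpha, 1, \alpha)$, so $g^\rmJ = \up(\diag(\alpha, 1/\alpha)) \in \GJ$ is the image of the $\SL{2}$-element $\diag(\alpha, 1/\alpha)$ with trivial translation parameters, while $g = \diag(1, 1/\alpha, 1, \alpha)$ is the distinguished coset representative with $a = 1/\alpha$.

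Third, I would evaluate $a^\Jsk_m(n, r; y)\, e(r i v) \big|^\Jsk_{k,m}\, g^\rmJ$ using the definition of the skew-Jacobi slash action. The cocycle $\alpha^\rmJ_m$ equals $1$ because the lower-left entry of $\diag(\alpha, 1/\alpha)$ and all translation parameters vanish; the factor $|c\tau + d|^{-1}\, \ov{(c\tau+d)}^{1-k}$ simplifies to $\alpha^k$; and the coordinates transform as $(\tau, z) \mto (\alpha^2 \tau, \alpha z)$. Subsequently applying $\big|^\Jsk_{k,m}\, g$ via~\eqref{eq:rot-quotient-action} contributes a factor $(1/\alpha)^k$ together with the rescaling $z \mto z/\alpha$, giving a net coordinate change $(\tau, z) \mto (\alpha^2 \tau, z)$. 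Inserting the explicit formula $a^\Jsk_m(n, r; y) e(r i v) = \exp(2\pi n y - \pi r^2 y / m - 2\pi r v)$ and using $\alpha^2 = m/n$ converts the exponent into that defining $a^\Jsk_n(m, r; y) e(r i v)$.

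The first identity then follows after collecting the remaining scalar factors and comparing with the normalizing powers $m^{-k/2}$ and $n^{-k/2}$ attached to the two sides. The second statement is immediate: multiplying both sides of the first identity by the common scalar $c(\phi; n, r) = c(\phi'; m, r)$ preserves the equality, and conversely, comparing the nonzero terms on both sides recovers the equality of the Fourier coefficients. The main obstacle is the careful bookkeeping of the scalar factors arising from the cocycle $\alpha^\rmJ_m$, the $|c\tau + d|$ term, the extension scalar $a^k$, and the normalizing powers of $m$ and $n$, so that they assemble into the claimed normalization.
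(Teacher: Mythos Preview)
Your strategy is exactly what the paper's one-line proof (``This follows from the transformation behavior of $a^\Jsk_m$'') leaves to the reader: pin down $U$, decompose $\rot(U)=g^\rmJ g$, and compute both slash actions. Your identification $U=\diag(\alpha,1/\alpha)$ with $\alpha^2=m/n$, the decomposition with $g^\rmJ=\up(\diag(\alpha,1/\alpha))$ and $g=\diag(1,1/\alpha,1,\alpha)$, and the net coordinate change $(\tau,z)\mapsto(\alpha^2\tau,z)$ are all correct.

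The gap is in your last paragraph. You say ``the first identity then follows after collecting the remaining scalar factors,'' but you never actually collect them, and this is precisely where the computation does \emph{not} close up. By your own setup, the factor $\alpha^k$ coming from $|c\tau+d|^{-1}\ov{(c\tau+d)}^{1-k}$ in the $g^\rmJ$-step and the factor $(1/\alpha)^k$ from~\eqref{eq:rot-quotient-action} in the $g$-step cancel exactly. Since $\alpha^2=m/n$ turns $a^\Jsk_m(n,r;\alpha^2 y)$ into $a^\Jsk_n(m,r;y)$ on the nose (both equal $\exp(-2\pi(m-D/(2n))y)$ with $D=4mn-r^2$), your computation in fact yields
\[
  a^\Jsk_m(n,r;y)\,e(riv)\,\big|^\Jsk_{k,m}\,\rot(U)
  \;=\;
  a^\Jsk_n(m,r;y)\,e(riv)
\]
with \emph{no} residual scalar. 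That does not match the displayed identity unless $m=n$; the normalizing powers $m^{-k/2}$ and $n^{-k/2}$ in the statement appear to be a misprint (the lemma is not invoked elsewhere in the paper, so this has no downstream effect). You should have carried the bookkeeping to completion and flagged this discrepancy rather than asserting that the scalars ``assemble into the claimed normalization.''
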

\begin{proof}
This follows from the transformation behavior of $a^\Jsk_m$.
\end{proof}

\paragraph{Hecke operators for skew-holomorphic Jacobi forms} 

If $l>0$, then the definitions of the usual Jacobi-Hecke operator $U_l$ and $V_l$ extend to the skew-holomorphic case.

If $\phi\in\rmJ^\sk_{k,m}$ and $l > 0$, then
\begin{gather}
\label{eq:def:U-hecke-operator}
  \big( \phi \big|^\Jsk_{k,m}\, U_l \big)(\tau, z)
:=
  \phi(\tau, lz)
\tx{}
\end{gather}
and
\begin{gather}
\label{eq:def:V-hecke-operator-positive-l}
  \big( \phi \big|^\Jsk_{k,m}\, V_l \big)(\tau, z)
:=
  l^{k-1} \sum_{\substack{a \isdiv l\\b \pmod{l \slash a}}}
  (l \slash a)^{-k}\,
  \phi\big( \frac{a \tau + b}{l \slash a},\, a z \big)
\tx{.}
\end{gather}
Analogous to Theorem~4.2 of~\cite{eichler-zagier-1985} one finds that if $\phi \in \rmJ^\sk_{k,1}$, then
\begin{gather}
\label{eq:V-hecke-operator-coefficient-formula}
c\big( \phi \big|^\Jsk_{k,1}\, V_m;\,  n,r \big)
\;=\;
\sum_{0 < d \isdiv \gcd(n,r,m)} \hspace{-1em}
d^{k-1} c\Big(\phi;\, \frac{mn}{d^2}, \frac{r}{d} \Big)
\tx{.}
\end{gather}

\subsection{Harmonic Maa\ss-Jacobi forms and almost skew-harmonic Maa\ss-Jacobi forms}
\label{ssec:harmonic-jacobi-forms}

We start by defining harmonic Maa\ss-Jacobi forms, which were first introduced in~\cite{bringmann-richter-2010}.
\begin{definition}
\label{def:harmonic-jacobi-forms}
For $m \in \ZZ$, a harmonic Maa\ss-Jacobi form of weight~$3-k$ and Jacobi index~$m$ is a smooth function~$\phi :\, \HJ \ra \CC$ that satisfies
\begin{enumerateroman}
\item
$\phi \big|^\rmJ_{3-k,m}\, \gamma = \phi$ for all $\gamma \in \GaJ$.

\item
\label{it:def:harmonic-jacobi-forms:analytic-type}
$\Delta^\rmJ_{3-k}\, \phi = 0$ and $\phi(\tau, z)$ is holomorphic in~$z$.

\item
For all $\lambda, \mu \in \QQ$, the function $\phi \big|^\rmJ_{3-k,m} \big(\begin{psmatrix} 1 & 0 \\ 0 & 1 \end{psmatrix}, \lambda, \mu, 0\big)(\tau, 0) = \cO(y^a)$ for some~$a \in \RR$ as $y \ra \infty$.
\end{enumerateroman}
\end{definition}
The space of such functions will be denoted by~$\bbJ_{3-k,m}$. Note that if $m < 0$, then $\bbJ_{3-k,m} = \{0\}$. By definition, elements of~$\bbJ_{3-k,m}$ have moderate growth but no singularities, and hence (using the theta decomposition, spectral theory, and our assumption~$k > 3$) one can show that they are scalar multiples of Eisenstein series.

\paragraph{Fourier expansions of harmonic Maa\ss-Jacobi forms}

Harmonic Maa\ss-Jacobi forms of index~$m$ and weight~$3-k$ have Fourier series expansions of the form (cf.\ \cite{bringmann-raum-richter-2015})
\begin{multline}
  \sum_{\substack{n, r \in \ZZ \\ D = 0}}
  c\big( \phi;\, n_{(0)},r \big)\,
  a^\bbJ_{3-k,m}(n_{(0)},r;\,y) e( n x + r z )
\\
  +\;
  \sum_{\substack{n, r \in \ZZ \\ D= 0}}
  c\big( \phi;\, n_{(1)},r \big)\,
  a^\bbJ_{3-k,m}(n_{(1)},r;\,y) e( n x + r z )
\\
  +\;
  \sum_{\substack{n, r \in \ZZ \\ D \ne 0}}
  c\big( \phi;\, n,r \big)
  a^\bbJ_{3-k,m}(n,r;\, y) e(n x + r z)
\tx{,}
\end{multline}
where we always write $D := 4mn - r^2$, and where
\begin{alignat}{2}
  a^\bbJ_{3-k,m}(n_{(0)},r;\, y)
&:=
  e(n i y)
=
  \exp\big( - \pi \frac{r^2}{2 m} y \big)
\quad
&&
  \tx{for $D = 0$,}
\\
  a^\bbJ_{3-k,m}(n_{(1)},r;\, y)
&:=
  y^{k-\frac{3}{2}}\,
  e(n i y)
=
  y^{k-\frac{3}{2}}\,
  \exp\big( - \pi \frac{r^2}{2 m} y \big)
\quad
&&
  \tx{for $D = 0$,}
\\
  a^\bbJ_{3-k,m}(n,r;\, y)
&:=
  e(n i y)
\quad
&&
  \tx{for $D > 0$.}
\\
\label{eq:def:harmonic-maass-jacobi:fourier-coefficient:negative}
  a^\bbJ_{3-k,m}(n,r;\, y)
&:=
  \Gamma\big( \frac{3}{2} - k,\, \frac{\pi |D| y}{m} \big)\,
  e(n i y)
\quad
&&
  \tx{for $D < 0$.}
\end{alignat}

\paragraph{Almost harmonic Maa\ss-Jacobi forms}

Multiplication by $y^{\frac{3}{2} - k}$ and $(k-2)$-fold iteration of $\rmL^\Jsk$ yields another type of real-analytic skew-Jacobi form. Because of the particular role these Jacobi forms play in the context of the Kohnen limit process for negative Fourier-Jacobi indices, we introduce extra notation for them:
\begin{gather}
  \cJ^\sk_{2-k, m}
\;:=\;
  \big( \rmL^\Jsk \big)^{k-2} \big(
  y^{\frac{5}{2}-k}\, \bbJ_{3-k,m}
  \big)
=
  \rmR^\bbJ_{3-k} \big(
  \bbJ_{3-k,m}
  \big)
\tx{,}
\end{gather}
where
\begin{gather}
\label{eq:def:harmonic-maass-jacobi-raising-operator}
  \rmR^\bbJ_{3-k}
\;:=\;
  \frac{(-1)^{\frac{k-2}{2}} 2^{k-1} \pi^{\frac{1}{2}}}{\Gamma(k-\tfrac{1}{2})}\,
  \big( \rmL^{\Jsk} \big)^{k-2} \circ y^{\frac{5}{2}-k}
\tx{.}
\end{gather}
Note that $\rmR^\bbJ_{3-k}$ is normalized such that it maps the standard Eisenstein series in~$\bbJ_{3-k,m}$ to the standard Eisenstein series in~$\cJ^\sk_{2-k, m}$  (see \eqref{eq:almost-skew-harmonic-eisenstein-series-raising-image}).

\begin{remark}
\label{rem:almost-skew-harmonic-jacobi-forms}
The elements of $\cJ^\sk_{2-k,m}$ could be called almost skew-harmonic Maa\ss-Jacobi forms, i.e., the skew-Jacobi form analogues of almost harmonic Maa\ss\ forms in~\cite{bringmann-folsom-2014}. When combining the theta decomposition of elements in $\cJ^\sk_{2-k,m}$ with the half-integral weight analogue of Theorem~31 in~\cite{maass-1964}, one finds that $\cJ^\sk_{2-k, m}$ consists only of Eisenstein series.

Similarly, $\bbJ_{3-k,m}$ consists only of Eisenstein series. In particular, if $m=1$, then $\bbJ_{3-k,1}$ is spanned by the Eisenstein series~$E^{\bbJ}_{3-k, 1}$ defined in~\eqref{eq:def:skew-harmonic-jacobi-eisenstein-series}.
\end{remark}

In the following we only consider weights and Laplace eigenvalues for which the lowering operator is invertible. Hence we obtain an isomorphism between $\cJ^\sk_{2-k,m}$ and $\bbJ_{3-k,m}$.
\begin{proposition}
\label{prop:harmonic-and-almost-skew-harmonic-jacobi-forms}
The following diagram is commutative up to multiplication with nonzero scalars.
\begin{center}
\begin{tikzpicture}
\matrix(m)[matrix of math nodes,
column sep=10em,
text height=1.5em, text depth=1.25ex]{%
\bbJ_{3-k,m} & \cJ^\sk_{2-k,m} \\
};

\path
(m-1-1) edge[->, bend left = 10] node[above=.2em] {$\rmR^\bbJ_{3-k}$} (m-1-2)
(m-1-2) edge[->, bend left = 10] node[below=.2em] {$y^{k-\frac{3}{2}} \circ \big( \rmR^\Jsk \big)^{k-2} $} (m-1-1)
;
\end{tikzpicture}
\end{center}
\end{proposition}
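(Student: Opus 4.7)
The plan is to establish well-definedness of both arrows and show that each composition acts as a nonzero scalar on its source space. Well-definedness of $\rmR^\bbJ_{3-k}$ as a map into $\cJ^\sk_{2-k,m}$ is automatic from the very definition of the codomain. For the reverse arrow $y^{k-\frac{3}{2}}\circ(\rmR^\Jsk)^{k-2}$, I would check that $(\rmR^\Jsk)^{k-2}$ raises the skew weight from $2-k$ to $k-2$ by covariance of~$\rmR^\Jsk$, that multiplication by the appropriate power of $y$ converts between skew weight and ordinary weight in the Jacobi setting (the Jacobi analogue of the equivalence of $|^\sk_k$ and $|_{1-k}$ noted after~\eqref{eq:def:siegel-skew-slash-action}), and that the remaining defining properties of~$\bbJ_{3-k,m}$ — holomorphy in~$z$, moderate growth, and the Laplace equation $\Delta^\rmJ_{3-k}=0$ — are preserved. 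Preservation of holomorphy in~$z$ uses that every $\psi\in\cJ^\sk_{2-k,m}$ arises from an element of $\bbJ_{3-k,m}$ via $(\rmL^\Jsk)^{k-2}\circ y^{5/2-k}$, and that both $\rmL^\Jsk$ and $\rmR^\Jsk$ preserve this feature up to scalar factors in~$y$.

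To analyze the first composition, I would substitute the definition of $\rmR^\bbJ_{3-k}$ and use the rearrangement $\rmL^\Jsk_{1-\kappa}\,y^{\kappa-1/2}=y^{3/2+\kappa}\rmR^\rmJ_\kappa$ of~\eqref{eq:def:jacobi-usual-casmir-lowering-raising-operator} inductively to rewrite $(\rmL^\Jsk)^{k-2}\,y^{5/2-k}\phi$ as a scalar multiple of $y^{k-\frac{3}{2}}(\rmR^\rmJ)^{k-2}\phi$. A parallel induction using $\rmR^\Jsk_{1-\kappa}\,y^{\kappa-1/2}=y^{\kappa-5/2}\rmL^\rmJ_\kappa$ simplifies the subsequent application of $y^{k-\frac{3}{2}}\circ(\rmR^\Jsk)^{k-2}$, and the two calculations combine to express the composition $\big(y^{k-\frac{3}{2}}\circ(\rmR^\Jsk)^{k-2}\big)\circ\rmR^\bbJ_{3-k}\phi$ as a scalar multiple of $(\rmL^\rmJ)^{k-2}(\rmR^\rmJ)^{k-2}\phi$, with any residual power of $y$ absorbed into the scalar via the weight compatibility just established.

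The final step is to evaluate $(\rmL^\rmJ)^{k-2}(\rmR^\rmJ)^{k-2}$ on $\bbJ_{3-k,m}$. Iterating the commutation relation~\eqref{eq:jacobi-differential-operator-commutator}, transplanted to the ordinary slash via the dictionary~\eqref{eq:def:jacobi-usual-casmir-lowering-raising-operator}, rewrites this composition as a polynomial in the Casimir~$\Delta^\rmJ$, which by definition vanishes on $\bbJ_{3-k,m}$. The polynomial then reduces to an explicit product of factors indexed by the intermediate weights $3-k,\,5-k,\ldots,k-1$ traversed by the iterated raising. For $k>3$ odd, none of these factors vanishes and the product yields the desired nonzero scalar. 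The reverse composition on $\cJ^\sk_{2-k,m}$ is handled by the same calculation read backwards, which, combined with surjectivity of $\rmR^\bbJ_{3-k}$ by construction, gives the other direction.

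The main obstacle is the iterated commutator calculation together with the verification that no factor in the resulting product vanishes. This is precisely the spectral invertibility of the raising and lowering operators at Laplace eigenvalue zero through the weights involved, which is the fact alluded to in the remark immediately preceding the proposition. The standing assumption $k>3$ ensures this invertibility at every intermediate weight, and hence also that the two maps in the diagram are genuine isomorphisms of (one-dimensional, when~$m=1$) Eisenstein spaces that are mutually inverse up to an explicit nonzero scalar.
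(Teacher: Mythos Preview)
Your proposal is correct and follows essentially the same route as the paper: convert the skew operators to the ordinary Jacobi raising/lowering operators via the dictionary~\eqref{eq:def:jacobi-usual-casmir-lowering-raising-operator}, reduce the composition to $(\rmL^\rmJ)^{k-2}(\rmR^\rmJ)^{k-2}_{3-k}$, rewrite this by iterated commutation as a polynomial in~$\Delta^\rmJ_{3-k}$, and then use harmonicity to collapse it to an explicit product of nonzero scalars. The paper carries out exactly this computation, writing the commutator constants as $\alpha_\kappa=\tfrac14(\kappa-\tfrac32)$ and checking $\sum_{i=0}^{j-1}\alpha_{3-k+2i}=\tfrac{j}{4}\big((j-1)-\tfrac{2k-3}{2}\big)\ne 0$ for $1\le j\le k-2$; your discussion of well-definedness of the reverse arrow and of the second composition is more explicit than the paper's, but the substance is the same.
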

\begin{proof}
It suffices to show that
\begin{gather}
\label{eq:prop:harmonic-and-almost-skew-harmonic-jacobi-forms:operator}
  y^{k-\frac{3}{2}} \circ \big( \rmR^\Jsk \big)^{k-2}
  \,\circ\,
  \big( \rmL^\Jsk \big)^{k-2} \circ y^{\frac{3}{2}-k}
=
  \rmL^{\rmJ\,k-2} \circ \rmR^{\rmJ\,k-2}_{3-k}
\end{gather}
acts on $\bbJ_{3-k,m}$ by a nonzero scalar. For convenience, if $\kappa \in \ZZ$, then we set
\begin{gather*}
  \alpha_\kappa
:=
  \rmL^\rmJ_{\kappa+2} \rmR^\rmJ_\kappa - \rmR^\rmJ_{\kappa-2} \rmL^\rmJ_\kappa
=
  \tfrac{1}{4} \big( \kappa - \tfrac{3}{2} \big)
\tx{.}
\end{gather*}
 
Observe that for $h \in \ZZ_{> 0}$, $h' \in \ZZ_{\ge 0}$, and $\kappa \in \ZZ$, we have
\begin{gather*}
  \rmR^{\rmJ\, h'}_{\kappa + 2h - 2} \circ \rmL^\rmJ \circ \rmR^{\rmJ\,h}_\kappa
=
  \rmR^{\rmJ\, h'+1}_{k+2h-4} \circ \rmL^\rmJ \circ \rmR^{\rmJ\,h-1}_\kappa
  \,+\,
  \alpha_{\kappa + 2h-2}
  \rmR^{\rmJ\, h+h'-1}_\kappa
\tx{.}
\end{gather*}
Inducing on~$h \in \ZZ_{\ge 0}$, while keeping $h + h'$ fixed, we find that
\begin{gather*}
  \rmL^\rmJ \circ \rmR^{\rmJ\,h}_\kappa
=
  \rmR^{\rmJ\,h}_{\kappa-2} \circ \rmL^\rmJ 
  +
  \sum_{i=0}^{h-1} \alpha_{\kappa + 2i}
  \rmR^{\rmJ\,h-1}_\kappa
\tx{.}
\end{gather*}
With this formula in place, we can once more induce on $h \in \ZZ_{\ge 0}$ to discover that
\begin{gather*}
  \rmL^{\rmJ\,h} \circ \rmR^{\rmJ\,h}_\kappa
=
  \rmL^{\rmJ\,h-1} \circ \rmL^\rmJ \circ \rmR^{\rmJ\,h}_\kappa
=
  \rmL^{\rmJ\,h-1} \circ \rmR^{\rmJ\,h-1}_\kappa \circ
  \Big( \Delta^\rmJ_\kappa + \sum_{i=0}^{h-1} \alpha_{\kappa + 2i} \Big)
=
  \prod_{j=1}^h
  \Big( \Delta^\rmJ_\kappa + \sum_{i=0}^{j-1} \alpha_{\kappa + 2i} \Big)
\tx{.}
\end{gather*}
Letting $\kappa = 3-k$ and $h = k-2$, and observing that $\Delta^\rmJ_{3-k}$ annihilates $\bbJ_{3-k,m}$, we see that
\begin{gather}
\label{eq:prop:harmonic-and-almost-skew-harmonic-jacobi-forms:operator-scalar-action}
  \Big( \rmL^{\rmJ\,k-2} \circ \rmR^{\rmJ\,k-2}_{3-k} \Big)\;
  \bbJ_{3-k,m}
=
  \Big( \prod_{j=1}^{k-2} \sum_{i=0}^{j-1} \alpha_{3-k + 2i} \Big)\;
  \bbJ_{3-k,m}
\tx{.}
\end{gather}
For general $j \in \ZZ_{> 0}$, we have
\begin{gather*}
  \sum_{i=0}^{j-1} \alpha_{3-k + 2i}
=
  \frac{1}{4}
  \sum_{i=0}^{j-1} \big(
  (3-k + 2i) - \frac{3}{2}
  \big)
=
  \frac{1}{4} \big(
  2 \frac{(j-1) j}{2}
  -
  j \frac{2k-3}{2}
  \big)
=
  \frac{j}{4} \big(
  (j-1)
  -
  \frac{2k-3}{2}
  \big)
\tx{,} 
\end{gather*}
which is nonzero for any~$j \in \ZZ$. We conclude that~\eqref{eq:prop:harmonic-and-almost-skew-harmonic-jacobi-forms:operator} acts by a nonzero scalar on~$\bbJ_{3-k,m}$.
\end{proof}
\begin{remark}
For later purposes (see~\eqref{eq:almost-skew-harmonic-eisenstein-series-raising-and-xi}), we record that the last computation of the previous proof shows that
\begin{gather}
\label{eq:prop:harmonic-and-almost-skew-harmonic-jacobi-forms:operator-scalar-action-simplified}
  \Big( \rmL^{\rmJ\,k-2} \circ \rmR^{\rmJ\,k-2}_{3-k} \Big)\;
  \bbJ_{3-k,m}
=
  2^{4-2k} \Gamma(k-1) \Gamma(-\tfrac{1}{2}) \big\slash \Gamma(\tfrac{3}{2}-k)\,
  \bbJ_{3-k,m}
\tx{.}
\end{gather}
\end{remark}

\paragraph{Fourier expansions of almost skew-harmonic Maa\ss-Jacobi forms}

Every $\phi \in \cJ^\sk_{2-k,m}$ has a Fourier series expansion of the form
\begin{multline}
  \sum_{\substack{n, r \in \ZZ \\ D = 0}}
  c\big( \phi;\, n_{(0)},r \big)\,
  a^{\cJ\sk}_{2-k,m}(n_{(0)},r;\,y) e( n x + r z )
\\
  +\;
  \sum_{\substack{n, r \in \ZZ \\ D= 0}}
  c\big( \phi;\, n_{(1)},r \big)\,
  a^{\cJ\sk}_{2-k,m}(n_{(1)},r;\,y) e( n x + r z )
\\
  +\;
  \sum_{\substack{n, r \in \ZZ \\ D \ne 0}}
  c\big( \phi;\, n,r \big)
  a^{\cJ\sk}_{2-k,m}(n,r;\, y) e(n x + r z)
\tx{,}
\end{multline}
where as before $D := 4mn-r^2$ and where
\begin{alignat}{2}
\label{eq:def:fourier-coefficients-almost-skew-Jacobi-zero0}
  a^{\cJ\sk}_{2-k,m}(n_{(0)},r;\, y)
&:=
  y^{k-1}\,
  \exp\big( - \pi \frac{r^2}{2 m} y \big)
\quad
&&
  \tx{for $D = 0$,}
\\
\label{eq:def:fourier-coefficients-almost-skew-Jacobi-zero1}
  a^{\cJ\sk}_{2-k,m}(n_{(1)},r;\, y)
&:=
  y^{\frac{1}{2}}\,
  \exp\big( - \pi \frac{r^2}{2 m} y \big)
\quad
&&
  \tx{for $D = 0$,}
\\
\label{eq:def:fourier-coefficients-almost-skew-Jacobi-nonzero}
  a^{\cJ\sk}_{2-k,m}(n,r;\, y)\,
&:=
  \frac{%
  \rmR^\bbJ_{3-k} \big(
  a^\bbJ_{3-k,m}(n,r;\, y)\,
  e(n x + r z)
  \big)}{e(n x + r z)}
\quad
&&
  \tx{for $D \ne 0$}
\tx{.}
\end{alignat}
For $D = 0$, we have defined $a^{\cJ\sk}_{2-k,m}(n_{(0)},r;\, y)$ and $a^{\cJ\sk}_{2-k,m}(n_{(1)},r;\, y)$ in such a way that they match the corresponding Fourier series coefficients of skew-harmonic Maa\ss-Siegel modular forms (cf.\@ Lemma~\ref{la:kohnen-limit-negative-m-semi-definite-fourier-coefficients}). This implies that the subscripts of Fourier series coefficients change under the raising operator. For example, $\rmR^\bbJ_{3-k}\big( a^\bbJ_{3-k,m}(n_{(1)}, r;\, y) e(n x + r z) \big)$ is a scalar multiple of $a^{\cJ\sk}_{3-k,m}(n_{(0)}, r;\, y) e(n x + r z)$.

\paragraph{Connection to skew-holomorphic Jacobi forms}

Recall that the $\xi$-operator in~\cite{bringmann-richter-2010,bringmann-raum-richter-2015} maps harmonic Maa\ss-Jacobi forms to skew-holomorphic Jacobi forms. Since elements of $\bbJ_{3-k,m}$ are holomorphic in~$z$, Equation~(10) of~\cite{bringmann-raum-richter-2015} shows that there exists a map
\begin{gather*}
  y^{-\frac{1}{2} - k}\, \rmL^\rmJ_{2-k} 
  \,:\,
  \bbJ_{3-k,m}
\lra
  \rmJ^\sk_{k,m}
\tx{.}
\end{gather*}
Proposition~\ref{prop:harmonic-and-almost-skew-harmonic-jacobi-forms} and the fact that $\Delta^\rmJ_{3-k}\, \bbJ_{3-k,m} = 0$ allow us to find the corresponding map from almost skew-harmonic Maa\ss-Jacobi forms to skew-holomorphic Jacobi forms:
\begin{gather}
\label{eq:def:jacobi-raising-operator-power}
  \rmR^\cJsk_{2-k}
\;:=\;
  \frac{(-1)^{\frac{1-k}{2}} 2^{k-1} \pi^{\frac{1}{2}}}{\Gamma(k-\frac{1}{2})}\,
  \big( \rmR^\Jsk_{2-k} \big)^{k-1}
 \,:\,
 \cJ^\sk_{2-k,m}
\lra
  \rmJ^\sk_{k,m}
\tx{.}
\end{gather}
Analogous to the normalization of~$\rmR^\bbJ_{3-k}$, we have normalized $\rmR^\cJsk_{2-k}$ in such a way that it maps standard Eisenstein series to standard Eisenstein series.

\paragraph{Eisenstein series of index~$1$}

The following Jacobi-Eisenstein series converge absolutely and locally uniformly on~$\HJ$ for $m > 0$:
\begin{align}
\label{eq:def:skew-harmonic-jacobi-eisenstein-series}
  E^{\bbJ}_{3-k, m}
\;&:=\;
  \sum_{\gamma \in \GaJ_\infty \backslash \GaJ}
  y^{k-\frac{1}{2}} \big|^\rmJ_{3-k;\, m}\, \gamma
\,\in
  \bbJ_{3-k, m}
\\
\label{eq:def:almost-skew-harmonic-jacobi-eisenstein-series}
  E^{\cJ\sk}_{2-k, m}
\;&:=\;
  \sum_{\gamma \in \GaJ_\infty \backslash \GaJ}
  y^{k-1} \big|^\Jsk_{2 - k;\, m}\, \gamma
\,\in
  \cJ^\sk_{2-k, m}
\tx{.}
\end{align}
The defining expressions directly imply that
\begin{gather}
\label{eq:almost-skew-harmonic-eisenstein-series-raising-image}
  \rmR^\bbJ_{3-k}\, E^{\bbJ}_{3-k,m}
=
  E^{\cJ\sk}_{2-k,m}
\quad\tx{and}\quad
  \rmR^\cJsk_{2-k}\, E^{\cJ\sk}_{2-k,m}
=
  E^{\Jsk}_{k,m}
\tx{.}
\end{gather}
Using~\eqref{eq:def:harmonic-maass-jacobi-raising-operator}, \eqref{eq:def:jacobi-raising-operator-power}, and~\eqref{eq:prop:harmonic-and-almost-skew-harmonic-jacobi-forms:operator-scalar-action-simplified}, we find that for a function~$\phi: \HJ \ra \CC$ that satisfies~\ref{it:def:harmonic-jacobi-forms:analytic-type} of Definition~\ref{def:harmonic-jacobi-forms}, we have
\begin{gather}
\label{eq:almost-skew-harmonic-eisenstein-series-raising-and-xi}
  \rmR^\cJsk_{2-k}\, \rmR^\bbJ_{3-k}\, \phi
=
  \frac{8 i \pi^{\frac{1}{2}}\, \Gamma(k-1)}{\Gamma(k-\frac{1}{2})}\,
  \rmR^\Jsk_{k-2}\, y^{\frac{5}{2}-k}\, \phi
=
  \frac{-4 \pi^{\frac{1}{2}}\, \Gamma(k-1)}{\Gamma(k-\frac{1}{2})}\,
  \xi^\rmJ_{3-k}\, \phi
\tx{,}
\end{gather}
where $\xi^\rmJ_{3-k}$ is the Jacobi $\xi$-operator defined in~\eqref{eq:def:jacobi-xi-operator}. In particular, we find that
\begin{gather}
\label{eq:almost-skew-harmonic-eisenstein-series-xi-image}
  \xi^\rmJ_{3-k}\,
  E^\bbJ_{3-k,m}
=
  \frac{-\pi^{-\frac{1}{2}}\,\Gamma(k-\frac{1}{2})}{4\Gamma(k-1)}\,
  E^\Jsk_{k,m}
\tx{.}
\end{gather}

\begin{lemma}
\label{la:jacobi-eisenstein-series-fourier-coefficients-real}
Consider the Fourier series expansion
\begin{gather*}
  E^{\Jsk}_{k,1}
=
  \sum_{n, r \in \ZZ}
  c(n,r)\, a^\Jsk_{1}(n,r;\,y) e(n x + r z)
\tx{.}
\end{gather*}
We have $c(n,r) \in \RR$. Moreover, there exists an indefinite matrix~$T = \begin{psmatrix} n & r \slash 2 \\ r \slash 2 & 1 \end{psmatrix}$ with $c(n,r) \ne 0$.
\end{lemma}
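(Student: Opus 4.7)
My plan is to deduce both claims from an explicit Fourier-theoretic analysis of $E^\Jsk_{k,1}$ via the theta decomposition for skew-holomorphic Jacobi forms of index one.

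First I would write $E^\Jsk_{k,1}(\tau, z) = h_0(\tau)\,\theta_{1,0}(\tau, z) + h_1(\tau)\,\theta_{1,1}(\tau, z)$, where $\theta_{1,\mu}(\tau, z) = \sum_{s \in \ZZ + \mu/2} e(s^2 \tau + 2 s z)$ has integer Fourier coefficients, and $(h_0, h_1)$ is the component vector of a skew-holomorphic elliptic Eisenstein series of weight $k - \tfrac{1}{2}$ on $\SL{2}(\ZZ)$ valued in the Weil representation of the quadratic form $m = 1$. For reality, the Fourier coefficients of the components $h_\mu$ at the indices $N = -D/4 = (r^2 - 4n)/4$ are given by the classical Cohen formula for half-integral weight Eisenstein series; they are rational multiples of generalized class numbers $H(k-1, \,\cdot\,)$ and hence real. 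Since the theta coefficients are integers, we obtain $c(n, r) \in \RR$.

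For the existence of an indefinite $T$ with $c(n, r) \ne 0$, observe that indefiniteness of $T = \begin{psmatrix} n & r/2 \\ r/2 & 1 \end{psmatrix}$ is equivalent to $D = 4n - r^2 < 0$, which is exactly the regime where the half-integral weight Eisenstein series $h_\mu$ has positively-indexed Fourier coefficients expressible in terms of $H(k-1, |D|)$. Cohen's formula shows $H(k-1, |D|)$ is non-zero for infinitely many admissible $|D|$ (for fundamental $|D|$ it is proportional to the non-vanishing $L$-value $L(2-k, \chi_{-|D|})$). Picking any such $|D|$ together with an integer pair $(n, r)$ realizing it then produces the desired indefinite $T$.

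The main obstacle is bookkeeping: matching the classical Kohnen plus-space / Cohen normalization against the skew-Jacobi slash action $\big|^\Jsk_{k,1}$ used here. A route that bypasses this matching is to invoke~\eqref{eq:almost-skew-harmonic-eisenstein-series-xi-image}, which asserts $\xi^\rmJ_{3-k}\,E^\bbJ_{3-k,1}$ is a non-zero scalar multiple of $E^\Jsk_{k,1}$. Applying $\xi^\rmJ_{3-k}$ to the Fourier expansion of $E^\bbJ_{3-k,1}$ term-by-term, and using~\eqref{eq:def:jacobi-xi-operator} together with the formula~\eqref{eq:def:harmonic-maass-jacobi:fourier-coefficient:negative} for $a^\bbJ_{3-k,1}(n,r; y)$ when $D < 0$, a direct computation sends each $D<0$ Fourier coefficient of $E^\bbJ_{3-k,1}$ to a non-zero scalar multiple of the corresponding coefficient of $E^\Jsk_{k,1}$. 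Since the $D < 0$ Fourier coefficients of $E^\bbJ_{3-k,1}$ are non-zero by the standard Eisenstein unfolding computation (and manifestly real by the same unfolding), both statements of the lemma follow without tracking the theta decomposition normalization.
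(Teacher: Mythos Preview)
Your approach is correct in outline but genuinely different from the paper's. The paper does not go through the theta decomposition and Cohen's generalized class numbers for the reality statement. Instead, it identifies $E^{\Jsk}_{k,1}$ with a specialization of Mizuno's real-analytic Jacobi Eisenstein series $E^{\mathrm{Mizuno}}_{1-k,1}(\tau,z,k-\tfrac{1}{2})$ and reads off reality from the explicit integral $\tau_d(y,\alpha,\beta)$ appearing in Mizuno's Fourier expansion, together with a phase computation from~\cite{mizuno-2005}. For the non-vanishing claim the paper argues by contradiction: if every indefinite coefficient vanished, the theta decomposition would force $E^{\Jsk}_{k,1}$ to be a linear combination of the weight-$\tfrac{1}{2}$ theta series $\theta_{1,\mu}$, which is impossible.

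Your route via Cohen's Eisenstein series is the more classical one and gives extra information (rationality, explicit values), at the cost of the bookkeeping you flagged: in the skew setting the theta components are functions of $-\bar\tau$ rather than $\tau$ (cf.\ the paper's proof of Lemma~\ref{la:jacobi-eisenstein-series-fourier-coefficients-nonzero}), and matching the skew slash action to Kohnen's plus-space conventions for odd $k$ requires citing Skoruppa precisely. Your alternative via~\eqref{eq:almost-skew-harmonic-eisenstein-series-xi-image} is sound but only relocates the reality question to the $D<0$ coefficients of $E^{\bbJ}_{3-k,1}$, which still needs an explicit computation (essentially Arakawa's or Mizuno's again). The paper's Mizuno citation sidesteps all of this by working directly with the skew Eisenstein series, while its contradiction argument for non-vanishing is shorter than your constructive one, though yours yields more.
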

\begin{proof}
The statement is clear if $D := 4 m n - r^2 = 0$. Consider the case~$D \ne 0$. Proposition~2 of~\cite{mizuno-2008} provides the Fourier series expansion of a Jacobi-Eisenstein series $E^{\mathrm{Mizuno}}_{k,1}(\tau, z, s)$, which features the function $\tau_{d}(y, \alpha, \beta):=\int_{-\infty}^{\infty}e(-du)\tau^{-\alpha}\overline{\tau}^{-\beta}\,du$. Replacing $k \leadsto 1-k$ and $s \leadsto k - \frac{1}{2}$ implies
\begin{gather*}
  E^{\Jsk}_{k,1}
=
  y^{\frac{1}{2}-k}\,
  E^{\mathrm{Mizuno}}_{1-k,1}(\tau, z, k - \tfrac{1}{2})
\tx{,}
\end{gather*}
and~(7) of~\cite{mizuno-2008} gives that $c(n,r) \in i^{-\frac{1}{2}} \tau_{4l-\mu}(\frac{y}{4}, 0, k -\frac{1}{2}) \RR$, since~$D \ne 0$. Proposition~4 of~\cite{mizuno-2005} then shows that $\tau_{4l-\mu}(\frac{y}{4}, 0, k -\frac{1}{2}) \in i^{k-\frac{1}{2}} \RR$, which yields the first claim of the lemma.

If the second claim of the lemma did not hold, then~$E^{\Jsk}_{k,1}$ would be a linear combination of holomorphic theta series by its theta decomposition, which is impossible.
\end{proof}

\begin{lemma}
\label{la:jacobi-eisenstein-series-fourier-coefficients-nonzero}
Consider the Fourier series expansion
\begin{gather*}
  E^{\cJ\sk}_{2-k,1}
=
  \sum_{n, r \in \ZZ}
  c(n,r;\, y) e(n x + r z)
\tx{.}
\end{gather*}
There exist matrices $T = \begin{psmatrix} n & r \slash 2 \\ r \slash 2 & 1 \end{psmatrix}$ that are
\begin{enumerateroman*}
\item indefinite and
\item negative definite
\end{enumerateroman*}
such that $c(n,r;\, y) \ne 0$.
\end{lemma}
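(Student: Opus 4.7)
The plan is to exploit the two identities of~\eqref{eq:almost-skew-harmonic-eisenstein-series-raising-image}, namely $\rmR^\cJsk_{2-k}\,E^{\cJ\sk}_{2-k,1} = E^\Jsk_{k,1}$ and $E^{\cJ\sk}_{2-k,1} = \rmR^\bbJ_{3-k}\,E^\bbJ_{3-k,1}$. The key observation is that both $\rmR^\cJsk_{2-k}$ and $\rmR^\bbJ_{3-k}$ are built out of the operators in~\eqref{eq:def:jacobi-raising-operator} and~\eqref{eq:def:jacobi-lowering-operator}, whose coefficients depend only on $y$ and~$v$ together with first-order derivatives in $\tau,\bar\tau,z,\bar z$. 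In particular, they commute with the real translations $x \mapsto x+\alpha$ and $u \mapsto u+\beta$, and therefore act diagonally on the Fourier expansion: the $(n,r)$-Fourier mode of the image is determined solely by the $(n,r)$-Fourier mode of the input.

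For part~(i), apply $\rmR^\cJsk_{2-k}$ to the Fourier expansion of $E^{\cJ\sk}_{2-k,1}$. By the diagonal action, if some $c(n,r;y)$ of $E^{\cJ\sk}_{2-k,1}$ vanished then so would the corresponding Fourier coefficient of $E^\Jsk_{k,1}$. Lemma~\ref{la:jacobi-eisenstein-series-fourier-coefficients-real} provides an indefinite $T = \begin{psmatrix} n & r/2 \\ r/2 & 1 \end{psmatrix}$ whose Fourier coefficient in $E^\Jsk_{k,1}$ is nonzero, immediately forcing $c(n,r;y)\ne 0$ in $E^{\cJ\sk}_{2-k,1}$.

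For part~(ii), first note that $T_{22}=1>0$ precludes $T$ from being negative definite, so the intended case is $D=4n-r^2>0$, i.e.\ positive definite. I would proceed from $E^{\cJ\sk}_{2-k,1} = \rmR^\bbJ_{3-k}\,E^\bbJ_{3-k,1}$ with the same diagonal action, reducing the problem to two points: (a)~$\rmR^\bbJ_{3-k}$ does not annihilate a Fourier mode with $n>0$, and (b)~some $D>0$ Fourier coefficient of $E^\bbJ_{3-k,1}$ is nonzero. For~(a), a direct computation using~\eqref{eq:def:harmonic-maass-jacobi-raising-operator} and $\rmL^\Jsk = y^2 \partial_\tau + y v \partial_z + 2imv^2 - iy/4$ shows that $\rmR^\bbJ_{3-k}$ applied to $e(n\tau+rz) = a^\bbJ_{3-k,1}(n,r;y)\,e(nx+rz)$ has leading-order behavior in $y$ a nonzero scalar multiple of $n^{k-2}\,y^{k-3/2}\,e(n\tau+rz)$, which is nonzero for $n>0$. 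For~(b), I would adapt the Mizuno-type argument of Lemma~\ref{la:jacobi-eisenstein-series-fourier-coefficients-real}, specializing Proposition~2 of~\cite{mizuno-2008} (or unfolding the Poincar\'e-series definition \eqref{eq:def:skew-harmonic-jacobi-eisenstein-series} directly) to read off the $D>0$ Fourier coefficients of $E^\bbJ_{3-k,1}$ as explicit nonvanishing divisor sums.

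The main obstacle is step~(b). Unlike part~(i), it cannot be handled by the shadow relation $\xi^\rmJ_{3-k}\,E^\bbJ_{3-k,1} = \mathrm{const}\cdot E^\Jsk_{k,1}$ of~\eqref{eq:almost-skew-harmonic-eisenstein-series-xi-image}, since $\xi^\rmJ_{3-k}$ annihilates the $D>0$ holomorphic part. A fully rigorous treatment therefore requires either explicit Fourier-analytic input (Mizuno-type formulas or direct unfolding of the Poincar\'e series) or a theta-decomposition argument reducing nonvanishing to that of half-integral weight Maass-Eisenstein Fourier coefficients, which in turn is classical.
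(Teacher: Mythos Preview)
Your observation that $T_{22}=1$ rules out negative definiteness is correct; the intended meaning is $D=4n-r^2>0$, i.e.\ $T$ positive definite (the sign flip to negative definite Siegel indices happens when passing to $e^\sk_{k,-1}$ via the conjugation in the Kohnen limit).

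Your argument for~(i) is correct and genuinely different from the paper's. You exploit $\rmR^\cJsk_{2-k}E^{\cJ\sk}_{2-k,1}=E^\Jsk_{k,1}$ together with Lemma~\ref{la:jacobi-eisenstein-series-fourier-coefficients-real}, while the paper treats (i) and (ii) uniformly via the theta decomposition. Your route is more structural and avoids external references for~(i), but it does not extend to~(ii): as you note, $\xi^\rmJ_{3-k}$ kills the $D>0$ part, and $\rmR^\cJsk_{2-k}$ likewise maps $D>0$ coefficients to zero (skew-holomorphic Jacobi forms have no $D>0$ terms), so no information flows back from $E^\Jsk_{k,1}$.

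For~(ii) your proposal is incomplete. Step~(a) can be made rigorous without a leading-order computation: the proof of Proposition~\ref{prop:harmonic-and-almost-skew-harmonic-jacobi-forms} shows that on any function annihilated by $\Delta^\rmJ_{3-k}$ (in particular on each individual Fourier term $a^\bbJ_{3-k,1}(n,r;y)e(nx+rz)$), the composite $\rmL^{\rmJ\,k-2}\circ\rmR^{\rmJ\,k-2}_{3-k}$ is a nonzero scalar, so $\rmR^\bbJ_{3-k}$ cannot annihilate it. Step~(b), however, is the real content, and you defer it. The paper handles this directly on $E^{\cJ\sk}_{2-k,1}$ (bypassing the detour through $E^\bbJ_{3-k,1}$): the theta decomposition gives $h_0,h_1$ of weight~$\tfrac{3}{2}-k$, and $h_0(4\tau)+h_1(4\tau)$ is identified with the Goldfeld--Hoffstein Eisenstein series $E_\infty$ (with $k\leadsto 3-2k$, $s\leadsto k-1$); Proposition~1.4 of \cite{goldfeld-hoffstein-1985} then gives $c'(\pm 4,y)\neq 0$, covering both signs of~$D$ at once. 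Your suggested ``theta-decomposition argument reducing to half-integral weight'' is exactly this, so the missing piece is the specific citation rather than a new idea.
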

\begin{proof}
Observe that $E^{\cJ\sk}_{2-k,1}(\tau, z)$ is holomorphic with respect to~$z$. Hence it has the theta decomposition
\begin{gather*}
  E^{\cJ\sk}_{2-k,1}(\tau, z)
=
  h_0(-\ov{\tau})\theta_{1,0}(\tau,z)
  +
  h_1(-\ov{\tau})\theta_{1,1}(\tau,z)
\tx{,}
\end{gather*} 
where
\begin{gather*}
\theta_{1,\mu}(\tau,z)
:=
\sum_{\substack{r \in \ZZ \\ r \equiv \mu \pmod{2}}}
q^{\frac{r^2}{4}} \zeta^r
\quad
\tx{for $\mu \in \{0,1\}$,}
\end{gather*}
and where $h_0$ and $h_1$ are certain modular forms of weights $\frac{3}{2}-k$. In analogy to Theorem~5.4 of~\cite{eichler-zagier-1985}, the function
\begin{gather*}
  h_0(4\tau) + h_1(4\tau)
=
  \sum_{n \in \ZZ} c'(n,y) e(n x)
\tx{}
\end{gather*}
is essentially the Eisenstein series $E_\infty$ of~\cite{goldfeld-hoffstein-1985} (with $k \leadsto 3 - 2k$, $s \leadsto k - 1$). Proposition~1.4 of~\cite{goldfeld-hoffstein-1985} in combination with 8.447.3 of~\cite{gradshteyn-ryzhik-2007} shows that the coefficients $c'(-4,y)$ and $c'(4,y)$ are nonzero, which yields the claim. Alternatively, instead of~\cite{goldfeld-hoffstein-1985}, one could also apply Proposition~2 of~\cite{mizuno-2008}.
\end{proof}

\begin{lemma}
\label{la:jacobi-eisenstein-series-zeroth-fourier-coefficient}
The Fourier coefficient of index~$n=0$ and $r=0$ of\/ $E^{\cJ\sk}_{2-k,1}(\tau, z)$ equals
\begin{gather}
\label{eq:la:jacobi-eisenstein-series-zeroth-fourier-coefficient}
  a\big(E^{\cJ\sk}_{2-k,1};\, 0,0;\, y \big)
=
  y^{k-1}
  \;+\;
  (-1)^{\frac{k-1}{2}}2^{2-k}
  \frac{\Gamma(\frac{1}{2}) \Gamma(k-\frac{3}{2})}{\Gamma(k-1)}\,
  \frac{\zeta(2k-3)}{\zeta(2k-2)}\,
  y^{\frac{1}{2}}
\tx{.}
\end{gather}
\end{lemma}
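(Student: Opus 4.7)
The plan is to compute the $(n, r) = (0, 0)$ Fourier coefficient of~$E^{\cJ\sk}_{2-k,1}$ by combining the theta decomposition already used in the proof of Lemma~\ref{la:jacobi-eisenstein-series-fourier-coefficients-nonzero} with the known constant Fourier coefficient of the Goldfeld-Hoffstein Eisenstein series. First, write
\begin{gather*}
E^{\cJ\sk}_{2-k,1}(\tau, z)
=
h_0(-\ov\tau)\,\theta_{1,0}(\tau, z)
\,+\,
h_1(-\ov\tau)\,\theta_{1,1}(\tau, z)
\tx{.}
\end{gather*}
Only $\theta_{1,0}$ has an $r = 0$ contribution, namely the constant~$1$, while $\theta_{1,1}$ supports only odd~$r$. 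Hence the $r = 0$ component of $E^{\cJ\sk}_{2-k,1}(\tau, z)$ equals $h_0(-\ov\tau)$, and the desired $(0,0)$ coefficient is the constant-in-$x$ term of $h_0(-\ov\tau)$, which agrees with the constant-in-$x$ term of $h_0(\tau)$.

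Next, as in the proof of Lemma~\ref{la:jacobi-eisenstein-series-fourier-coefficients-nonzero}, identify $h_0(4\tau) + h_1(4\tau)$ (up to a fixed normalization) with the Goldfeld-Hoffstein Eisenstein series $E_\infty(\tau, s)$ from~\cite{goldfeld-hoffstein-1985} evaluated at $(\kappa, s) = (3 - 2k,\, k-1)$. Since $h_0(4\tau)$ is supported on Fourier indices in $4\ZZ$ and $h_1(4\tau)$ has no Fourier term at index~$0$, the constant term of $h_0(4\tau) + h_1(4\tau)$ equals the constant term of $h_0(4\tau)$, which in turn equals the constant term of $h_0(\tau)$. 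Reading off the constant Fourier coefficient of $E_\infty$ from Proposition~1.4 of~\cite{goldfeld-hoffstein-1985} produces two summands of the form $y^s$ and $c(\kappa, s)\, y^{1 - s - \kappa/2}$; substituting $(\kappa, s) = (3 - 2k,\, k-1)$ yields exactly $y^{k-1}$ and $y^{1/2}$, matching the two exponents appearing in~\eqref{eq:la:jacobi-eisenstein-series-zeroth-fourier-coefficient}. Evaluating $c(3-2k,\, k-1)$ then yields a product of $\Gamma$- and $\zeta$-values of the shape claimed.

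The main obstacle is bookkeeping the exact numerical constant in front of $y^{1/2}$, in particular the factor $(-1)^{(k-1)/2}\, 2^{2-k}$. This factor arises from three distinct sources: (i)~the sign from the substitution $\tau \mapsto -\ov\tau$ passing between $h_0(\tau)$ and $h_0(-\ov\tau)$; (ii)~the power of $2$ produced by the rescaling $\tau \mapsto 4\tau$ in Fourier variables; and (iii)~the explicit normalization relating the Jacobi Eisenstein series $E^{\cJ\sk}_{2-k,1}$ to the scalar Goldfeld-Hoffstein $E_\infty$ through the theta decomposition. A parallel route that avoids invoking~\cite{goldfeld-hoffstein-1985} is to unfold $E^{\cJ\sk}_{2-k,1}$ directly: after carrying out the $u$-integration (using Poisson summation in $\lambda \in \ZZ$) and then Poisson summation in the $\SL{2}(\ZZ)$-part of the sum, one is reduced to a computation of exactly the same type as Lemma~\ref{la:elliptic-eisenstein-series-fourier-expansion}, adapted to the half-integral skew slash action; the two exponents $y^{k-1}$ and $y^{1/2}$ emerge as the analogues of $y^s$ and $y^{1-\kappa-s}$ in that lemma, with the gamma and zeta factors falling out of the standard Mellin-transform evaluation.
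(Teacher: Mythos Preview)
Your approach is valid but different from the paper's. The paper appeals directly to Arakawa's computation \cite{arakawa-1990} of the Fourier expansion of real-analytic Jacobi Eisenstein series $E_{k,m,r}((\tau,z),s)$: after identifying $E^{\cJ\sk}_{2-k,1} = \tfrac{1}{2} y^{k-\frac{3}{2}} E_{k-1,1,0}((\tau,z),\tfrac{k}{2}-\tfrac{1}{4})$, Arakawa's formula for the $4n-r^2=0$ part gives the answer at once, theta functions and constants included. Your route instead recycles the theta decomposition and Goldfeld--Hoffstein identification already used in Lemma~\ref{la:jacobi-eisenstein-series-fourier-coefficients-nonzero}, reducing to the constant term of the half-integral weight $E_\infty$. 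This is a perfectly reasonable alternative; it has the virtue of not introducing a new reference, but the cost is precisely the bookkeeping you flag.

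Two small corrections to that bookkeeping. First, the passage $\tau\mapsto -\ov\tau$ fixes $y$ and only flips $x$, so for the $n=0$ term it contributes no sign at all; your item~(i) is harmless. Second, the rescaling $\tau\mapsto 4\tau$ is not innocuous for the constant term as a function of $y$: if the constant term of $h_0(\tau)$ is $a\,y^{k-1}+b\,y^{1/2}$, then that of $h_0(4\tau)$ is $a\,(4y)^{k-1}+b\,(4y)^{1/2}$, so the two powers of $y$ pick up \emph{different} powers of $4$. Since the paper's Lemma~\ref{la:jacobi-eisenstein-series-fourier-coefficients-nonzero} only asserts that $h_0(4\tau)+h_1(4\tau)$ is ``essentially'' $E_\infty$, you would need to pin down that normalization exactly and then undo the $4y$ scaling separately on each term to land on the factor $(-1)^{(k-1)/2}2^{2-k}$. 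None of this is an obstruction, but as written your argument stops short of actually producing the constant; the Arakawa citation avoids this by working natively in the Jacobi setting.
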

\begin{proof}
Arakawa~\cite{arakawa-1990} investigates real-analytic Jacobi Eisenstein series $E_{k,m,r}((\tau,z),s)$. In the special case that $r=0$, $m=1$, $k \leadsto k-1$, and $s=\frac{k}{2}-\frac{1}{4}$, one finds that the Eisenstein series~$\frac{1}{2}y^{k-\frac{3}{2}} E_{k-1,1,0}((\tau,z),\frac{k}{2}-\frac{1}{4})$ equals $E^{\cJ\sk}_{2-k,1}(\tau, z)$. Arakawa's work also determines the part of the Fourier series expansion of real-analytic Jacobi Eisenstein series with $4n - r^2 = 0$, which yields
\begin{multline*}
  \sum_{\substack{n, r \in \ZZ \\ 4n=r^2}}
  a\big(E^{\cJ\sk}_{2-k,1};\, n,r;\, y \big) e(n x + r z)
=
  \Big(
  y^{k-1}
  \;+\;
  (-1)^{\frac{k-1}{2}} 2^{2-k}
  \frac{\Gamma(\frac{1}{2}) \Gamma(k-\frac{3}{2})}{\Gamma(k-1)}\,
  \frac{\zeta(2k-3)}{\zeta(2k-2)}\,
  y^{\frac{1}{2}}
  \Big)\,
  \theta_{1,0}(\tau,z)
\\ 
+
  \Big(
  (-1)^{\frac{k-1}{2}} 2^{2-k}
  \frac{\Gamma(\frac{1}{2}) \Gamma(k-\frac{3}{2})}{\Gamma(k-1)}\,
  \varphi_{0,1} \,y^{\frac{1}{2}}
  \Big)\,
  \theta_{1,1}(\tau,z)
\tx{,}
\end{multline*}
where $\varphi_{0,1}$ is a certain Dirichlet series, and $\theta_{1,0}$ and $\theta_{1,1}$ are as in the proof of Lemma~\ref{la:jacobi-eisenstein-series-fourier-coefficients-nonzero}. This implies~\eqref{eq:la:jacobi-eisenstein-series-zeroth-fourier-coefficient}.
\end{proof}

\paragraph{Eisenstein series of index~$m$}

In the proof of Theorem~\ref{mainthm:skew-maass-lift} we will need the following vanishing statement for Jacobi forms in~$\cJ^\sk_{2-k,m}$.
\begin{lemma}
\label{la:almost-skew-harmonic-jacobi-form-vanishing}
Let $\phi \in \cJ^\sk_{2-k,m}$. We have~$\phi = 0$, if for all $n,r \in \ZZ$ with $4 m n - r^2 = 0$, the $(n,r)$\thdash\ Fourier series coefficient of~$\phi$ is of the form
\begin{gather*}
  b(n,r)\,
  y^{\frac{1}{2}}
  e\big( \frac{r^2}{4m} i y \big)
\end{gather*}
for some constants $b(n,r) \in \CC$.
\end{lemma}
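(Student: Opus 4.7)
The plan is to compose the isomorphism $\rmR^\bbJ_{3-k}\colon \bbJ_{3-k,m} \to \cJ^\sk_{2-k,m}$ of Proposition~\ref{prop:harmonic-and-almost-skew-harmonic-jacobi-forms} with the raising operator $\rmR^\cJsk_{2-k}$. Writing $\phi = \rmR^\bbJ_{3-k}\,\psi$, identity~\eqref{eq:almost-skew-harmonic-eisenstein-series-raising-and-xi} gives
\begin{gather*}
  \rmR^\cJsk_{2-k}\,\phi
\;=\;
  \frac{-4\pi^{\frac{1}{2}}\,\Gamma(k-1)}{\Gamma(k-\frac{1}{2})}\,
  \xi^\rmJ_{3-k}\,\psi
\;\in\;
  \rmJ^\sk_{k,m}.
\end{gather*}
I will argue that this skew-holomorphic Jacobi form is simultaneously cuspidal and Eisenstein, hence vanishes; then $\xi^\rmJ_{3-k}\,\psi = 0$ will force $\psi$ to be a holomorphic Jacobi form of weight $3-k < 0$, so $\psi = 0$ and $\phi = 0$.

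For cuspidality, I compute the action of $\rmR^\cJsk_{2-k}$ on the $D = 0$ Fourier terms of $\phi$. Since $q^n\zeta^r$ is holomorphic in~$\tau$ and~$z$, a direct calculation yields the recursion $\rmR^\Jsk_\kappa(y^s q^n\zeta^r) = \tfrac{i}{2}(s + \kappa - \tfrac{1}{2})\,y^{s-1} q^n\zeta^r$. Iterating $\rmR^\cJsk_{2-k} = \mathrm{const}\cdot (\rmR^\Jsk_{2-k})^{k-1}$ at step $j$ contributes the factor $\tfrac{i}{2}(s + j + \tfrac{3}{2} - k)$: for $s = \tfrac{1}{2}$ this equals $\tfrac{i}{2}(j + 2 - k)$, which vanishes at $j = k-2$ and annihilates the whole iteration, while for $s = k-1$ it equals $\tfrac{i}{2}(j + \tfrac{1}{2})$, whose product over $j = 0, \ldots, k-2$ is nonzero. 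Thus $\rmR^\cJsk_{2-k}$ sends $y^{k-1} q^n\zeta^r$ to a nonzero multiple of $q^n\zeta^r$ and annihilates $y^{1/2} q^n\zeta^r$. Since by~\eqref{eq:def:fourier-coefficients-almost-skew-Jacobi-zero0} and~\eqref{eq:def:fourier-coefficients-almost-skew-Jacobi-zero1} the $D = 0$ Fourier terms of $\phi$ are $c(\phi; n_{(0)}, r)\,y^{k-1} q^n\zeta^r + c(\phi; n_{(1)}, r)\,y^{1/2} q^n\zeta^r$, and the hypothesis forces $c(\phi; n_{(0)}, r) = 0$, the $D = 0$ Fourier coefficients of $\rmR^\cJsk_{2-k}\,\phi$ vanish. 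As the Fourier expansion of elements of $\rmJ^\sk_{k,m}$ is supported on $D \le 0$, this shows $\rmR^\cJsk_{2-k}\,\phi$ is a cusp form.

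The Eisenstein assertion follows from Remark~\ref{rem:almost-skew-harmonic-jacobi-forms} and~\eqref{eq:almost-skew-harmonic-eisenstein-series-xi-image}: $\bbJ_{3-k,m}$ is spanned by Eisenstein series, and the image of each under $\xi^\rmJ_{3-k}$ lies in the Eisenstein subspace of $\rmJ^\sk_{k,m}$. Since the cuspidal and Eisenstein subspaces of $\rmJ^\sk_{k,m}$ intersect trivially, $\xi^\rmJ_{3-k}\,\psi = 0$, which forces all non-holomorphic Fourier terms of $\psi$ to vanish and leaves $\psi$ a holomorphic Jacobi form of weight $3-k$ and index~$m$. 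Since $k > 3$ is odd, the weight is negative, so $\psi = 0$, and consequently $\phi = \rmR^\bbJ_{3-k}\,\psi = 0$. The main technical obstacle is the identification of the vanishing factor at step $j = k-2$ in the iterated raising of $y^{1/2}$, which is precisely what singles out the $y^{1/2}$-component of the $D = 0$ Fourier coefficients as the kernel of $\rmR^\cJsk_{2-k}$ on this index.
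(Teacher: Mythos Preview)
Your proof is correct and takes a genuinely different route from the paper's. The paper argues directly inside $\cJ^\sk_{2-k,m}$: using the theta decomposition and spectral theory it writes $\phi$ as a finite linear combination of Eisenstein series, separates the $c=0$ term (which produces the $y^{k-1}$ contribution at each $D=0$ index) from the scattering contribution (which produces the $y^{1/2}$ part), and observes that the hypothesis kills the former, forcing all Eisenstein coefficients to vanish. Your argument instead pushes the problem forward along $\rmR^\cJsk_{2-k}$ to the skew-holomorphic space $\rmJ^\sk_{k,m}$, where the hypothesis translates into cuspidality and the structural fact that $\bbJ_{3-k,m}$ consists of Eisenstein series translates into the image being Eisenstein; the trivial intersection then gives $\xi^\rmJ_{3-k}\,\psi=0$, and you finish via vanishing of negative-weight holomorphic Jacobi forms.

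What each buys: the paper's approach is shorter and self-contained, needing only the Eisenstein decomposition of $\cJ^\sk_{2-k,m}$ and the identification of the leading $D=0$ term. Your approach avoids any explicit scattering computation and instead leverages the diagram $\bbJ_{3-k,m}\cong\cJ^\sk_{2-k,m}\to\rmJ^\sk_{k,m}$ already set up in the paper. One small point: for $m>1$ the space $\bbJ_{3-k,m}$ may contain Eisenstein series other than $E^\bbJ_{3-k,m}$, and~\eqref{eq:almost-skew-harmonic-eisenstein-series-xi-image} only treats that single series; you should note that the covariance of $\xi^\rmJ_{3-k}$ with the Jacobi slash action sends any Poincar\'e-type Eisenstein sum to another such sum, so the image still lies in the Eisenstein part of $\rmJ^\sk_{k,m}$.
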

\begin{proof}
Combining the theta decomposition for real-analytic Jacobi forms that are holomorphic in~$z$ with the half-integral weight analogue of Theorem~31 of~\cite{maass-1964}, we find that~$\cJ^\sk_{2-k,m}$ is spanned by Eisenstein series
\begin{gather*}
  \sum_{\gamma \in \GaJ_\infty \backslash \GaJ}
  y^{k-\frac{1}{2}}
  e\big(\frac{r^2}{4m} \tau + r z \big)
  \big|^\Jsk_{2-k;\, m}\, \gamma
\end{gather*}
for $r \in \ZZ$ and $s \in \{ k - 1, \frac{1}{2} \}$. Invariance under the embedded Heisenberg group in~$\GaJ$ and under $-I^{(2)}$ allows us to assume that $0 \le r \le m$. We discover that $\phi \in \cJ^\sk_{2-k,m}$ can be written as
\begin{gather*}
  \phi(\tau, z)
=
  \sum_{\gamma \in \GaJ_\infty \backslash \GaJ}
  \Big(
  \sum_{r = 0}^m \td{b}(r)
  y^{k-\frac{1}{2}}
  e\big(\frac{r^2}{4m} \tau + r z \big) \Big)
  \Big|^\Jsk_{2-k;\, m}\, \gamma
\end{gather*}
for some constants $\td{b}(r) \in \CC$.

Separating the contributions of matrices $\begin{psmatrix} a & b \\ c & d \end{psmatrix} \in \SL{2}(\ZZ)$ with $c = 0$ and $c \ne 0$, we find that the $(n,r)$\thdash\ Fourier coefficient of~$\phi$ with $4 n m - r^2 = 0$ and $0 \le r \le m$ equals
\begin{gather*}
  \big( \td{b}(r) y^{k-\frac{1}{2}} + b(r) y^{\frac{1}{2}} \big)\,
  e\big( \frac{r^2}{4m} i y \big)
\end{gather*}
for some coefficients $b(r) \in \CC$ arising from the associated scattering matrix. The assumptions of the Lemma guarantee that~$\td{b}(r) = 0$ for all~$0 \le r \le m$, which proves that $\phi = 0$.
\end{proof}

\paragraph{Hecke operators}

The action of $V_l$ for $l > 0$ on $a^{\cJ\sk}_{2-k,m}(n,r;\, y) e(n x + r z)$ is analogous to the skew-holomorphic case. We obtain
\begin{multline*}
  \sum_{\substack{n, r \in \ZZ \\ D \ne 0}}
  c\big( n,r \big) a^{\cJ\sk}_{2-k,1}(n,r;\, y) e(n x + r z)
  \Big|^\Jsk_{k,1}\, V_m
\\
=
  \sum_{\substack{n, r \in \ZZ \\ D \ne 0}}
  \Big( \sum_{d \isdiv (n,r,m)} d^{1-k} c\big( \frac{mn}{d^2}, \frac{r}{d} \big) \Big)
  a^{\cJ\sk}_{2-k,1}(n,r;\, y) e(n x + r z)
\tx{,}
\end{multline*}
where $D := 4 n - r^2$. However, the action on coefficients with $D = 0$ is different. In the context of the Kohnen limit process for Eisenstein series, we will need the action of $V_l$ for $l > 0$ on powers of~$y$, described in the next lemma.
\begin{lemma}
\label{eq:V-hecke-operator-real-analytic-coefficient-formula}
Let $l > 0$ and $s \in \CC$. Then
\begin{gather*}
  y^s \big|^\Jsk_{k,m}\, V_l
=
  l^{-s}
  \sigma_{k + 2s - 1}(l)\,
  y^s
\tx{.}
\end{gather*}
\end{lemma}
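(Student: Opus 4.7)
The plan is a direct computation unwinding the definition of~$V_l$ in~\eqref{eq:def:V-hecke-operator-positive-l} on the function $\phi(\tau, z) = y^s$. The key observation is that $y^s$ depends only on $y = \Im(\tau)$ and is independent of both~$z$ and~$x$. Consequently, for every divisor $a \isdiv l$ and every $b$ modulo $l \slash a$, the value $\phi\big(\tfrac{a\tau + b}{l \slash a}, a z\big)$ equals $\big(\tfrac{a^2}{l}\big)^s y^s$, which is independent of~$b$ (and of~$z$).

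The next step is to evaluate the inner sum. Since the summand has no dependence on~$b$, the sum over $b \pmod{l \slash a}$ contributes a factor of $l \slash a$, leaving a single divisor sum over $a \isdiv l$. Collecting powers gives
\begin{gather*}
  y^s \big|^\Jsk_{k,m}\, V_l
\;=\;
  l^{k-1-s}\, y^s
  \sum_{a \isdiv l}
  (l \slash a)^{1-k}\, a^{2s}
\tx{.}
\end{gather*}
Substituting $d = l \slash a$ converts this to $l^{k-1+s}\, y^s \sum_{d \isdiv l} d^{1-k-2s}$.

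The final step is a second reindexing $e = l \slash d$ to identify the divisor sum: it equals $l^{1-k-2s}\, \sigma_{k+2s-1}(l)$. Combining the remaining powers of~$l$ gives the factor $l^{(k-1+s)+(1-k-2s)} = l^{-s}$, which yields the claimed identity. The only potential obstacle is bookkeeping of exponents of~$l$; there is no conceptual difficulty, and the statement is independent of the index~$m$ precisely because the action in~\eqref{eq:def:V-hecke-operator-positive-l} does not involve the index-dependent cocycle~$\alpha^\rmJ_m$ on the constant-in-$z$ function~$y^s$.
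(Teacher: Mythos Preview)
Your proof is correct and follows essentially the same approach as the paper: a direct computation from the definition of~$V_l$, using that $y^s$ is independent of~$b$ and~$z$ so the inner sum contributes a factor~$l/a$, then identifying the resulting divisor sum as~$\sigma_{k+2s-1}(l)$. The paper's version is slightly more streamlined---it reads off $\sum_{a\mid l} a^{k-1+2s}$ directly rather than going through two substitutions $d=l/a$ and $e=l/d$---but the content is identical.
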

\begin{proof}
Inserting the definition of~$V_l$, we have
\begin{gather*}
  y^s \big|^\Jsk_{k,m}\, V_l
=
  l^{k-1}
  \sum_{\substack{a \isdiv l \\ b \pmod{l \slash a}}}
  (l \slash a)^{-k}
  \big( \frac{a y}{l \slash a} \big)^s
=
  l^{k-1} l
  \sum_{\substack{a \isdiv l}}
  l^{-k} a^{k-1}
  a^{2s} l^{-s} y^s
= 
  l^{-s}
  \sigma_{k + 2s - 1}(l)\,
  y^s
\tx{.}
\end{gather*}
\end{proof}

\subsection{Abstract Fourier-Jacobi terms}
\label{ssec:abstract-fourier-jacobi-terms}

Abstract Fourier-Jacobi terms are intermediate objects in between Jacobi forms and Siegel modular forms. 
Consider the Fourier-Jacobi expansion of a real-analytic Siegel modular form
\begin{gather*}
  F(Z)
=
  \sum_{m \in \ZZ}
  \wtd\phi_m(\tau, z, \tau')
\tx{,}
\end{gather*}
where $\wtd\phi_m(\tau, z, \tau') = \wht\phi_m(\tau, z, y') e(m x')$ for some function $\wht\phi_m$ (see also~\eqref{eq:introduction:fourier-jacobi}), and
\begin{gather*}
  \wtd\phi_m(\tau, z, \tau' + b)
=
  \wtd\phi_m(\tau, z, \tau') e(mb)
\quad
  \tx{for all $b \in \RR$.}
\end{gather*}

The $m$\thdash\ Fourier-Jacobi term $\wtd\phi_m$ inherits analytic properties of $F$.

\begin{definition}[Skew-harmonic abstract Fourier-Jacobi terms]
\label{def:abstract-fourier-jacobi-term-skew-harmonic}
Let $m \in \ZZ$. Suppose $\wtd\phi_m :\, \HS^{(2)} \ra \CC$ is a function such that 
\begin{gather*}
  \wtd\phi_m(\tau, z, \tau')
=
  \wht\phi_m(\tau, z, y') e(m x')
\quad
  \tx{for some function $\wht\phi_m$.}
\end{gather*}
Then $\wtd\phi_m$ is an abstract skew-harmonic Fourier-Jacobi term of weight~$k$ and Jacobi index~$m$ if it satisfies
\begin{enumerateroman}
\item
$\Omega^\sk_k\, \wtd\phi_m = 0$ and $\xi^\sk_k\, \wtd\phi_m = 0$.

\item
For all $\gaJ$ in the extended Jacobi group $\SL{2}(\ZZ) \ltimes (\ZZ^2 \ltimes \ZZ)$ (viewed as embedded into $\Sp{2}(\ZZ)$), we have the Siegel slash invariance $\wtd\phi_m \big|^\sk_{k} \,\gaJ = \wtd\phi_m$.

\item
$\wtd\phi_m(Z) = \cO(\tr(Y)^a)$ for some $a \in \RR$ as $\tr(Y) \ra \infty$.
\end{enumerateroman}

The space of such functions is denoted by $\rmA\rmJ^\sk_{k,m}$.
\end{definition}

We record the following consequence of Propositions~\ref{prop:multiplicity-one-for-skew-harmonic-fourier-coefficients-nonzero-nonnegative} and~\ref{prop:multiplicity-one-for-skew-harmonic-fourier-coefficients-negative}.
\begin{proposition}
Suppose that $m > 0$ and $\wtd\phi_m \in \rmA\rmJ^\sk_{k,m}$. Then $\wtd\phi_m$ has a Fourier series expansion
\begin{gather*}
  \wtd\phi_m(\tau, z, \tau')
=
  \sum_{n,r \in \ZZ}
  c(n, r)
  a^\sk_k\big( \begin{psmatrix} n & r \slash 2 \\ r \slash 2 & m \end{psmatrix} Y \big)
  e\big( \begin{psmatrix} n & r \slash 2 \\ r \slash 2 & m \end{psmatrix} X \big)
\tx{,}\quad
  c(n,r) \in \CC
\tx{.}
\end{gather*}
If $m < 0$ and $\wtd\phi_m \in \rmA\rmJ^\sk_{k,m}$, then $\wtd\phi_m$ has a Fourier series expansion of the form
\begin{multline*}
  \wtd\phi_m(\tau, z, \tau')
=
  \sum_{\substack{n,r \in \ZZ \\ 4 nm - r^2 = 0}}
  c(n_{(0)}, r)
  a^\sk_k\big( \begin{psmatrix} n & r \slash 2 \\ r \slash 2 & m \end{psmatrix}_{(0)} Z \big)
  \,+
  \sum_{\substack{n,r \in \ZZ \\ 4 nm - r^2 = 0}}
  c(n_{(1)}, r)
  a^\sk_k\big( \begin{psmatrix} n & r \slash 2 \\ r \slash 2 & m \end{psmatrix}_{(1)} Z \big)
\\
  +
  \sum_{\substack{n,r \in \ZZ \\ 4 nm - r^2 \ne 0}}
  c(n, r)
  a^\sk_k\big( \begin{psmatrix} n & r \slash 2 \\ r \slash 2 & m \end{psmatrix} Z \big)
\end{multline*}
for $c(n_{(0)},r), c(n_{(1)},r), c(n,r) \in \CC$. The Fourier series expansions converge absolutely and locally uniformly.
\end{proposition}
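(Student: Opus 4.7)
The plan is to derive the claimed Fourier expansions in three stages: produce a preliminary Fourier series via translation invariance, decouple the skew-harmonic and moderate-growth conditions across Fourier modes, and then invoke the multiplicity-one statements of Propositions~\ref{prop:multiplicity-one-for-skew-harmonic-fourier-coefficients-nonzero-nonnegative} and~\ref{prop:multiplicity-one-for-skew-harmonic-fourier-coefficients-negative}.

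First, I would observe that every Siegel translation $\trans(B)$ with $B \in \MatT{2}(\ZZ)$ lies in the image of the extended Jacobi group under the embedding $\up$ of~\eqref{eq:embedding-of-jacobi-group}: namely, $\trans(B) = \up\big(\begin{psmatrix} 1 & B_{11} \\ 0 & 1 \end{psmatrix}, 0, B_{12}, B_{22}\big)$. Hence condition (ii) of Definition~\ref{def:abstract-fourier-jacobi-term-skew-harmonic} implies invariance of $\wtd\phi_m$ under all such translations. Combined with the factorization $\wtd\phi_m(Z) = \wht\phi_m(\tau, z, y')\, e(m x')$, this gives that $\wht\phi_m$ is periodic of period one in $x$ and in $u$, so standard Fourier analysis (using smoothness of $\wtd\phi_m$) yields a Fourier expansion
\[
  \wtd\phi_m(Z) \,=\, \sum_{n, r \in \ZZ} c_T(Y)\, e(T X),
\qquad
  T := \begin{psmatrix} n & r \slash 2 \\ r \slash 2 & m \end{psmatrix},
\]
via the identity $\tr(TX) = nx + ru + mx'$.

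Second, since $\Omega^\sk_k$ and $\xi^\sk_k$ are linear differential operators whose $X$-derivatives only introduce polynomial factors in the entries of $T$ when applied to a summand $c_T(Y)\, e(TX)$, the equations $\Omega^\sk_k\, \wtd\phi_m = 0 = \xi^\sk_k\, \wtd\phi_m$ decouple over Fourier modes, forcing each $c_T(Y)\, e(TX)$ to be individually skew-harmonic. The moderate growth of $\wtd\phi_m$ likewise transfers to each coefficient: integrating $\wtd\phi_m$ against the characters $e(-nx - ru)$ over a fundamental period domain and invoking iterated integration by parts yields polynomial bounds for $c_T(Y)$ in $\tr(Y)$, uniformly on compact subsets of the remaining real-part variables. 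This also secures absolute and locally uniform convergence of the series.

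Finally, the index $T$ is nonzero because its bottom-right entry equals $m \ne 0$, so Propositions~\ref{prop:multiplicity-one-for-skew-harmonic-fourier-coefficients-nonzero-nonnegative} and~\ref{prop:multiplicity-one-for-skew-harmonic-fourier-coefficients-negative} apply. For $m > 0$, the signature trichotomy of $T$ is positive definite ($4mn - r^2 > 0$), positive semi-definite ($4mn - r^2 = 0$), or indefinite ($4mn - r^2 < 0$); the first case forces $c_T \equiv 0$, consistently with $a^\sk_k(T;Y) \equiv 0$, and in the remaining cases Proposition~\ref{prop:multiplicity-one-for-skew-harmonic-fourier-coefficients-nonzero-nonnegative} supplies a unique scalar $c(n, r)$ such that $c_T(Y) = c(n,r)\, a^\sk_k(T Y)$. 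For $m < 0$, the trichotomy becomes negative definite, negative semi-definite degenerate, and indefinite: Proposition~\ref{prop:multiplicity-one-for-skew-harmonic-fourier-coefficients-negative} delivers a two-dimensional space in the degenerate case, which supplies the coefficients $c(n_{(0)}, r)$ and $c(n_{(1)}, r)$ multiplying $a^\sk_k(T_{(0)} Z)$ and $a^\sk_k(T_{(1)} Z)$, and a one-dimensional space in each of the other two. Assembling the three cases yields the stated expansions. The main technical obstacle is verifying that the moderate growth bound descends uniformly to each individual Fourier coefficient, since this is precisely the hypothesis required by the multiplicity-one statements; the argument is standard but must be handled with care because the real-part variables $y$, $v$, $y'$ remain free during the Fourier decomposition in $x$ and $u$.
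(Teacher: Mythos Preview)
Your proposal is correct and follows the same approach as the paper, which simply records the proposition as a consequence of Propositions~\ref{prop:multiplicity-one-for-skew-harmonic-fourier-coefficients-nonzero-nonnegative} and~\ref{prop:multiplicity-one-for-skew-harmonic-fourier-coefficients-negative} without giving any further argument. Your three stages---extracting the Fourier decomposition from translation invariance under the embedded Jacobi group, transferring the skew-harmonicity and growth conditions to individual Fourier terms, and then invoking the multiplicity statements case by case according to the signature of~$T$---are exactly the standard details one would supply, and your explicit verification that $\trans(B)$ lies in the image of~$\up$ is a nice touch.
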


\paragraph{Hecke operators}

We generalize the definitions of the Hecke operators in~\eqref{eq:def:U-hecke-operator} and~\eqref{eq:def:V-hecke-operator-positive-l} to the case of abstract Fourier-Jacobi terms.  Specifically, for $\wtd\phi \in \rmA\rmJ^\sk_{k,m}$ and $l > 0$, we set
\begin{gather}
\label{eq:def:U-hecke-operator-abstract-fourier-jacobi-coefficient}
  \big( \wtd\phi \big|^\sk_k\, U_l \big)(\tau, z, \tau')
:=
  \wtd\phi(\tau, lz, l^2 \tau')
\end{gather}
and
\begin{gather}
\label{eq:def:V-hecke-operator-positive-l-abstract-fourier-jacobi-coefficient}
  \big( \wtd\phi \big|^\sk_k\, V_l \big)(\tau, z, \tau')
:=
  l^{k-1} \sum_{\substack{a \isdiv l\\b \pmod{l \slash a}}}
  (l \slash a)^{-k}\,
  \phi\big( \frac{a \tau + b}{l \slash a},\, a z, a^2 \tau' \big)
\tx{.}
\end{gather}
Both can also be defined via $\up(\GaJ)$-cosets in $\GSp{2}(\QQ)$, yielding the maps
\begin{gather}
\label{eq:U-V-hecke-operator-domain-range}
  U_l
:\,
  \rmA\rmJ^\sk_{k,m}
\lra
  \rmA\rmJ^\sk_{k,l^2m}
\quad\tx{and}\quad
  V_l
:\,
  \rmA\rmJ^\sk_{k,m}
\lra
  \rmA\rmJ^\sk_{k,lm}
\tx{.}
\end{gather}
We also extend the coefficient formula~\eqref{eq:V-hecke-operator-coefficient-formula} to the case of abstract Fourier-Jacobi terms: If $\wtd\phi_1\in\rmA\rmJ^\sk_{k,1}$, then 
\begin{gather}
\label{eq:V-hecke-operator-coefficient-formula-abstract-fourier-jacobi-coefficient}
  c\big( \wtd\phi_1 \big|^\sk_k\, V_m;\,  n,r \big)
\;=\;
  \sum_{0 < d \isdiv \gcd(n,r,m)} \hspace{-1em}
  d^{k-1} c\Big(\wtd\phi_1;\, \frac{mn}{d^2}, \frac{r}{d} \Big)
\tx{,}\quad
  m > 0
\tx{.}
\end{gather}

\subsection{Abstract Fourier-Jacobi-Eisenstein series}

We introduce abstract Fourier-Jacobi-Eisen\-stein series, and determine their behavior under the action of the Hecke operator $V_m$.

For $k \in \ZZ$, $m \in \ZZ$, and a smooth function $f : \RR_{> 0} \ra \CC$, we set
\begin{gather}
\label{eq:def:abstract-jacobi-eisenstein-series}
  AE^\Jsk_{k,m}(f;\, Z)
:=
  \sum_{\gaJ \in \GaJ_\infty \backslash \GaJ}
  f(m y') e(m x') \big|^\sk_k\, \gaJ
\end{gather}
provided that the sum converges absolutely, and where $\gaJ$ is viewed again as an element of~$\Sp{2}(\ZZ)$.

Our following Proposition is analogous to Theorem~4.3 of~\cite{eichler-zagier-1985}, and will be needed for the proofs of Lemma~\ref{la:kohnen-limit-positive-m-eisenstein-series} and Lemma~\ref{la:kohnen-limit-negative-m-eisenstein-series}.
\begin{proposition}
\label{prop:abstract-jacobi-eisenstein-series-Vm-relation}
Let $k \in \ZZ$, $m \in \ZZ_{> 0}$, and $f : \RR_{> 0} \ra \CC$ be smooth such that~\eqref{eq:def:abstract-jacobi-eisenstein-series} converges absolutely. Then
\begin{gather}
\label{eq:prop:abstract-jacobi-eisenstein-series-Vm-relation}
  AE^\Jsk_{k,\pm 1}(f) \big|^\sk_k\, V_m
=
  \sum_{\substack{l, l' > 0 \\ l^2 \isdiv m \\ l' \isdiv l}}
  \sigma_{k-1}(m \slash l^2) \mu(l')\,
  AE^\Jsk_{k,\pm m l^{\prime\, 2} \slash l^2}(f)
  \big|^\sk_k\, U_{l \slash l'}
\tx{.}
\end{gather}
\end{proposition}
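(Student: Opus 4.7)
The plan is to combine the coset-sum definitions of $V_m$ and $AE^\Jsk_{k,\pm 1}(f)$ into a single double-coset sum, then reorganize it via M\"obius inversion to obtain the stated identity. This mirrors the proof of Theorem~4.3 of~\cite{eichler-zagier-1985}.

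First, I would insert the definitions to write
\begin{gather*}
  AE^\Jsk_{k,\pm 1}(f) \big|^\sk_k V_m
\,=\,
  m^{k-1}
  \sum_{\gaJ \in \GaJ_\infty \backslash \GaJ}
  \sum_{\substack{a \isdiv m \\ b \pmod{m/a}}}
  (m/a)^{-k}\,
  \big( f(\pm y') e(\pm x') \big) \big|^\sk_k\, \gaJ\, h_{a,b}
\tx{,}
\end{gather*}
where $h_{a,b}$ denotes the coset representative of~$V_m$ corresponding to $(a,b)$. Since $V_m$ commutes with the $\up(\GaJ)$-action up to a permutation of representatives, the combined double sum rearranges as a sum over representatives $g$ of $\GaJ_\infty$-orbits on $\GaJ \cdot \{h_{a,b}\}$. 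Next, I would bring each such $g$ into a canonical form isolating its ``$U_e$ content'': using the extended slash action~\eqref{eq:slash-extension} together with the covariance property displayed below~\eqref{eq:rot-quotient-action}, one shows that every representative, after left-multiplication by an element of $\GaJ_\infty$ and right-multiplication by a suitable element of~$\GaJ$, can be written as $\rot\big(\mathrm{diag}(1,e)\big)\, g'$ with $e > 0$, $e^2 \isdiv m$, and $g'$ a representative contributing to an abstract Fourier-Jacobi term of index $\pm m/e^2$. The factor $\rot(\mathrm{diag}(1,e))$ produces exactly the $U_e$ action by~\eqref{eq:def:U-hecke-operator-abstract-fourier-jacobi-coefficient}. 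Grouping by~$e$ then yields an intermediate identity expressing the left-hand side as a sum over divisors $e$ with $e^2 \isdiv m$ of a ``primitive'' index-$\pm m/e^2$ Jacobi-Eisenstein series acted on by~$U_e$.

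Finally, I would invert this intermediate identity via the standard M\"obius identity $\sum_{l' \isdiv l} \mu(l') = [l=1]$ to extract the decomposition claimed in~\eqref{eq:prop:abstract-jacobi-eisenstein-series-Vm-relation}: setting $e = l$ and letting $l'$ range over divisors of~$l$ produces both the $\mu(l')$ factor and the index $\pm m (l')^2 / l^2$ on $AE$, while the $\sigma_{k-1}(m/l^2)$ factor emerges from the Hecke-multiplicativity of the combinatorial count in the intermediate step. The hardest step will be the coset enumeration in the middle stage: the Heisenberg component of~$\GaJ$ interacts nontrivially with the scaling matrix~$\rot(\mathrm{diag}(1,e))$, and keeping track of how many $(\gaJ, a, b)$-triples map to each $(e, g')$-pair --- which is precisely where the multiplicity $\sigma_{k-1}(m/l^2)$ arises --- requires a careful combinatorial identity analogous to the one at the heart of Eichler--Zagier's proof.
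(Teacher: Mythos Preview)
Your outline is correct and follows essentially the same route as the paper: both proofs mirror Eichler--Zagier's Theorem~4.3 by expanding the double coset sum, isolating a $U_e$-content, and then verifying a combinatorial identity between divisor sums. The paper's execution is more concrete in two places you left schematic: the ``canonical form'' step is done by sorting $\Gamma_\infty \backslash \Mat{2}(m)$ according to the gcd~$\delta$ of the bottom row, after which summing over $\Gamma_\infty(\delta) \backslash \Gamma_\infty$ via orthogonality of roots of unity produces the condition $\delta \mid \lambda^2$ (this is precisely the Heisenberg interaction you flagged as the hard step); and the final identity between your intermediate form and~\eqref{eq:prop:abstract-jacobi-eisenstein-series-Vm-relation} is checked directly at prime powers rather than by a formal M\"obius inversion, though the two arguments are equivalent.
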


\begin{proof}[{Proof of Proposition~\ref{prop:abstract-jacobi-eisenstein-series-Vm-relation}}]
We only treat the case of Jacobi index~$+1$; The case of index~$-1$ is analogous. We closely follow the proof of Theorem~4.3 of~\cite{eichler-zagier-1985}. For convenience, set $\td{f}(\tau') := f(y') e(x')$. We start by expanding the left-hand side of~\eqref{eq:prop:abstract-jacobi-eisenstein-series-Vm-relation} completely, and obtain:
\begin{gather*}
  m^{k-1}
  \sum_{\substack{\gamma \in \Gamma_\infty \backslash \Mat{2}(m) \\ \gamma = \begin{psmatrix} a & b \\ c & d \end{psmatrix}}}
  (c \tau + d)^{-\frac{1}{2}}
  (c \ov\tau + d)^{\frac{1}{2}-k}
  \sum_{\lambda \in \ZZ}
  \td{f}\Big(m \tau'
  +
  \lambda^2 \frac{a \tau + b}{c \tau + d}
  +
  2 \lambda \frac{m z}{c \tau + d}
  -
  \frac{m c z^2}{c \tau + d}
  \Big)
\tx{,}
\end{gather*}
where $\Mat{2}(m) := \{ M \in \Mat{2}(\ZZ) \,:\, \det(M) = m \}$.

In analogy with the proof of Theorem~4.3 of~\cite{eichler-zagier-1985}, representatives of $\Gamma_\infty \backslash \Mat{2}(m)$ can be separated into distinct classes via the greatest common divisor~$\delta$ of their bottom row. Write $\Mat{2}(m)_\times \subseteq \Mat{2}(m)$ for the subset of matrices with coprime bottom row. As sets we have
\begin{gather*}
  \Gamma_\infty \big\backslash \Mat{2}(m)
\;\sim\;
  \bigcup_{\delta \isdiv m}
  \Gamma_\infty
  \big\backslash
  \Big\{
  \begin{psmatrix}
  a & b \\
  \delta c & \delta d
  \end{psmatrix}
  \,:\,
  \begin{psmatrix} a & b \\ c & d \end{psmatrix} \in \Mat{2}(m \slash \delta)_\times
  \Big\}
\;\sim\;
  \bigcup_{\delta \isdiv m}
  \Gamma_\infty(\delta)
  \big\backslash
  \Mat{2}(m \slash \delta)_\times
\tx{,}
\end{gather*}
where $\Gamma_\infty(\delta):=\{\begin{psmatrix} 1 & \delta b \\ 0 & 1 \end{psmatrix} \,:\, b \in \ZZ \} \subseteq \Gamma_\infty$.

We insert this into our expression for the left-hand side of~\eqref{eq:prop:abstract-jacobi-eisenstein-series-Vm-relation}:
\begin{gather*}
  \sum_{\delta \isdiv m}
  \frac{m^{k-1}}{\delta^k}
  \sum_{\substack{\gamma \in \Gamma_\infty(\delta) \backslash \Mat{2}(m \slash \delta)_\times \\ \gamma = \begin{psmatrix} a & b \\ c & d \end{psmatrix}}} \hspace{-1.7em}
  (c \tau + d)^{-\frac{1}{2}}
  (c \ov\tau + d)^{\frac{1}{2}-k}
  \sum_{\lambda \in \ZZ}
  \td{f}\Big(m \tau'
  +
  \frac{\lambda^2}{\delta} \frac{a \tau + b}{c \tau + d}
  +
  \frac{2 \lambda}{\delta} \frac{m z}{c \tau + d}
  -
  \frac{m c z^2}{c \tau + d}
  \Big)
	\text{.}
\end{gather*}
Note that if $\gamma \in \Gamma_\infty \backslash \Mat{2}(m \slash \delta)_\times$, then $\gamma' \gamma$ belong to distinct classes of $\Gamma_\infty(\delta) \backslash \Mat{2}(m \slash \delta)_\times$ when $\gamma'$ runs through $\Gamma_\infty(\delta) \backslash \Gamma_\infty$. In particular, we can split the sum over $\Gamma_\infty(\delta) \backslash \Mat{2}(m \slash \delta)_\times$ into sums over $\Gamma_\infty(\delta) \backslash \Gamma_\infty$ and $\Gamma_\infty \backslash \Mat{2}(m \slash \delta)_\times$. When executing the sum over $\Gamma_\infty(\delta) \backslash \Gamma_\infty$, we find from orthogonality of roots of unity that $\lambda$ contributes only if $\delta \isdiv \lambda^2$:
\begin{gather*}
  \sum_{\delta \isdiv m}
  \frac{m^{k-1}}{\delta^{k-1}}
  \sum_{\substack{\gamma \in \Gamma_\infty \backslash \Mat{2}(m \slash \delta)_\times \\ \gamma = \begin{psmatrix} a & b \\ c & d \end{psmatrix}}} \hspace{-1.7em}
  (c \tau + d)^{-\frac{1}{2}}
  (c \ov\tau + d)^{\frac{1}{2}-k}
  \sum_{\substack{\lambda \in \ZZ \\ \delta \isdiv \lambda^2}}
  \td{f}\Big(m \tau'
  +
  \frac{\lambda^2}{\delta} \frac{a \tau + b}{c \tau + d}
  +
  \frac{2 \lambda}{\delta} \frac{m z}{c \tau + d}
  -
  \frac{m c z^2}{c \tau + d}
  \Big)
	\text{.}
\end{gather*}

We write $\delta = \delta_\rms^2 \delta_\rmf$ for square-free~$\delta_\rmf$, and the condition~$\delta \isdiv \lambda^2$ gives~$\delta_\rms \delta_\rmf \isdiv \lambda$. Also observe that
\begin{gather*}
  \Gamma_\infty \big\backslash \Mat{2}(m \slash \delta)_\times
\;\sim\;
  \Gamma_\infty(m \slash \delta)
  \big\backslash
  \Big\{
  \begin{psmatrix}
  m a \slash \delta & m b \slash \delta \\
  c & d
  \end{psmatrix}
  \,:\,
  \begin{psmatrix} a & b \\ c & d \end{psmatrix} \in \SL{2}(\ZZ)
  \Big\}
\;\sim\;
  \Gamma_\infty \big\backslash \SL{2}(\ZZ)
\tx{.}
\end{gather*}
Hence the left-hand side~$AE^\Jsk_{k,\pm 1}(f) \big|^\sk_k\, V_m$ of~\eqref{eq:prop:abstract-jacobi-eisenstein-series-Vm-relation} equals
\begin{multline*}
  \sum_{\delta \isdiv m}
  \frac{m^{k-1}}{\delta^{k-1}} 
  \sum_{\substack{\gamma \in \Gamma_\infty \backslash \SL{2}(\ZZ) \\ \gamma = \begin{psmatrix} a & b \\ c & d \end{psmatrix}}} \hspace{-0.7em}
  (c \tau + d)^{-\frac{1}{2}}
  (c \ov\tau + d)^{\frac{1}{2}-k}
\\
  \sum_{\lambda \in \delta_\rms \delta_\rmf \ZZ}
  \td{f}\Big(
  \frac{m}{\delta_\rms^2} \delta_\rms^2 \tau'
  +
  \frac{\lambda^2}{\delta_\rms^2 \delta_\rmf^2} \frac{m}{\delta_\rms^2} \frac{a \tau + b}{c \tau + d}
  +
  2 \frac{\lambda}{\delta_\rms \delta_\rmf} \frac{m}{\delta_\rms^2} \frac{\delta_\rms z}{c \tau + d}
  -
  \frac{m}{\delta_\rms^2} \frac{c (\delta_\rms z)^2}{c \tau + d}
  \Big)
\tx{,}
\end{multline*}
which simplifies to
\begin{gather*}
  \sum_{\substack{\delta \isdiv m \\ \delta = \delta_\rms^2 \delta_\rmf}}
  \frac{m^{k-1}}{\delta^{k-1}} 
  AE^\Jsk_{k,m \slash \delta_\rms^2}(f) \big|^\sk_k\, U_{\delta_\rms}
\tx{.}
\end{gather*}

Recall from~\eqref{eq:U-V-hecke-operator-domain-range} that $U_l$ increase the Jacobi index by a factor~$l^2$. It therefore suffices to show that the following linear combinations of Jacobi Hecke operators agree:
\begin{gather}
\label{eq:prop:abstract-jacobi-eisenstein-series-Ul-relation}
  \sum_{\substack{\delta \isdiv m \\ d = \delta_\rms^2 \delta_\rmf}}
  \frac{m^{k-1}}{\delta^{k-1}}\,
  U_{\delta_\rms}
\qquad\tx{and}\qquad
\sum_{\substack{l, l' > 0 \\ l^2 \isdiv m \\ l' \isdiv l}}
  \sigma_{k-1}(m \slash l^2) \mu(l')\,
  U_{l \slash l'}
\tx{.}
\end{gather}
Since both expressions in~\eqref{eq:prop:abstract-jacobi-eisenstein-series-Ul-relation} are multiplicative in~$m$, we may assume for the remainder that~$m$ is a power of some prime~$p$.

Consider the second expression in~\eqref{eq:prop:abstract-jacobi-eisenstein-series-Ul-relation}. Write $m = m_\rms^2 m_\rmf$, where $m_\rms$ is a power of~$p$ and with~$m_\rmf \in \{1, p\}$.  The contribution of $l = m_\rms$ and $l' = 1$ is $\sigma_{k-1}(m_\rmf)\, U_{m_\rms}$, and if $l \slash l' \ne m_\rms$, then there are two choices of~$l$ and~$l'$, whose contributions partially cancel. In total, we find for the second expression in~\eqref{eq:prop:abstract-jacobi-eisenstein-series-Ul-relation}:
\begin{gather*}
  \sum_{\substack{l p \isdiv m_\rms}}
  \big( (m \slash l^2)^{k-1} + (m \slash l^2 p)^{k-1} \big)\, U_l
  \;+\;
  \sigma_{k-1}(m_\rmf)\, U_{m_\rms}
\tx{.}
\end{gather*}
Since $m_\rmf \in \{1, p\}$ and hence $\sigma_{k-1}(m_\rmf) = 1$ or $\sigma_{k-1}(m_\rmf) = 1 + p^{k-1}$, the previous expression agrees precisely with the first one in~\eqref{eq:prop:abstract-jacobi-eisenstein-series-Ul-relation}. This concludes the proof.
\end{proof}

\section{The Kohnen limit process for \texpdf{$m \ne 0$}{m not equal 0}}
\label{sec:kohnen-limit-nonzero-m}

Kohnen~\cite{kohnen-1994} introduced a limit process for Siegel modular forms, which was further developed in~\cite{bringmann-raum-richter-2011}. In particular, if $k > 3$ and $m>0$, then \cite{bringmann-raum-richter-2011} established that  Kohnen's limit process applied to skew-harmonic Maa\ss-Siegel forms yields skew-holomorphic Jacobi forms.  In this section, we extend the Kohnen limit process to negative $m$, and we also show that it can be inverted. Our proofs of some of the results in this section are quite technical. For brevity, we occasionally omit a proof, if it follows the pattern of a previous proof.

In the following, we define the Kohnen limit processes for abstract Fourier-Jacobi coefficients. Observe that these limits already converge for Fourier series coefficients of abstract Fourier-Jacobi coefficients. Sometimes we apply Kohnen limit processes to such Fourier series coefficients.

\subsection{The Kohnen limit process for \texpdf{$m > 0$}{m > 0}}
\label{ssec:kohnen-limit-positive-m}

We begin with a minor extension of the Kohnen limit process in~\cite{bringmann-raum-richter-2011}.
\begin{proposition}
\label{prop:kohnen-limit-positive-m-convergence}
Let $\wtd{\phi}_m \in \rmA\rmJ^\sk_{k,m}$, $m > 0$. Then
\begin{gather}
\label{eq:def:kohnen-limit-positive-m}
  \Klim^\sk_{k,m} \big( \wtd{\phi}_m \big) (\tau, z)
:=
  \lim_{y' \ra \infty}
  (4 \pi m y')^{k-\frac{1}{2}} e(- m \tau')\,
  \widetilde{\phi}_m(\tau, z, \tau')
\end{gather}
exists locally uniformly, and we have
\begin{gather*}
  \Klim^\sk_{k,m} \big( \wtd{\phi}_m \big)
\in
  \rmJ^\sk_{k,m}
\tx{.}
\end{gather*}
\end{proposition}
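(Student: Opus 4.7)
The plan is to work from the Fourier expansion of $\wtd\phi_m$ guaranteed by the discussion after Definition~\ref{def:abstract-fourier-jacobi-term-skew-harmonic}. For $m > 0$, any $T = \begin{psmatrix} n & r \slash 2 \\ r \slash 2 & m \end{psmatrix}$ is either positive definite ($D := 4mn - r^2 > 0$), positive semi-definite ($D = 0$), or indefinite ($D < 0$); positive definite Fourier terms vanish by Proposition~\ref{prop:multiplicity-one-for-skew-harmonic-fourier-coefficients-nonzero-nonnegative}. I would compute the Kohnen limit termwise in the two remaining cases. For $D = 0$, I would combine the explicit Whittaker formula~\eqref{eq:def:fourier-coefficients-skew-positive-semi-definite} with the classical asymptotic $W_{\kappa,\mu}(t) \sim t^{\kappa} e^{-t/2}$ and the expansion $\trace(TY) = ny + rv + my'$ to see that the exponent $-2\pi\trace(TY) + 2\pi m y'$ tends to $-2\pi(ny + rv)$ and that the polynomial prefactor $(4\pi m y')^{k-1/2} (4\pi\trace(TY))^{(1-2k)/2}$ tends to $1$. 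Together with the $z$-dependence $e(ru) e^{-2\pi r v} = e(rz)$, this reproduces $a^\Jsk_m(n,r; y)\, e(nx + rz)$. For indefinite $T$, the limit is the content of the normalization~\eqref{eq:def:fourier-coefficients-skew-indefinite-positive-m}, again matching $a^\Jsk_m(n,r; y)\, e(rz)$ after reinstating the $e(nx + ru)$ phase.

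Granted the pointwise limits, the next task is locally uniform convergence on $\HS^\rmJ$. I would exploit the moderate growth assumption $\wtd\phi_m(Z) = \cO(\trace(Y)^a)$, which, combined with standard integration-against-characters arguments, yields polynomial bounds $|c(n,r)| \ll (1 + |n| + |r|)^N$. The $W$-Whittaker asymptotic controls each Fourier term by a fixed exponentially decaying majorant in $(n,r)$, uniform over $y'$ bounded below, so dominated convergence gives locally uniform convergence and a holomorphic-in-$z$ limit.

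To conclude that $\Klim^\sk_{k,m}(\wtd\phi_m) \in \rmJ^\sk_{k,m}$, three checks remain. First, the Jacobi transformation law: the embedded Jacobi group fixes $y'$ and translates $x'$ by an integer (up to the cocycle already absorbed into $\alpha^\rmJ_m$), so the $\up(\GaJ)$-invariance of $\wtd\phi_m$ under $|^\sk_k$ passes to $|^\Jsk_{k,m}$-invariance in the limit; this was essentially verified in~\cite{bringmann-raum-richter-2011}, and the same computation applies verbatim since abstract Fourier-Jacobi terms carry exactly the same slash compatibility. Second, the heat and holomorphicity conditions of Definition~\ref{def:skew-holomorphic-jacobi-forms}: these follow termwise from the explicit Fourier expansion above, since only $D \le 0$ indices survive and the matching Fourier coefficients are precisely those of a skew-holomorphic Jacobi form; alternatively one can extract them from the conditions $\Omega^\sk_k\,\wtd\phi_m = 0$ and $\xi^\sk_k\,\wtd\phi_m = 0$ by inspecting the leading behaviour of each entry of $\Omega^\sk_k$ as $y' \to \infty$. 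Third, the cusp growth condition, which is immediate from the Fourier expansion since no positive definite (hence no $D > 0$) indices appear.

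The main obstacle is the uniform convergence step: abstract Fourier-Jacobi terms do not come with a globally defined Siegel modular form, so the coefficient bounds cannot simply be imported from~\cite{bringmann-raum-richter-2011}. One must derive them directly from the moderate growth hypothesis combined with the analytic Fourier shape given by Propositions~\ref{prop:multiplicity-one-for-skew-harmonic-fourier-coefficients-nonzero-nonnegative} and~\ref{prop:multiplicity-one-for-skew-harmonic-fourier-coefficients-negative}, and then marry them to the Whittaker asymptotics so that the limit can be taken inside the Fourier series.
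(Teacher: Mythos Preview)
Your termwise Fourier strategy is the same one the paper (via~\cite{bringmann-raum-richter-2011}) follows, and the analysis for degenerate~$T$ and the Jacobi invariance / growth checks are all fine. The paper's own proof is deliberately minimal: it simply observes that~\cite{bringmann-raum-richter-2011} already handled Fourier--Jacobi terms of genuine Siegel modular forms, and that multiplicity-one (Proposition~\ref{prop:multiplicity-one-for-skew-harmonic-fourier-coefficients-nonzero-nonnegative}) forces the individual Fourier terms~$a^\sk_k(TZ)$ of an abstract~$\wtd\phi_m$ to coincide with those appearing in~$e^\sk_{k,m}$, so the termwise limits and the resulting skew-holomorphic structure carry over verbatim.

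There is one logical slip in your write-up: for indefinite~$T$ you invoke~\eqref{eq:def:fourier-coefficients-skew-indefinite-positive-m}, but that equation is a \emph{normalization} of~$a^\sk_k(TY)$ chosen precisely so that the Kohnen limit comes out as stated; it presupposes Proposition~\ref{prop:kohnen-limit-positive-m-convergence} rather than proving it. What you need instead is an independent reason the limit exists for each indefinite Fourier term. The paper gets this by pointing to~\cite{bringmann-raum-richter-2011}, where the limit is computed on the Eisenstein series~$e^\sk_{k,m}$ and multiplicity-one then pins down the asymptotic of every~$a^\sk_k(TY)$. If you want to stay self-contained, you would have to extract the $y'\to\infty$ asymptotic directly from Maa\ss's differential equations or Shimura's integral representation~\cite{shimura-1982}; merely citing~\eqref{eq:def:fourier-coefficients-skew-indefinite-positive-m} is circular. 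Once that is fixed, your uniform-convergence argument via polynomial coefficient bounds and Shimura's decay estimates is exactly the mechanism that closes the gap between the Eisenstein case and general abstract Fourier--Jacobi terms.
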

\begin{proof}
If $\wtd{\phi}_m$ is a Fourier-Jacobi term of a skew-harmonic Maa\ss-Siegel form, then~\eqref{eq:def:kohnen-limit-positive-m} was established in~\cite{bringmann-raum-richter-2011}. The extension of~\eqref{eq:def:kohnen-limit-positive-m} to arbitrary $\wtd\phi \in \rmA\rmJ^\sk_{k,m}$ follows from the mul\-ti\-pli\-ci\-ty-one result in Proposition~\ref{prop:multiplicity-one-for-skew-harmonic-fourier-coefficients-nonzero-nonnegative}.
\end{proof}

Note that our notation in~\eqref{eq:def:kohnen-limit-positive-m} is slightly different from~\cite{bringmann-raum-richter-2011}.  Furthermore, we inserted the factor~$(4 \pi m)^{k-\frac{1}{2}}$, which allows for the covariance in the next proposition and which normalizes the Kohnen limit process applied to the skew Eisenstein series in Lemma~\ref{la:kohnen-limit-positive-m-eisenstein-series}.
\begin{proposition}
\label{prop:kohnen-limit-positive-m-covariance}
Let $\wtd\phi_m \in \rmA\rmJ^\sk_{k,m}$, $m > 0$. We have the covariance properties
\begin{gather}
\label{eq:kohnen-limit-positive-m-covariance}
\begin{aligned}
  \Klim^\sk_{k,m} \big( \wtd\phi_m \big|^\sk_{k}\, g^\rmJ \big)
&=
  \Klim^\sk_{k,m} \big( \wtd\phi_m \big) \big|^\Jsk_{k,m}\, g^\rmJ
\tx{,}
\\
  \Klim^\sk_{k,a^2 m} \big( \wtd\phi_m \big|^\sk_{k}\, \diag(1,a,1, 1 \slash a) \big)
&=
  \Klim^\sk_{k,m} \big( \wtd\phi_m \big) \big|^\Jsk_{k,m} \diag(1,a,1, 1 \slash a)
\tx{,}
\end{aligned}
\end{gather}
for all $g^\rmJ \in \GJ$ and for all $a \in \RR^\times$. For all $l > 0$, we have
\begin{gather}
\label{eq:kohnen-limit-positive-m-covariance-hecke-operators}
\begin{aligned}
  \Klim^\sk_{k,l^2 m} \big( \wtd\phi_m \big|^\sk_{k}\, U_l \big)
=
  \Klim^\sk_{k,m} \big( \wtd\phi_m \big) \big|^\Jsk_{k,m}\, U_l
\tx{,}\\
  \Klim^\sk_{k,l m} \big( \wtd\phi_m \big|^\sk_{k}\, V_l \big)
=
  \Klim^\sk_{k,m} \big( \wtd\phi_m \big) \big|^\Jsk_{k,m}\, V_l
\tx{.}
\end{aligned}
\end{gather}
\end{proposition}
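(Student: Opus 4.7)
The plan is to verify each identity by direct substitution into the definition of $\Klim^\sk_{k,m}$, separating the Siegel cocycle from the rest and absorbing the $y'$-dependent change of variable inside the limit. Throughout, I write $\wtd\phi_m = \wht\phi_m \cdot e(mx')$, so that the Kohnen limit equals $\lim_{y' \to \infty} (4\pi m y')^{k-\frac{1}{2}} e^{2\pi m y'} \wht\phi_m(\tau, z, y')$, and I use that each transformed Fourier-Jacobi term continues to lie in some $\rmA\rmJ^\sk_{k, m'}$, so that Proposition~\ref{prop:kohnen-limit-positive-m-convergence} justifies the locally uniform convergence of the Kohnen limit on the right-hand side of each identity.

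For the first identity, I compute $Z^* = g^\rmJ Z$ via the embedding $\up(g^\rmJ)$. Its bottom row $(0, 0, 0, 1)$ forces $\det(CZ+D) = c\tau + d$ to depend only on $\tau$, and a direct calculation of the $(2,2)$ entry of $(AZ+B)(CZ+D)^{-1}$ shows that $\tau'^* - \tau'$ is a function of $(\tau, z, \lambda, \mu, \kappa)$ alone and satisfies
\[
  e\bigl(m(\tau'^* - \tau')\bigr) = \alpha^\rmJ_m\bigl(g^\rmJ, (\tau, z)\bigr).
\]
In particular $y'^* - y' = O(1)$ as $y' \to \infty$, so $(y'/y'^*)^{k-\frac{1}{2}} \to 1$ and I may change variables from $y'$ to $y'^*$ inside the limit. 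Substituting into the definition yields
\[
  \Klim^\sk_{k,m}\bigl(\wtd\phi_m \big|^\sk_k g^\rmJ\bigr)(\tau, z) = |c\tau+d|^{-1}\, \overline{c\tau+d}^{\,1-k}\, \alpha^\rmJ_m(g^\rmJ, (\tau, z))\, \Klim^\sk_{k,m}(\wtd\phi_m)(\tau^*, z^*),
\]
which is exactly the Jacobi slash action. The second identity is cleaner: $Z^* = \diag(1,a)\, Z\, \diag(1,a) = \begin{psmatrix} \tau & az \\ az & a^2\tau' \end{psmatrix}$ and $\det(CZ+D) = 1/a$, so the skew-Siegel slash factor becomes $a^k$. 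Setting $\tau'' := a^2 \tau'$ inside the limit absorbs all remaining $a$-dependence and produces $a^k\, \Klim^\sk_{k,m}(\wtd\phi_m)(\tau, az)$, matching the extended slash action from~\eqref{eq:rot-quotient-action}.

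For the Hecke operators, the covariance with $U_l$ follows from the same substitution pattern as in the second identity, combined with a translation in $\tau$. The operator $V_l$ is a finite sum whose summands correspond to double-coset representatives in $\up(\GaJ) \backslash \GSp{2}(\QQ)$; each summand factors as a Jacobi-group element composed with a diagonal rescaling, so by linearity I commute the limit past the finite sum and apply the first two identities term by term to recover the Jacobi-Hecke operator on the right. The main obstacle is the cocycle identity $e(m(\tau'^* - \tau')) = \alpha^\rmJ_m(g^\rmJ, (\tau, z))$ in the first part, a bookkeeping-heavy but standard matrix computation in the embedded Jacobi group; once this identity is secured, the remainder of the proof is routine.
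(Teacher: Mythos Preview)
Your proposal is correct and follows essentially the same route as the paper. The paper dispatches the first identity in~\eqref{eq:kohnen-limit-positive-m-covariance} with a single sentence (``the uniform convergence of the Kohnen limit process yields the first equation''), whereas you unpack the underlying cocycle computation $e\bigl(m(\tau'^*-\tau')\bigr)=\alpha^\rmJ_m(g^\rmJ,(\tau,z))$ explicitly; the second identity is handled by the same direct calculation in both, and both derive~\eqref{eq:kohnen-limit-positive-m-covariance-hecke-operators} from~\eqref{eq:kohnen-limit-positive-m-covariance}.
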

\begin{proof}
The covariance of Hecke operators in~\eqref{eq:kohnen-limit-positive-m-covariance-hecke-operators} follows directly from~\eqref{eq:kohnen-limit-positive-m-covariance}.

The uniform convergence of the Kohnen limit process yields the first equation in~\eqref{eq:kohnen-limit-positive-m-covariance}. To establish the second one, observe that
\begin{align*}
{}&
  (\Klim^\sk_{k,m} \wtd\phi_m) \big|^\Jsk_{k,m} \diag(1,a,1,1 \slash a)
=
  a^k\,
  (\Klim^\sk_{k,m} \wtd\phi_m)(\tau, a z)
\\
={}&
  a^k\,
  \lim_{y' \ra \infty}
  (4 \pi m y')^{k-\frac{1}{2}}\,
  e( - m \tau' )\,
  \wtd\phi_m(\tau, a z, \tau')
\\
={}&
  a^k\, 
  \lim_{y' \ra \infty}
  (4 \pi m a^2 y')^{k-\frac{1}{2}}
  e( - m a^2 \tau' )\,
  \wtd\phi_m(\tau, a z, a^2 \tau')
\\
={}&
  a^k\, \Klim^\sk_{k,a^2m} \big( \wtd\phi_m(\tau, a z, a^2 \tau') \big)
=
  \Klim^\sk_{k,a^2m} \big( \wtd\phi_m \big|^\sk_{k}\, \diag(1,a,1,1 \slash a) \big)
\tx{.}
\end{align*}
\end{proof}

Next we point out that the Kohnen limit process is injective.

\begin{proposition}
\label{prop:kohnen-limit-positive-m-injective}
Let $m > 0$ and $\wtd\phi_m : \HS^{(2)} \ra \CC$ be skew-harmonic such that
\begin{gather*}
  \wtd\phi_m(\tau, z, \tau')
=
  \wht\phi_m(\tau, z, y') e(m x')
\quad
  \tx{for some function $\wht\phi_m$.}
\end{gather*}
If\/ $\Klim^\sk_{k,m}(\wtd\phi_m) = 0$, then $\wtd\phi_m = 0$.
\end{proposition}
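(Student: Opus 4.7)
The plan is to prove injectivity of $\Klim^\sk_{k,m}$ via a combined asymptotic and Fourier-analytic argument, leveraging the multiplicity-one result (Proposition~\ref{prop:multiplicity-one-for-skew-harmonic-fourier-coefficients-nonzero-nonnegative}) in pointwise form. First, writing $\wtd\phi_m(\tau, z, \tau') = \wht\phi_m(\tau, z, y')\, e(mx')$, the skew-harmonic conditions $\Omega^\sk_k \wtd\phi_m = 0$ and $\xi^\sk_k \wtd\phi_m = 0$ translate, via $\partial_{x'} \mapsto 2\pi i m$, into a system of PDEs on $\wht\phi_m$. Elliptic regularity guarantees that $\wht\phi_m$ is real-analytic on $\HS \times \CC \times \RR_{>0}$, and the hypothesis $\Klim^\sk_{k,m}(\wtd\phi_m) \equiv 0$ is equivalent to $(4\pi m y')^{k-\frac{1}{2}} e^{2\pi m y'}\, \wht\phi_m(\tau,z,y') \to 0$ locally uniformly in $(\tau,z)$.

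Next I would analyze the asymptotic behavior of $\wht\phi_m(\tau, z, y')$ as $y' \to \infty$. Since $m > 0$, every admissible Fourier index $T = \begin{psmatrix} n & r/2 \\ r/2 & m \end{psmatrix}$ is positive (semi-)definite or indefinite. By Proposition~\ref{prop:multiplicity-one-for-skew-harmonic-fourier-coefficients-nonzero-nonnegative}, any local mode of $\wht\phi_m$ at such a $T$ must be a scalar multiple of the canonical function $a^\sk_k(T;Y)$; this is forced to vanish when $T$ is positive definite. For the remaining signatures the dominant asymptotic as $y' \to \infty$ is governed by $W_{\frac{1-k}{2},\frac{k-1}{2}}(4\pi m y')$, with leading behavior $(4\pi m y')^{(1-2k)/2} e^{-2\pi m y'}$ times an $(x,z,v)$-dependent exponential. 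The normalization factor $(4\pi m y')^{k-\frac{1}{2}}\, e(-m\tau')$ built into $\Klim^\sk_{k,m}$ is precisely calibrated to isolate the coefficient of this leading exponential at each $(\tau, z)$.

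The conclusion then follows by a bootstrap. If the Kohnen limit vanishes, the leading asymptotic coefficient of $\wht\phi_m$ vanishes identically in $(\tau, z)$. Feeding this back into the PDE system, in the spirit of Maass's treatment of the matrix-valued equations in~\cite{maass-1953}, shows that each subdominant asymptotic contribution is forced to vanish as well, yielding $\wht\phi_m = 0$ for all sufficiently large $y'$. Real-analyticity then propagates this vanishing to all of $\HS^{(2)}$. \textbf{The main obstacle} is making the local mode analysis of the second step precise in the absence of any assumed periodicity of $\wtd\phi_m$ in $\tau$ or $z$, which prevents a direct global Fourier expansion; this is overcome by applying the multiplicity-one result pointwise in $(\tau, z)$ and exploiting the rigidity of the skew-harmonic PDE system to reconstruct $\wht\phi_m$ from its asymptotic head.
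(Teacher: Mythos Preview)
Your proposal diverges from the paper's argument and leaves the decisive step unjustified. The multiplicity-one statement of Proposition~\ref{prop:multiplicity-one-for-skew-harmonic-fourier-coefficients-nonzero-nonnegative} concerns functions of the form $a(Y)e(TX)$, i.e.\ joint eigenfunctions of all $X$-translations; there is no ``pointwise in $(\tau,z)$'' version applicable to a general skew-harmonic $\wht\phi_m$, so your proposed resolution of the periodicity obstacle does not work as stated. More seriously, the ``bootstrap'' in your third step---that vanishing of the leading $y'\to\infty$ asymptotic forces all subdominant contributions to vanish---is asserted but not argued: the skew-harmonic system is not an ODE in $y'$ alone, and Maa\ss's analysis in~\cite{maass-1953} does not deliver this kind of rigidity without precisely the mode decomposition you are trying to avoid. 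Note also that for indefinite $T$ the paper has no closed formula for $a^\sk_k(T;Y)$; its very normalization in~\eqref{eq:def:fourier-coefficients-skew-indefinite-positive-m} is \emph{via} the Kohnen limit, so even the claimed Whittaker asymptotic is not available a priori and invoking it here is circular.

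The paper takes a different route. It reduces to showing $\Klim^\sk_{k,m}\bigl(a^\sk_k(TZ)\bigr)\ne 0$ for each individual Fourier term with $T=\begin{psmatrix} n & r/2\\ r/2 & m\end{psmatrix}$. Degenerate positive semi-definite $T$ are handled by the explicit formula (Lemma~\ref{la:kohnen-limit-positive-m-semi-definite-fourier-coefficients}); positive definite $T$ contribute nothing by Proposition~\ref{prop:multiplicity-one-for-skew-harmonic-fourier-coefficients-nonzero-nonnegative}. For indefinite $T$ the paper uses an external witness, the Eisenstein series $E^\sk_k$: Lemma~\ref{la:kohnen-limit-positive-m-eisenstein-series} identifies $\Klim^\sk_{k,m}(e^\sk_{k,m})$ as a nonzero multiple of $E^\Jsk_{k,1}\big|V_m$, which by Lemma~\ref{la:jacobi-eisenstein-series-fourier-coefficients-real} has a nonvanishing coefficient at some indefinite index, forcing $\Klim^\sk_{k,m}\bigl(a^\sk_k(TZ)\bigr)\ne 0$ for that $T$; covariance under $\rot(\GL{2}^\downarrow(\RR))$ from Proposition~\ref{prop:kohnen-limit-positive-m-covariance} then propagates nonvanishing to every indefinite $T$ with bottom-right entry $m$. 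This Eisenstein-series input is the ingredient your sketch is missing.
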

\begin{proof}
The proof is similar to the proof of Proposition~\ref{prop:kohnen-limit-negative-m-injective}, which gives the analogous statement in the case that $m < 0$. However, note that Fourier series coefficients~$a^\sk_k(T Y)$ with positive definite~$T = \begin{psmatrix} n & r \slash 2 \\ r \slash 2 & m \end{psmatrix}$ vanish by Proposition~\ref{prop:multiplicity-one-for-skew-harmonic-fourier-coefficients-nonzero-nonnegative}.  We omit further details, since we give a complete proof of Proposition~\ref{prop:kohnen-limit-negative-m-injective}.
\end{proof}

In the case of semi-definite Fourier indices, the explicit formulas for Fourier series coefficients of skew-harmonic Siegel modular forms allow us to compute the Kohnen limit process directly.
\begin{lemma}
\label{la:kohnen-limit-positive-m-semi-definite-fourier-coefficients}
For~$m > 0$, we have
\begin{gather}
  \Klim^\sk_{k,m} \big(
  a^\sk_k\big( \begin{psmatrix} 0 & 0 \\ 0 & m \end{psmatrix}; Z \big)
  \big)
=
  1
\tx{.}
\end{gather}
\end{lemma}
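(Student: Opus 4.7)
The idea is to substitute the explicit formula for $a^\sk_k(T;Y)$ from~\eqref{eq:def:fourier-coefficients-skew-positive-semi-definite} with $T = \begin{psmatrix} 0 & 0 \\ 0 & m \end{psmatrix}$ into the definition of the Kohnen limit~\eqref{eq:def:kohnen-limit-positive-m}, and then reduce everything to a standard asymptotic of the Whittaker function. Since $\trace(TY) = m y'$ and $TX = m x'$ for this particular~$T$, the object we want to apply $\Klim^\sk_{k,m}$ to is
\[
  a^\sk_k\big(\begin{psmatrix} 0 & 0 \\ 0 & m \end{psmatrix}; Z\big)
=
  (4\pi m y')^{-k/2}\, W_{\frac{1-k}{2},\frac{k-1}{2}}(4\pi m y')\, e(m x').
\]

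The first step is to observe that the prefactor $e(-m\tau')$ in~\eqref{eq:def:kohnen-limit-positive-m} splits as $e^{2\pi m y'} e(-m x')$, so that the $x'$-dependence cancels against $e(m x')$ and one is left with a purely $y'$-dependent expression. Setting $u := 4\pi m y'$ and gathering powers gives
\[
  \Klim^\sk_{k,m}\big(a^\sk_k(\begin{psmatrix} 0 & 0 \\ 0 & m \end{psmatrix}; Z)\big)
\;=\;
  \lim_{u\to\infty} u^{(k-1)/2}\, e^{u/2}\, W_{\frac{1-k}{2},\frac{k-1}{2}}(u).
\]

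The second step is to invoke the leading-order asymptotic $W_{\kappa,\mu}(u) \sim u^\kappa e^{-u/2}$ as $u\to\infty$ (for instance 13.14.21 of~\cite{nist-dlmf}) with $\kappa = (1-k)/2$ and $\mu = (k-1)/2$. The powers $u^{(k-1)/2}$ and $u^{(1-k)/2}$ cancel, as do the exponentials $e^{u/2}$ and $e^{-u/2}$, yielding the limit~$1$.

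There is essentially no obstacle: the normalization factor $(4\pi m y')^{k-\tfrac{1}{2}}$ inserted in~\eqref{eq:def:kohnen-limit-positive-m} was tuned for exactly this cancellation. The only point worth double-checking is that the leading Whittaker asymptotic has implicit constant equal to~$1$, which is clear from the integral representation
\[
  W_{\kappa,\mu}(z)
=
  \frac{e^{-z/2}\, z^\kappa}{\Gamma(\mu-\kappa+\tfrac{1}{2})}
  \int_0^\infty e^{-t}\, t^{\mu-\kappa-\tfrac{1}{2}}\, (1+t/z)^{\mu+\kappa-\tfrac{1}{2}}\, dt
\]
(valid here since $\mu - \kappa + \tfrac{1}{2} = k - \tfrac{1}{2} > 0$) by dominated convergence: $(1+t/z)^{\mu+\kappa-\tfrac{1}{2}} \to 1$ as $z \to\infty$, and the remaining integral evaluates to $\Gamma(\mu-\kappa+\tfrac{1}{2})$.
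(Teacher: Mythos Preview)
Your proof is correct and follows essentially the same approach as the paper: insert the explicit formula~\eqref{eq:def:fourier-coefficients-skew-positive-semi-definite} into the Kohnen limit~\eqref{eq:def:kohnen-limit-positive-m} and use the standard Whittaker asymptotic $W_{\kappa,\mu}(u)\sim u^{\kappa}e^{-u/2}$ (the paper cites 13.19.3 of~\cite{nist-dlmf} rather than 13.14.21, and does not spell out the cancellations or the integral-representation check of the leading constant, but the argument is the same).
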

\begin{proof}
Expansion~13.19.3 in~\cite{nist-dlmf} implies that
\begin{gather*}
  W_{\frac{1-k}{2}, \frac{k-1}{2}}( 4 \pi m y' )
\asymp
  (4 \pi m y')^{\frac{1-k}{2}} \exp(2 \pi m y')
\quad
  \tx{as $y' \ra \infty$.}
\end{gather*}
Now, the claim follows directly after inserting this asymptotic into the defining formula~\eqref{eq:def:fourier-coefficients-skew-positive-semi-definite} of~$a^\sk_k(T; Y)$.
\end{proof}

We need the next lemma to relate Fourier-Jacobi coefficients of positive and negative indices (see Sections~\ref{ssec:kohnen-limit-zero-m-eisenstein-series} and~\ref{ssec:proof-of-skew-maass-lift}) and to define the Hecke operator~$V_0$ for abstract Fourier-Jacobi terms (see Section~\ref{ssec:hecke-operator-V-0}).
\begin{lemma}
\label{la:kohnen-limit-positive-m-eisenstein-series}
Let $e^\sk_{k,m}$, $m > 0$ be the $m$\thdash\ Fourier-Jacobi coefficient of the Eisenstein series $E^\sk_k$. Then
\begin{gather*}
  \Klim^\sk_{k,m}\big( e^\sk_{k,m} \big)
=
  \frac{(-1)^{\frac{1-k}{2}} (2 \pi)^k}{\Gamma(\frac{1}{2}) \zeta(k)}\,
  E^\Jsk_{k,1} \big|^\Jsk_{k,1}\, V_m
\tx{,}
\end{gather*}
where $E^\Jsk_{k,1}$ is the Jacobi-Eisenstein series defined in~\eqref{eq:def:skew-holomorphic-jacobi-eisenstein-series}. In particular, we have
\begin{gather*}
  e^\sk_{k,m}
=
  e^\sk_{k,1} \big|^\sk_k\, V_m
\tx{.}
\end{gather*}
\end{lemma}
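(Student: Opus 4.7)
The plan is to reduce the lemma to the case $m=1$ via the covariance of the Kohnen limit process under $V_m$, and then to evaluate a single Fourier coefficient to pin down the scalar.

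First, I would establish the structural identity $e^\sk_{k,m} = e^\sk_{k,1} \big|^\sk_k V_m$ for all $m>0$. Starting from $E^\sk_k = \sum_{\gamma \in \GaM{2}_\infty \backslash \GaM{2}} 1 \big|^\sk_k\, \gamma$, I would decompose $\GaM{2}_\infty \backslash \GaM{2}$ with respect to the embedded Klingen parabolic $\up(\GaJ)$ extended by the center. Sorting cosets by the rank of the lower-left $2\times 2$ block~$C$, the rank-one cosets naturally parameterize as a double sum, the outer one running over $\GaJ_\infty \backslash \GaJ$ and the inner matching the coset decomposition of $V_m$ from~\eqref{eq:def:V-hecke-operator-positive-l-abstract-fourier-jacobi-coefficient}. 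Comparing Fourier-Jacobi index~$m$ on both sides yields $e^\sk_{k,m} = e^\sk_{k,1} \big|^\sk_k V_m$. This is formally the same combinatorial argument that underlies the holomorphic Maa\ss\ lift, transported to the skew setting via the skew slash action.

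Second, by Proposition~\ref{prop:kohnen-limit-positive-m-covariance} the Kohnen limit is covariant with $V_m$, so
\[
  \Klim^\sk_{k,m}\big(e^\sk_{k,m}\big)
 \,=\,
  \Klim^\sk_{k,1}\big(e^\sk_{k,1}\big) \big|^\Jsk_{k,1}\, V_m,
\]
and it remains to prove $\Klim^\sk_{k,1}\big(e^\sk_{k,1}\big) = \frac{(-1)^{(1-k)/2}(2\pi)^k}{\Gamma(\frac{1}{2})\zeta(k)}\, E^\Jsk_{k,1}$. From the coset description of $e^\sk_{k,1}$ above, $\Klim^\sk_{k,1}$ distributes over the sum over $\GaJ_\infty \backslash \GaJ$, producing a skew-holomorphic Jacobi form of Eisenstein shape; injectivity of $\Klim^\sk_{k,1}$ (Proposition~\ref{prop:kohnen-limit-positive-m-injective}) together with the coset parameterization reduces the equality to comparing a single Fourier coefficient. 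I would choose the $(n,r) = (0,0)$ coefficient: on the right-hand side it equals~$1$, coming from the identity coset of~$E^\Jsk_{k,1}$, while on the left-hand side Lemma~\ref{la:kohnen-limit-positive-m-semi-definite-fourier-coefficients} identifies it with the Fourier coefficient of $E^\sk_k$ at index $T = \begin{psmatrix} 0 & 0 \\ 0 & 1 \end{psmatrix}$. Siegel unfolding of this coefficient reduces to an elliptic weight-$k$ Eisenstein-series coefficient (producing the $\zeta(k)$ in the denominator; cf.\ Lemma~\ref{la:elliptic-eisenstein-series-fourier-expansion}) multiplied by a Whittaker-type factor producing $(2\pi)^k/\Gamma(\frac{1}{2})$ and the sign $(-1)^{(1-k)/2}$ from the skew weight.

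The main obstacle is the coset analysis in the first step. The skew slash action separates $|\det(C\tau+D)|$ from $\overline{\det(C\tau+D)}^{1-k}$, so the seed along the Klingen cosets is not the pure holomorphic $q'^m$-seed of the classical Eichler--Zagier calculation; one has to keep track of the half-integral power of~$\det(Y)$ that the skew weight introduces, and then match the rank-one cosets of $\GaM{2}_\infty \backslash \GaM{2}$ with the Hecke-coset decomposition of $V_m$ on the nose. Once that is accomplished, the covariance step and the Fourier-coefficient evaluation are routine applications of the tools developed in Sections~\ref{sec:jacobi-forms} and~\ref{ssec:kohnen-limit-positive-m}.
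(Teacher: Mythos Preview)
Your approach is correct but proceeds in the opposite logical order from the paper. The paper first establishes the Kohnen-limit formula (the first display) by invoking formula~(17) of Kohnen's 1994 paper together with Proposition~\ref{prop:abstract-jacobi-eisenstein-series-Vm-relation}, and only then deduces $e^\sk_{k,m} = e^\sk_{k,1}\,|^\sk_k\,V_m$ from it via injectivity (Proposition~\ref{prop:kohnen-limit-positive-m-injective}) and the intertwining of~$\Klim$ with~$V_m$ (Proposition~\ref{prop:kohnen-limit-positive-m-covariance}). You reverse this: you prove the structural $V_m$-relation on the Siegel side first by a direct coset computation, and then reduce the Kohnen-limit formula to the single case~$m=1$, which you finish by evaluating one Fourier coefficient.

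Both routes rest on the same Eichler--Zagier-type coset combinatorics; the paper packages that combinatorics into Proposition~\ref{prop:abstract-jacobi-eisenstein-series-Vm-relation} and outsources the Kohnen-limit computation to the cited reference, whereas you reproduce it inline on the abstract Fourier--Jacobi level. Your route is thus more self-contained and makes the $V_m$-compatibility visible before passing to the Jacobi side, at the cost of re-deriving material already available in Proposition~\ref{prop:abstract-jacobi-eisenstein-series-Vm-relation} and of a separate Fourier-coefficient evaluation for the constant (which the paper gets directly from Kohnen). One small point of terminology: the cosets contributing to $e^\sk_{k,m}$ for $m>0$ are not the ``rank-one'' cosets of~$C$ but rather those parameterized via the Klingen parabolic (in the notation of the proof of Lemma~\ref{la:kohnen-limit-zero-m-eisenstein-series}, this is the piece with $C_{22}\ne 0$); this does not affect your argument, only the labeling.
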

\begin{proof}
The first statement follows from (17) of~\cite{kohnen-1994} and Proposition~\ref{prop:abstract-jacobi-eisenstein-series-Vm-relation}. The second statement is a consequence of the first one, since $\Klim$ is injective (Proposition~\ref{prop:kohnen-limit-positive-m-injective}) and since $\Klim$ and $V_m$ intertwine (Proposition~\ref{prop:kohnen-limit-positive-m-covariance}).
\end{proof}

\subsection{The Kohnen limit process for \texpdf{$m < 0$}{m < 0}}
\label{ssec:kohnen-limit-negative-m}

The Kohnen limit process in~\cite{bringmann-raum-richter-2011} is defined only for $m>0$.  In this section, we extend it to negative~$m$.

Let $\wtd\phi_m \in \rmA\rmJ^\sk_{k,m}$, $m < 0$.  Set
\begin{gather}
\label{eq:def:kohnen-limit-negative-m}
  \Klim^\sk_{k,m} \big( \wtd\phi_m \big) (\tau, z)
:=
  y^{k-1}
  \lim_{y' \ra \infty}
  (4 \pi |m| y')^{\frac{1}{2}} e(m \tau')\,
  \ov{\wtd{\phi}_m(\tau, z, \tau')}
\tx{.}
\end{gather}
In contrast to the case~$m > 0$, the Kohnen limit process~\eqref{eq:def:kohnen-limit-negative-m} does not yield skew-holomorphic Jacobi forms.  More precisely, we have the next proposition, which is the main result of this section. We postpone its proof and also the proofs of the next two propositions until the end of this section.
\begin{proposition}
\label{prop:kohnen-limit-negative-m-convergence}
Let $\wtd\phi_m \in \rmA\rmJ^\sk_{k,m}$, $m < 0$. Then $\Klim^\sk_{k,m} \big( \wtd\phi_m \big)$ converges locally uniformly and
\begin{gather*}
  \Klim^\sk_{k,m} \big( \wtd\phi_m \big)
\in
  \cJ^\sk_{2-k, |m|}
\tx{.}
\end{gather*}
\end{proposition}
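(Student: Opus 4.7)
The plan is to mirror the proof in~\cite{bringmann-raum-richter-2011} for the case $m>0$, but with three additional complications arising from the fact that for $m<0$ the index $T=\begin{psmatrix} n & r/2 \\ r/2 & m \end{psmatrix}$ runs through negative definite, negative semi-definite degenerate, and indefinite matrices, so that three different families of Fourier coefficients appear.

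First I would write out the Fourier expansion of~$\wtd\phi_m$ as provided by the proposition at the end of Section~\ref{ssec:abstract-fourier-jacobi-terms}, and compute the Kohnen limit term by term. For the semi-definite degenerate indices, the Fourier coefficients are explicitly given by~\eqref{eq:def:fourier-coefficients-skew-negative-semi-definite}, so a direct calculation, using the asymptotic $W_{\kappa,\mu}(z)\sim z^{\kappa}e^{-z/2}$ as $z\to\infty$ and the leading-order behavior $\trace(TY)\sim my'$, produces expressions that match the index-$(n_{(0)},r)$ and index-$(n_{(1)},r)$ coefficients~\eqref{eq:def:fourier-coefficients-almost-skew-Jacobi-zero0} and \eqref{eq:def:fourier-coefficients-almost-skew-Jacobi-zero1} up to explicit constants; the exponential factors $e^{-2\pi my'}$ coming from $e(m\tau')$ and those appearing in the coefficient cancel exactly, leaving an expression independent of~$y'$, and the prefactor $y^{k-1}(4\pi|m|y')^{1/2}$ produces the correct powers of~$y$ demanded by~\eqref{eq:def:fourier-coefficients-almost-skew-Jacobi-zero0}, \eqref{eq:def:fourier-coefficients-almost-skew-Jacobi-zero1}. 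For indefinite and negative definite indices, I would proceed analogously, relying on the multiplicity-one statement Proposition~\ref{prop:multiplicity-one-for-skew-harmonic-fourier-coefficients-negative}; for negative definite~$T$ the existence of the nonzero Maa\ss\ solution there found, combined with the asymptotic $\Gamma(k,\text{\texttt u})\sim \text{\texttt u}^{k-1}e^{-\text{\texttt u}}$, yields a nonzero limit matching $a^{\cJ\sk}_{2-k,|m|}(-n,-r;y)$ as asserted by~\eqref{eq:def:fourier-coefficients-skew-negative-definite}; this in particular retroactively justifies the non-vanishing noted in the remark following that proposition. The indefinite case will be handled using the same normalization convention~\eqref{eq:def:fourier-coefficients-skew-indefinite-positive-m}--\eqref{eq:def:fourier-coefficients-skew-negative-definite}.

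Once the termwise limit has been identified, I would establish local uniform convergence. This reduces to a Fourier coefficient bound: the moderate growth condition in Definition~\ref{def:abstract-fourier-jacobi-term-skew-harmonic}~(iii), together with the classical extraction of Fourier coefficients by integration against $e(-nx-ru)$ on a compact period box, gives polynomial bounds $c(n,r)=O(P(|n|,|r|))$. Combined with the Gaussian-type decay of $a^\sk_k(TY)$ for each $T$-type uniformly on compacta in $(\tau,z)$, this yields absolute convergence of the Fourier series on compacta, uniformly in $y'\ge y'_0$, which both justifies interchanging $\lim_{y'\to\infty}$ with summation and establishes local uniform convergence of the limit.

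Finally, I would verify that the limit lies in $\cJ^\sk_{2-k,|m|}$. Invariance under the Jacobi subgroup $\GaJ\hookrightarrow\Sp{2}(\ZZ)$ follows from the corresponding invariance of $\wtd\phi_m$ and a covariance lemma for $\Klim^\sk_{k,m}$ entirely analogous to Proposition~\ref{prop:kohnen-limit-positive-m-covariance} (the complex conjugate and the factor $y^{k-1}$ in~\eqref{eq:def:kohnen-limit-negative-m} account precisely for the passage from weight $k$ with the skew slash action to weight $2-k$ with the Jacobi skew slash action). The defining differential equations of $\cJ^\sk_{2-k,|m|}$ as the image $\rmR^\bbJ_{3-k}(\bbJ_{3-k,|m|})$ follow from the differential equations $\Omega^\sk_k\wtd\phi_m=0$ and $\xi^\sk_k\wtd\phi_m=0$ for~$\wtd\phi_m$ after restricting the Maa\ss\ matrix operators and the $\xi$-operator to the Fourier-Jacobi term of index~$m$ and passing to the limit~$y'\to\infty$; equivalently, it may be read off directly from the termwise identification of the Fourier coefficients with those in~\eqref{eq:def:fourier-coefficients-almost-skew-Jacobi-zero0}--\eqref{eq:def:fourier-coefficients-almost-skew-Jacobi-nonzero} combined with the characterization of $\cJ^\sk_{2-k,|m|}$ via such expansions.

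The main obstacle will be the case of indefinite and negative definite Fourier indices, because unlike the semi-definite case there is no completely explicit closed-form expression for the coefficient function~$a^\sk_k(TY)$ that survives to the limit; the argument there must rely on the uniqueness statements of Propositions~\ref{prop:multiplicity-one-for-skew-harmonic-fourier-coefficients-nonzero-nonnegative} and~\ref{prop:multiplicity-one-for-skew-harmonic-fourier-coefficients-negative} together with the covariance of~$\Klim^\sk_{k,m}$ under $\rot(\diag(1,a,1,1/a))$ to reduce to a single normalized case, the delicate part being to show that in the indefinite case the two Maa\ss\ solutions do not both survive (only one of them has the correct exponential decay to produce a nonzero limit). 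Uniform control over the Whittaker and incomplete Gamma asymptotics in the regime where the eigenvalues of~$T$ vary is the source of the technical difficulty.
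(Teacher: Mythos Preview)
Your plan is broadly along the right lines for the degenerate semi-definite terms, but it has a real gap for the indefinite and negative definite Fourier indices. You propose to compute the limit of $a^\sk_k(TZ)$ directly from asymptotics (e.g.\ of $\Gamma(k,\text{\texttt u})$), but no closed-form expression for $a^\sk_k(TY)$ is available in those cases: the proof of Proposition~\ref{prop:multiplicity-one-for-skew-harmonic-fourier-coefficients-negative} only produces solutions $\phi$, $\psi$ of auxiliary ODEs, and one still has to feed these into Maa\ss's integral formula~(35) to recover $a^\sk_k(TY)$, which does not yield an expression whose $y'\to\infty$ behavior can be read off. (The remark after Proposition~\ref{prop:multiplicity-one-for-skew-harmonic-fourier-coefficients-negative} flags exactly this: even nonvanishing of $a^\sk_k(TY)$ for $T<0$ is deferred to Section~\ref{ssec:kohnen-limit-negative-m}.) Your last paragraph also conflates cases: for indefinite $T$ there is only one solution by Proposition~\ref{prop:multiplicity-one-for-skew-harmonic-fourier-coefficients-nonzero-nonnegative}, so the ``two solutions surviving'' issue does not arise there.

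The paper avoids this obstruction by bootstrapping from the Eisenstein series. It first proves the proposition for $\wtd\phi_m = e^\sk_{k,m}$ (Lemma~\ref{la:kohnen-limit-negative-m-eisenstein-series}), where the defining sum over $\Gamma^{(2)}_\infty\backslash\Gamma^{(2)}$ can be manipulated directly as in~\cite{kohnen-1994}. Since $E^{\cJ\sk}_{2-k,1}$ has nonzero Fourier coefficients at some indefinite and some negative definite index (Lemma~\ref{la:jacobi-eisenstein-series-fourier-coefficients-nonzero}), the corresponding $\Klim^\sk_{k,m}(a^\sk_k(TZ))$ must converge and be nonzero for at least one $T$ of each signature. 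Multiplicity-one then forces the limit to exist for those specific $T$, and covariance under $\rot(\GL{2}^\downarrow(\RR))$ (Proposition~\ref{prop:kohnen-limit-negative-m-covariance}, already available for $e^\sk_{k,m}$) transports this to every indefinite or negative definite $T$ with $m<0$. Membership in $\cJ^\sk_{2-k,|m|}$ is then inherited from the Eisenstein case rather than checked via differential equations. You should restructure your argument around this Eisenstein-series bootstrap; the direct asymptotic computation you propose is not feasible with the tools available.
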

\begin{remark}
\label{rm:kohnen-limit-negative-m-convergence}
Recall the discussion of the space~$\cJ^\sk_{2-k, |m|}$ in Remark~\ref{rem:almost-skew-harmonic-jacobi-forms}, and in particular, the fact that it contains only Eisenstein series. Proposition~\ref{prop:kohnen-limit-negative-m-injective} shows that $\Klim^\sk_{k,m}$ is injective for~$m < 0$. Hence it is not feasible to lift skew-holomorphic Jacobi cusp forms to~$\rmA\rmJ^\sk_{k,m}$.
\end{remark}

We have the following covariance.
\begin{proposition}
\label{prop:kohnen-limit-negative-m-covariance}
Let $\wtd\phi_m \in \rmA\rmJ^\sk_{k,m}$, $m < 0$. We have the covariance properties
\begin{gather}
\label{eq:kohnen-limit-negative-m-covariance}
\begin{aligned}
  \Klim^\sk_{k,m} \big( \wtd\phi_m \big|^\sk_{k}\, g^\rmJ \big)
&=
  \Klim^\sk_{k,m} \big( \wtd\phi_m \big) \big|^\Jsk_{2-k,|m|}\, g^\rmJ
\tx{,}
\\
  \Klim^\sk_{k,a^2 m} \big( \wtd\phi_m \big|^\sk_{k}\, \diag(1,a,1, 1 \slash a) \big)
&=
  \Klim^\sk_{k,m} \big( \wtd\phi_m \big) \big|^\Jsk_{2-k,|m|} \diag(1,a,1, 1 \slash a)
\tx{,}
\end{aligned}
\end{gather}
for all $g^\rmJ \in \GJ$ and for all $a \in \RR^\times$. For all $l > 0$, we have
\begin{gather}
\label{eq:kohnen-limit-negative-m-covariance-hecke-operators}
\begin{aligned}
  \Klim^\sk_{k,l^2 m} \big( \wtd\phi_m \big|^\sk_{k}\, U_l \big)
&=
  |l|^{2k-2}\,
  \Klim^\sk_{k,m} \big( \wtd\phi_m \big) \big|^\Jsk_{2-k,|m|}\, U_l
\tx{,}\\
  \Klim^\sk_{k,l m} \big( \wtd\phi_m \big|^\sk_{k}\, V_l \big)
&=
  |l|^{k-1}\,
  \Klim^\sk_{k,m} \big( \wtd\phi_m \big) \big|^\Jsk_{2-k,|m|}\, V_l
\tx{.}
\end{aligned}
\end{gather}
\end{proposition}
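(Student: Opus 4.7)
The proof parallels that of Proposition~\ref{prop:kohnen-limit-positive-m-covariance}, with the main adjustments stemming from the complex conjugation, the $y^{k-1}$ prefactor, and the weight transition $k \to 2-k$ in~\eqref{eq:def:kohnen-limit-negative-m}. For the first equation in~\eqref{eq:kohnen-limit-negative-m-covariance}, I would use the local uniform convergence from Proposition~\ref{prop:kohnen-limit-negative-m-convergence} to exchange $\Klim^\sk_{k,m}$ with the $\GJ$-slash action. Substituting the new limit variable $\tilde\tau'$ equal to the $\tau'$-component of $g^\rmJ Z$, the difference $\tau' - \tilde\tau'$ depends only on $(\tau,z)$ and not on $\tilde\tau'$, so $e(m(\tau'-\tilde\tau'))$ factors out of the limit; since $m<0$, this factor is precisely $\alpha^\rmJ_{|m|}(g^\rmJ, (\tau,z))$, matching the cocycle on the right-hand side. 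Complex conjugation converts $\ov{(c\tau+d)}^{1-k}$ to $(c\tau+d)^{1-k}$, and combining this with $y^{k-1}/\Im(g^\rmJ\tau)^{k-1} = |c\tau+d|^{2(k-1)}$ together with $|c\tau+d|^{-1}$ from the original skew slash action produces the weight $2-k$ automorphy factor $|c\tau+d|^{-1}\ov{(c\tau+d)}^{k-1}$ that the right-hand side requires.

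The second line of~\eqref{eq:kohnen-limit-negative-m-covariance} follows from direct computation. For $g = \diag(1,a,1,1/a)$ with $a > 0$, one has $|\det(CZ+D)|^{-1}\ov{\det(CZ+D)}^{1-k} = a^k$, so $(\wtd\phi_m|^\sk_k g)(Z) = a^k\,\wtd\phi_m(\tau, az, a^2\tau')$. Substituting $\tilde\tau' = a^2\tau'$ gives $(4\pi|a^2m|y')^{1/2} = (4\pi|m|\tilde y')^{1/2}$ and $e(a^2m\tau') = e(m\tilde\tau')$, so the limit collapses to $a^k\,\Klim^\sk_{k,m}(\wtd\phi_m)(\tau, az)$, matching the right-hand side via the extended slash action~\eqref{eq:rot-quotient-action}.

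For~\eqref{eq:kohnen-limit-negative-m-covariance-hecke-operators}, I would express $U_l$ and $V_l$ via right $\up(\GaJ)$-coset decompositions in $\GSp{2}(\QQ)$, where each coset representative factors as an element of $\GJ$ composed with a diagonal matrix in $\rot(\GL{2}^\downarrow(\RR))$. The two covariances in~\eqref{eq:kohnen-limit-negative-m-covariance} then yield a term-by-term transformation rule. Summing over cosets while reconciling the intrinsic normalizations of $U_l$ and $V_l$ on the abstract Fourier-Jacobi term side, \eqref{eq:def:U-hecke-operator-abstract-fourier-jacobi-coefficient} and~\eqref{eq:def:V-hecke-operator-positive-l-abstract-fourier-jacobi-coefficient}, against their weight-$(2-k)$ Jacobi-form counterparts yields the overall scalar factors $|l|^{2k-2}$ and $|l|^{k-1}$.

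The main obstacle will be the combinatorial bookkeeping for the Hecke operator cases: correctly identifying the diagonal scaling arising from each coset representative, tracking the interaction between the $y^{k-1}$ prefactor and the extended slash action~\eqref{eq:rot-quotient-action}, and verifying that the cumulative effect is precisely the claimed scalar factor rather than a different power of $l$. A secondary concern is confirming that the local uniform convergence from Proposition~\ref{prop:kohnen-limit-negative-m-convergence} is strong enough to permit the substitutions $\tilde\tau' = \tau'_{\mathrm{new}}$; this amounts to showing that the asymptotic $(4\pi|m|y')^{1/2} \sim (4\pi|m|\tilde y')^{1/2}$ holds uniformly on compact subsets of $\HS^\rmJ$, together with uniform control over the $(\tau,z)$-dependent shift factor $\tau' - \tilde\tau'$.
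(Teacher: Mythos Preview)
Your approach is correct and matches the paper's: the paper's proof consists of two sentences saying that the identities follow exactly as in the proof of Proposition~\ref{prop:kohnen-limit-positive-m-covariance} via the locally uniform convergence of Proposition~\ref{prop:kohnen-limit-negative-m-convergence}, and that the extra intertwining factors~$|l|^{2k-2}$ and~$|l|^{k-1}$ arise from the weight difference ($k$ versus $2-k$) in the actions of $\diag(1,l,1,1/l)$ and $\diag(1,\sqrt{l},1,1/\sqrt{l})$. Your more detailed unpacking of the conjugation, the $y^{k-1}$ prefactor, and the coset bookkeeping is exactly what lies behind that terse justification, and your identified ``obstacles'' are the right places where the weight shift produces the nontrivial scalars.
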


Our next Proposition is analogous to Proposition~\ref{prop:kohnen-limit-positive-m-injective}, and it asserts that the Kohnen limit process is also injective for $m<0$.

\begin{proposition}
\label{prop:kohnen-limit-negative-m-injective}
Let $m < 0$ and $\wtd\phi_m : \HS^{(2)} \ra \CC$ be skew-harmonic such that
\begin{gather*}
  \wtd\phi_m(\tau, z, \tau')
=
  \wht\phi_m(\tau, z, y') e(m x')
\quad
  \tx{for some function $\wht\phi_m$.}
\end{gather*}
If\/~$\Klim^\sk_{k,m}(\wtd\phi_m) = 0$, then $\wtd\phi_m = 0$.
\end{proposition}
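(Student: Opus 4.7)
The plan is to pass to Fourier series expansions and reduce the assertion to the non-vanishing of $\Klim^\sk_{k,m}$ on individual Fourier terms. By Propositions~\ref{prop:multiplicity-one-for-skew-harmonic-fourier-coefficients-nonzero-nonnegative} and~\ref{prop:multiplicity-one-for-skew-harmonic-fourier-coefficients-negative} the hypotheses on $\wtd\phi_m$ force it to admit a Fourier expansion indexed by matrices $T = \begin{psmatrix} n & r/2 \\ r/2 & m \end{psmatrix}$ with $n, r \in \ZZ$; since $m < 0$ no positive definite~$T$ contributes, so each Fourier term is a scalar multiple of $a^\sk_k(TZ)$ when $T$ is indefinite or negative definite, and of the two linearly independent coefficients $a^\sk_k(T_{(0)} Z)$ and $a^\sk_k(T_{(1)} Z)$ when $T$ is negative semi-definite and degenerate.

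Using the Whittaker asymptotic $W_{\kappa,\mu}(x) \sim x^\kappa e^{-x/2}$ as $x \to \infty$ (see 13.19.3 of~\cite{nist-dlmf}) in the explicit formulas~\eqref{eq:def:fourier-coefficients-skew-negative-semi-definite} for the degenerate Fourier coefficients, and analogous estimates for the remaining cases (coming from the ODE analysis in the proof of Proposition~\ref{prop:multiplicity-one-for-skew-harmonic-fourier-coefficients-negative}), I would next show that the modified sum $y^{k-1} (4\pi|m|y')^{1/2}\, e(m\tau')\, \overline{\wtd\phi_m(\tau, z, \tau')}$ has individually convergent summands as $y' \to \infty$ and that the resulting series converges uniformly on compact subsets of~$\HJ$. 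This would legitimise the termwise interchange of $\Klim^\sk_{k,m}$ with the Fourier summation.

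I would then compute $\Klim^\sk_{k,m}$ on each Fourier term. For negative definite $T$ and for indefinite $T$, the normalizations in~\eqref{eq:def:fourier-coefficients-skew-negative-definite} and~\eqref{eq:def:fourier-coefficients-skew-indefinite-positive-m} (extended to $m < 0$ via the covariance in Proposition~\ref{prop:kohnen-limit-negative-m-covariance}) identify the image as a non-zero Fourier term $a^{\cJ\sk}_{2-k,|m|}(n',r';y)\, e(n'x + r'z)$ with $(n', r') = (-n, -r)$ in $\cJ^\sk_{2-k,|m|}$, the non-vanishing being visible from the explicit formulas for $a^\Jsk_m$ and $a^{\cJ\sk}_{2-k,m}$. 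For negative semi-definite degenerate~$T$ a direct asymptotic calculation using~\eqref{eq:def:fourier-coefficients-skew-negative-semi-definite} yields
\begin{gather*}
  \Klim^\sk_{k,m}\big( a^\sk_k(T_{(0)} Z) \big)
  \;=\;
  a^{\cJ\sk}_{2-k,|m|}\big( (-n)_{(0)}, -r;\, y \big)\, e(-nx - rz),
\end{gather*}
while $\Klim^\sk_{k,m}\big( a^\sk_k(T_{(1)} Z) \big)$ is a non-zero constant multiple of $a^{\cJ\sk}_{2-k,|m|}\big( (-n)_{(1)}, -r;\, y \big)\, e(-nx - rz)$; crucially, the $y$-profiles $y^{k-1}$ and $y^{1/2}$ of these two images remain linearly independent.

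Since the map $(n,r) \mapsto (-n,-r)$ is a bijection on $\ZZ^2$, distinct Siegel-side Fourier indices give distinct Jacobi-side Fourier indices, and within each Jacobi Fourier index the contributions from $a^\sk_k(T_{(0)} Z)$ and $a^\sk_k(T_{(1)} Z)$ stay linearly independent. The hypothesis $\Klim^\sk_{k,m}(\wtd\phi_m) = 0$ therefore forces every Fourier coefficient of $\wtd\phi_m$ to vanish, whence $\wtd\phi_m = 0$. The main obstacle will be the uniform asymptotic analysis of the Whittaker functions needed to justify the interchange of limit and sum, and directly verifying the non-vanishing of the prescribed normalizations~\eqref{eq:def:fourier-coefficients-skew-negative-definite} and~\eqref{eq:def:fourier-coefficients-skew-indefinite-positive-m} for negative definite and indefinite~$T$; the latter can be settled by exhibiting an appropriate Fourier coefficient of a skew-harmonic Eisenstein series as a concrete non-zero representative.
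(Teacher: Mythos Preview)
Your proposal is correct and follows essentially the same route as the paper: reduce to individual Fourier terms, handle the degenerate negative semi-definite case by the explicit asymptotics (this is exactly Lemma~\ref{la:kohnen-limit-negative-m-semi-definite-fourier-coefficients}), and for non-degenerate~$T$ invoke multiplicity-one together with the Eisenstein series computation (Lemmas~\ref{la:jacobi-eisenstein-series-fourier-coefficients-nonzero} and~\ref{la:kohnen-limit-negative-m-eisenstein-series}) plus covariance (Proposition~\ref{prop:kohnen-limit-negative-m-covariance}) to propagate non-vanishing to all indefinite and negative definite indices.

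The one technical difference worth noting: you plan to justify the termwise interchange of~$\Klim^\sk_{k,m}$ with the Fourier summation by establishing uniform asymptotics. The paper sidesteps this entirely by observing that each Fourier coefficient of~$\wtd\phi_m$ is expressible as a compact integral over~$(x,u)$, so one may first extract the~$(n,r)$\nobreakdash-th term and then apply the Kohnen limit to it; linearity and the compactness of the integral make the reduction immediate without any delicate uniform estimates. Your route would work but is more laborious, and the compact-integral observation is worth internalising for similar situations.
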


As in the case of $m > 0$, explicit formulas for Fourier coefficients of skew-harmonic Siegel modular forms are available if the Fourier index~$T$ is degenerate. The next lemma will be instrumental in the second of our two proofs of Proposition~\ref{prop:fourier-jacobi-coefficient-zero-m-eisenstein-series-semi-definite-horizontal}.
\begin{lemma}
\label{la:kohnen-limit-negative-m-semi-definite-fourier-coefficients}
For~$m < 0$, we have
\begin{gather}
\label{eq:la:kohnen-limit-negative-m-semi-definite-fourier-coefficients}
\begin{alignedat}{3}
&
  \Klim^\sk_{k,m} \big(
  a^\sk_k\big( \begin{psmatrix} 0 & 0 \\ 0 & m \end{psmatrix}_{(0)}; Z \big)
  \big)
&&\;=\;
  y^{k-1}
&&\;=\;
  a^{\cJ\sk}_{2-k,|m|}(0_{(0)},0;\, y)
\tx{,}
\\
&
  \Klim^\sk_{k,m} \big(
  a^\sk_k\big( \begin{psmatrix} 0 & 0 \\ 0 & m \end{psmatrix}_{(1)}; Z \big)
  \big)
&&\;=\;
  (4 \pi |m|)^{k-\frac{3}{2}}\, y^{\frac{1}{2}}
&&\;=\;
  (4 \pi |m|)^{k-\frac{3}{2}}\,
  a^{\cJ\sk}_{2-k,|m|}(0_{(1)},0;\, y)
\tx{.}
\end{alignedat}
\end{gather}
\end{lemma}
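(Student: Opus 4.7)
The proof is a direct computation from the explicit formulas in \eqref{eq:def:fourier-coefficients-skew-negative-semi-definite} together with the definition \eqref{eq:def:kohnen-limit-negative-m} of the Kohnen limit process. With $T = \begin{psmatrix} 0 & 0 \\ 0 & m \end{psmatrix}$ and the standard coordinates on $Y$, one has $\trace(TY) = m y'$ and $e(TX) = e(m x')$, hence $|\trace(TY)| = |m| y'$ since $m<0$ and $y'>0$. The key observation driving both cases is that the combination $e(m\tau')\cdot\overline{e(mx')} = e^{-2\pi m y'} = e^{2\pi |m| y'}$, which is designed precisely to cancel the exponential decay built into $a^\sk_k(T_{(0)};Y)$ and $a^\sk_k(T_{(1)};Y)$.

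For the first identity, I would substitute the definition of $a^\sk_k(T_{(0)};Z)$, take the complex conjugate (which leaves the real-valued Whittaker factor unchanged and conjugates only $e(mx')$), and multiply by $(4\pi|m|y')^{1/2}\,e(m\tau')$ to obtain
\begin{gather*}
  (4\pi|m|y')^{(1-k)/2}\, e^{2\pi|m|y'}\, W_{\frac{k-1}{2},\frac{k-1}{2}}\big(4\pi|m|y'\big).
\end{gather*}
The asymptotic $W_{\kappa,\mu}(z)\sim e^{-z/2}z^\kappa$ as $z\to\infty$ (formula 13.19.3 of \cite{nist-dlmf}, already invoked in Lemma~\ref{la:kohnen-limit-positive-m-semi-definite-fourier-coefficients}) makes this tend to $1$, and multiplying by the prefactor $y^{k-1}$ yields $y^{k-1}$, matching $a^{\cJ\sk}_{2-k,|m|}(0_{(0)},0;y)$ from \eqref{eq:def:fourier-coefficients-almost-skew-Jacobi-zero0}.

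For the second identity I would proceed similarly: substituting gives
\begin{gather*}
  e(m\tau')\,\overline{a^\sk_k(T_{(1)};Z)} = \det(Y)^{\frac{3}{2}-k}\,(4\pi|m|y')^{k-2},
\end{gather*}
since the $e^{-2\pi|m|y'}$ cancels against $e^{2\pi|m|y'}$ exactly. Multiplying by $(4\pi|m|y')^{1/2}$ gives $\det(Y)^{3/2-k}(4\pi|m|y')^{k-3/2}$, and then the identity $\det(Y) = yy'-v^2$ shows that $\det(Y)^{3/2-k}/(y')^{3/2-k}\to y^{3/2-k}$ as $y'\to\infty$. Multiplying by the prefactor $y^{k-1}$ then leaves $(4\pi|m|)^{k-3/2}\,y^{1/2}$, matching $(4\pi|m|)^{k-3/2}\,a^{\cJ\sk}_{2-k,|m|}(0_{(1)},0;y)$ from \eqref{eq:def:fourier-coefficients-almost-skew-Jacobi-zero1}.

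No substantive obstacle arises; the proof is bookkeeping. The one point to be careful about is the sign conventions in the exponential factors (the fact that $m<0$ means the $e^{-2\pi|\trace(TY)|}$ in $a^\sk_k(T_{(1)};Y)$ would blow up under $e(m\tau')$ if not conjugated first, which is exactly why the definition~\eqref{eq:def:kohnen-limit-negative-m} is set up with a complex conjugate), and the placement of the normalizing factors $(4\pi|m|y')^{1/2}$ and $y^{k-1}$, which are precisely chosen so that the resulting expressions land on the standard Fourier-coefficient normalizations in \eqref{eq:def:fourier-coefficients-almost-skew-Jacobi-zero0} and \eqref{eq:def:fourier-coefficients-almost-skew-Jacobi-zero1}.
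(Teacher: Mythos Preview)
Your proof is correct and follows the same approach as the paper: insert the asymptotic expansion of the $W$-Whittaker function (13.19.3 in~\cite{nist-dlmf}) into the explicit formulas~\eqref{eq:def:fourier-coefficients-skew-negative-semi-definite} and compute the limit directly. The paper's proof is a one-line reference to this computation, while you have spelled out the bookkeeping.
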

\begin{proof}
Recall from the proof of~Lemma~\ref{la:kohnen-limit-positive-m-semi-definite-fourier-coefficients} the asymptotic expansion of the $W$-Whittaker function.  Inserting that asymptotics into the defining formulas~\eqref{eq:def:fourier-coefficients-skew-negative-semi-definite} of~$a^\sk_k(T_{(0)}; Y)$ and $a^\sk_k(T_{(1)}; Y)$ yields~\eqref{eq:la:kohnen-limit-negative-m-semi-definite-fourier-coefficients}.
\end{proof}

We proceed as in~\cite{bringmann-raum-richter-2011} to prove Proposition~\ref{prop:kohnen-limit-negative-m-convergence}: We first prove Proposition~\ref{prop:kohnen-limit-negative-m-convergence} in the case of Eisenstein series (see Lemma~\ref{la:kohnen-limit-negative-m-eisenstein-series}) and then invoke multiplicity-one for Fourier series coefficients of skew-harmonic Maa\ss-Siegel forms (see Propositions~\ref{prop:multiplicity-one-for-skew-harmonic-fourier-coefficients-nonzero-nonnegative} and~\ref{prop:multiplicity-one-for-skew-harmonic-fourier-coefficients-negative}). The case of degenerate Fourier indices requires an additional argument that rests on Lemma~\ref{la:kohnen-limit-negative-m-semi-definite-fourier-coefficients}.

\begin{lemma}
\label{la:kohnen-limit-negative-m-eisenstein-series}
Let $e^\sk_{k,m}$ be the $m$\thdash\ Fourier-Jacobi coefficient of $E^\sk_k$ with~$m<0$. Then
\begin{gather*}
  \Klim^\sk_{k,m}\big( e^\sk_{k,m} \big)
\end{gather*}
exists locally uniformly, and we have
\begin{gather*}
  \Klim^\sk_{k,m}\big( e^\sk_{k,m} \big)
=
  |m|^{k-1}\,
  \frac{(-1)^{\frac{k-1}{2}} (2 \pi)^k}{\Gamma(k-\frac{1}{2}) \zeta(k)}\,
  E^\cJsk_{2-k,1} \big|_{2-k, 1} V_{|m|}
\in
  \cJ^\sk_{2-k,|m|}
\tx{,}
\end{gather*}
where $E^\cJsk_{k,1}$ is the real-analytic Jacobi-Eisenstein series defined in~\eqref{eq:def:almost-skew-harmonic-jacobi-eisenstein-series}. In particular, we have
\begin{gather*}
  e^\sk_{k,m}
=
  e^\sk_{k,-1} \big|^\sk_k\, V_{|m|}
\end{gather*}
and
\begin{gather}
  \rmR^\cJsk\,
  \Klim^\sk_{k,m} \big( e^\sk_{k,m} \big)
=
  |m|^{k-1}\,
  \frac{\Gamma(\frac{1}{2})}{\Gamma(k-\frac{1}{2})}\,
  \Klim^\sk_{k,|m|} \big( e^\sk_{k,|m|} \big)
\tx{.}
\end{gather}
\end{lemma}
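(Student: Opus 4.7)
The plan is to parallel the structure of the proof of Lemma~\ref{la:kohnen-limit-positive-m-eisenstein-series}. First, I would decompose $\Gamma^{(2)}_\infty \backslash \Gamma^{(2)}$ by the $C$-block of $\gamma$, following Kohnen~\cite{kohnen-1994}, equation~(17). This expresses the $m$\thdash\ Fourier-Jacobi coefficient $e^\sk_{k,m}$ as an abstract Jacobi-Eisenstein series $AE^\Jsk_{k,m}(f_{k,m};\,Z)$ for an explicit weight function $f_{k,m}$ obtained by integrating the Whittaker model of $a^\sk_k$ against $e(-m\tau')$. Applying Proposition~\ref{prop:abstract-jacobi-eisenstein-series-Vm-relation} with Jacobi index $-1$ and M\"obius inversion, one then identifies the collection of these series over all $m < 0$ with $AE^\Jsk_{k,-1}(f_{k,-1}) \big|^\sk_k V_{|m|}$, so that
\begin{gather*}
  e^\sk_{k,m}
\;=\;
  e^\sk_{k,-1} \big|^\sk_k\, V_{|m|}
\tx{.}
\end{gather*}

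Next I would evaluate the Kohnen limit, using the covariance~\eqref{eq:kohnen-limit-negative-m-covariance-hecke-operators} of Proposition~\ref{prop:kohnen-limit-negative-m-covariance} to reduce to $m = -1$. By Proposition~\ref{prop:kohnen-limit-negative-m-convergence}, $\Klim^\sk_{k,-1}(e^\sk_{k,-1}) \in \cJ^\sk_{2-k,1}$; but $\cJ^\sk_{2-k,1}$ is one-dimensional and spanned by $E^\cJsk_{2-k,1}$ by Remark~\ref{rem:almost-skew-harmonic-jacobi-forms}, so
\begin{gather*}
  \Klim^\sk_{k,-1}\big( e^\sk_{k,-1} \big)
\;=\;
  c\, E^\cJsk_{2-k,1}
\end{gather*}
for some scalar $c \in \CC$. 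I would pin down $c$ by comparing a single Fourier coefficient, most conveniently the one of index $(n,r) = (0,0)$: Lemma~\ref{la:kohnen-limit-negative-m-semi-definite-fourier-coefficients} gives the Kohnen limit of the two independent contributions $a^\sk_k(\begin{psmatrix} 0 & 0 \\ 0 & -1 \end{psmatrix}_{(0)};Z)$ and $a^\sk_k(\begin{psmatrix} 0 & 0 \\ 0 & -1 \end{psmatrix}_{(1)};Z)$, while Lemma~\ref{la:jacobi-eisenstein-series-zeroth-fourier-coefficient} supplies the corresponding coefficient of $E^\cJsk_{2-k,1}$ in terms of $\zeta(2k-3)/\zeta(2k-2)$. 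Matching these via the known Fourier expansion of $E^\sk_k$ (obtained by expanding the $C$-block sum, summing the resulting Kloosterman-type series, and invoking the Siegel series formula) should yield the claimed constant $(-1)^{(k-1)/2}(2\pi)^k / (\Gamma(k-\frac{1}{2})\zeta(k))$.

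For the remaining claims, the identity $e^\sk_{k,m} = e^\sk_{k,-1}|^\sk_k V_{|m|}$ follows \emph{a posteriori} from injectivity of $\Klim^\sk_{k,m}$ in Proposition~\ref{prop:kohnen-limit-negative-m-injective} together with the Kohnen-limit formula and the covariance~\eqref{eq:kohnen-limit-negative-m-covariance-hecke-operators}. The identity
\begin{gather*}
  \rmR^\cJsk\, \Klim^\sk_{k,m}\big( e^\sk_{k,m} \big)
=
  |m|^{k-1}\,
  \frac{\Gamma(\frac{1}{2})}{\Gamma(k-\frac{1}{2})}\,
  \Klim^\sk_{k,|m|}\big( e^\sk_{k,|m|} \big)
\end{gather*}
is obtained by feeding the explicit formulas of Lemmas~\ref{la:kohnen-limit-positive-m-eisenstein-series} and of the statement being proved into the relation $\rmR^\cJsk_{2-k} E^\cJsk_{2-k,m} = E^\Jsk_{k,m}$ from~\eqref{eq:almost-skew-harmonic-eisenstein-series-raising-image}, and comparing the prefactors.

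The main obstacle is the constant bookkeeping. The factor $|m|^{k-1}$ in front, the sign $(-1)^{(k-1)/2}$, and the $\Gamma$-factor ratios must emerge consistently: one has to track the asymptotics $(4\pi|m|y')^{1/2} e(m\tau')\, \overline{W_{(1-k)/2,(k-1)/2}(4\pi|m| y')} \to (4\pi|m|)^{(k-1)/2}$ appearing in Lemma~\ref{la:kohnen-limit-negative-m-semi-definite-fourier-coefficients} side by side with the $|l|^{k-1}$ prefactor introduced by the $V_l$-covariance~\eqref{eq:kohnen-limit-negative-m-covariance-hecke-operators}, while simultaneously matching the $\sqrt{\pi}\,\Gamma(k-\frac{3}{2})\zeta(2k-3)/(\Gamma(k-1)\zeta(2k-2))$ normalization of the second zeroth coefficient of $E^\cJsk_{2-k,1}$ from Lemma~\ref{la:jacobi-eisenstein-series-zeroth-fourier-coefficient}.
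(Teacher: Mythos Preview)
Your overall plan is the paper's own: follow Kohnen's $C$-block decomposition (equation~(17) of~\cite{kohnen-1994}) and invoke Proposition~\ref{prop:abstract-jacobi-eisenstein-series-Vm-relation}. The paper in fact omits the argument, declaring it ``completely analogous'' to Lemma~\ref{la:kohnen-limit-positive-m-eisenstein-series} and~\cite{kohnen-1994}.

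There is, however, a genuine circularity in your organization. You invoke Proposition~\ref{prop:kohnen-limit-negative-m-convergence} to place $\Klim^\sk_{k,-1}(e^\sk_{k,-1})$ in~$\cJ^\sk_{2-k,1}$, and Proposition~\ref{prop:kohnen-limit-negative-m-covariance} to pass from general~$m<0$ to~$m=-1$; later you cite Proposition~\ref{prop:kohnen-limit-negative-m-injective} for the Hecke relation. But look at the proofs of those three propositions: each one depends on Lemma~\ref{la:kohnen-limit-negative-m-eisenstein-series} as its base case (the proof of Proposition~\ref{prop:kohnen-limit-negative-m-convergence} says so explicitly). The paper's logical order is the reverse of yours: first prove the Eisenstein case directly, then bootstrap convergence, covariance, and injectivity for general abstract Fourier--Jacobi terms via multiplicity-one. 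In the positive-$m$ setting this issue does not arise because Proposition~\ref{prop:kohnen-limit-positive-m-convergence} was already established in~\cite{bringmann-raum-richter-2011} independently of Eisenstein series; for $m<0$ no such prior result is available.

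The fix is to compute the Kohnen limit of the explicit series $AE^\Jsk_{k,m}(f;\,Z)$ \emph{directly}: take $y' \ra \infty$ termwise in the Jacobi-group sum, justify the interchange by the uniform decay of the summands, and read off both locally uniform convergence and the identification with a multiple of $E^\cJsk_{2-k,|m|}$ in one stroke. That is what ``analogous to~\cite{kohnen-1994}'' means here. Your derivation of $e^\sk_{k,m} = e^\sk_{k,-1}\big|^\sk_k V_{|m|}$ via Proposition~\ref{prop:abstract-jacobi-eisenstein-series-Vm-relation} plus M\"obius inversion is fine and stands on its own, so you need not appeal to injectivity for it.
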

\begin{proof}
We omit the proof, which is completely analogous to the proofs of the corresponding results for $m>0$ in~\cite{kohnen-1994} and Lemma~\ref{la:kohnen-limit-positive-m-eisenstein-series}. However, note that $V_{|m|}$ does not intertwine with the Kohnen limit process due to the additional factor of~$|m|^{k-1}$.
\end{proof}

\begin{proof}[{Proof of Proposition~\ref{prop:kohnen-limit-negative-m-convergence}}]
Let $m<0$. Let $\wtd\phi_m \in \rmA\rmJ^\sk_{k,m}$ and let~$e^\sk_{k,m}$ be the $m$\thdash\ Fourier-Jacobi coefficient of $E^\sk_k$. Observe that Proposition~\ref{prop:kohnen-limit-negative-m-convergence} in the special case of~$\wtd\phi_m = e^\sk_{k,m}$ reduces to Lemma~\ref{la:kohnen-limit-negative-m-eisenstein-series}, whose proof is already completed. Moveover, Lemma~\ref{la:kohnen-limit-negative-m-eisenstein-series} suffices to establish Proposition~\ref{prop:kohnen-limit-negative-m-covariance} for~$\wtd\phi_m = e^\sk_{k,m}$ (cf.\@ the proof of Proposition~\ref{prop:kohnen-limit-negative-m-covariance} below). In particular, we may apply the statement of Proposition~\ref{prop:kohnen-limit-negative-m-covariance} for~$\wtd\phi_m = e^\sk_{k,m}$ in this current proof.

Consider the Fourier series expansion
\begin{gather*}
  e^\sk_{k,m}(Z)
=
  \sum_{T = \begin{psmatrix} n & r \slash 2 \\ r \slash 2 & m \end{psmatrix}}
  c(T)\, a( T Z )
\tx{.}
\end{gather*}
Uniform convergence of $\Klim^\sk_{k,m}(a(T Z))$ for negative semi-definite, degenerate~$T$ follows from Lemma~\ref{la:kohnen-limit-negative-m-semi-definite-fourier-coefficients}. If~$T$ is indefinite or negative definite, then Propositions~\ref{prop:multiplicity-one-for-skew-harmonic-fourier-coefficients-nonzero-nonnegative} and~\ref{prop:multiplicity-one-for-skew-harmonic-fourier-coefficients-negative} guarantee that $a(T Z)$ is a scalar multiple of~$a^\sk_k(T Z)$. By Lemmas~\ref{la:jacobi-eisenstein-series-fourier-coefficients-nonzero} and~\ref{la:kohnen-limit-negative-m-eisenstein-series}, $\Klim(e^\sk_{k,1}) \ne 0$, and there exist matrices $T = \begin{psmatrix} n & r \slash 2 \\ r \slash 2 & 1 \end{psmatrix}$ that are
\begin{enumerateroman*}
\item indefinite and
\item negative definite
\end{enumerateroman*}
such that $\Klim(a^\sk_k(T Z))$ converges and is nonzero. By Proposition~\ref{prop:kohnen-limit-negative-m-covariance} for the already established case~$\wtd\phi_m = e^\sk_{k,m}$, we have the covariance
\begin{multline*}
  \Klim^\sk_{k,m}( a^\sk_k(T Z) ) \big|^\Jsk_{2-k,m}\, \rot(U)
=
  \Klim^\sk_{k,a^2 m}\big( a^\sk_k(T Z) \big|^\sk_k\, \rot(U) \big)
\\
=
  \Klim^\sk_{k,a^2 m}\big( a^\sk_k(\rT U T U\, Z) \big)
\end{multline*}
for every $U \in \GL{2}^\downarrow(\RR)$ with bottom-right entry~$a$. In particular, the right-hand side converges. Since any indefinite or negative definite~$T'$ with negative bottom-right entry can be written as $\rT U T U$ with $U \in \GL{2}^\downarrow(\RR)$, we find that the limit $\Klim(a(T' Z))$ exists for any such~$T'$. Then the locally uniform convergence of the Fourier series expansion
\begin{gather*}
  \wtd{\phi}_m(Z)
=
  \sum_{T = \begin{psmatrix} n & r \slash 2 \\ r \slash 2 & m \end{psmatrix}}
  c(\wtd{\phi}_m;\, T) a( T Z )
\end{gather*}
implies that $\Klim(\wtd{\phi}_m)$ converges.

Finally, Lemmas~\ref{la:kohnen-limit-negative-m-semi-definite-fourier-coefficients} and~\ref{la:kohnen-limit-negative-m-eisenstein-series} together with Propositions~\ref{prop:multiplicity-one-for-skew-harmonic-fourier-coefficients-nonzero-nonnegative} and~\ref{prop:multiplicity-one-for-skew-harmonic-fourier-coefficients-negative} yield that $\Klim(\wtd{\phi}_m) \in \cJ^\sk_{2-k,|m|}$.
\end{proof}

\begin{proof}[{Proof of Proposition~\ref{prop:kohnen-limit-negative-m-covariance}}]
The identities follow exactly as in the proof of Proposition~\ref{prop:kohnen-limit-positive-m-covariance}, using the locally uniform convergence of Proposition~\ref{prop:kohnen-limit-negative-m-convergence}. Observe that the additional intertwining factors arise from the different weights for the actions of~$\diag(1, l, 1, 1 \slash l)$ and~$\diag(1, \sqrt{l}, 1, 1 \slash \sqrt{l})$, respectively. 
\end{proof}

\begin{proof}[{Proof of Proposition~\ref{prop:kohnen-limit-negative-m-injective}}]
We prove the statement for Fourier series coefficients $a^\sk_k(T Y)$ with $T = \begin{psmatrix} n & r \slash 2 \\ r \slash 2 & m \end{psmatrix}$. This suffices, since the Kohnen limit process is linear, Fourier series coefficients are expressible by compact integrals. In the case of semi-definite~$T$ with~$\det(T) = 0$, Lemma~\ref{la:kohnen-limit-negative-m-semi-definite-fourier-coefficients} reveals that only zero linear combinations of $a^\sk_k(T_{(0)} Z)$ and~$a^\sk_k(T_{(1)} Z)$ vanish under $\Klim^\sk_{k,m}$. In the case of non-degenerate~$T$, we employ multiplicity-one for Fourier series coefficients established in Propositions~\ref{prop:multiplicity-one-for-skew-harmonic-fourier-coefficients-nonzero-nonnegative} and~\ref{prop:multiplicity-one-for-skew-harmonic-fourier-coefficients-negative}. In analogy to the case~$m > 0$, the claim then follows from Lemma~\ref{la:jacobi-eisenstein-series-fourier-coefficients-nonzero}, Proposition~\ref{prop:kohnen-limit-negative-m-covariance}, and Lemma~\ref{la:kohnen-limit-negative-m-eisenstein-series}.
\end{proof}

\subsection{The inverse Kohnen limit processes}
\label{ssec:inverse-kohnen-limit}

Recall from Equations~\eqref{eq:def:fourier-coefficients-siegel-modular-forms-type-skew} and~\eqref{eq:skew-jacobi-fourier-expansion} our definitions of $a^\sk_k(T Y)$ and $a^\Jsk_m(n, r; y)$, respectively. If
\begin{gather*}
  \phi(\tau, z)
=
  \sum_{n,r}
  c(n,r)\, a^\Jsk_m(n,r;y) e(nx + rz)
\end{gather*}
is a skew-holomorphic Jacobi form of weight~$k$ and index~$m > 0$, then set
\begin{multline}
\label{eq:def:inverse-kohnen-limit-positive-m}
  \Klim^{-1}_{k,m}(\phi)
=
  \Klim^{-1}_{k,m}\Big(
  \sum_{n,r}
  c(n,r)
  a^\Jsk_m(n,r; y) e(nx + r z)
  \Big)
\\
:=\;
  \sum_{n,r}
  c(n,r)\,
  a^\sk_k\big( \begin{psmatrix} n & r \slash 2 \\ r \slash 2 & m \end{psmatrix}Z \big)
\tx{.}
\end{multline}
Note that the inverse Kohnen limit process for skew weights is merely defined for~$m > 0$ (cf.\ Proposition~\ref{prop:kohnen-limit-negative-m-convergence} and Remark~\ref{rm:kohnen-limit-negative-m-convergence}).
\begin{proposition}
\label{prop:inverse-kohnen-limit-positive-m}
Let~$m > 0$. For every $\phi \in \rmJ^\sk_{k,m}$ the defining series on the right-hand side of~\eqref{eq:def:inverse-kohnen-limit-positive-m} converges absolutely and locally uniformly. The map $\Klim^{-1}_{k,m}$ is inverse to the map $\Klim^\sk_{k,m}$. Specifically, the arrows in the following diagram are well-defined isomorphisms:
\begin{center}
\begin{tikzpicture}
\matrix(m)[matrix of math nodes,
column sep=10em,
text height=1.5em, text depth=1.25ex]{%
\rmJ^\sk_{k,m} & \rmA\rmJ^\sk_{k,m} \\
};

\path
(m-1-1) edge[->, bend left = 10] node[above=.2em] {$\Klim^{-1}_{k,m}$} (m-1-2)
(m-1-2) edge[->, bend left = 10] node[below=.2em] {$\Klim^\sk_{k,m}$} (m-1-1)
;
\end{tikzpicture}
\end{center}
\end{proposition}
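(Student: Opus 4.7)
The plan is to take the formula~\eqref{eq:def:inverse-kohnen-limit-positive-m} as the definition of $\Klim^{-1}_{k,m}(\phi)$ and then verify, in turn, convergence, skew-harmonicity, moderate growth, invariance under the extended Jacobi group, and finally mutual inversion with $\Klim^\sk_{k,m}$. The construction is coefficient-wise, so the work is entirely in showing that the resulting formal series makes analytic sense and exhibits the correct equivariance.

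First, I would establish absolute and locally uniform convergence of the series on $\HS^{(2)}$. The Fourier coefficients $c(n,r)$ of a form in $\rmJ^\sk_{k,m}$ are polynomially bounded in $|n|+|r|$, via the theta decomposition together with standard estimates for Fourier coefficients of half-integral weight elliptic modular forms. To bound the Fourier terms $a^\sk_k(T Z)$ for indefinite $T = \begin{psmatrix} n & r/2 \\ r/2 & m \end{psmatrix}$, I would exploit Proposition~\ref{prop:multiplicity-one-for-skew-harmonic-fourier-coefficients-nonzero-nonnegative}: $a^\sk_k(T Y)$ depends only on $\tr(T Y)$, $\det(T Y)$, and the signature of $T$. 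Combined with the normalization fixed in~\eqref{eq:def:fourier-coefficients-skew-indefinite-positive-m} through the covariance from Proposition~\ref{prop:kohnen-limit-positive-m-covariance}, this reduces the estimate to classical asymptotics of the $W$-Whittaker function, which supplies exponential decay comfortably dominating the polynomial growth of $c(n,r)$.

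Next, I would verify that $\wtd\phi_m := \Klim^{-1}_{k,m}(\phi)$ lies in $\rmA\rmJ^\sk_{k,m}$. Skew-harmonicity is immediate: each summand $a^\sk_k(T Z)$ is annihilated by both $\Omega^\sk_k$ and $\xi^\sk_k$, and locally uniform convergence transfers these identities to $\wtd\phi_m$. The prescribed factorization $\wht\phi_m(\tau, z, y') e(m x')$ is visible from the Fourier expansion, and moderate growth in $\tr(Y)$ follows from the same exponential estimates used for convergence. For invariance under the extended Jacobi group $\SL{2}(\ZZ) \ltimes (\ZZ^2 \ltimes \ZZ)$, central translations by $\kappa \in \ZZ$ act correctly through $e(m x')$. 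For any other element~$\gamma^\rmJ$, the covariance of $\Klim^\sk_{k,m}$ in Proposition~\ref{prop:kohnen-limit-positive-m-covariance} yields
\[
\Klim^\sk_{k,m}\bigl( \wtd\phi_m \big|^\sk_k \gamma^\rmJ \bigr) = \phi \big|^\Jsk_{k,m} \gamma^\rmJ = \phi = \Klim^\sk_{k,m}(\wtd\phi_m),
\]
so that injectivity (Proposition~\ref{prop:kohnen-limit-positive-m-injective}) forces $\wtd\phi_m \big|^\sk_k \gamma^\rmJ = \wtd\phi_m$. Mutual inversion then follows: $\Klim^\sk_{k,m} \circ \Klim^{-1}_{k,m} = \id$ is a coefficient-by-coefficient check using how the limit acts on each normalized Fourier term, while $\Klim^{-1}_{k,m} \circ \Klim^\sk_{k,m} = \id$ reduces to injectivity of $\Klim^\sk_{k,m}$ on $\rmA\rmJ^\sk_{k,m}$.

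The main obstacle I foresee is the first step: producing growth estimates on $a^\sk_k(T Z)$ that are uniform enough across all indefinite $T$ with bottom-right entry $m$ to dominate the Fourier series of $\phi$ on compact subsets of $\HS^{(2)}$. Once that estimate is in hand, the remaining verifications are essentially formal consequences of the covariance properties, the harmonicity of the building blocks, and the injectivity results already secured in Sections~\ref{sec:harmonic-siegel-maass-forms} and~\ref{ssec:kohnen-limit-positive-m}.
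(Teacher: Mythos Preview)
Your proposal follows essentially the same route as the paper's proof: establish convergence and moderate growth via estimates on the Fourier terms $a^\sk_k(TZ)$, verify skew-harmonicity termwise, and then deduce Jacobi-group invariance by combining the covariance of $\Klim^\sk_{k,m}$ (Proposition~\ref{prop:kohnen-limit-positive-m-covariance}) with its injectivity (Proposition~\ref{prop:kohnen-limit-positive-m-injective}). The paper checks invariance under the center and integer translations directly from the defining series and reserves the covariance-plus-injectivity trick for the single element $S = \up\big(\begin{psmatrix} 0 & -1 \\ 1 & 0 \end{psmatrix}\big)$, but your uniform treatment of all $\gamma^\rmJ$ is equally valid.

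The one point where your proposal diverges substantively is the source of the growth estimates, and here there is a small gap. You propose that Proposition~\ref{prop:multiplicity-one-for-skew-harmonic-fourier-coefficients-nonzero-nonnegative} together with the normalization~\eqref{eq:def:fourier-coefficients-skew-indefinite-positive-m} ``reduces the estimate to classical asymptotics of the $W$-Whittaker function.'' But for \emph{indefinite} $T$ the paper explicitly refrains from giving a closed formula for $a^\sk_k(TY)$ (see the paragraph preceding~\eqref{eq:def:fourier-coefficients-skew-indefinite-positive-m}); these are solutions of Maa\ss's matrix-valued system that are not expressible as a single Whittaker function of a scalar argument, so one-variable Whittaker asymptotics do not directly apply. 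The paper instead invokes Shimura's estimates~\cite{shimura-1982} for confluent hypergeometric functions of matrix argument, which supply precisely the uniform exponential decay you need, both for convergence of the defining series and for the moderate growth of $\wtd\phi_m \big|^\sk_k\, S$ (needed so that injectivity applies to the difference). Once you replace the Whittaker reduction by a citation of Shimura, your argument goes through as written.
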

\begin{proof}
Prior work in~\cite{bringmann-raum-richter-2011} shows that the bottom arrow is well-defined, and Proposition~\ref{prop:kohnen-limit-positive-m-injective} gives its injectivity. It remains to show that the top map is well-defined and injective, and that the composition of $\Klim^\sk_{k,m}$ and $\Klim^{-1}_{k,m}$ is the identity map.

Absolute and locally uniform convergence of the right-hand side of~\eqref{eq:def:inverse-kohnen-limit-positive-m} follows from Shimura's~\cite{shimura-1982} estimates for $a^\sk_k(TY)$ and the polynomial growth of the Fourier series coefficients $c(\phi;\, n, r)$, where as always~$T = \begin{psmatrix} n & r \slash 2 \\ r \slash 2 & m \end{psmatrix}$. Hence $\Klim^{-1}_{k,m}$ is well-defined as a map to skew-harmonic functions. Shimura's estimates also imply the growth condition in Definition~\ref{def:abstract-fourier-jacobi-term-skew-harmonic}. Since $a^\sk_k(T Y) \ne 0$, we also find that $\Klim^{-1}_{k,m}$ is injective.

We next argue that $\Klim^\sk_{k,m} \circ \Klim^{-1}_{k,m}$ is the identity map. If $T$ is indefinite, then~\eqref{eq:def:fourier-coefficients-skew-indefinite-positive-m} asserts that $\Klim^\sk_{k,m}$ maps $a^\sk_k(T Y)$ to $a^\Jsk_m(n, r; y) e(r i v)$. For semi-definite~$T = \begin{psmatrix} 0 & 0 \\ 0 & m \end{psmatrix}$, we deduce from the asymptotic behavior of the $W$-Whittaker function that
\begin{gather*}
  \Klim^\sk_{k,m} \big( a^\sk_k(TY) \big)
=
  1
\tx{.}
\end{gather*}
This extends to any semi-definite~$T$ via the covariance of~$\Klim^\sk_{k,m}$ in Proposition~\ref{prop:kohnen-limit-positive-m-covariance}.

To conclude the proof, we have to show that the image of~$\phi$ under~$\Klim^{-1}_{k,m}$ is invariant under the action of the embedded Jacobi group, which is generated by its center, translations of $\tau$ and~$z$ by integers, and by the transformation~$S := \up(\begin{psmatrix} 0 & -1 \\ 1 & 0 \end{psmatrix})$. Invariance under the center and translations of $\tau$ and~$z$ follows directly from the defining expression of~$\Klim^{-1}_{k,m}(\phi)$.

Set
\begin{gather*}
  \wtd\phi
:=
  \Klim^{-1}_{k,m}(\phi)
\quad\tx{and}\quad
  \wtd\phi'
:=
  \Klim^{-1}_\pm(\phi) \big|^\sk_k\, S
\tx{.}
\end{gather*}
Shimura's estimates employed above imply moderate growth of $\wtd\phi'$. By Proposition~\ref{prop:kohnen-limit-positive-m-injective}, it suffices to prove the vanishing of
\begin{gather*}
  \Klim^\sk_{k,m}\big( \wtd\phi - \wtd\phi' \big)
\tx{.}
\end{gather*}
This limit exists locally uniformly, because~$S$ acts as a compact map on~$\HJ$. Moreover, the Kohnen limit process is linear and covariant (see Proposition~\ref{prop:kohnen-limit-positive-m-covariance}), and we discover that
\begin{multline*}
  \Klim^\sk_{k,m}\big( \wtd\phi - \wtd\phi' \big)
=
  \Klim^\sk_{k,m}(\wtd\phi)
  -
  \Klim^\sk_{k,m}(\wtd\phi')
\\
=
  \Klim^\sk_{k,m}(\wtd\phi)
  -
  \Klim^\sk_{k,m}(\wtd\phi) \big|^\Jsk_{k,m}\,
  \begin{psmatrix} 0 & -1 \\ 1 & 0 \end{psmatrix}
=
  \wtd\phi
  -
  \wtd\phi \big|^\Jsk_{k,m}\,
  \begin{psmatrix} 0 & -1 \\ 1 & 0 \end{psmatrix}
=
  0
\tx{.}
\end{multline*}
This completes the proof.
\end{proof}

We will need a similar inverse Kohnen limit process connecting $\rmA\rmJ^\sk_{k,m}$ and $\cJ^\sk_{2-k,|m|}$ in the case that $m < 0$. Suppose that~$\phi \in \cJ^\sk_{2-k,|m|}$ has the Fourier series expansion
\begin{align*}
  \phi(\tau, z)
=\;&
  \sum_{\substack{n,r \in \ZZ \\ D = 0}}
  c(n_{(0)},r)\, a^{\cJ\sk}_{2-k,|m|}(n_{(0)},r;y) e(nx + rz)
\\
  +\,&
  \sum_{\substack{n,r \in \ZZ \\ D = 0}}
  c(n_{(1)},r)\, a^{\cJ\sk}_{2-k,|m|}(n_{(1)},r;y) e(nx + rz)
\\
  +\,&
  \sum_{\substack{n,r \in \ZZ \\ D \ne 0}}
  c(n,r)\, a^{\cJ\sk}_{2-k,|m|}(n,r;y) e(nx + rz)
\tx{,}
\end{align*}
where $D := 4 n |m| - r^2$. Set
\begin{align}
\label{eq:def:inverse-kohnen-limit-negative-m}
  \Klim^{-1}_{k,m}(\phi)
:=\;&
  \sum_{\substack{n,r \in \ZZ \\ 4mn - r^2 = 0}}
  c(-n_{(0)}, -r)\,
  a^\sk_k\big( \begin{psmatrix} n & r \slash 2 \\ r \slash 2 & m \end{psmatrix}_{(0)} Z \big)
\\\nonumber
  +\,&
  \sum_{\substack{n,r \in \ZZ \\ 4mn - r^2 = 0}}
  (4 \pi |m|)^{\frac{3}{2}-k} c(-n_{(1)}, -r)\,
  a^\sk_k\big( \begin{psmatrix} n & r \slash 2 \\ r \slash 2 & m \end{psmatrix}_{(1)} Z \big)
\\\nonumber
  +\,&
  \sum_{\substack{n,r \in \ZZ \\ 4mn - r^2 \ne 0}}
  |m|^{k-1}\,
  2^{9-2k} \pi^{\frac{5}{2}-k} \Gamma(k-1)\,
  c(-n,-r)\,
  a^\sk_k\big( \begin{psmatrix} n & r \slash 2 \\ r \slash 2 & m \end{psmatrix} Z \big)
\tx{.}
\end{align}

Recall from Section~\ref{ssec:harmonic-jacobi-forms} the connection between $\cJ^\sk_{2-k,|m|}$ and the isomorphic space $\bbJ_{3-k,|m|}$ of harmonic Maa\ss-Jacobi forms. If $\phi \in \bbJ_{3-k,|m|}$, then we define for convenience
\begin{gather}
\label{eq:def:inverse-kohnen-limit-negative-m-maass-jacobi}
  \Klim^{-1}_{k,m} (\phi)
\;:=\;
  \Klim^{-1}_{k,m} \big( \rmR^\bbJ_{3-k}\, \phi \big)
\tx{.}
\end{gather}

The next lemma complements Lemma~\ref{la:kohnen-limit-negative-m-semi-definite-fourier-coefficients} and it is needed in the proof of Proposition~\ref{prop:inverse-kohnen-limit-negative-m}.
\begin{lemma}
\label{la:kohnen-limit-negative-m-indefinite-fourier-coefficients}
If $m < 0$ and $\begin{psmatrix} n & r \slash 2 \\ r \slash 2 & m \end{psmatrix}$ is indefinite, then we have
\begin{gather}
\label{eq:la:kohnen-limit-negative-m-indefinite-fourier-coefficients}
  \Klim^\sk_{k,m}\Big(
  a^\sk_k\big( \begin{psmatrix} n & r \slash 2 \\ r \slash 2 & m \end{psmatrix} Z \big)
  \Big)
=
  |m|^{1-k}
  \frac{2^{2k-9} \pi^{k-\frac{5}{2}} }{\Gamma(k-1)}\,
  a^\cJsk_{2-k,|m|}(-n, -r;\, y) e(-nx - rz)
\tx{.}
\end{gather}
\end{lemma}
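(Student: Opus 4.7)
My plan is to compute both sides of~\eqref{eq:la:kohnen-limit-negative-m-indefinite-fourier-coefficients} explicitly. The first observation is that $a^\sk_k(T;Y)$ for indefinite~$T$ is real-valued and, by the analysis in~\cite{maass-1953}, depends only on $\trace(TY)$, $\det(T)\det(Y)$, and the signature of~$T$; this fixes the normalization for $m<0$ from the $m>0$ case stipulated in~\eqref{eq:def:fourier-coefficients-skew-indefinite-positive-m}. Unpacking the definition~\eqref{eq:def:kohnen-limit-negative-m} of $\Klim^\sk_{k,m}$ for $m<0$, the factor $e(m\tau')\overline{e(TX)}$ simplifies to $e^{2\pi|m|y'}e(-nx-ru)$, yielding
\begin{gather*}
\Klim^\sk_{k,m}\bigl(a^\sk_k(TZ)\bigr)(\tau,z)
=
y^{k-1}\,e(-nx-ru)\lim_{y'\to\infty}(4\pi|m|y')^{\frac{1}{2}}\,e^{2\pi|m|y'}\,a^\sk_k(TY).
\end{gather*}

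Next, I would use Maa\ss's explicit closed form for $a^\sk_k(T;Y)$ at indefinite~$T$, which expresses it as a $W$-Whittaker function in a variable closely related to $\trace(TY)$, together with the covariance of $\Klim^\sk_{k,m}$ under the embedded Jacobi group and under $\diag(1,a,1,1/a)$ (Proposition~\ref{prop:kohnen-limit-negative-m-covariance}) to reduce the computation to a canonical case such as $T=\diag(n_0,-1)$. The asymptotic $W_{\kappa,\mu}(u)\sim u^\kappa e^{-u/2}$ as $u\to\infty$ then yields a closed-form limit that, after the $y^{k-1}e(-nx-ru)$ prefactor, exhibits the correct $x$- and $u$-dependence to match $a^\cJsk_{2-k,|m|}(-n,-r;y)\,e(-nx-rz)$ up to a scalar.

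To verify that this scalar is precisely $|m|^{1-k}\tfrac{2^{2k-9}\pi^{k-5/2}}{\Gamma(k-1)}$, I would evaluate $a^\cJsk_{2-k,|m|}(-n,-r;y)$ from~\eqref{eq:def:fourier-coefficients-almost-skew-Jacobi-nonzero} and the explicit formula~\eqref{eq:def:harmonic-maass-jacobi-raising-operator} for $\rmR^\bbJ_{3-k}$ applied to $a^\bbJ_{3-k,|m|}(-n,-r;y)e(-nx-rz)$. For the discriminant $D=-4|m|n-r^2>0$ this amounts to $e(-niy-riz)$, while for $D<0$ it involves the incomplete gamma function $\Gamma(\tfrac{3}{2}-k,\pi|D|y/|m|)$; in either case the dependence on $y$ is determined by applying the lowering operator $(k-2)$ times to an elementary expression.

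The main obstacle is the bookkeeping of the numerical constant. The factor $|m|^{1-k}$ can be anticipated from the covariance under $\diag(1,\sqrt{|m|},1,1/\sqrt{|m|})$ reducing the index from $-|m|$ to $-1$. The remaining $\tfrac{2^{2k-9}\pi^{k-5/2}}{\Gamma(k-1)}$ must emerge by combining: the normalization constant $\tfrac{(-1)^{(k-2)/2}2^{k-1}\pi^{1/2}}{\Gamma(k-1/2)}$ appearing in~\eqref{eq:def:harmonic-maass-jacobi-raising-operator}, the $(k-2)$-fold iteration of the skew-Jacobi lowering operator, and the Whittaker function asymptotic (which produces $\Gamma(k-1)$ in the denominator via the standard integral representation). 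Verifying this combinatorial identity of constants is the computational heart of the proof.
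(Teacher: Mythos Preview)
Your plan has a genuine gap at its very first substantive step. You assume that ``Maa\ss's explicit closed form for $a^\sk_k(T;Y)$ at indefinite~$T$ \dots\ expresses it as a $W$-Whittaker function in a variable closely related to $\trace(TY)$.'' But no such formula is available: as the paper states in the introduction, for non-degenerate indefinite~$T$ only Shimura's integral representations are known, and the Whittaker-function expressions in~\eqref{eq:def:fourier-coefficients-skew-positive-semi-definite} and~\eqref{eq:def:fourier-coefficients-skew-negative-semi-definite} are specific to the \emph{degenerate} semi-definite cases where $\det(T)=0$. For non-degenerate~$T$ the function $a^\sk_k(T;Y)$ genuinely depends on both $\trace(TY)$ and $\det(TY)$, and the solution to Maa\ss's system is not a single-variable Whittaker function. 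This is precisely why the paper refrains from giving any explicit formula for $a^\sk_k(TY)$ in the indefinite case and instead \emph{defines} its normalization via the Kohnen limit at positive~$m$ (equation~\eqref{eq:def:fourier-coefficients-skew-indefinite-positive-m}).

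The paper's proof therefore takes a completely different, indirect route that you should study. Rather than evaluating either side of~\eqref{eq:la:kohnen-limit-negative-m-indefinite-fourier-coefficients} in closed form, it uses the Eisenstein series $E^\sk_k$ as a probe: Lemma~\ref{la:kohnen-limit-positive-m-eisenstein-series} determines $c(E^\sk_k;\begin{psmatrix}-1&0\\0&1\end{psmatrix})$ via $\Klim^\sk_{k,1}$, the $\rot(\begin{psmatrix}0&1\\1&0\end{psmatrix})$-invariance of $E^\sk_k$ equates this with $c(E^\sk_k;\begin{psmatrix}1&0\\0&-1\end{psmatrix})$, and Lemma~\ref{la:kohnen-limit-negative-m-eisenstein-series} relates $\Klim^\sk_{k,-1}(e^\sk_{k,-1})$ to $E^{\cJ\sk}_{2-k,1}$. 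The unknown scalar then reduces to the ratio $c(E^{\cJ\sk}_{2-k,1};-1,0)/c(E^{\Jsk}_{k,1};-1,0)$, which is computed not by iterating $\rmL^{\Jsk}$ $(k-2)$ times as you suggest, but by a single application of $\rmR^{\Jsk}_{k-2}\circ y^{5/2-k}$ via the identity~\eqref{eq:almost-skew-harmonic-eisenstein-series-raising-and-xi}. Only after the constant is pinned down for $T=\begin{psmatrix}1&0\\0&-1\end{psmatrix}$ does covariance under $\GL{2}^\downarrow(\RR)$ extend the result to general indefinite~$T$ with $m<0$.
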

\begin{proof}
We will establish the claim as follows: We first employ Lemmas~\ref{la:kohnen-limit-positive-m-eisenstein-series} and~\ref{la:kohnen-limit-negative-m-eisenstein-series} to deduce the Kohnen limit process of $a^\sk_k\big( \begin{psmatrix} 1 & 0 \\ 0 & -1 \end{psmatrix} Z \big)$. Then the covariance in Propositions~\ref{prop:kohnen-limit-positive-m-covariance} and~\ref{prop:kohnen-limit-negative-m-covariance} allows us to extend the result to arbitrary~$\begin{psmatrix} n & r \slash 2 \\ r \slash 2 & m \end{psmatrix}$ with~$m < 0$.

Observe that the Fourier series expansion of the first Fourier-Jacobi series coefficient~$e^\sk_{k,1}$ of~$E^\sk_k$ equals
\begin{gather*}
  e^\sk_{k,1}(Z)
=
  \sum_{n,r \in \ZZ}
  c\big( E^\sk_k;\, \begin{psmatrix} n & r \slash 2 \\ r \slash 2 & 1 \end{psmatrix} \big)\,
  a^\sk_k\big( \begin{psmatrix} n & r \slash 2 \\ r \slash 2 & 1 \end{psmatrix} Z \big)
\tx{.}
\end{gather*}
Uniform convergence of the Kohnen limit process in Proposition~\ref{prop:kohnen-limit-positive-m-convergence} together with the at most polynomial growth of the Fourier series coefficients of~$E^\sk_k$ justifies the interchanging of the summation in the Fourier series expansion and the Kohnen limit process. This yields the equality
\begin{gather*}
  \Klim^\sk_{k,1}\big( e^\sk_{k,1} \big)
=
  \sum_{n,r \in \ZZ}
  c\big( E^\sk_k;\, \begin{psmatrix} n & r \slash 2 \\ r \slash 2 & 1 \end{psmatrix} \big)\,
  \Klim^\sk_{k,1}\Big(
  a^\sk_k\big( \begin{psmatrix} n & r \slash 2 \\ r \slash 2 & 1 \end{psmatrix} Z \big)
  \Big)
\tx{.}
\end{gather*}
The normalization of~$a^\sk_k( T Z )$ in~\eqref{eq:def:fourier-coefficients-skew-indefinite-positive-m} for indefinite Fourier indices~$T$ allows us to compute the $(-1,0)$\thdash\ Fourier series coefficient in the above expression:
\begin{gather*}
  c\Big(
  \Klim^\sk_{k,1}\big( e^\sk_{k,1} \big);\;
  -1, 0
  \Big)
=
  c\big( E^\sk_k;\, \begin{psmatrix} -1 & 0 \\ 0 & 1 \end{psmatrix} \big)
\tx{.}
\end{gather*}

Recall from Lemma~\ref{la:kohnen-limit-positive-m-eisenstein-series} that
\begin{gather*}
  \Klim^\sk_{k,1}\big( e^\sk_{k,1} \big)
=
  \frac{(-1)^{\frac{1-k}{2}} (2 \pi)^k}{\Gamma(\frac{1}{2}) \zeta(k)}\,
  E^\Jsk_{k,1}
\tx{,}
\end{gather*}
and then comparing Fourier series coefficients gives
\begin{gather*}
  c\big( E^\sk_k;\, \begin{psmatrix} -1 & 0 \\ 0 & 1 \end{psmatrix} \big)\,
=
  \frac{(-1)^{\frac{1-k}{2}} (2 \pi)^k}{\Gamma(\frac{1}{2}) \zeta(k)}\,
  c\big( E^\Jsk_{k,1};\, -1,0 \big)
\tx{.}
\end{gather*}

A similar argument applies to the $(-1,0)$\thdash\ Fourier-Jacobi coefficient~$e^\sk_{k,-1}$ of~$E^\sk_k$, and with the help of Lemma~\ref{la:kohnen-limit-negative-m-eisenstein-series} we find that
\begin{gather*}
  \ov{c\big( E^\sk_k;\, \begin{psmatrix} 1 & 0 \\ 0 & -1 \end{psmatrix} \big)}\,
  \Klim^\sk_{k,-1}\Big(
  a^\sk_k\big( \begin{psmatrix} 1 & 0 \\ 0 & -1 \end{psmatrix} Z \big)
  \Big)
=
  \frac{(-1)^{\frac{k-1}{2}} (2 \pi)^k}{\Gamma(k-\frac{1}{2}) \zeta(k)}\,
  c\big( E^\cJsk_{2-k,1};\, -1, 0;\, y \big) e(-x)
\tx{.}
\end{gather*}
Note that $E^\sk_k$ is invariant under the slash action of $\rot(\begin{psmatrix} 0 & 1 \\ 1 & 0 \end{psmatrix})$, and hence (since~$k$ is odd) $c\big( E^\sk_k;\, \begin{psmatrix} 1 & 0 \\ 0 & -1 \end{psmatrix} \big) = c\big( E^\sk_k;\, \begin{psmatrix} -1 & 0 \\ 0 & 1 \end{psmatrix} \big)$. We insert the equation for~$c\big( E^\sk_k;\, \begin{psmatrix} -1 & 0 \\ 0 & 1 \end{psmatrix} \big)$ to obtain
\begin{gather*}
  \Klim^\sk_{k,1}\Big(
  a^\sk_k\big( \begin{psmatrix} 1 & 0 \\ 0 & -1 \end{psmatrix} Z \big)
  \Big)
=
  \frac{\Gamma(\frac{1}{2})}{\Gamma(k-\frac{1}{2})}\,
  \frac{c\big( E^\cJsk_{2-k,1};\, -1, 0;\, y \big) e(-x)}
       {\ov{c\big( E^\Jsk_{k,1};\, -1,0 \big)}}
\tx{.}
\end{gather*}
Recall from Lemma~\ref{la:jacobi-eisenstein-series-fourier-coefficients-real} that $E^\Jsk_{k,1}$ has real Fourier series coefficients, and hence the complex conjugation in the denominator can be ignored.

We write
\begin{gather}
  c\big( E^\cJsk_{2-k,1};\, -1, 0;\, y \big)
=
  c\big( E^\cJsk_{2-k,1};\, -1, 0 \big)\,
  a^\cJsk_{2-k,1}(-1, 0;\, y)
\tx{,}
\end{gather}
where $a^\cJsk_{2-k,1}$ is defined in~\eqref{eq:def:fourier-coefficients-almost-skew-Jacobi-nonzero}. This transforms the above equation to
\begin{gather}
\label{eq:la:kohnen-limit-negative-m-indefinite-fourier-coefficients:partially-evaluated}
  \Klim^\sk_{k,1}\Big(
  a^\sk_k\big( \begin{psmatrix} 1 & 0 \\ 0 & -1 \end{psmatrix} Z \big)
  \Big)
=
  \frac{\Gamma(\frac{1}{2})}{\Gamma(k-\frac{1}{2})}\,
  \frac{c\big( E^\cJsk_{2-k,1};\, -1, 0 \big)}
       {c\big( E^\Jsk_{k,1};\, -1,0 \big)}\;
  a^\cJsk_{2-k,1}(-1, 0;\, y) e(-x)
\tx{.}
\end{gather}

To complete the proof, we have to analyze the relation between the Fourier series coefficients of $E^\cJsk_{2-k,1}$ and $E^\Jsk_{k,1}$. In~\eqref{eq:almost-skew-harmonic-eisenstein-series-raising-image}, we provided the relation
\begin{gather*}
  \rmR^\cJsk_{2-k}\, E^{\cJ\sk}_{2-k,m}
=
  E^{\Jsk}_{k,m}
\tx{.}
\end{gather*}
In particular, we have
\begin{multline*}
  c\big(E^{\cJsk}_{2-k,1};\, -1, 0 \big)\,
  \rmR^\cJsk_{2-k}\big(
  a^\cJsk_{2-k,1}(-1,0;\, y) e(-x)
  \big)
=
  \rmR^\cJsk_{2-k}\big(
  c\big(E^{\cJsk}_{2-k,1};\, -1, 0;\, \tau, z \big)
  \big)
\\
=
  c\big(E^{\Jsk}_{k,1};\, -1, 0;\, \tau, z \big)
=
  c\big(E^{\Jsk}_{k,1};\, -1, 0 \big)
  a^\Jsk_1(-1, 0;\, y) e(-x)
\tx{,}
\end{multline*}
where $c(E^{\cJsk}_{2-k,1};\, -1, 0;\, \tau, z )$ and $c(E^{\Jsk}_{k,1};\, -1, 0;\, \tau, z )$ are the $(-1,0)$\thdash\ Fourier terms of~$E^{\cJsk}_{2-k,1}$ and~$E^{\Jsk}_{k,1}$. Combining the covariance of the raising operator with respect to the extended Jacobi group with the second equation in~\eqref{eq:almost-skew-harmonic-eisenstein-series-raising-image} and Lemma~\ref{la:jacobi-eisenstein-series-fourier-coefficients-real}, we observe that the Fourier term~$\rmR^\cJsk_{2-k}\big( a^\cJsk_{2-k,1}(-1,0;\, y) e(-x) \big)$ does not vanish identically. We find that
\begin{gather*}
  \frac{c\big(E^{\cJsk}_{2-k,1};\, -1, 0 \big)}
       {c\big(E^{\Jsk}_{k,1};\, -1, 0 \big)}
=
  \frac{a^\Jsk_1(-1, 0;\, y) e(-x)}
       {\rmR^\cJsk_{2-k}\big(
        a^\cJsk_{2-k,1}(-1,0;\, y) e(-x)
        \big)}
\tx{,}
\end{gather*}
and inserting the definition of~$a^\cJsk_{2-k,1}$ in~\eqref{eq:def:fourier-coefficients-almost-skew-Jacobi-nonzero} gives:
\begin{gather*}
  \frac{c\big(E^{\cJsk}_{2-k,1};\, -1, 0 \big)}
       {c\big(E^{\Jsk}_{k,1};\, -1, 0 \big)}
=
  \frac{a^\Jsk_1(-1, 0;\, y) e(-x)}
       {\rmR^\cJsk_{2-k} \circ \rmR^\bbJ_{3-k} \big(
        a^\bbJ_{3-k,1}(-1,0;\, y) e(- x)
        \big)}
\tx{.}
\end{gather*}
Note that the composition of the two raising operators can be expressed as a single raising operator by~\eqref{eq:almost-skew-harmonic-eisenstein-series-raising-and-xi}, since $a^\bbJ_{3-k,1}(-1, 0;\, y) e(-x)$ is annihilated by the differential operator $\Delta^\rmJ_{3-k}$ and it is also holomorphic in~$z$:
\begin{gather*}
  \frac{c\big(E^{\cJsk}_{2-k,1};\, -1, 0 \big)}
       {c\big(E^{\Jsk}_{k,1};\, -1, 0 \big)}
=
  \frac{- i \Gamma(k-\frac{1}{2})}{8 \pi^{\frac{1}{2}}\, \Gamma(k-1)}\,
  \frac{a^\Jsk_1(-1, 0;\, y) e(-x)}
       {\rmR^\Jsk_{k-2} \big(
        y^{\frac{5}{2}-k}
        a^\bbJ_{3-k,1}(-1,0;\, y) e(- x)
        \big)}
\tx{.}
\end{gather*}

Next we recall the definition~\eqref{eq:def:jacobi-raising-operator} of the raising operator and the definition~\eqref{eq:def:harmonic-maass-jacobi:fourier-coefficient:negative} of~$a^\bbJ_{3-k,1}$. A straightforward computation shows that
\begin{multline*}
  \rmR^\Jsk_{k-2} \big(
  y^{\frac{5}{2}-k}
  a^\bbJ_{3-k,1}(-1,0;\, y) e(-x)
  \big)
\\
=
  \Big(
  \partial_{\ov{\tau}}
  + v y^{-1} \partial_{\ov{z}}
  + \tfrac{i}{2}(k-2-\tfrac{1}{2}) y^{-1}
  \Big) \Big(
  y^{\frac{5}{2}-k}
  \Gamma\big( \tfrac{3}{2} - k,\, 4 \pi y \big)
  e(-\tau)
  \Big)
=
  \tfrac{i}{2} (4 \pi)^{k - \frac{5}{2}}\, e(-\ov{\tau})
\tx{.}
\end{multline*}
Moreover, $a^\Jsk_{1}(-1,0;\,y) = e(i y)$, and hence
\begin{gather*}
  \frac{c\big(E^{\cJsk}_{2-k,1};\, -1, 0 \big)}
       {c\big(E^{\Jsk}_{k,1};\, -1, 0 \big)}
=
  \frac{- i \Gamma(k-\frac{1}{2})}{8 \pi^{\frac{1}{2}}\, \Gamma(k-1)}\,
  \frac{i}{2} (4 \pi)^{k - \frac{5}{2}}
=
  \frac{2^{2k-9} \pi^{k-3}\, \Gamma(k-\frac{1}{2})}{\Gamma(k-1)}
\tx{.}
\end{gather*}
We insert this last expression into~\eqref{eq:la:kohnen-limit-negative-m-indefinite-fourier-coefficients:partially-evaluated} to obtain
\begin{gather*}
  \Klim^\sk_{k,1}\Big(
  a^\sk_k\big( \begin{psmatrix} 1 & 0 \\ 0 & -1 \end{psmatrix} Z \big)
  \Big)
=
  \frac{2^{2k-9} \pi^{k-\frac{5}{2}} }{\Gamma(k-1)}\,
  a^\cJsk_{2-k,1}(-1, 0;\, y) e(-x)
\tx{.}
\end{gather*}

It remains to employ the covariance from Proposition~\ref{prop:kohnen-limit-negative-m-covariance} to extend the result to general Fourier indices~$\begin{psmatrix} n & r \slash 2 \\ r \slash 2 & m \end{psmatrix}$ with~$m<0$. First, Proposition~\ref{prop:multiplicity-one-for-skew-harmonic-fourier-coefficients-nonzero-nonnegative} guarantees that $a^\sk_k(T Z)$ for indefinite~$T$ depends only on the trace and determinant of~$T Z$. This implies the equality
\begin{gather*}
  \Klim^\sk_{k,1}\Big(
  a^\sk_k\big( \begin{psmatrix} 1 & 0 \\ 0 & -m \end{psmatrix} Z \big)
  \Big)
=
  |m|^{-\frac{k}{2}}
  \Klim^\sk_{k,1}\Big(
  a^\sk_k\big( \begin{psmatrix} 1 & 0 \\ 0 & -1 \end{psmatrix} Z \big)
  \big|^\sk_k\, \diag(1, |m|^{\frac{1}{2}}, 1, |m|^{-\frac{1}{2}})
  \Big)
\tx{.}
\end{gather*}
Next, the application of the second equation in~\eqref{eq:kohnen-limit-negative-m-covariance} and the extension of the skew-Jacobi slash action in~\eqref{eq:rot-quotient-action} show that this is equal to
\begin{align*}
&
  |m|^{-\frac{k}{2}}
  \Klim^\sk_{k,1}\Big(
  a^\sk_k\big( \begin{psmatrix} 1 & 0 \\ 0 & -1 \end{psmatrix} Z \big)
  \Big)
  \Big|^\Jsk_{2-k,1}\, \diag(1, |m|^{\frac{1}{2}}, 1, |m|^{-\frac{1}{2}})
\\
={}&
  |m|^{-\frac{k}{2}}
  \frac{2^{2k-9} \pi^{k-\frac{5}{2}} }{\Gamma(k-1)}\,
  a^\cJsk_{2-k,1}(-1, 0;\, y) e(-x)
  \big|^\Jsk_{2-k,1}\, \diag(1, |m|^{\frac{1}{2}}, 1, |m|^{-\frac{1}{2}})
\\
={}&
  |m|^{1-k}
  \frac{2^{2k-9} \pi^{k-\frac{5}{2}} }{\Gamma(k-1)}\,
  a^\cJsk_{2-k,|m|}(-1, 0;\, y) e(-x)
\tx{.}
\end{align*}
For the last equality, we have used the covariance of Fourier terms with respect to the action of~$\diag(1, |m|^{\frac{1}{2}}, 1, |m|^{-\frac{1}{2}})$ that follows directly from the defining equations~\eqref{eq:def:harmonic-maass-jacobi:fourier-coefficient:negative} and~\eqref{eq:def:fourier-coefficients-almost-skew-Jacobi-nonzero}. The first equation in~\eqref{eq:kohnen-limit-negative-m-covariance} now yields the claim.

\end{proof}

\begin{proposition}
\label{prop:inverse-kohnen-limit-negative-m}
Let $m < 0$. For every $\phi \in \cJ^\sk_{2-k,|m|}$ the defining series on the right-hand side of~\eqref{eq:def:inverse-kohnen-limit-negative-m} converges absolutely and locally uniformly. The map $\Klim^{-1}_{k,m}$ is inverse to the map $\Klim^\sk_{k,m}$. Specifically, the arrows in the following diagram are well-defined isomorphisms:
\begin{center}
\begin{tikzpicture}
\matrix(m)[matrix of math nodes,
column sep=10em,
text height=1.5em, text depth=1.25ex]{%
\bbJ_{3-k,|m|} & \cJ^\sk_{2-k,|m|} & \rmA\rmJ^\sk_{k,m} \\
};

\path
(m-1-1) edge[->, bend left = 10] node[above=.2em] {$\rmR^\bbJ_{3-k}$} (m-1-2)
(m-1-2) edge[->, bend left = 10] node[below=.2em] {$y^{k-\frac{3}{2}} \circ \big( \rmR^\Jsk \big)^{k-2} $} (m-1-1)
(m-1-2) edge[->, bend left = 10] node[above=.2em] {$\Klim^{-1}_{k,m}$} (m-1-3)
(m-1-3) edge[->, bend left = 10] node[below=.2em] {$\Klim^\sk_{k,m}$} (m-1-2)
;
\end{tikzpicture}
\end{center}
\end{proposition}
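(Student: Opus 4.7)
The plan is to follow the proof of Proposition~\ref{prop:inverse-kohnen-limit-positive-m} very closely, adapting to the richer Fourier expansion present in~$\cJ^\sk_{2-k,|m|}$. The left-hand bi-directional arrows between~$\bbJ_{3-k,|m|}$ and~$\cJ^\sk_{2-k,|m|}$ are already supplied by Proposition~\ref{prop:harmonic-and-almost-skew-harmonic-jacobi-forms}, and Propositions~\ref{prop:kohnen-limit-negative-m-convergence} and~\ref{prop:kohnen-limit-negative-m-injective} respectively show that $\Klim^\sk_{k,m}$ is a well-defined and injective map from $\rmA\rmJ^\sk_{k,m}$ to~$\cJ^\sk_{2-k,|m|}$. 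Thus what remains is to verify that $\Klim^{-1}_{k,m}$ is well-defined as a map into $\rmA\rmJ^\sk_{k,m}$ and that $\Klim^\sk_{k,m}\circ\Klim^{-1}_{k,m} = \mathrm{id}$ on~$\cJ^\sk_{2-k,|m|}$; combined with the injectivity of~$\Klim^\sk_{k,m}$ this immediately yields the two-sided inverse and hence the claimed isomorphism.

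For well-definedness of $\Klim^{-1}_{k,m}$, I would first argue that the series in~\eqref{eq:def:inverse-kohnen-limit-negative-m} converges absolutely and locally uniformly. The coefficients $c(-n,-r)$, $c(-n_{(0)},-r)$, $c(-n_{(1)},-r)$ grow only polynomially since $\cJ^\sk_{2-k,|m|}$ consists only of Eisenstein series (Remark~\ref{rem:almost-skew-harmonic-jacobi-forms}). Combined with Shimura's estimates~\cite{shimura-1982} for the $W$-Whittaker factors that enter $a^\sk_k(TY)$ in the indefinite and negative definite cases, and the explicit elementary formulas~\eqref{eq:def:fourier-coefficients-skew-negative-semi-definite} in the negative semi-definite degenerate case, this gives locally uniform convergence on~$\HS^{(2)}$. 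Termwise skew-harmonicity is built into the normalizations~\eqref{eq:def:fourier-coefficients-siegel-modular-forms-type-skew}, \eqref{eq:def:fourier-coefficients-skew-negative-semi-definite}, \eqref{eq:def:fourier-coefficients-skew-indefinite-positive-m}, \eqref{eq:def:fourier-coefficients-skew-negative-definite}, and the resulting function $\wtd\phi := \Klim^{-1}_{k,m}(\phi)$ has moderate growth by Shimura's bounds. Invariance of~$\wtd\phi$ under the center and translations of the extended Jacobi group is visible directly from the Fourier series; for the remaining generator $S := \up(\begin{psmatrix} 0 & -1 \\ 1 & 0 \end{psmatrix})$ I would proceed as in the positive index case: $\wtd\phi |^\sk_k S$ still satisfies the structural form $\wht{(\,\cdot\,)}(\tau,z,y')\,e(mx')$ required in Proposition~\ref{prop:kohnen-limit-negative-m-injective} (one checks this directly from $S \cdot Z$), so that injectivity of~$\Klim^\sk_{k,m}$ reduces invariance under~$S$ to the identity $\Klim^\sk_{k,m}(\wtd\phi - \wtd\phi |^\sk_k S) = 0$. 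By the covariance in~\eqref{eq:kohnen-limit-negative-m-covariance} and the composition identity below, this collapses to $\phi - \phi |^\Jsk_{2-k,|m|} \begin{psmatrix} 0 & -1 \\ 1 & 0 \end{psmatrix} = 0$, which holds by hypothesis on~$\phi$.

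The identity $\Klim^\sk_{k,m}\circ\Klim^{-1}_{k,m} = \mathrm{id}$ is verified by applying the limit termwise, justified by locally uniform convergence. For non-degenerate Fourier indices, Lemma~\ref{la:kohnen-limit-negative-m-indefinite-fourier-coefficients} (extended to negative definite~$T$ via the normalization~\eqref{eq:def:fourier-coefficients-skew-negative-definite}) gives exactly the reciprocal of the constant $|m|^{k-1}\,2^{9-2k}\pi^{\frac{5}{2}-k}\,\Gamma(k-1)$ placed in~\eqref{eq:def:inverse-kohnen-limit-negative-m}, and the sign inversion $(n,r) \mapsto (-n,-r)$ compensates precisely for the complex conjugation present in the definition~\eqref{eq:def:kohnen-limit-negative-m} of~$\Klim^\sk_{k,m}$. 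For degenerate semi-definite indices~$T$ with $4mn-r^2=0$, Lemma~\ref{la:kohnen-limit-negative-m-semi-definite-fourier-coefficients} handles $T = \begin{psmatrix} 0 & 0 \\ 0 & m \end{psmatrix}$ directly, and the covariance of~$\Klim^\sk_{k,m}$ under the embedded Jacobi group together with the diagonal matrices in~\eqref{eq:kohnen-limit-negative-m-covariance} propagates the computation to arbitrary such~$(n,r)$; the factor $(4\pi|m|)^{\frac{3}{2}-k}$ in front of the $c(-n_{(1)},-r)$-sum in~\eqref{eq:def:inverse-kohnen-limit-negative-m} is the reciprocal of the $(4\pi|m|)^{k-\frac{3}{2}}$ produced by Lemma~\ref{la:kohnen-limit-negative-m-semi-definite-fourier-coefficients}. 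The main technical obstacle will be the careful bookkeeping of these normalizing constants and signs, together with the justification of interchanging $\lim_{y'\to\infty}$ with the Fourier summation; the latter is uniform in compact subsets of~$\HJ$ by the same Shimura estimates that underlie the convergence argument, after which multiplicity-one (Propositions~\ref{prop:multiplicity-one-for-skew-harmonic-fourier-coefficients-nonzero-nonnegative} and~\ref{prop:multiplicity-one-for-skew-harmonic-fourier-coefficients-negative}) ensures that no additional terms are lost or doubled in the process.
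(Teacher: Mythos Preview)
Your proposal is correct and follows essentially the same route as the paper's proof: reduce the left half of the diagram to Proposition~\ref{prop:harmonic-and-almost-skew-harmonic-jacobi-forms}, import convergence, injectivity, and Jacobi-invariance from the $m>0$ argument in Proposition~\ref{prop:inverse-kohnen-limit-positive-m}, handle degenerate Fourier indices via Lemma~\ref{la:kohnen-limit-negative-m-semi-definite-fourier-coefficients}, and recognise that the one genuinely new ingredient is Lemma~\ref{la:kohnen-limit-negative-m-indefinite-fourier-coefficients} for the indefinite terms. The paper's write-up is briefer and does not single out the negative-definite case as you do (it is absorbed into the normalisation~\eqref{eq:def:fourier-coefficients-skew-negative-definite}), but the structure is the same.
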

\begin{proof}
The left half of the diagram appears in Proposition~\ref{prop:harmonic-and-almost-skew-harmonic-jacobi-forms}. Consider the right half of the diagram: Convergence and injectivity of the inverse Kohnen limit process follow as in the case of $m > 0$ in Proposition~\ref{prop:inverse-kohnen-limit-positive-m}. The argument that $\Klim^\sk_{k,m} \circ \Klim^{-1}_{k,m}$ is the identity map for Fourier series terms of degenerate index is also the same as in Proposition~\ref{prop:inverse-kohnen-limit-positive-m} (see Lemma~\ref{la:kohnen-limit-negative-m-semi-definite-fourier-coefficients}). However, a separate argument for Fourier series terms of indefinite index is required, since the analogue of~\eqref{eq:def:fourier-coefficients-skew-indefinite-positive-m} for $m < 0$ does not hold. The remaining part of the proof is then again similar to the one of Proposition~\ref{prop:inverse-kohnen-limit-positive-m}.

Thus, it remains to justify that
\begin{gather*}
  \Klim^\sk_{k,m}\Big(
  \Klim^{-1}_{k,m}\big(
  a^{\cJ\sk}_{2-k,m}(n,r;y) e(nx + rz)
  \big)
  \Big)
=
  a^{\cJ\sk}_{2-k,m}(n,r;y) e(nx + rz)
\tx{.}
\end{gather*}
This follows after inserting the definition of the inverse Kohnen limit process in~\eqref{eq:def:inverse-kohnen-limit-negative-m}, and then applying Lemma~\ref{la:kohnen-limit-negative-m-indefinite-fourier-coefficients}.
\end{proof}

\section{The Kohnen limit process for \texpdf{$m = 0$}{m = 0}}
\label{sec:kohnen-limit-zero-m}

The Kohnen limit process for~$m = 0$ is difficult, and we have to overcome five subtleties:
\begin{enumeratearabic}
\item
Elements of~$\rmA\rmJ^\sk_{k,0}$ are not determined uniquely by the leading asymptotic term as $y' \ra \infty$. In particular, the naive analogue of $\Klim^\sk_{k,m}$ for~$m \ne 0$ would not be injective. For this reason, we define two ``semi-definite'' Kohnen limit processes~$\Klim[0,0]^\sk_{k,0}$ and~$\Klim[0,1]^\sk_{k,0}$ in Section~\ref{ssec:kohnen-limit-zero-m-semi-definite} and a family of ``indefinite'' Kohnen limit processes~$\Klim[N]^\sk_{k,0}$ for~$N \ge 1$ in Section~\ref{ssec:kohnen-limit-zero-m-indefinite}.

\item
In contrast to the case~$m \ne 0$, the defining series of the $0$\thdash\ Fourier-Jacobi coefficient~$e^\sk_{k,0}$ of the skew Eisenstein series~$E^\sk_k$ has non-uniform asymptotic behavior as $y' \ra \infty$. In the case~$m \ne 0$, we interchanged limit and summation to directly compute the Kohnen limit process of~$e^\sk_{k,m}$. For~$m = 0$, we first have to apply the Poisson summation formula in Section~\ref{ssec:kohnen-limit-zero-m-eisenstein-series}.

\item
The cases~$m \ne 0$ rely heavily on multiplicity-one for Fourier series coefficients (see Propositions~\ref{prop:multiplicity-one-for-skew-harmonic-fourier-coefficients-nonzero-nonnegative} and~\ref{prop:multiplicity-one-for-skew-harmonic-fourier-coefficients-negative}). Fourier series coefficients of index~$T = \begin{psmatrix} 0 & 0 \\ 0 & 0 \end{psmatrix}$ violate multiplicity-one, and can occur in the Fourier series expansions of elements of~$\rmA\rmJ^\sk_{k,0}$. Observe that Theorem~3~(a) of~\cite{bringmann-raum-richter-2011} incorrectly states that the ``constant term'' of elements of~$\rmM^\sk_k$ is independent of~$Y$, which however does not impact any of the other results of~\cite{bringmann-raum-richter-2011}. Proposition~\ref{prop:multiplicity-one-for-skew-harmonic-fourier-coefficients-zero} corrects this inaccuracy of~\cite{bringmann-raum-richter-2011} by providing details on the Fourier series coefficients of weight~$k$ skew-harmonic functions of index~$T = \begin{psmatrix} 0 & 0 \\ 0 & 0 \end{psmatrix}$.

\item
The Kohnen limit processes for $m \ne 0$ are defined via the leading asymptotic term as~$y' \ra \infty$. The semi-definite Kohnen limit processes~$\Klim[0,0]^\sk_{k,0}$ and~$\Klim[0,1]^\sk_{k,0}$ capture the first two terms in the asymptotic behavior as~$y' \ra \infty$. However, the remaining Kohnen limit processes~$\Klim[N]^\sk_{k,0}$ for~$N \ge 1$ are defined via a limit~$y' \ra 0$. We do not provide details, but the third asymptotic term as~$y' \ra \infty$ would yield a discontinuous Kohnen limit process that does not converge uniformly.

\item
When defining~$\Klim[N]^\sk_{k,0}$, the Jacobi-variable~$z$ is specialized at torsion points. This collapses parts of the Fourier series expansion of an abstract Fourier-Jacobi term of index~$0$. For a given~$n$, there are infinitely many~$r$ such that Fourier series coefficients of index~$\begin{psmatrix} n & r \slash 2 \\ r \slash 2 & 0 \end{psmatrix}$ contribute to the $n$\thdash\ Fourier series coefficient of the image under the Kohnen limit process. This complication will be addressed in detail in a sequel.
\end{enumeratearabic}

\subsection{Abstract elliptic functions}
\label{ssec:abstract-elliptic-functions}

Abstract Fourier-Jacobi terms of index~$m \ne 0$ live in between skew-harmonic Siegel modular forms and skew-Jacobi forms. Abstract elliptic functions in the next definition are analogous objects in between skew-harmonic Siegel modular forms and elliptic functions. In particular, we do not impose any transformation invariance with respect to~$\SL{2}(\ZZ)$.
\begin{definition}[Abstract elliptic functions]
\label{def:abstract-elliptic-function-skew}
Suppose $\wtd\phi_0 :\, \HS^{(2)} \ra \CC$ is a function such that $\wtd\phi_0(\tau, z, \tau') = \wht\phi_0(\tau, z, y')$ is independent of~$x'$. Then $\wtd\phi_0$ is an abstract skew-harmonic elliptic function of weight~$k$ if it satisfies 
\begin{enumerateroman}
\item
\label{it:def:abstract-elliptic-function-skew:differential-operators}
$\wtd\phi_0$ is skew-harmonic, i.e., \@ $\Omega^\sk_k\, \wtd\phi_0 = 0$ and $\xi^\sk_k\, \wtd\phi_0 = 0$.

\item
\label{it:def:abstract-elliptic-function-skew:invariance}
For all $\gaJ$ in the Heisenberg group $\ZZ^2 \ltimes \ZZ$ (viewed as embedded into $\Sp{2}(\ZZ)$), we have the Siegel invariance $\wtd\phi_0 \circ \gaJ = \wtd\phi_0$.

\item
$\wtd\phi_0(Z) = \cO(\tr(Y)^a)$ for some $a \in \RR$ as $\tr(Y) \ra \infty$.
\end{enumerateroman}
The space of such functions is denoted by $\rmA\rmEll^\sk_k$.
\end{definition}
\begin{remark}
Note that Condition~\ref{it:def:abstract-elliptic-function-skew:differential-operators} of Definition~\ref{def:abstract-elliptic-function-skew} depends on $k$, while the Siegel invariance in~\ref{it:def:abstract-elliptic-function-skew:invariance} is independent of the weight~$k$.
\end{remark}

An abstract skew-harmonic elliptic function~$\wtd\phi_0$ and its transforms $\wtd\phi_0 |^\sk_k\, \up(\gamma)$ with $\gamma \in \SL{2}(\ZZ)$ are invariant with respect to the Heisenberg group~$\ZZ^2 \ltimes \ZZ$ as in Condition~\ref{it:def:abstract-elliptic-function-skew:invariance} in Definition~\ref{def:abstract-elliptic-function-skew}. They have Fourier coefficients with respect to~$x$, and by invariance under the translation~$z \mto z + 1$, these have a Fourier series expansion with respect to~$u$. If $n \in \RR_{>0}$, then that expansion is of the form
\begin{gather}
\label{eq:abstract-elliptic-function-skew-fourier-expansion-positive-n}
  c\big(\wtd\phi_0\big|^\sk_k\,\up(\gamma);\, n, 0\big)\,
  a^\sk_k\big( \begin{psmatrix} n & 0 \\ 0 & 0 \end{psmatrix} Y \big)
  \,+\,
  \sum_{r \in \ZZ \setminus \{0\}}
  c\big(\wtd\phi_0\big|^\sk_k\,\up(\gamma);\, n, r\big)\,
  a^\sk_k\big( \begin{psmatrix} n & r \slash 2 \\ r \slash 2 & 0 \end{psmatrix} Y \big) e(r u)
\text{,}
\end{gather}
if $n \in \RR_{<0}$, then it is of the form
\begin{multline}
\label{eq:abstract-elliptic-function-skew-fourier-expansion-negative-n}
  c\big(\wtd\phi_0\big|^\sk_k\,\up(\gamma);\, n_{(0)}, 0\big)\,
  a^\sk_k\big( \begin{psmatrix} n & 0 \\ 0 & 0 \end{psmatrix}_{(0)} Y \big)
  \,+\,
  c\big(\wtd\phi_0\big|^\sk_k\,\up(\gamma);\, n_{(1)}, 0\big)\,
  a^\sk_k\big( \begin{psmatrix} n & 0 \\ 0 & 0 \end{psmatrix}_{(1)} Y \big)
\\
  +\,
  \sum_{r \in \ZZ \setminus \{0\}}
  c\big(\wtd\phi_0\big|^\sk_k\,\up(\gamma);\, n, r\big)\,
  a^\sk_k\big( \begin{psmatrix} n & r \slash 2 \\ r \slash 2 & 0 \end{psmatrix} Y \big) e(r u)
\tx{,}
\end{multline}
and if $n = 0$, then it is of the form
\begin{multline}
\label{eq:abstract-elliptic-function-skew-fourier-expansion-zero-n}
  c\big(\wtd\phi_0\big|^\sk_k\,\up(\gamma);\, 0_{(0)}, 0\big)\,
  a^\sk_k\big( \begin{psmatrix} 0 & 0 \\ 0 & 0 \end{psmatrix}_{(0)} Y \big)
  \,+\,
  c\big(\wtd\phi_0\big|^\sk_k\,\up(\gamma);\, 0_{(1)}, 0; Y\big)
\\
  +\,
  \sum_{r \in \ZZ \setminus \{0\}}
  c\big(\wtd\phi_0\big|^\sk_k\,\up(\gamma);\, 0, r\big)\,
  a^\sk_k\big( \begin{psmatrix} 0 & r \slash 2 \\ r \slash 2 & 0 \end{psmatrix} Y \big) e(r u)
\tx{.}
\end{multline}
Recall from Proposition~\ref{prop:multiplicity-one-for-skew-harmonic-fourier-coefficients-zero} that invariance of $\wtd\phi_0 |^\sk_k\, \up(\ga)$ under the action of $\rot(\begin{psmatrix} 1 & 0 \\ 1 & 1\end{psmatrix})$, i.e., the transformation~$z \mto z + \tau$, implies that the second term in~\eqref{eq:abstract-elliptic-function-skew-fourier-expansion-zero-n} can be expanded in terms of Fourier coefficients in~\eqref{eq:def:fourier-coefficients-skew-zero:sub-coefficients} as
\begin{gather}
\label{eq:abstract-elliptic-function-skew-fourier-expansion-zero-term}
\begin{aligned}
  &
  c\big(\wtd\phi_0\big|^\sk_k\,\up(\gamma);\, (0_{(1)}, 0_{(0)}), 0 \big)
  a^\sk_k\big( \begin{psmatrix} 0 & 0 \\ 0 & 0 \end{psmatrix}_{(1)}, 0_{(0)};\, \yb \big)
\\
  +\,&
  c\big(\wtd\phi_0\big|^\sk_k\,\up(\gamma);\, (0_{(1)}, 0_{(1)}), 0 \big)
  a^\sk_k\big( \begin{psmatrix} 0 & 0 \\ 0 & 0 \end{psmatrix}_{(1)}, 0_{(1)};\, \yb \big)
\\
  +\,&
  \sum_{\nb \in \ZZ \setminus \{0\}}
  c\big(\wtd\phi_0\big|^\sk_k\,\up(\gamma);\, (0_{(1)}, \nb), 0 \big)
  a^\sk_k\big( \begin{psmatrix} 0 & 0 \\ 0 & 0 \end{psmatrix}_{(1)}, \nb;\, \yb \big) e(\nb \xb)
\tx{.}
\end{aligned}
\end{gather}

We naturally have $\rmA\rmJ^\sk_{k,0} \subset \rmA\rmEll^\sk_k$. In addition, we define the subspace $\rmA\rmEll^{\skmd}_k \subset \rmA\rmEll^\sk_k$ of abstract elliptic functions that have a Fourier series expansion with respect to~$x$ and whose Fourier series coefficients are of the following moderate growth:
\begin{multline}
\label{eq:def:abstract-elliptic-function-skew-moderate-growth}
  \rmA\rmEll^{\skmd}_k
:=
  \Big\{
  \wtd\phi_0 \in \rmA\rmEll^\sk_k \,:\,
  c\big( \wtd\phi_0;\, n, r \big)
  \tx{\@ grows polynomially with respect to~$r$,}
\\
  c\big( \wtd\phi_0;\, (0_{(1)}, \nb), 0 \big)
  \tx{\@ grows polynomially with respect to~$\nb$}
  \Big\}
\tx{.}
\end{multline}
For convenience and to unify notation, set
\begin{gather}
\label{eq:def:abstract-fourier-jacobi-skew-moderate-growth}
  \rmA\rmJ^\skmd_{k,0}
\;:=\;
  \rmA\rmJ^\sk_{k,0}
  \,\cap\,
  \rmA\rmEll^\skmd_k
\tx{.}
\end{gather}

\begin{proposition}
\label{prop:fourier-jacobi-coefficients-zero-m-moderate-growth}
If\/ $F(Z) = \sum_{m \in \ZZ} \wtd\phi_m(\tau, z, y') e(m x') \in \rmM^\sk_k$, then
\begin{gather*}
  \wtd\phi_0 \in \rmA\rmJ^\skmd_{k,0}
\tx{.}
\end{gather*}
\end{proposition}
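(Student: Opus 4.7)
The plan is to verify the two conditions defining $\rmA\rmJ^\skmd_{k,0}$ in~\eqref{eq:def:abstract-elliptic-function-skew-moderate-growth}: membership of $\wtd\phi_0$ in $\rmA\rmJ^\sk_{k,0}$ and polynomial growth of its Fourier coefficients. Writing $\wtd\phi_0(\tau, z, \tau') = \int_0^1 F(\tau, z, \tau' + b)\, db$ as a period integral over the central $x'$-translation in $\up(\GJ)$, the function $\wtd\phi_0$ inherits skew-harmonicity, moderate growth, and invariance under $\up(\SL{2}(\ZZ) \ltimes (\ZZ^2 \ltimes \ZZ))$ from $F$: this averaging commutes with the action of the embedded Jacobi group precisely because the $x'$-translation is central in $\up(\GJ)$, and the operators $\Omega^\sk_k$, $\xi^\sk_k$ have coefficients independent of $x'$ and hence commute with $x'$-translations.

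For the polynomial growth of $c(\wtd\phi_0; n, r)$ in $r$ with $n$ fixed, I extract the $(n,r)$-Fourier coefficient by integrating $\wtd\phi_0$ against $e(-nx - ru)$ over a fundamental domain in $(x, u)$. Moderate growth of $\wtd\phi_0$ bounds the resulting integral uniformly in $(n, r)$ by $C\, \tr(Y)^a$ for fixed~$Y$. Since this integral equals $c(\wtd\phi_0; n, r)\, a^\sk_k(TY)\, e(irv)$ (with $T = \begin{psmatrix} n & r \slash 2 \\ r \slash 2 & 0 \end{psmatrix}$, up to the subscripted $(0),(1)$-decomposition when $r = 0$), a polynomial bound on $|c(\wtd\phi_0; n, r)|$ reduces to a polynomial lower bound on $|a^\sk_k(TY)|$ for some suitable~$Y$. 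The explicit $W$-Whittaker expressions in Propositions~\ref{prop:multiplicity-one-for-skew-harmonic-fourier-coefficients-nonzero-nonnegative} and~\ref{prop:multiplicity-one-for-skew-harmonic-fourier-coefficients-negative} in the semi-definite case $r = 0$, and the normalizations~\eqref{eq:def:fourier-coefficients-skew-indefinite-positive-m} and~\eqref{eq:def:fourier-coefficients-skew-negative-definite} via the Kohnen limit process in the indefinite case $r \ne 0$, combined with a choice of $Y$ keeping $|\tr(TY)|$ and $|\det(TY)|$ of polynomial size in $r$, yield the required bound. This is the Shimura-type argument already invoked in the proof of Proposition~\ref{prop:inverse-kohnen-limit-positive-m}.

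For the polynomial growth of $c(\wtd\phi_0; (0_{(1)}, \nb), 0)$ in $\nb$, I observe that the $T = \begin{psmatrix} 0 & 0 \\ 0 & 0 \end{psmatrix}$-Fourier coefficient of $\wtd\phi_0$ coincides with that of $F$ (the $x'$-average is trivial at $T = 0$), hence inherits invariance under all of $\rot(\GL{2}(\ZZ))$. Proposition~\ref{prop:multiplicity-one-for-skew-harmonic-fourier-coefficients-zero} then identifies the non-constant component with a scalar multiple of the real-analytic elliptic Eisenstein series $E_0(k-1, \taub)$ in the variable~$\taub$, whose Fourier coefficients at nonzero~$\nb$ are given explicitly in~\eqref{eq:prop:multiplicity-one-for-skew-harmonic-fourier-coefficients-zero:eisenstein-coefficients} as constants times $|\nb|^{1-k}\, \sigma_{2k-3}(|\nb|)$, hence grow polynomially in $|\nb|$. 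The main obstacle will be the Shimura-type estimate of the second paragraph: $a^\sk_k(TY)$ for indefinite~$T$ is only implicitly defined via the Kohnen limit process of Section~\ref{sec:kohnen-limit-nonzero-m}, so producing a uniform lower bound requires tracing through the covariances of Propositions~\ref{prop:kohnen-limit-positive-m-covariance} and~\ref{prop:kohnen-limit-negative-m-covariance} and reducing to explicit asymptotics of the Jacobi-side coefficient functions $a^\Jsk$ and $a^{\cJ\sk}$.
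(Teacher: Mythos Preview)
Your treatment of membership in $\rmA\rmJ^\sk_{k,0}$ and of the $\nb$-growth is essentially the same as the paper's: both invoke the $\rot(\GL{2}(\ZZ))$-invariance of the $T=\begin{psmatrix}0&0\\0&0\end{psmatrix}$-coefficient and the resulting elliptic Maa\ss\ form structure from Proposition~\ref{prop:multiplicity-one-for-skew-harmonic-fourier-coefficients-zero}.

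For the $r$-growth you take a genuinely different route, and the step you flag as ``the main obstacle'' is indeed one. Your argument needs, for each $r$, a choice of $Y$ with $|a^\sk_k(TY)|$ bounded below polynomially in $r$ \emph{and} $\tr(Y)^a$ bounded above polynomially in $r$. But for $T=\begin{psmatrix}n&r/2\\r/2&0\end{psmatrix}$ one has $\det(TY)=-\tfrac{r^2}{4}\det(Y)$, so keeping $\det(TY)$ of fixed size forces $\det(Y)\to 0$; the growth hypothesis $F(Z)=\cO(\tr(Y)^a)$ as $\tr(Y)\to\infty$ says nothing about this boundary approach, and the implicit definition of $a^\sk_k(TY)$ for indefinite $T$ via~\eqref{eq:def:fourier-coefficients-skew-indefinite-positive-m} gives no ready lower bound there. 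Making this precise would require both a fundamental-domain bound on $F$ near the boundary of $\Im(\HS^{(2)})$ and explicit control of $a^\sk_k(TY)$ outside the regime where Shimura's estimates (which are \emph{upper} bounds) apply.

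The paper sidesteps this analytic difficulty entirely by exploiting the full $\Sp{2}(\ZZ)$-invariance of $F$, not just the Jacobi subgroup. Invariance under $\rot\big(\begin{psmatrix}0&1\\1&0\end{psmatrix}\big)$ together with the normalization~\eqref{eq:def:fourier-coefficients-skew-indefinite-positive-m} gives $c(\wtd\phi_0;\,n,r)=c(\wtd\phi_n;\,0,r)$. For $n>0$, Proposition~\ref{prop:inverse-kohnen-limit-positive-m} identifies the right-hand side with a Fourier coefficient of $\Klim^\sk_{k,n}(\wtd\phi_n)\in\rmJ^\sk_{k,n}$, and the theta decomposition reduces the bound to the trivial estimate $\cO_\epsilon(|r|^{k-1+2\epsilon})$ for half-integral weight coefficients. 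For general $n$, the Heisenberg shift $c(\wtd\phi_0;\,n,r)=c(\wtd\phi_0;\,n+2hr,r)$ with $h$ chosen so that $n+2hr>0$ brings one back to the positive case. This turns a delicate analytic lower-bound problem into an algebraic identity plus a classical coefficient bound.
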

\begin{proof}
It is clear that $\wtd\phi_0 \in \rmA\rmJ^\sk_{k,0}$.

Since $F$ is a Siegel modular form, it is in particular invariant under $\rot(\SL{2}(\ZZ))$.  Observing the Fourier series expansion in~\eqref{eq:abstract-elliptic-function-skew-fourier-expansion-zero-term}, we discover that~$c\big(\wtd\phi_0\big|^\sk_k\,\up(\gamma);\, 0_{(1)}, 0;\, Y\big)$ is up to the factor~$\det(Y)^{\frac{1-k}{2}}$ a Maa\ss\ form of eigenvalue $(k-1)(2-k)$ with respect to~$\taub$. In particular, its Fourier series coefficients~$c\big(\wtd\phi_0\big|^\sk_k\,\up(\gamma);\, (0_{(1)}, \nb), 0\big)$ grow at most polynomially with respect to~$\nb$.

We next argue that~$c\big( \wtd\phi_0;\, n, r \big)$ grows polynomially with respect to~$r$. If $n > 0$, then Proposition~\ref{prop:inverse-kohnen-limit-positive-m} and the normalization~\eqref{eq:def:fourier-coefficients-skew-indefinite-positive-m} imply that $c\big( \wtd\phi_0;\, n, r \big) = c\big( \wtd\phi_n;\, 0, r \big)$ is the Fourier series coefficient of a skew-holomorphic Jacobi form. When combining the theta decomposition of weight~$k$ skew-holomorphic Jacobi forms with the trivial bound on the growth of Fourier series coefficients of half-integral weight modular forms (cf.\ the introduction of~\cite{iwaniec-1987}, where a stronger bound is provided), we find that for fixed~$n > 0$
\begin{gather*}
  c\big( \wtd\phi_0;\, n, r \big)
=
  \cO_\epsilon\big(
  \big(|r|^2\big)^{\frac{k - 1 \slash 2}{2} - \frac{1}{4} + \epsilon}
  \big)
=
  \cO_\epsilon\big(
  |r|^{k - 1 + 2\epsilon}
  \big)
\quad
\tx{for any $\epsilon > 0$ as $r \rightarrow \infty$.}
\end{gather*}
For general, fixed $n \in \ZZ$, if $r \ne 0$, we have
\begin{gather*}
  c\big( \wtd\phi_0;\, n, r \big)
=
  c\big( \wtd\phi_0;\, n + 2hr, r \big)
=
  c\big( \wtd\phi_{n + 2hr};\, 0, r \big)
=
  \cO_{h,\epsilon}\big( |r|^{k - 1 + 2\epsilon} \big)
\quad
\tx{for any $\epsilon > 0$ as $r \rightarrow \infty$,}
\end{gather*}
provided that $h \in \ZZ$ with $n + 2hr > 0$. For fixed $n$, we can choose $h = \sgn(n)\sgn(r) n$ and hence obtain an estimate that only depends on~$n$.
\end{proof}

\subsection{The semi-definite Kohnen limit process}
\label{ssec:kohnen-limit-zero-m-semi-definite}

In this section, we define two Kohnen limit processes for $m=0$. We refer to them as the semi-definite Kohnen limit processes, because they allow us to extract the Fourier series coefficients of semi-definite index. However, they will also capture the Fourier coefficients of indices~$( \begin{psmatrix} 0 & 0 \\ 0 & 0 \end{psmatrix}_{(0)}, 0 )$, $( \begin{psmatrix} 0 & 0 \\ 0 & 0 \end{psmatrix}_{(1)}, 0_{(0)} )$, and $( \begin{psmatrix} 0 & 0 \\ 0 & 0 \end{psmatrix}_{(1)}, 0_{(1)} )$ with notation as in~\eqref{eq:def:fourier-coefficients-skew-zero}.

For $\wtd\phi_0 \in \rmA\rmEll^\sk_k$, set
\begin{gather}
\label{eq:def:kohnen-limit-zero-m-semi-definite-0}
  \Klim[0,0]^\sk_{k,0}
  \big(
  \wtd\phi_0
  \big) (\tau)
\;:=\;
  y^{k-\frac{1}{2}}\,
  \lim_{y' \ra \infty}
  \wtd{\phi}_0 (\tau, 0, \tau')
\tx{.}
\end{gather}
Proposition~\ref{prop:kohnen-limit-zero-m-semi-definite-convergence} asserts that it converges absolutely and locally uniformly to a function on the Poincar\'e upper half plane~$\HS$ that has spectral parameter~$k-\frac{1}{2}$, i.e., eigenvalue $\frac{1}{2} (k - \frac{1}{2})$ with respect to the weight~$1-k$ hyperbolic Laplace operator.

The Kohnen limit process~$\Klim[0,0]^\sk_{k,0}$ for index~$m = 0$ enters with an adjoint that we denote by $\Klim[0,0]^{\sk\,\#}_{k,0}$. Consider a periodic function $\phi_0 :\, \HS \ra \CC$ of eigenvalue $\frac{1}{2} (k - \frac{1}{2})$ with respect to the weight~$1-k$ hyperbolic Laplace operator, which has a Fourier series expansion of the form
\begin{gather*}
  c(\phi_0;\, 0_{(0)}) y^{k-\frac{1}{2}}
  \,+\,
  c(\phi_0;\, 0_{(1)}) y^{\frac{1}{2}}
  \,+\,
  \sum_{n \in \ZZ \setminus \{0\}}
  c(\phi_0;\, n)\,
  (4 \pi |n| y)^{\frac{k-1}{2}} W_{\sgn(n)\frac{1-k}{2}, \frac{k-1}{2}}(4 \pi |n| y)\, e(nx)
\tx{.}
\end{gather*}
If the Fourier series coefficients of~$\phi_0$ grow at most polynomially, we set
\begin{multline}
\label{eq:def:kohnen-limit-zero-m-adjoint-semi-definite-0}
  \Klim[0,0]^{\sk\,\#}_{k,0}
  \big(
  \phi_0
  \big) (Z)
\;:=\;
  c(\phi_0, 0_{(0)}
  a^\sk_k\big( \begin{psmatrix} 0 & 0 \\ 0 & 0 \end{psmatrix}_{(0)}; Y \big)
  \,+\,
  c(\phi_0, 0_{(1)}
  a^\sk_k\big( \begin{psmatrix} 0 & 0 \\ 0 & 0 \end{psmatrix}_{(1)}, 0_{(0)}; Y \big)
\\
  +\,
  \sum_{n = 1}^\infty
  c(\phi_0, n)\,
  a^\sk_k\big( \begin{psmatrix} n & 0 \\ 0 & 0 \end{psmatrix}; Y \big)
  \,+\,
  \sum_{n = 1}^\infty
  c(\phi_0, -n)\,
  a^\sk_k\big( \begin{psmatrix} -n & 0 \\ 0 & 0 \end{psmatrix}_{(0)}; Y \big)
\tx{.}
\end{multline}
The key property of this adjoint is given in the next lemma.
\begin{lemma}
\label{la:kohnen-limit-zero-m-adjoint-semi-definite-0}
Let $\phi_0 :\, \HS \ra \CC$ be a periodic function of eigenvalue $\frac{1}{2} (k - \frac{1}{2})$ with respect to the weight~$1-k$ hyperbolic Laplace operator. Assume that the Fourier series coefficients of~$\phi_0$ grow moderately in the sense of~\eqref{eq:def:abstract-elliptic-function-skew-moderate-growth}. Then we have
\begin{gather}
\label{eq:la:kohnen-limit-zero-m-adjoint-semi-definite-0}
  \Klim[0,0]^\sk_{k,0}\big(
  \Klim[0,0]^{\sk\,\#}_{k,0} \big(\phi_0\big)
  \big)
=
  \phi_0
\tx{.}
\end{gather}
\end{lemma}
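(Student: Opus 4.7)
The plan is to verify \eqref{eq:la:kohnen-limit-zero-m-adjoint-semi-definite-0} by a termwise computation. First I would argue that $\Klim[0,0]^{\sk\,\#}_{k,0}(\phi_0)$, defined by the series \eqref{eq:def:kohnen-limit-zero-m-adjoint-semi-definite-0}, converges absolutely and locally uniformly in $Z \in \HS^{(2)}$, and even jointly in $(Z, y')$ for $y'$ bounded away from~$0$. This relies on the polynomial growth of the Fourier coefficients of $\phi_0$ imposed by the moderate growth assumption \eqref{eq:def:abstract-elliptic-function-skew-moderate-growth}, combined with the exponential decay of the $W$-Whittaker functions appearing in \eqref{eq:def:fourier-coefficients-skew-positive-semi-definite}, \eqref{eq:def:fourier-coefficients-skew-negative-semi-definite}, and \eqref{eq:def:fourier-coefficients-skew-zero}. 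With this uniform convergence in hand, the limit $y' \to \infty$ may be interchanged with the summation when applying $\Klim[0,0]^\sk_{k,0}$.

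Next I would compute $\Klim[0,0]^\sk_{k,0}$ of each individual summand. Every Fourier index appearing in the adjoint has the special form $T = \begin{psmatrix} n & 0 \\ 0 & 0 \end{psmatrix}$ (possibly decorated with subscripts). Setting $z = 0$ makes $v = 0$, so that $Y = \diag(y, y')$, and hence $\trace(T Y) = n y$ and $\det(Y) = y y'$. For $n \ne 0$, direct substitution into \eqref{eq:def:fourier-coefficients-skew-positive-semi-definite} and \eqref{eq:def:fourier-coefficients-skew-negative-semi-definite} shows that the factor $a^\sk_k(T; Y)$ is independent of $y'$, so the limit $y' \to \infty$ passes through trivially. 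For the two zero-index contributions, \eqref{eq:def:fourier-coefficients-skew-zero} gives the explicit values $a^\sk_k(\begin{psmatrix} 0 & 0 \\ 0 & 0 \end{psmatrix}_{(0)}; Y) = 1$ and $a^\sk_k(\begin{psmatrix} 0 & 0 \\ 0 & 0 \end{psmatrix}_{(1)}, 0_{(0)}; Y) = y^{1-k}$.

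Finally, multiplication by $y^{k - \frac{1}{2}}$ recovers the corresponding Fourier term of $\phi_0$: the two zero-index contributions yield $c(\phi_0, 0_{(0)})\, y^{k - \frac{1}{2}}$ and $c(\phi_0, 0_{(1)})\, y^{\frac{1}{2}}$, matching the constant-index part of $\phi_0$; and for $n \ne 0$, the Whittaker indices $\sgn(n)\frac{1-k}{2}$ and $\frac{k-1}{2}$ arising from \eqref{eq:def:fourier-coefficients-skew-positive-semi-definite} and \eqref{eq:def:fourier-coefficients-skew-negative-semi-definite}, together with the $e(nx)$ factor, reproduce precisely the standard basis functions of the Maass eigenspace of spectral parameter $k - \frac{1}{2}$ and weight $1-k$ normalized by the adjoint in \eqref{eq:def:kohnen-limit-zero-m-adjoint-semi-definite-0}. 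Summing these contributions yields $\phi_0(\tau)$.

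The principal technical hurdle is the interchange of limit and summation in the opening step; once that is in place, the remainder of the proof is a mechanical substitution of the defining formulas for the Fourier coefficients collected in Section~\ref{sec:harmonic-siegel-maass-forms}.
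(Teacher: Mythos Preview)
Your proposal is correct and follows essentially the same approach as the paper: a direct termwise verification using the explicit formulas \eqref{eq:def:fourier-coefficients-skew-positive-semi-definite}, \eqref{eq:def:fourier-coefficients-skew-negative-semi-definite}, and \eqref{eq:def:fourier-coefficients-skew-zero}, which the paper also invokes (together with Lemma~\ref{la:kohnen-limit-zero-m-semi-definite-fourier-coefficients}). Your write-up is more explicit about the convergence step justifying the interchange of limit and summation, which the paper leaves implicit.
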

\begin{proof}
This follows directly from the defining equations~\eqref{eq:def:fourier-coefficients-skew-positive-semi-definite}, \eqref{eq:def:fourier-coefficients-skew-negative-semi-definite}, and~\eqref{eq:def:fourier-coefficients-skew-zero} of the Fourier series terms that appear in~\eqref{eq:def:kohnen-limit-zero-m-adjoint-semi-definite-0} (see also Lemma~\ref{la:kohnen-limit-zero-m-semi-definite-fourier-coefficients}).
\end{proof}
For convenience, we set
\begin{gather}
  \Klim[0,0]^{\sk\,\pi}_{k,0}
\;:=\;
  \Klim[0,0]^{\sk\,\#}_{k,0} \circ \Klim[0,0]^\sk_{k,0}
\tx{.}
\end{gather}
If that Kohnen limit process converges, then by Lemma~\ref{la:kohnen-limit-zero-m-adjoint-semi-definite-0}, we can view it as a projection of the Fourier expansion to contributions of indices~$\begin{psmatrix} 0 & 0 \\ 0 & 0 \end{psmatrix}_{(0)}$, $( \begin{psmatrix} 0 & 0 \\ 0 & 0 \end{psmatrix}_{(1)}, 0_{(0)} )$, $\begin{psmatrix} n & 0 \\ 0 & 0 \end{psmatrix}$ for $n > 0$, and $\begin{psmatrix} n & 0 \\ 0 & 0 \end{psmatrix}_{(0)}$ for $n < 0$.

We next use $\Klim[0,0]^{\sk\,\pi}_{k,0}$ to remove contributions that are already captured by the first semi-definite Kohnen limit process~$\Klim[0,0]^\sk_{k,0}$, and we define a second Kohnen limit process that extracts the remaining Fourier series coefficients of semi-definite index. Again, let $\wtd\phi_0 \in \rmA\rmEll^\sk_k$. Then
\begin{gather}
\label{eq:def:kohnen-limit-zero-m-semi-definite-1}
  \Klim[0,1]^\sk_{k,0}
  \big(
  \wtd\phi_0
  \big) (\tau)
\;:=\;
  y^{\frac{1}{2}}
  \lim_{y' \ra \infty}
  y^{\prime\, k-\frac{3}{2}}
  \Big(
  \ov{%
  \wtd{\phi}_0 (\tau, 0, \tau')
  -
  \Klim[0,0]^{\sk\,\pi}_{k,0} \big( \wtd{\phi}_0 \big) (\tau, 0, \tau')
  }\Big)
\tx{.}
\end{gather}
Proposition~\ref{prop:kohnen-limit-zero-m-semi-definite-convergence} shows that \eqref{eq:def:kohnen-limit-zero-m-semi-definite-1} converges absolutely and locally uniformly to a holomorphic function on the Poincar\'e upper half plane~$\HS$.

In analogy with~\eqref{eq:def:kohnen-limit-zero-m-adjoint-semi-definite-0}, if $\phi_0$ is a periodic and holomorphic function on~$\HS$ such that its Fourier series coefficients have at most polynomial growth, then the adjoint to $\Klim[0,1]^\sk_{k,0}$ is defined by
\begin{multline}
\label{eq:def:kohnen-limit-zero-m-adjoint-semi-definite-1}
  \Klim[0,1]^{\sk\,\#}_{k,0}\Big(
  \sum_{n = 0}^\infty c(n) e(n \tau)
  \Big)(Z)
\;:=\;
\\
  \ov{c(\phi_0; 0)}
  a^\sk_k\big(\begin{psmatrix} 0 & 0 \\ 0 & 0 \end{psmatrix}_{(1)}, 0_{(1)};\, Y \big)
  \,+\,
  \sum_{n = 1}^\infty
  \frac{\ov{c(\phi_0; n)}}{(4 \pi n)^{k-2}} a^\sk_k\big(\begin{psmatrix} -n & 0 \\ 0 & 0 \end{psmatrix}_{(1)};\, Z \big)
\tx{.}
\end{multline}
The next lemma is analogous to Lemma~\ref{la:kohnen-limit-zero-m-adjoint-semi-definite-0}, asserting that $\Klim[0,1]^{\sk\,\#}_{k,0}$ is indeed an adjoint to $\Klim[0,1]^{\sk}_{k,0}$.
\begin{lemma}
\label{la:kohnen-limit-zero-m-adjoint-semi-definite-1}
Let $\phi_0 :\, \HS \ra \CC$ be periodic and holomorphic. Assume that the Fourier series coefficients of~$\phi_0$ grow moderately in the sense of~\eqref{eq:def:abstract-elliptic-function-skew-moderate-growth}. Then we have
\begin{gather}
\label{eq:la:kohnen-limit-zero-m-adjoint-semi-definite-1}
  \Klim[0,1]^\sk_{k,0}\big(
  \Klim[0,1]^{\sk\,\#}_{k,0}\, \phi_0
  \big)
=
  \phi_0
\tx{.}
\end{gather}
\end{lemma}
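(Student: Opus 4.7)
My plan is to verify the identity by computing both sides of~\eqref{eq:la:kohnen-limit-zero-m-adjoint-semi-definite-1} directly from their defining Fourier expansions, working term by term on the right-hand side. Write $\phi_0(\tau) = \sum_{n \ge 0} c(\phi_0; n)\, e(n\tau)$. I first unravel $\Klim[0,1]^{\sk\,\#}_{k,0}(\phi_0)$ via the formulas~\eqref{eq:def:fourier-coefficients-skew-negative-semi-definite} and~\eqref{eq:def:fourier-coefficients-skew-zero}: at $z = 0$ we have $v = 0$, so $\det Y = y y'$, and the two types of summands in~\eqref{eq:def:kohnen-limit-zero-m-adjoint-semi-definite-1} become, respectively, $\ov{c(\phi_0;0)}\,y^{-1/2}\,y^{\prime\,3/2-k}$ and $\frac{\ov{c(\phi_0;n)}}{(4\pi n)^{k-2}}\,(yy')^{3/2-k}\,(4\pi n y)^{k-2}\, e^{-2\pi ny}\, e(-nx)$.

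The key preliminary step is to check that the projector $\Klim[0,0]^{\sk\,\pi}_{k,0}$ annihilates $\Klim[0,1]^{\sk\,\#}_{k,0}(\phi_0)$. By Lemma~\ref{la:kohnen-limit-zero-m-adjoint-semi-definite-0} this reduces to showing that $\Klim[0,0]^\sk_{k,0}\bigl(\Klim[0,1]^{\sk\,\#}_{k,0}(\phi_0)\bigr) = 0$. Since every summand above carries a factor $y^{\prime\,3/2-k}$ (and the $n \ge 1$ summands carry the additional exponential $e^{-2\pi ny}$, independent of~$y'$), multiplication by $y^{k-1/2}$ and the limit $y' \to \infty$ from the definition~\eqref{eq:def:kohnen-limit-zero-m-semi-definite-0} yields $0$ because $k > 3/2$. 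Consequently, the definition~\eqref{eq:def:kohnen-limit-zero-m-semi-definite-1} collapses to $y^{1/2}\lim_{y' \to \infty} y^{\prime\,k-3/2}\,\ov{\Klim[0,1]^{\sk\,\#}_{k,0}(\phi_0)(\tau, 0, \tau')}$.

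It remains to evaluate this limit term by term. For the $n=0$ summand: $y^{1/2}\, y^{\prime\,k-3/2} \cdot c(\phi_0;0)\, y^{-1/2}\,y^{\prime\,3/2-k} = c(\phi_0;0)$. For $n \ge 1$: $y^{1/2}\,y^{\prime\,k-3/2} \cdot \frac{c(\phi_0;n)}{(4\pi n)^{k-2}}\,(yy')^{3/2-k}(4\pi n y)^{k-2}\,e^{-2\pi n y}\,e(nx)$ telescopes to $c(\phi_0;n)\, e^{-2\pi n y}\,e(nx) = c(\phi_0;n)\,e(n\tau)$, where the powers of $y$ and $y'$ cancel exactly by design of the normalization $(4\pi n)^{k-2}$ in~\eqref{eq:def:kohnen-limit-zero-m-adjoint-semi-definite-1}. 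Summing yields $\phi_0(\tau)$.

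The one technical point — which I expect to be the main obstacle — is justifying the interchange of the limit $y' \to \infty$ with the infinite sum over $n$. This is where the moderate growth hypothesis~\eqref{eq:def:abstract-elliptic-function-skew-moderate-growth} on the Fourier coefficients of $\phi_0$ enters: combined with the uniform decay factor $e^{-2\pi n y}$ on compact subsets of $\HS$, it provides a dominating function (independent of $y'$ for $y'$ in any neighborhood of $\infty$) that validates termwise passage to the limit by dominated convergence. After this, the identity~\eqref{eq:la:kohnen-limit-zero-m-adjoint-semi-definite-1} follows.
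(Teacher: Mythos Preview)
Your proposal is correct and follows essentially the same approach as the paper, which simply cites the defining equations~\eqref{eq:def:fourier-coefficients-skew-positive-semi-definite}, \eqref{eq:def:fourier-coefficients-skew-negative-semi-definite}, \eqref{eq:def:fourier-coefficients-skew-zero} together with Lemma~\ref{la:kohnen-limit-zero-m-semi-definite-fourier-coefficients}; you have unpacked those references into the explicit term-by-term computation. One small simplification: since every summand of $\Klim[0,1]^{\sk\,\#}_{k,0}(\phi_0)$ at $z=0$ carries the common factor $y^{-1/2}y^{\prime\,3/2-k}$, the whole expression equals $y^{-1/2}y^{\prime\,3/2-k}\,\ov{\phi_0(\tau)}$, so the interchange of limit and sum is in fact vacuous rather than an application of dominated convergence.
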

\begin{proof}
This follows again directly from the defining equations~\eqref{eq:def:fourier-coefficients-skew-positive-semi-definite}, \eqref{eq:def:fourier-coefficients-skew-negative-semi-definite}, and~\eqref{eq:def:fourier-coefficients-skew-zero} of the Fourier series terms that appear in~\eqref{eq:def:kohnen-limit-zero-m-adjoint-semi-definite-0} (see also Lemma~\ref{la:kohnen-limit-zero-m-semi-definite-fourier-coefficients}).
\end{proof}
The associated projection on Fourier series expansions is
\begin{gather}
  \Klim[0,1]^{\sk\,\pi}_{k,0}
\;:=\;
  \Klim[0,1]^{\sk\,\#}_{k,0} \circ \Klim[0,1]^\sk_{k,0}
\tx{.}
\end{gather}
It maps a skew-harmonic Fourier series expansion to contributions of indices~$( \begin{psmatrix} 0 & 0 \\ 0 & 0 \end{psmatrix}_{(1)}, 0_{(1)} )$ and $\begin{psmatrix} n & 0 \\ 0 & 0 \end{psmatrix}_{(1)}$ for $n < 0$. 

Analogous to the case of~$m \ne 0$, the Kohnen limit processes~$\Klim[0,0]^\sk_{k,0}$ and~$\Klim[0,1]^\sk_{k,0}$ converge already on Fourier series coefficients. The next lemma specifies the image of the semi-definite Kohnen limit processes on Fourier series coefficients. We will use it frequently in the next sections.
\begin{lemma}
\label{la:kohnen-limit-zero-m-semi-definite-fourier-coefficients}
Let $n > 0$. We have
\begin{alignat}{2}
&
  \Klim[0,0]^\sk_{k,0} \big(
  a^\sk_k\big( \begin{psmatrix} 0 & 0 \\ 0 & 0 \end{psmatrix}_{(0)};\, Z \big)
  \big)
&&=
  y^{k-\frac{1}{2}}
\tx{,}
\\&
  \Klim[0,0]^\sk_{k,0} \big(
  a^\sk_k\big( \begin{psmatrix} 0 & 0 \\ 0 & 0 \end{psmatrix}_{(1)}, 0_{(0)};\, Z \big)
  \big)
&&=
  y^{\frac{1}{2}}
\tx{,}
\\&
  \Klim[0,0]^\sk_{k,0} \big(
  a^\sk_k\big( \begin{psmatrix} n & 0 \\ 0 & 0 \end{psmatrix};\, Z \big)
  \big)
&&=
  ( 4 \pi n )^{\frac{1}{2}-k}\,
  ( 4 \pi n y )^{\frac{k-1}{2}}
  W_{\frac{1-k}{2}, \frac{k-1}{2}}\big( 4 \pi n y \big)
  e(n x)
\tx{,}
\\&
  \Klim[0,0]^\sk_{k,0} \big(
  a^\sk_k\big( \begin{psmatrix} -n & 0 \\ 0 & 0 \end{psmatrix}_{(0)};\, Z \big)
  \big)
&&=
  ( 4 \pi n )^{\frac{1}{2}-k}\,
  ( 4 \pi n y )^{\frac{k-1}{2}}
  W_{\frac{k-1}{2}, \frac{k-1}{2}}\big( 4 \pi n y \big)
  e(- n x)
\tx{;}
\\[.3em]&
  \Klim[0,1]^\sk_{k,0} \big(
  a^\sk_k\big( \begin{psmatrix} 0 & 0 \\ 0 & 0 \end{psmatrix}_{(1)}, 0_{(1)};\, Z \big)
  \big)
&&=
  1
\tx{,}
\\&
  \Klim[0,1]^\sk_{k,0} \big(
  a^\sk_k\big( \begin{psmatrix} -n & 0 \\ 0 & 0 \end{psmatrix}_{(1)};\, Z \big)
  \big)
&&=
  (4 \pi n)^{k-2} e(n \tau)
\tx{.}
\end{alignat}
The remaining Fourier series coefficients vanish under the semi-definite Kohnen limit processes.
\end{lemma}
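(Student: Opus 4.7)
The plan is to plug $z=0$ (so $u=v=0$) into each Fourier term listed, apply the defining formulas \eqref{eq:def:fourier-coefficients-skew-positive-semi-definite}, \eqref{eq:def:fourier-coefficients-skew-negative-semi-definite}, and \eqref{eq:def:fourier-coefficients-skew-zero}, and then carry out the prescribed $y'\to\infty$ limit. For the identities in the first block, all six listed Fourier terms have trivial $y'$-dependence after setting $v=0$: the $\begin{psmatrix}0 & 0\\0 & 0\end{psmatrix}_{(0)}$ and $\begin{psmatrix}0 & 0\\0 & 0\end{psmatrix}_{(1)},0_{(0)}$ coefficients equal $1$ and $y^{1-k}$ respectively, and for $T=\begin{psmatrix}\pm n & 0\\0 & 0\end{psmatrix}$ one has $\trace(TY)=\pm ny$ which again involves only~$y$. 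In each case the limit is trivial, and a short manipulation of powers of~$y$ and $(4\pi n y)$ transforms the factor $y^{k-\frac{1}{2}}(4\pi ny)^{-k/2}$ produced by \eqref{eq:def:kohnen-limit-zero-m-semi-definite-0} into the claimed normalization $(4\pi n)^{\frac{1}{2}-k}(4\pi ny)^{\frac{k-1}{2}}$.

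For the two identities involving $\Klim[0,1]^\sk_{k,0}$, I would first verify that the subtracted projection $\Klim[0,0]^{\sk\,\pi}_{k,0}$ actually annihilates the relevant terms. Both $a^\sk_k(\begin{psmatrix}0 & 0\\0 & 0\end{psmatrix}_{(1)},0_{(1)};Y)$ and $a^\sk_k(\begin{psmatrix}-n & 0\\0 & 0\end{psmatrix}_{(1)};Z)$ carry a factor $\det(Y)^{\frac{3}{2}-k}=(yy')^{\frac{3}{2}-k}$ at $v=0$; since $k>3$ this decays as $y'\to\infty$, so the inner $\Klim[0,0]^\sk_{k,0}$ limit is zero. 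With the projection correction gone, multiplication by $y'^{\,k-\frac{3}{2}}$ exactly cancels the $y'$-decay, the complex conjugation converts $e(-nx)$ to $e(nx)$, and the surviving factor $e^{-2\pi ny}e(nx)=e(n\tau)$ combines with the power $(4\pi n)^{k-2}$ to give the asserted formulas $1$ and $(4\pi n)^{k-2}e(n\tau)$.

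The remaining (and main) task is the vanishing assertion. I split it into two families. The sub-coefficients $(\begin{psmatrix}0 & 0\\0 & 0\end{psmatrix}_{(1)},\nb)$ with $\nb\ne 0$ from \eqref{eq:abstract-elliptic-function-skew-fourier-expansion-zero-term} involve $a^\sk_k(\begin{psmatrix}0 & 0\\0 & 0\end{psmatrix}_{(1)},\nb;\yb)$, which by \eqref{eq:def:fourier-coefficients-skew-zero:sub-coefficients} carries the $K$-Bessel factor $K_{k-\frac{3}{2}}(2\pi|\nb|\yb)$; in the coordinates~\eqref{eq:def:Y-coordinates} the limit $y'\to\infty$ with $y$ fixed forces $\yb\to\infty$, so this term decays exponentially and both semi-definite limits vanish. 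The harder family consists of the indefinite coefficients $T=\begin{psmatrix}n & r/2 \\ r/2 & 0\end{psmatrix}$ with $r\ne 0$. By Proposition~\ref{prop:multiplicity-one-for-skew-harmonic-fourier-coefficients-nonzero-nonnegative}, $a^\sk_k(TY)$ depends on~$Y$ only through $\trace(TY)=ny+rv$ and $\det(TY)=-\tfrac{r^2}{4}(yy'-v^2)$. At $v=0$ the trace stays bounded while $|\det(TY)|=\tfrac{r^2}{4}yy'\to\infty$, and the explicit Whittaker description in (35)--(37) of \cite{maass-1953} (see also Shimura's integral representation in \cite{shimura-1982}) shows that $a^\sk_k(TY)$ decays faster than any negative power of $|\det(TY)|$, hence faster than any negative power of $y'$. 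Consequently both $\lim_{y'\to\infty}\wtd\phi_0(\tau,0,\tau')$ and $\lim_{y'\to\infty}y'^{\,k-\frac{3}{2}}\,\overline{\wtd\phi_0(\tau,0,\tau')}$ vanish on these Fourier terms, which finishes the proof. The main obstacle is precisely this last estimate on indefinite coefficients, because it is the only step where the direct ``plug and simplify'' strategy is insufficient and one genuinely uses the analytic structure of $a^\sk_k$.
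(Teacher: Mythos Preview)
Your proof plan is correct and follows essentially the same route as the paper: direct evaluation of the limits from the explicit formulas~\eqref{eq:def:fourier-coefficients-skew-positive-semi-definite}, \eqref{eq:def:fourier-coefficients-skew-negative-semi-definite}, \eqref{eq:def:fourier-coefficients-skew-zero} for the degenerate indices, the $K$-Bessel asymptotics for the $(\begin{psmatrix}0&0\\0&0\end{psmatrix}_{(1)},\nb)$ terms with $\nb\ne 0$, and Shimura's decay estimates for the indefinite terms. Your explicit check that $\Klim[0,0]^{\sk\,\pi}_{k,0}$ annihilates the two $\det(Y)^{\frac{3}{2}-k}$ terms before applying $\Klim[0,1]^\sk_{k,0}$ is a point the paper leaves implicit; one small correction is that equations (35)--(37) of~\cite{maass-1953} treat the negative definite case rather than the indefinite one, so for the last step you should rely on~\cite{shimura-1982} alone.
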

\begin{proof}
For Fourier series coefficients of index~$T$ with $\det(T) = 0$, this follows directly from the defining formulas for Fourier series coefficients in~\eqref{eq:def:fourier-coefficients-skew-positive-semi-definite}, \eqref{eq:def:fourier-coefficients-skew-negative-semi-definite}, and~\eqref{eq:def:fourier-coefficients-skew-zero}. In particular, the asymptotic expansion of the modified $K$-Bessel functions suffices to conclude that the Fourier series terms of index~$(\begin{psmatrix} 0 & 0 \\ 0 & 0 \end{psmatrix}_{(1)}, \nb)$ with~$\nb \ne 0$ in~\eqref{eq:def:fourier-coefficients-skew-zero} do not contribute to the image. Shimura's estimate for the decay of Fourier series terms of indefinite index implies that they decay faster than any polynomial in~$y'$ and therefore do not contribute to the semi-definite Kohnen limit processes.
\end{proof}

In contrast to the case~$m \ne 0$, one can examine $\Klim[0,0]$ and $\Klim[0,1]$ without inspecting the skew Eisenstein series.
\begin{proposition}
\label{prop:kohnen-limit-zero-m-semi-definite-convergence}
Let $\wtd\phi_0 \in \rmA\rmEll^\skmd_k$. Then $\Klim[0,0]^\sk_{k,0}(\wtd\phi_0)$ and $\Klim[0,1]^\sk_{k,0}(\wtd\phi_0)$ converge absolutely and locally uniformly. If\/ $\wtd\phi_0 \in \rmA\rmJ^\sk_{k,0}$, then
\begin{enumerateroman}
\item
\label{it:prop:kohnen-limit-zero-m-semi-definite-convergence:0}
$\Klim[0,0]^\sk_{k,0}(\wtd\phi_0)$ is an elliptic Maa\ss\ form of weight~$1-k$ and eigenvalue $\frac{1}{2} (k - \frac{1}{2})$ with respect to the weight~$1-k$ hyperbolic Laplace operator, and
\item
\label{it:prop:kohnen-limit-zero-m-semi-definite-convergence:1}
$\Klim[0,1]^\sk_{k,0}(\wtd\phi_0)$ is a holomorphic elliptic modular form of weight~$k-1$.
\end{enumerateroman}
\end{proposition}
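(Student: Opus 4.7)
The strategy is to reduce everything to a termwise computation on the Fourier expansion of $\wtd\phi_0$ with respect to~$x$, then to separately verify convergence, the analytic-type conditions, and modular invariance. The three ingredients are: the explicit list of Kohnen limits of individual Fourier terms (Lemma~\ref{la:kohnen-limit-zero-m-semi-definite-fourier-coefficients}), the moderate growth hypothesis encoded in $\rmA\rmEll^\skmd_k$, and the skew-slash invariance of $\wtd\phi_0$ under $\up\big(\SL{2}(\ZZ)\big)$ built into Definition~\ref{def:abstract-fourier-jacobi-term-skew-harmonic}.

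First I would substitute the Fourier expansions~\eqref{eq:abstract-elliptic-function-skew-fourier-expansion-positive-n}--\eqref{eq:abstract-elliptic-function-skew-fourier-expansion-zero-n}, specialized at $z=0$, into the defining limits~\eqref{eq:def:kohnen-limit-zero-m-semi-definite-0} and~\eqref{eq:def:kohnen-limit-zero-m-semi-definite-1}. Applying Lemma~\ref{la:kohnen-limit-zero-m-semi-definite-fourier-coefficients} termwise shows that the non-vanishing contributions to $\Klim[0,0]^\sk_{k,0}(\wtd\phi_0)$ arise from Fourier terms of index $\begin{psmatrix}0&0\\0&0\end{psmatrix}_{(0)}$, $\big(\begin{psmatrix}0&0\\0&0\end{psmatrix}_{(1)}, 0_{(0)}\big)$, $\begin{psmatrix}n&0\\0&0\end{psmatrix}$ with $n>0$, and $\begin{psmatrix}-n&0\\0&0\end{psmatrix}_{(0)}$ with $n>0$, while the non-vanishing contributions to $\Klim[0,1]^\sk_{k,0}(\wtd\phi_0)$ (after subtracting $\Klim[0,0]^{\sk\,\pi}_{k,0}(\wtd\phi_0)$) arise exactly from the terms of index $\big(\begin{psmatrix}0&0\\0&0\end{psmatrix}_{(1)}, 0_{(1)}\big)$ and $\begin{psmatrix}-n&0\\0&0\end{psmatrix}_{(1)}$ with $n\ge 1$. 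Termwise convergence for the remaining contributions is uniform on compacta because the Whittaker and exponential factors appearing for indefinite Fourier indices decay faster than any negative power of~$y'$ (via Shimura's estimates from~\cite{shimura-1982}), and because the semi-definite terms have been explicitly evaluated in Lemma~\ref{la:kohnen-limit-zero-m-semi-definite-fourier-coefficients}. Together with the polynomial bounds on the Fourier coefficients provided by~$\rmA\rmEll^\skmd_k$, this will yield absolute and locally uniform convergence of both $\Klim[0,0]^\sk_{k,0}(\wtd\phi_0)$ and $\Klim[0,1]^\sk_{k,0}(\wtd\phi_0)$.

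For part~\ref{it:prop:kohnen-limit-zero-m-semi-definite-convergence:0}, the explicit form of the surviving Fourier terms — powers $y^{k-\frac{1}{2}},\, y^{\frac{1}{2}}$ together with $(4\pi|n|y)^{\frac{k-1}{2}} W_{\mp\frac{k-1}{2},\frac{k-1}{2}}(4\pi|n|y)\, e(\pm n x)$ — is precisely the Fourier expansion of an elliptic Maa\ss\ form of weight $1-k$ with spectral parameter $k-\frac{1}{2}$, so the eigenvalue condition for $\Delta_{1-k}$ is automatic from each Fourier term. The modular invariance under $\SL{2}(\ZZ)$ reduces to a direct manipulation: for $\gamma=\begin{psmatrix}a&b\\c&d\end{psmatrix}$, the embedding~\eqref{eq:def:rot-up-down-embedding} yields $\up(\gamma)\cdot\begin{psmatrix}\tau&0\\0&\tau'\end{psmatrix}=\begin{psmatrix}\gamma\tau&0\\0&\tau'\end{psmatrix}$, so the limit $y'\ra\infty$ is preserved, and the slash factor $|c\tau+d|^{-1}\ov{c\tau+d}^{1-k}$ combined with $y^{k-\frac{1}{2}}=\Im(\gamma\tau)^{k-\frac{1}{2}}|c\tau+d|^{2k-1}$ collapses to $(c\tau+d)^{-(1-k)}$. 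The assumption $\wtd\phi_0\big|^\sk_k\up(\gamma)=\wtd\phi_0$ then gives $\Klim[0,0]^\sk_{k,0}(\wtd\phi_0)\big|_{1-k}\gamma=\Klim[0,0]^\sk_{k,0}(\wtd\phi_0)$.

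For part~\ref{it:prop:kohnen-limit-zero-m-semi-definite-convergence:1}, Lemma~\ref{la:kohnen-limit-zero-m-semi-definite-fourier-coefficients} shows that $\Klim[0,1]^\sk_{k,0}(\wtd\phi_0)=\sum_{n\ge 0}\tilde c(n)\, e(n\tau)$ for suitable constants $\tilde c(n)$, which is manifestly holomorphic on~$\HS$. The transformation law is established by the same covariance computation as above, this time using that the complex conjugation and the factor $y^{\frac{1}{2}} y'^{k-\frac{3}{2}}$ combine with $\Im(\gamma\tau)=y/|c\tau+d|^2$ to produce the factor $(c\tau+d)^{-(k-1)}$, after verifying that $\Klim[0,0]^{\sk\,\pi}_{k,0}$ commutes with the $\up(\gamma)$ action (which is immediate from the analogous covariance of $\Klim[0,0]^\sk_{k,0}$ together with the $\SL{2}(\ZZ)$-equivariance of the adjoint $\Klim[0,0]^{\sk\,\#}_{k,0}$ determined by its Fourier-coefficient level formula~\eqref{eq:def:kohnen-limit-zero-m-adjoint-semi-definite-0}).

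The main obstacle will be the convergence of $\Klim[0,1]^\sk_{k,0}(\wtd\phi_0)$: the defining expression extracts a subleading $y'^{\frac{3}{2}-k}$ term, so I must show that after subtracting $\Klim[0,0]^{\sk\,\pi}_{k,0}(\wtd\phi_0)$ the leading asymptotics cancel uniformly. This cancellation is forced termwise by the design of $\Klim[0,0]^{\sk\,\pi}_{k,0}$ (Lemmas~\ref{la:kohnen-limit-zero-m-adjoint-semi-definite-0}, \ref{la:kohnen-limit-zero-m-adjoint-semi-definite-1}), but to promote it to a genuine limit of the full sum I will use Shimura's estimates together with the moderate growth condition~\eqref{eq:def:abstract-elliptic-function-skew-moderate-growth} to dominate the sum by a convergent series uniform on compact subsets of~$\HS$, so that the interchange of limit and summation is legitimate.
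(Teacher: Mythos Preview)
Your proposal is correct and follows essentially the same approach as the paper: termwise evaluation of the Fourier expansion via Lemma~\ref{la:kohnen-limit-zero-m-semi-definite-fourier-coefficients}, moderate growth together with Shimura's estimates to justify the interchange of limit and summation, and then the covariance under $\up(\SL{2}(\ZZ))$ for modularity. The only organizational difference is that the paper defers the covariance computation to the separately stated Proposition~\ref{prop:kohnen-limit-zero-m-semi-definite-covariance} (whose proof in turn invokes the uniform convergence you establish here), whereas you unroll that covariance argument in place; your direct manipulation with $\up(\gamma)$ acting on $\begin{psmatrix}\tau&0\\0&\tau'\end{psmatrix}$ is exactly what underlies that proposition, so the two presentations are equivalent.
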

\begin{proof}
Consider the Fourier series expansion of~$\wtd\phi_0$, whose terms are given in~\eqref{eq:def:fourier-coefficients-skew-positive-semi-definite}, \eqref{eq:def:fourier-coefficients-skew-negative-semi-definite}, \eqref{eq:def:fourier-coefficients-skew-zero}, and~\eqref{eq:def:fourier-coefficients-skew-indefinite-positive-m}. Since $\wtd\phi_0$ has Fourier series coefficients that grow moderately, we can compute the Kohnen limit process by employing Lemma~\ref{la:kohnen-limit-zero-m-semi-definite-fourier-coefficients}. The images of the Fourier series coefficients given in Lemma~\ref{la:kohnen-limit-zero-m-semi-definite-fourier-coefficients} also reveal the Laplace eigenvalues of~$\Klim[0,0]^\sk_{k,0}(\wtd\phi_0)$. Similarly, it follows that~$\Klim[0,1]^\sk_{k,0}(\wtd\phi_0)$ is holomorphic.

The claimed modular transformation behavior in~\ref{it:prop:kohnen-limit-zero-m-semi-definite-convergence:0} and~\ref{it:prop:kohnen-limit-zero-m-semi-definite-convergence:1} will follow from the covariance of the semi-definite Kohnen limit processes stated in Proposition~\ref{prop:kohnen-limit-zero-m-semi-definite-covariance}.
\end{proof}
\begin{remark}
Proposition~\ref{prop:kohnen-limit-zero-m-semi-definite-convergence} could also be proved by using techniques from~Section~\ref{sec:kohnen-limit-nonzero-m} (see Lemma~\ref{la:kohnen-limit-zero-m-eisenstein-series}).
\end{remark}

\begin{proposition}
\label{prop:kohnen-limit-zero-m-semi-definite-covariance}
Let $\wtd\phi_0 \in \rmA\rmEll^\skmd_k$. Then
\begin{gather}
\label{eq:prop:kohnen-limit-zero-m-semi-definite-covariance}
\begin{aligned}
  \Klim[0,0]^\sk_{k,0} \big( \wtd\phi_0 \big)
  \big|_{1-k}\, g
&=
  \Klim[0,0]^\sk_{k,0} \big( \wtd\phi_0 \big|^\sk_{k}\, \up(g) \big)
\tx{,}
\\
  \Klim[0,1]^\sk_{k,0} \big( \wtd\phi_0 \big)
  \big|_{k-1}\, g
&=
  \Klim[0,1]^\sk_{k,0} \big( \wtd\phi_0 \big|^\sk_{k}\, \up(g) \big)
\tx{,}
\end{aligned}
\end{gather}
for all $g \in \SL{2}(\QQ)$.

If $\phi_0$ is an elliptic Maa\ss\ form of weight~$1-k$ and eigenvalue $\frac{1}{2} (k - \frac{1}{2})$ with respect to the weight~$1-k$ hyperbolic Laplace operator, then
\begin{gather}
\label{eq:prop:kohnen-limit-zero-m-adjoint-semi-definite-0-covariance}
  \Klim[0,0]^{\sk\,\#}_{k,0} \big( \phi_0 \big|_{1-k}\, g \big)
=
  \Klim[0,0]^{\sk\,\#}_{k,0} \big( \phi_0 \big) \big|^\sk_{k}\, \up(g)
\end{gather}
for all $g \in \SL{2}(\QQ)$. If $\phi_0$ is a holomorphic modular form on~$\HS$ of weight~$k-1$, then
\begin{gather}
\label{eq:prop:kohnen-limit-zero-m-adjoint-semi-definite-1-covariance}
  \Klim[0,1]^{\sk\,\#}_{k,0} \big( \phi_0 \big|_{k-1}\, g \big)
=
  \Klim[0,1]^{\sk\,\#}_{k,0} \big( \phi_0 \big) \big|^\sk_{k}\, \up(g)
\end{gather}
for all $g \in \SL{2}(\QQ)$.
\end{proposition}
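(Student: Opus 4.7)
The plan is to establish the four identities in the order (1), (3), (2), (4), with each step feeding into the next. Identity~(1) is a direct computation, (3) follows from (1) via the multiplicity-one results of Section~\ref{sec:harmonic-siegel-maass-forms}, and (2) and~(4) are obtained by minor modifications of these arguments.

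For~(1), I would compute the action of $\up(g)$ on $Z$ explicitly. Writing $\up(g) = \begin{psmatrix} A & B \\ C & D \end{psmatrix}$ as in~\eqref{eq:def:rot-up-down-embedding}, one finds $CZ + D = \begin{psmatrix} c\tau+d & cz \\ 0 & 1 \end{psmatrix}$, so $\det(CZ+D) = c\tau+d$; specializing at $z=0$ then gives $\up(g)\cdot Z = \begin{psmatrix} g\tau & 0 \\ 0 & \tau' \end{psmatrix}$. Substituting into the skew slash action~\eqref{eq:def:siegel-skew-slash-action} and passing to the limit $y' \to \infty$ yields~(1), relying on the identity
\[
y^{k-\tfrac{1}{2}}\,|c\tau+d|^{-1}\,\overline{(c\tau+d)}^{1-k}
\;=\;
(c\tau+d)^{k-1}\,(\Im g\tau)^{k-\tfrac{1}{2}}
\tx{,}
\]
which follows from $|c\tau+d|^{-2} = (c\tau+d)^{-1}\overline{(c\tau+d)}^{-1}$ and $\Im g\tau = y\,|c\tau+d|^{-2}$.

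For~(3), I would argue by multiplicity-one. Both sides are skew-harmonic functions of moderate growth whose Fourier expansions are supported on indices of the form $T = \begin{psmatrix} n & 0 \\ 0 & 0 \end{psmatrix}$ (with the subscript conventions of~\eqref{eq:def:fourier-coefficients-skew-zero} when $n \le 0$); by part~(1) together with Lemma~\ref{la:kohnen-limit-zero-m-adjoint-semi-definite-0} they share the same image under $\Klim[0,0]^\sk_{k,0}$, namely $\phi_0|_{1-k}\,g$. Propositions~\ref{prop:multiplicity-one-for-skew-harmonic-fourier-coefficients-nonzero-nonnegative},~\ref{prop:multiplicity-one-for-skew-harmonic-fourier-coefficients-negative}, and~\ref{prop:multiplicity-one-for-skew-harmonic-fourier-coefficients-zero} then assert that each such Fourier coefficient is uniquely determined by its $\Klim[0,0]^\sk_{k,0}$ image, forcing the two sides to agree term by term. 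Identity~(2) follows from (1) and~(3): the projection $\Klim[0,0]^{\sk\,\pi}_{k,0} = \Klim[0,0]^{\sk\,\#}_{k,0} \circ \Klim[0,0]^\sk_{k,0}$ commutes with $|^\sk_k\,\up(g)$, so the subtraction in~\eqref{eq:def:kohnen-limit-zero-m-semi-definite-1} is $\up(g)$-equivariant; the remaining calculation proceeds as in~(1), with the complex conjugation in~\eqref{eq:def:kohnen-limit-zero-m-semi-definite-1} turning $\overline{(c\tau+d)}^{1-k}$ into $(c\tau+d)^{1-k}$ and yielding the weight $k-1$ slash action via $y^{\tfrac{1}{2}}|c\tau+d|^{-1} = (\Im g\tau)^{\tfrac{1}{2}}$. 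Identity~(4) is then proved analogously to~(3), using Lemma~\ref{la:kohnen-limit-zero-m-adjoint-semi-definite-1} and part~(2).

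The main technical obstacle is the passage from $\SL{2}(\ZZ)$ to $\SL{2}(\QQ)$ in~(3) and~(4). For $g \in \SL{2}(\QQ) \setminus \SL{2}(\ZZ)$, the function $\phi_0|_{1-k}\,g$ is typically periodic only with respect to a sublattice $\tfrac{1}{N}\ZZ \subset \QQ$, so the adjoint formulas~\eqref{eq:def:kohnen-limit-zero-m-adjoint-semi-definite-0} and~\eqref{eq:def:kohnen-limit-zero-m-adjoint-semi-definite-1} must be extended to Fourier expansions over such finer lattices. This extension is natural, since $\up(g)$ sends each skew-harmonic Fourier term $a^\sk_k(T Y)\,e(T X)$ to an analogous term of rational Fourier index, still governed by the multiplicity-one results of Section~\ref{sec:harmonic-siegel-maass-forms}. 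Tracking these rational indices under $\up(g)$ and verifying convergence of the extended sums is the only delicate point of the argument.
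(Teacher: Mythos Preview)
Your approach is correct and matches the paper's in its essential ingredients: a direct computation from the definitions and uniform convergence for the covariances of the Kohnen limit processes themselves, and injectivity of $\Klim[0,0]^\sk_{k,0}$ on the image of its adjoint for the covariances of the adjoints. Your invocation of the multiplicity-one results of Section~\ref{sec:harmonic-siegel-maass-forms} is essentially a restatement of this injectivity; the paper instead phrases it directly via Lemma~\ref{la:kohnen-limit-zero-m-adjoint-semi-definite-0} (that $\Klim[0,0]^\sk_{k,0} \circ \Klim[0,0]^{\sk\,\#}_{k,0}$ is the identity), which packages the same information more cleanly and avoids re-deriving the term-by-term comparison.

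One point worth noting: the paper asserts that both equations in~\eqref{eq:prop:kohnen-limit-zero-m-semi-definite-covariance} follow together from the definitions and uniform convergence, and only then proves~\eqref{eq:prop:kohnen-limit-zero-m-adjoint-semi-definite-0-covariance} and~\eqref{eq:prop:kohnen-limit-zero-m-adjoint-semi-definite-1-covariance}. Your ordering (1), (3), (2), (4) is more careful, since the definition~\eqref{eq:def:kohnen-limit-zero-m-semi-definite-1} of $\Klim[0,1]^\sk_{k,0}$ involves the projection $\Klim[0,0]^{\sk\,\pi}_{k,0}$, and the $\up(g)$-equivariance of that projection is exactly the content of~(3). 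So you have correctly identified and resolved a logical dependency that the paper's exposition glosses over.
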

\begin{remark}
We have to restrict the covariance statement to transformations by~$g \in \SL{2}(\QQ)$, since we have defined the adjoint Kohnen limit process only for periodic functions. For example, the right-hand side of the second equation in~\eqref{eq:prop:kohnen-limit-zero-m-semi-definite-covariance} is not defined for general~$g \in \SL{2}(\RR)$.
\end{remark}

\begin{proof}[{Proof of Proposition~\ref{prop:kohnen-limit-zero-m-semi-definite-covariance}}]
The definitions~\eqref{eq:def:kohnen-limit-zero-m-semi-definite-0} of $\Klim[0,0]^\sk_{k,0}$ and~\eqref{eq:def:kohnen-limit-zero-m-semi-definite-1} of $\Klim[0,1]^\sk_{k,0}$ together with the uniform convergence in Proposition~\ref{prop:kohnen-limit-zero-m-semi-definite-convergence} imply the covariances in~\eqref{eq:prop:kohnen-limit-zero-m-semi-definite-covariance}.

Next consider~\eqref{eq:prop:kohnen-limit-zero-m-adjoint-semi-definite-0-covariance}. Note that we can apply the adjoint Kohnen limit process to the left-hand side, since $g \in \SL{2}(\QQ)$ and $\phi_0 |_{k-1}\, g$ is periodic in~$x$ with period~$\frac{1}{N}$ for some~$N \in \ZZ_{> 0}$. Lemma~\ref{la:kohnen-limit-zero-m-adjoint-semi-definite-0} implies that~$\Klim[0,0]^\sk_{k,0}$ is injective on the image of its adjoint. Therefore, the covariance in~\eqref{eq:prop:kohnen-limit-zero-m-adjoint-semi-definite-0-covariance} is equivalent to
\begin{gather*}
  \Klim[0,0]^\sk_{k,0} \Big( \Klim[0,0]^{\sk\,\#}_{k,0} \big( \phi_0 \big|_{1-k}\, g \big) \Big)
=
  \Klim[0,0]^\sk_{k,0} \Big( \Klim[0,0]^{\sk\,\#}_{k,0} \big( \phi_0 \big) \big|^\sk_{k}\, \up(g) \Big)
\tx{,}
\end{gather*}
where we have applied the Kohnen limit process to both the left and right-hand side. We employ covariance of the Kohnen limit process and~\eqref{eq:la:kohnen-limit-zero-m-adjoint-semi-definite-0} to find that this follows from
\begin{multline*}
  \Klim[0,0]^\sk_{k,0} \Big( \Klim[0,0]^{\sk\,\#}_{k,0} \big( \phi_0 \big|_{1-k}\, g \big) \Big)
\\
=
  \phi_0 \big|_{1-k}\, g
=
  \Klim[0,0]^\sk_{k,0} \Big( \Klim[0,0]^{\sk\,\#}_{k,0} \big( \phi_0 \big) \Big) \big|_{1-k}\, g
\\
=
  \Klim[0,0]^\sk_{k,0} \Big( \Klim[0,0]^{\sk\,\#}_{k,0} \big( \phi_0 \big) \big|^\sk_{k}\, \up(g) \Big)
\tx{.}
\end{multline*}
Hence \eqref{eq:prop:kohnen-limit-zero-m-adjoint-semi-definite-0-covariance} holds.

The covariance in~\eqref{eq:prop:kohnen-limit-zero-m-adjoint-semi-definite-1-covariance} follows as the covariance in~\eqref{eq:prop:kohnen-limit-zero-m-adjoint-semi-definite-0-covariance} when also using Lemma~\ref{la:kohnen-limit-zero-m-adjoint-semi-definite-1}.
\end{proof}

The next operator can be thought of as a projection to semi-definite Fourier series coefficients. Note that some but not all Fourier series terms of index~$\begin{psmatrix} 0 & 0 \\ 0 & 0 \end{psmatrix}$ contribute to its image (see Lemma~\ref{la:kohnen-limit-zero-m-projection-semi-definite}).
We set
\begin{gather}
\label{eq:def:kohnen-limit-zero-m-semi-definite-projection}
  \Klim[0]^{\sk\,\pi}_{k,0} \big( \wtd\phi_0 \big)
\;:=\;
  \Klim[0,0]^{\sk\,\pi}_{k,0} \big( \wtd\phi_0 \big)
  +
  \Klim[0,1]^{\sk\,\pi}_{k,0} \big( \wtd\phi_0 \big)
\tx{.}
\end{gather}

The covariance in~\eqref{eq:prop:kohnen-limit-zero-m-adjoint-semi-definite-0-covariance} and~\eqref{eq:prop:kohnen-limit-zero-m-adjoint-semi-definite-1-covariance} implies that, if $\wtd\phi_0 \in \rmA\rmJ^\sk_k$, then
\begin{gather}
\label{eq:prop:kohnen-limit-zero-m-projection-semi-definite-covariance}
  \Klim[0]^{\sk\,\pi}_{k,0} \big( \wtd\phi_0 \big|^\sk_{k}\, \up(g) \big)
=
  \Klim[0]^{\sk\,\pi}_{k,0} \big( \wtd\phi_0 \big) \big|^\sk_{k}\, \up(g)
\end{gather}
for all $g \in \SL{2}(\QQ)$.

\begin{lemma}
\label{la:kohnen-limit-zero-m-projection-semi-definite}
Let $\wtd\phi_0 \in \rmA\rmEll^\skmd_k$. Then
\begin{gather*}
  \Big( \wtd\phi_0 - \Klim[0]^{\sk\,\pi}_{k,0} \big( \wtd\phi_0 \big) \Big)(Z)
\end{gather*}
has a Fourier series expansion supported on indefinite~$T$ and indices $\big( \begin{psmatrix} 0 & 0 \\ 0 & 0 \end{psmatrix}_{(1)}, \nb \big)$ with $\nb \ne 0$ as in~\eqref{eq:def:fourier-coefficients-skew-zero}.
\end{lemma}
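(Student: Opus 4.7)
The proof plan is to compute $\Klim[0]^{\sk\,\pi}_{k,0}(\wtd\phi_0)$ term by term on the Fourier series expansion of $\wtd\phi_0$, and then identify the complement. Recall from \eqref{eq:abstract-elliptic-function-skew-fourier-expansion-positive-n}--\eqref{eq:abstract-elliptic-function-skew-fourier-expansion-zero-n} and~\eqref{eq:abstract-elliptic-function-skew-fourier-expansion-zero-term}, together with Propositions~\ref{prop:multiplicity-one-for-skew-harmonic-fourier-coefficients-nonzero-nonnegative}, \ref{prop:multiplicity-one-for-skew-harmonic-fourier-coefficients-negative}, and~\ref{prop:multiplicity-one-for-skew-harmonic-fourier-coefficients-zero}, that every Fourier index of $\wtd\phi_0$ is of the form $T = \begin{psmatrix} n & r \slash 2 \\ r \slash 2 & 0 \end{psmatrix}$ with $\det(T) = -r^2 \slash 4$. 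Thus $T$ is semi-definite precisely when $r = 0$, while $T$ is indefinite precisely when $r \ne 0$. Consequently, the Fourier series of $\wtd\phi_0$ splits as a sum of (i) the semi-definite terms indexed by $r = 0$ (which further split by the sign of $n$ and, for $n < 0$ and $n = 0$, into the sub-types described in~\eqref{eq:def:fourier-coefficients-skew-negative-semi-definite}, \eqref{eq:def:fourier-coefficients-skew-zero}) and (ii) the indefinite terms indexed by $r \ne 0$.

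Next, I would apply Lemma~\ref{la:kohnen-limit-zero-m-semi-definite-fourier-coefficients} to compute the semi-definite Kohnen limit processes term by term. The moderate growth condition~\eqref{eq:def:abstract-elliptic-function-skew-moderate-growth} combined with Shimura's estimates for indefinite Fourier terms (used as in the proof of Proposition~\ref{prop:inverse-kohnen-limit-positive-m}) justifies interchanging the limits in~\eqref{eq:def:kohnen-limit-zero-m-semi-definite-0} and~\eqref{eq:def:kohnen-limit-zero-m-semi-definite-1} with the Fourier summation. By Lemma~\ref{la:kohnen-limit-zero-m-semi-definite-fourier-coefficients}, $\Klim[0,0]^\sk_{k,0}$ picks out exactly the Fourier coefficients of indices
\[
  \begin{psmatrix} 0 & 0 \\ 0 & 0 \end{psmatrix}_{(0)},\quad
  \big( \begin{psmatrix} 0 & 0 \\ 0 & 0 \end{psmatrix}_{(1)}, 0_{(0)} \big),\quad
  \begin{psmatrix} n & 0 \\ 0 & 0 \end{psmatrix} \; (n > 0),\quad
  \begin{psmatrix} -n & 0 \\ 0 & 0 \end{psmatrix}_{(0)} \; (n > 0),
\]
while $\Klim[0,1]^\sk_{k,0}$ picks out exactly the Fourier coefficients of indices $\big( \begin{psmatrix} 0 & 0 \\ 0 & 0 \end{psmatrix}_{(1)}, 0_{(1)} \big)$ and $\begin{psmatrix} -n & 0 \\ 0 & 0 \end{psmatrix}_{(1)}$ for $n > 0$; all other Fourier terms (in particular, the $(\begin{psmatrix} 0 & 0 \\ 0 & 0 \end{psmatrix}_{(1)}, \nb)$ terms with $\nb \ne 0$ and all indefinite terms) vanish under both limits.

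Then by Lemmas~\ref{la:kohnen-limit-zero-m-adjoint-semi-definite-0} and~\ref{la:kohnen-limit-zero-m-adjoint-semi-definite-1}, the compositions $\Klim[0,0]^{\sk\,\#}_{k,0} \circ \Klim[0,0]^\sk_{k,0}$ and $\Klim[0,1]^{\sk\,\#}_{k,0} \circ \Klim[0,1]^\sk_{k,0}$ reconstruct precisely the corresponding Fourier series terms of $\wtd\phi_0$ with the original coefficients (since $\Klim[0,0]^{\sk\,\#}_{k,0}$ restores the terms of index $\begin{psmatrix} 0 & 0 \\ 0 & 0 \end{psmatrix}_{(0)}$, $(\begin{psmatrix} 0 & 0 \\ 0 & 0 \end{psmatrix}_{(1)}, 0_{(0)})$, $\begin{psmatrix} n & 0 \\ 0 & 0 \end{psmatrix}$, and $\begin{psmatrix} -n & 0 \\ 0 & 0 \end{psmatrix}_{(0)}$, and analogously for $\Klim[0,1]^{\sk\,\#}_{k,0}$). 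Summing these two projections according to~\eqref{eq:def:kohnen-limit-zero-m-semi-definite-projection}, I obtain exactly the part of the Fourier series of $\wtd\phi_0$ supported on the semi-definite indices with the exception of the $(\begin{psmatrix} 0 & 0 \\ 0 & 0 \end{psmatrix}_{(1)}, \nb)$ terms with $\nb \ne 0$. Subtracting from $\wtd\phi_0$ leaves precisely those $\nb \ne 0$ contributions and all indefinite Fourier terms, as claimed.

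The only subtlety is justifying the termwise evaluation of the Kohnen limits, which is the content of Proposition~\ref{prop:kohnen-limit-zero-m-semi-definite-convergence} and reduces to the growth estimate~\eqref{eq:def:abstract-elliptic-function-skew-moderate-growth} together with the exponential decay of $a^\sk_k$ on indefinite indices. Since this reasoning is already developed in the preceding results, the argument is essentially a bookkeeping exercise on the Fourier expansion.
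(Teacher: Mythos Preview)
Your proposal is correct and follows essentially the same approach as the paper: the paper's proof simply cites Lemmas~\ref{la:kohnen-limit-zero-m-adjoint-semi-definite-0}, \ref{la:kohnen-limit-zero-m-adjoint-semi-definite-1}, and~\ref{la:kohnen-limit-zero-m-semi-definite-fourier-coefficients}, and you have spelled out exactly how those lemmas combine to yield the termwise computation of $\Klim[0]^{\sk\,\pi}_{k,0}$ on the Fourier expansion. One small point: Lemmas~\ref{la:kohnen-limit-zero-m-adjoint-semi-definite-0} and~\ref{la:kohnen-limit-zero-m-adjoint-semi-definite-1} literally give $\Klim \circ \Klim^{\#} = \id$ rather than the projection $\Klim^{\#} \circ \Klim$, so the reconstruction step really rests on the explicit formulas in Lemma~\ref{la:kohnen-limit-zero-m-semi-definite-fourier-coefficients} together with the defining equations~\eqref{eq:def:kohnen-limit-zero-m-adjoint-semi-definite-0} and~\eqref{eq:def:kohnen-limit-zero-m-adjoint-semi-definite-1}; this is implicit in both your argument and the paper's.
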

\begin{proof}
This follows directly from~\eqref{eq:la:kohnen-limit-zero-m-adjoint-semi-definite-0} in Lemma~\ref{la:kohnen-limit-zero-m-adjoint-semi-definite-0} and~\eqref{eq:la:kohnen-limit-zero-m-adjoint-semi-definite-1} in Lemma~\ref{la:kohnen-limit-zero-m-adjoint-semi-definite-1}, and the explicit expressions for the Fourier series terms in Lemma~\ref{la:kohnen-limit-zero-m-semi-definite-fourier-coefficients}.
\end{proof}

\subsection{The indefinite Kohnen limit process}
\label{ssec:kohnen-limit-zero-m-indefinite}

We now examine indefinite Fourier coefficients of $\wtd\phi_0 \in \rmA\rmEll^\skmd_k$. If $N \in \ZZ_{\ge 1}$, then set
\begin{multline}
\label{eq:def:kohnen-limit-zero-m-indefinite}
  \Klim[N]^\sk_{k,0}
  \big(
  \wtd\phi_0
  \big) (\tau)
\\:=\;
  y^{\frac{1}{2}}
  \lim_{y' \ra 0}
  y^{\prime\,k-\frac{3}{2}}
  \Bigg(
  \ov{\Big(
  \big( \wtd\phi_0 - \Klim[0]^{\sk\,\pi}_{k,0} (\wtd\phi_0) \big)
  \big|^\sk_k\, \up\big(\begin{psmatrix} 1 & 0 \\ 0 & 1 \end{psmatrix}, (\alpha,\beta,0) \big)
  \Big)\, (\tau, 0, \tau')}
  \Bigg)_{\substack{\alpha,\beta \in \frac{1}{N}\ZZ \slash \ZZ \\ \alpha \ZZ + \beta \ZZ = \frac{1}{N} \ZZ}}
\tx{.}
\end{multline}
\begin{remarks}
\begin{enumeratearabic}
\item	
Observe that \eqref{eq:def:kohnen-limit-zero-m-indefinite} defines a vector-valued function $\HS \ra V(\rho_N)$, where $\rho_N$ is given in Section~\ref{ssec:elliptic-vector-valued-modular-forms}.

\item
In a sequel, we will establish the following: Let the limit $\wtd\phi_0 \in \rmA\rmEll^\skmd_k$ and $N \in \ZZ_{\ge 1}$. Then $\Klim[N]^\sk_{k,0}(\wtd\phi_0)$ converges absolutely and locally uniformly. If $\wtd\phi_0 \in \rmA\rmJ^\skmd_{k,0}$, then $\Klim[N]^\sk_{k,0}(\wtd\phi_0)$ is an elliptic Maa\ss\ form of weight~$1-k$ and eigenvalue $0$ with respect to the weight~$1-k$ hyperbolic Laplace operator.
\end{enumeratearabic}
\end{remarks}

\subsection{The Kohnen limit process for Eisenstein series}
\label{ssec:kohnen-limit-zero-m-eisenstein-series}

The proof of the convergence of the limit $\Klim[N]^\sk_{k,0}$ for Fourier-Jacobi coefficients of Eisenstein series is much more subtle in comparison to the case $m \ne 0$, since the defining series has no uniform asymptotic as $y' \ra 0$.
\begin{lemma}
\label{la:kohnen-limit-zero-m-eisenstein-series}
Let $e^\sk_{k,0}$ be the $0$\thdash\ Fourier-Jacobi coefficient of $E^\sk_k$ and $N \in \ZZ_{\ge 1}$. Then
\begin{gather*}
  \Klim[N]^\sk_{k,0}\big( e^\sk_{k,0} \big)
\end{gather*}
exists locally uniformly and is an Eisenstein series. More precisely, we have
\begin{align}
\label{eq:la:kohnen-limit-zero-m-eisenstein-series-semi-definite-0}
  \Klim[0,0]^\sk_{k,0} \big( e^\sk_{k,0} \big) (\tau)
&=
  E_{1-k}\big( k-\tfrac{1}{2}, \tau \big)
\tx{,}
\\
\label{eq:la:kohnen-limit-zero-m-eisenstein-series-semi-definite-1}
  \Klim[0,1]^\sk_{k,0} \big( e^\sk_{k,0} \big) (\tau)
&=
  \frac{(-1)^{\frac{1-k}{2}} 2^{2-k} \pi \Gamma(k-\frac{3}{2})}
       {\Gamma(k - \frac{1}{2})}\,
  \frac{\zeta(k-1) \zeta(2k-3)}{\zeta(k) \zeta(2k-2)}\,
  E_{k-1}(\tau)
\tx{,}
\\
\label{eq:la:kohnen-limit-zero-m-eisenstein-series-indefinite}
  \Klim[N]^\sk_{k,0} \big( e^\sk_{k,0} \big) (\tau)
&=
  \frac{(-1)^{\frac{1+k}{2}} 2^{6-3k} \pi^{\frac{3}{2}}\, \Gamma(k)}{\Gamma(k-\frac{1}{2})}\,
  \frac{\zeta(k-1)}{\zeta(k) \zeta(2k-2)}\;
  H \big( E_{k-1,N}(\tau) \big)
\tx{,}
\end{align}
where $E_{k-1,N}(\tau)$ is the Eisenstein series in~\eqref{eq:def:vector-valued-elliptic-eisenstein-series-holomorphic} and $H$ is the linear map from $V(\rho_N)$ to~$V(\rho_N)$ that maps $\frakf_{c,d}$ (with $\gcd(c,d,N) = 1$) to the following linear combination of~$\frake_{\alpha,\beta}$'s:
\begin{gather}
\label{eq:la:kohnen-limit-zero-m-eisenstein-series-indefinite:matrix}
  \sum_{\substack{\alpha,\beta \in \frac{1}{N}\ZZ \slash \ZZ \\ \gcd(N \alpha, N \beta, N) = 1}}\!\!
  \frake_{\alpha,\beta}\,
  \sum_{\substack{\ov{l} = 1 \\ \ov{l} (\alpha d - \beta c) \not\in \ZZ}}^N
  \Bigg( \frac{2}{1 - e\big( \ov{l} (\beta c - \alpha d) \big)} - 1 \Bigg)
  \frac{\zeta\big( 2k-3, \frac{\ov{l}}{N} \big)}{N^{2k - 3}}
\tx{.}
\end{gather}
\end{lemma}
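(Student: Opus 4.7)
The plan is to compute the Fourier series expansion of $e^\sk_{k,0}$ with sufficient precision to identify each of the three limits via Lemma~\ref{la:kohnen-limit-zero-m-semi-definite-fourier-coefficients} for the semi-definite Kohnen processes, and via torsion-point evaluation for $\Klim[N]$. The starting point is the coset decomposition of $\Gamma^{(2)}_\infty \backslash \Gamma^{(2)}$ combined with extraction of the $0$-th Fourier coefficient in $x'$. We stratify coset representatives by the rank of the bottom-left $2 \times 2$ block $C$, which completely controls the $x'$-dependence of $|\det(CZ+D)|^{-1} \ov{\det(CZ+D)}^{1-k}$.

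The three strata contribute disjoint Fourier-Jacobi terms of index $0$: the identity coset ($C = 0$) produces the summand $1$, which equals the Fourier term of index $\begin{psmatrix} 0 & 0 \\ 0 & 0 \end{psmatrix}_{(0)}$; the $\rk C = 1$ cosets, parameterized via the Klingen parabolic, produce an abstract Jacobi-Eisenstein-series-type contribution whose Fourier expansion with respect to $x$ and $u$ yields the terms of indices $\begin{psmatrix} n & 0 \\ 0 & 0 \end{psmatrix}$ (for $n > 0$), $\begin{psmatrix} -n & 0 \\ 0 & 0 \end{psmatrix}_{(0)}$ (for $n > 0$), and $\begin{psmatrix} 0 & 0 \\ 0 & 0 \end{psmatrix}_{(1)}, 0_{(0)}$; and the $\rk C = 2$ cosets, after Poisson summation in one of the $D$-entries, produce the terms of indices $\begin{psmatrix} -n & 0 \\ 0 & 0 \end{psmatrix}_{(1)}$ (for $n > 0$), $\begin{psmatrix} 0 & 0 \\ 0 & 0 \end{psmatrix}_{(1)}, 0_{(1)}$, and all indefinite indices $\begin{psmatrix} n & r \slash 2 \\ r \slash 2 & 0 \end{psmatrix}$ with $r \ne 0$. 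For $\Klim[0,0]^\sk_{k,0}(e^\sk_{k,0})$, only the first two strata contribute by Lemma~\ref{la:kohnen-limit-zero-m-semi-definite-fourier-coefficients}: the identity coset supplies the constant term $y^{k - 1 \slash 2}$, while the rank-$1$ stratum, after using the $\GL{2}(\ZZ)$ action to put $C$ into Klingen form and identifying the remaining sum with the weight $1-k$ elliptic Eisenstein series of parameter $k - \tfrac{1}{2}$, matches term by term the non-constant Fourier coefficients of $E_{1-k}(k - \tfrac{1}{2}, \tau)$ in Lemma~\ref{la:elliptic-eisenstein-series-fourier-expansion}. This yields~\eqref{eq:la:kohnen-limit-zero-m-eisenstein-series-semi-definite-0}.

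For $\Klim[0,1]^\sk_{k,0}(e^\sk_{k,0})$ and $\Klim[N]^\sk_{k,0}(e^\sk_{k,0})$, the rank-$2$ stratum is essential. I would parameterize $(C, D)$ by putting $C$ into Smith normal form modulo $\GL{2}(\ZZ)$ and separating the sum over the two elementary divisors. Poisson summation applied to the sum over one entry of $D$ turns the slowly convergent lattice sum into a one-dimensional Whittaker integral multiplied by a Dirichlet series in the other elementary divisor. The Whittaker integral evaluates to $\Gamma$-factors by standard contour manipulation, and the Dirichlet series collapses through $\sum_{n \ge 1} \sigma_s(n) n^{-w} = \zeta(w) \zeta(w - s)$. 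For~\eqref{eq:la:kohnen-limit-zero-m-eisenstein-series-semi-definite-1}, setting $z = 0$ and $y' \to \infty$ extracts the contributions of indices $\begin{psmatrix} -n & 0 \\ 0 & 0 \end{psmatrix}_{(1)}$ and $\begin{psmatrix} 0 & 0 \\ 0 & 0 \end{psmatrix}_{(1)}, 0_{(1)}$, which are then matched with the Fourier expansion of $E_{k-1}(\tau)$ in Lemma~\ref{la:elliptic-eisenstein-series-fourier-expansion} to fix the overall scalar. For~\eqref{eq:la:kohnen-limit-zero-m-eisenstein-series-indefinite}, the specialization of $z$ at torsion points $(\alpha, \beta) \in (\tfrac{1}{N} \ZZ \slash \ZZ)^2$ introduces roots of unity $e(\ov{l}(\beta c - \alpha d))$ into the Dirichlet series; taking $y' \to 0$ and reorganizing the sum over $\ov{l} \in \{1, \dotsc, N\}$ yields the Hurwitz zeta values $\zeta(2k-3, \ov{l} \slash N)$ and the combinatorial expression for $H$ in~\eqref{eq:la:kohnen-limit-zero-m-eisenstein-series-indefinite:matrix}.

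The principal technical difficulty is the rank-$2$ contribution to $\Klim[N]^\sk_{k,0}$. Unlike the semi-definite limits, this limit samples $y' \to 0$, where the defining sum is most badly behaved and several asymptotic regimes collide, so Poisson summation is unavoidable. Moreover, the subtraction of the semi-definite projection $\Klim[0]^{\sk\,\pi}_{k,0}$ built into~\eqref{eq:def:kohnen-limit-zero-m-indefinite} must cancel exactly the contributions captured by~\eqref{eq:la:kohnen-limit-zero-m-eisenstein-series-semi-definite-0} and~\eqref{eq:la:kohnen-limit-zero-m-eisenstein-series-semi-definite-1}, which in turn forces us to verify that Poisson summation redistributes the rank-$2$ contributions into exactly the three families of indices listed above. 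Once this bookkeeping is complete, comparison with the vector-valued Eisenstein series $E_{k-1,N}(\tau)$ defined in~\eqref{eq:def:vector-valued-elliptic-eisenstein-series-holomorphic} is routine.
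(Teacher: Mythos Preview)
Your overall strategy (rank stratification of $C$, Poisson summation on the rank-$2$ piece) is the same as the paper's. But your attribution of Fourier indices to strata is incorrect, and this is not just bookkeeping: it would give wrong constants in~\eqref{eq:la:kohnen-limit-zero-m-eisenstein-series-semi-definite-1} and a missing argument in~\eqref{eq:la:kohnen-limit-zero-m-eisenstein-series-indefinite}.

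The rank-$1$ stratum contributes more than you claim. After summing the constant term $c_1 y^{1-k}$ over $U \in \GL{2}^\downarrow(\ZZ)\backslash \GL{2}(\ZZ)$ via $\rot(U)$, one obtains $c_1 \det(Y)^{\frac{1-k}{2}} E_0(k-1,\taub)$, whose Fourier expansion in $\taub$ produces \emph{all} of the terms $a^\sk_k(\begin{psmatrix}0&0\\0&0\end{psmatrix}_{(1)}, 0_{(0)};Y)$, $a^\sk_k(\begin{psmatrix}0&0\\0&0\end{psmatrix}_{(1)}, 0_{(1)};Y)$, and $a^\sk_k(\begin{psmatrix}0&0\\0&0\end{psmatrix}_{(1)}, \nb;Y)$ for $\nb\ne 0$. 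In particular the rank-$1$ stratum already contributes to $\Klim[0,1]^\sk_{k,0}(e^\sk_{k,0})$ (its constant term), so your claim that this limit comes from rank~$2$ alone would miss that contribution and give the wrong scalar. The paper assembles~\eqref{eq:la:kohnen-limit-zero-m-eisenstein-series-semi-definite-1} from \emph{both} strata.

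For $\Klim[N]$ there is a second gap. The $(\begin{psmatrix}0&0\\0&0\end{psmatrix}_{(1)},\nb)$-terms with $\nb\ne 0$ (coming from rank~$1$) survive the subtraction of $\Klim[0]^{\sk\,\pi}_{k,0}$ by Lemma~\ref{la:kohnen-limit-zero-m-projection-semi-definite} and hence enter the defining limit~\eqref{eq:def:kohnen-limit-zero-m-indefinite}. You must show they do not contribute as $y'\to 0$; the paper does this by analyzing $E_0(k-1,\taub)$ at the rational cusp $\taub\to\alpha$ and checking that the growth is only $\det(Y)^{\frac{3}{2}-k}$, too weak against $y'^{k-\frac{3}{2}}$. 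Your plan omits this step. Finally, for the rank-$2$ piece the paper uses Kohnen's parametrization by $\Gamma_\infty\backslash\SL{2}(\ZZ)\times \ZZ_{>0}\times\ZZ$ and Poisson summation in the $\lambda$-variable (Lemma~\ref{la:kohnen-limit-zero-m-eisenstein-series:fourier-transform-lambda}), after which the $\xi\ne 0$ contribution is a polylogarithm $\Li{2-k+n_2}$ whose asymptotics at the unit circle (with cancellation over $\sgn(\xi)$ when $l(\alpha d-\beta c)\in\ZZ$) yield the Hurwitz-$\zeta$ expression; your ``Smith normal form plus Poisson in one entry of $D$'' and ``Whittaker integral'' sketch does not yet capture this mechanism.
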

\begin{proof}
We compute the asymptotic expansion of the Eisenstein series~$E^\sk_k$ from its defining series, as in~\cite{kohnen-1994}:
\begin{gather*}
  \sum_{C,D} \det(C Z + D)^{k-1} |\det(C Z + D)|^{1-2k}
\tx{.}
\end{gather*}
As a first step, we have to determine the contributions to
\begin{gather}
  e^{\sk\,\pi}_{k,0}
:=
  \Klim[0]^{\sk\,\pi}_{k,0}(e^\sk_{k,0})
\quad\tx{and}\quad
  e^{\sk\,\indef}_{k,0}
:=
  e^\sk_{k,0} - e^{\sk\,\pi}_{k,0}
\tx{.}
\end{gather}

We split the sum into contributions for which the bottom-right subblocks of $C$ of sizes~$2$, $1$, and~$0$ vanish:
\begin{gather*}
  E^\sk_k
=
  (E^\sk_k)_{(0)} + (E^\sk_k)_{(1)} + (E^\sk_k)_{(2)}
\tx{,}
\end{gather*}
where
\begin{gather}
\label{eq:def:skew-harmonic-siegel-eisenstein-subseries}
  (E^\sk_k)_{(\nu)}
:=
  \sum_{\substack{C,D \\ C_{i\!j} = 0 \tx{, if } i,j > \nu \\ C_{\nu\!\nu} \ne 0}}
  \det(C Z + D)^{k-1} |\det(C Z + D)|^{1-2k}
\tx{.}
\end{gather}

In this decomposition, $(E^\sk_k)_{(0)} = 1$ is constant. In particular, it does not contribute to $e^{\sk\,\indef}_{k,0}$. Furthermore, we have
\begin{gather*}
  (E^\sk_k)_{(1)}(Z)
=
 \sum_{U \in \GL{2}^\downarrow(\ZZ) \backslash \GL{2}(\ZZ)}
  \Bigg(
  \sum_{\begin{psmatrix} 1 & 0 \\ 0 & 1 \end{psmatrix} \ne \gamma = \begin{psmatrix} a & b \\ c & d \end{psmatrix} \in \Gamma_\infty \backslash \SL{2}(\ZZ)}\hspace{-2em}
  (c \tau + d)^{k-1} |c \tau + d|^{1-2k}
  \Bigg)
  \Big|^\sk_k\, \rot(U)
\tx{.}
\end{gather*}
Lemma~\ref{la:elliptic-eisenstein-series-fourier-expansion} with~$\kappa \leadsto 1-k$ and $s \leadsto k - \frac{1}{2}$ yields the Fourier series expansion
\begin{align}
\nonumber
  \sum_{U \in \GL{2}^\downarrow(\ZZ) \backslash \GL{2}(\ZZ)}
  \Bigg(
&
  \hphantom{+\;}
  \frac{(-1)^{\frac{1-k}{2}} 2^{2-k} \pi^{\frac{1}{2}} \Gamma(k-1)}
       {\Gamma(k - \frac{1}{2})}
  \frac{\zeta(k-1)}{\zeta(k)}
  y^{1-k}
\\\nonumber
&
  +\;
  \frac{(-1)^{\frac{1-k}{2}} 2^k \pi^k}
       {\Gamma(\frac{1}{2})\, \zeta(k)}
  \sum_{n = 1}^\infty
  \sigma_{k-1}(n)\,
  a^\sk_k\big( \begin{psmatrix} n & 0 \\ 0 & 0 \end{psmatrix} Y \big) e(nx)
\\
&
  +\;
  \frac{(-1)^{\frac{1-k}{2}} 2^k \pi^k}
       {\Gamma(k - \frac{1}{2})\, \zeta(k)}
  \sum_{n = 1}^\infty
  \sigma_{k-1}(n)\,
  a^\sk_k\big( \begin{psmatrix} -n & 0 \\ 0 & 0 \end{psmatrix}_{(0)} Y \big) e(-nx)
  \Bigg)
  \Big|^\sk_k\, \rot(U)
\tx{.}
\label{eq:fourier-coefficients-siegel-eisenstein-series-(1)-type-skew}
\end{align}
Since the following expression will also appear again later, we set for convenience
\begin{gather*}
  c_1
:=
  \frac{(-1)^{\frac{1-k}{2}} 2^{2-k} \pi^{\frac{1}{2}}\, \Gamma(k-1)}{\Gamma(k-\frac{1}{2})}\,
  \frac{\zeta(k-1)}{\zeta(k)}
\tx{.}
\end{gather*}
The second and third term in the parentheses of~\eqref{eq:fourier-coefficients-siegel-eisenstein-series-(1)-type-skew} contribute to~$e^\sk_{k,0}$ only if~$U = \begin{psmatrix} 1 & 0 \\ 0 & 1 \end{psmatrix}$. The first one yields an Eisenstein series, expressible in terms of~$\xb$ and~$\yb$. More precisely, the coordinate transform in~\eqref{eq:def:Y-coordinates} allows us to write
\begin{gather*}
  c_1
  \sum_{U \in \GL{2}^\downarrow(\ZZ) \backslash \GL{2}(\ZZ)}
  y^{1-k}
  \big|^\sk_k\, \rot(U)
=
  c_1
  \det(Y)^{\frac{1-k}{2}}
  \sum_{U \in \GL{2}^\downarrow(\ZZ) \backslash \GL{2}(\ZZ)}
  \yb^{k-1}
  \big|^\sk_k\, \rot(U)
\tx{.}
\end{gather*}
Observe that any system of representatives for~$\SL{2}^\downarrow(\ZZ) \backslash \SL{2}(\ZZ)$ is also a system of representatives for~$\GL{2}^\downarrow(\ZZ) \backslash \GL{2}(\ZZ)$. In other words, we can assume that~$U \in \SL{2}(\ZZ)$. Proposition~\ref{prop:rot-slash-action-on-zeroth-fourier-coefficient} then yields
\begin{multline*}
  c_1
  \det(Y)^{\frac{1-k}{2}}
  \sum_{U \in \SL{2}^\downarrow(\ZZ) \backslash \SL{2}(\ZZ)}
  \yb^{k-1}
  \big|^\sk_k\, \rot(U)
\\
=
  c_1
  \det(Y)^{\frac{1-k}{2}}
  \sum_{U \in \SL{2}^\downarrow(\ZZ) \backslash \SL{2}(\ZZ)}
  \Im\Big( \begin{psmatrix} 0 & 1 \\ 1 & 0 \end{psmatrix} U \begin{psmatrix} 0 & 1 \\ 1 & 0 \end{psmatrix} \taub \Big)^{k-1}
=
  c_1
  \det(Y)^{\frac{1-k}{2}}
  \sum_{\ga \in \Gamma_\infty \backslash \SL{2}(\ZZ)}
  \Im\big( \ga \taub \big)^{k-1}
\tx{.}
\end{multline*}
The second equality holds, since $\begin{psmatrix} 0 & 1 \\ 1 & 0 \end{psmatrix} \SL{2}^\downarrow(\ZZ) \begin{psmatrix} 0 & 1 \\ 1 & 0 \end{psmatrix} = \Gamma_\infty$. Employing Lemma~\ref{la:elliptic-eisenstein-series-fourier-expansion} with~$\kappa \leadsto 0$ and $s \leadsto k-1$ and the defining formulas for index~$\begin{psmatrix} 0 & 0 \\ 0 & 0 \end{psmatrix}$ Fourier coefficients in~\eqref{eq:def:fourier-coefficients-skew-zero}, we find that this equals
\begin{multline}
\label{eq:fourier-coefficients-siegel-eisenstein-series-(1)-type-skew-first-summand}
  c_1
  a^\sk_k\big( \begin{psmatrix} 0 & 0 \\ 0 & 0 \end{psmatrix}_{(1)}, 0_{(0)};\, Y \big)
  \,+\,
  \frac{c_1\, 2^{4-2k} \pi \Gamma(2k-3)}
       {\Gamma(k-1)^2}
  \frac{\zeta(2k-3)}{\zeta(2k-2)}\,
  a^\sk_k\big( \begin{psmatrix} 0 & 0 \\ 0 & 0 \end{psmatrix}_{(1)}, 0_{(1)};\, Y \big)
\\
  +\,
  \frac{c_1\, \pi^{k-\frac{3}{2}}}{\zeta(2k-2)}
  \sum_{\nb \in \ZZ \setminus \{0\}}
  \frac{|\nb|^{1-k}\, \sigma_{2k-3}(|\nb|)}
       {\Gamma(k-1)}\,
  a^\sk_k\big( \begin{psmatrix} 0 & 0 \\ 0 & 0 \end{psmatrix}_{(1)}, \nb;\, Y \big)
  e\big( 4 \pi |\nb| \xb \big)
\tx{.}
\end{multline}

The first two summands contribute to $e^{\sk\,\pi}_{k,0}$ (the first term to~$\Klim[0,0]^\sk_{k,0}(e^\sk_{k,0})$ and the second one to $\Klim[0,1]^\sk_{k,0}(e^\sk_{k,0})$) and the remaining sum contributes to~$e^{\sk\,\indef}_{k,0}$. Observe that the third term in~\eqref{eq:fourier-coefficients-siegel-eisenstein-series-(1)-type-skew-first-summand} can be written as
\begin{gather}
\label{eq:fourier-coefficients-siegel-eisenstein-series-(1)-type-skew-first-summand:indefinite-contribution}
  c_1\,
  \det(Y)^{\frac{1-k}{2}}\,
  \Big(
  E_0(k-1,\taub)
  \,-\,
  1
  \,-\,
  \frac{2^{4-2k} \pi \Gamma(2k-3)}
       {\Gamma(k-1)^2}
  \frac{\zeta(2k-3)}{\zeta(2k-2)}\,
  \yb^{2-k}
  \Big)
\tx{.}
\end{gather}

It remains to inspect $(E^\sk_{k})_{(2)}$.  We do not derive the Fourier series expansion of $(E^\sk_{k})_{(2)}$, but we give an asymptotic expansion with respect to both~$y' \ra \infty$ and~$y' \ra 0$. The normal form for the matrix block~$C$ derived in~\cite{kohnen-1994} guarantees that its bottom-right entry is nonzero, matching the condition $C_{2,2} \ne 0$ in the definition of~$(E^\sk_k)_{(2)}$ in~\eqref{eq:def:skew-harmonic-siegel-eisenstein-subseries}. In particular, we can consider the expansion~(15) of~\cite{kohnen-1994} to deduce the Kohnen limit processes~\eqref{eq:def:kohnen-limit-zero-m-semi-definite-0}, \eqref{eq:def:kohnen-limit-zero-m-semi-definite-1}, and \eqref{eq:def:kohnen-limit-zero-m-indefinite}. Notice that the sum in the formula after~(15) does not include $c = 0$, which has already been subsumed in the expansion of $(E^\sk_{k})_{(1)}$.  In particular, the $0$\thdash\ Fourier series coefficient of the elliptic Eisenstein series $G_{1-k}(\tau', k - \frac{1}{2})$ (in the notation of~\cite{kohnen-1994} as opposed to the notation from Section~\ref{sec:elliptic-modular-forms}) equals $c_1 y^{\prime\, 1-k}$, by using Lemma~\ref{la:elliptic-eisenstein-series-fourier-expansion} with $\kappa \leadsto 1-k$ and $s \leadsto k - \frac{1}{2}$.

Generalizing the formula after (16) of~\cite{kohnen-1994}, we have to study the asymptotic behavior of
\begin{gather}
\label{eq:la:kohnen-limit-zero-m-eisenstein-series-indefinite-expansion}
  c_1\,
  \sum_{\substack{\begin{psmatrix} a & b \\ c & d \end{psmatrix} \in \Gamma_\infty \backslash \SL{2}(\ZZ) \\ l \in \ZZ_{> 0},\, \lambda \in \ZZ \\ (\lambda, l) = 1}}
  (c \tau + d)^{k - 1} |c \tau + d|^{1 - 2k}\,
  \Bigg(
  l^2 y'
  \,+\,
  \Im\Big(
  \lambda^2 \frac{a \tau + b}{c \tau + d}
  +
  \frac{2 l \lambda z}{c \tau + d}
  -
  \frac{c l^2 z^2}{c \tau + d}
  \Big)
  \Bigg)^{1-k}
\tx{.}
\end{gather}
This function is homogeneous of degree~$2 - 2k$ as a rational function of~$l$ and~$\lambda$, and multiplication by $\zeta(2k - 2)$ allows us to remove the condition that $l$ and $\lambda$ are co-prime.

Consider the rightmost expression in the parentheses of~\eqref{eq:la:kohnen-limit-zero-m-eisenstein-series-indefinite-expansion}. Writing $z = \alpha \tau + \beta$ for $\alpha, \beta \in \RR$ and completing the square then give
\begin{multline}
\label{eq:la:kohnen-limit-zero-m-eisenstein-series-indefinite-expansion-inner-expression}
  l^2 y'
  \,+\,
  \Im\Big(
  \lambda^2 \frac{a \tau + b}{c \tau + d}
  +
  \frac{2 l \lambda z}{c \tau + d}
  -
  \frac{c l^2 z^2}{c \tau + d}
  \Big)
\\
=
  l^2 \Big( y' - \frac{y (\alpha d - \beta c)^2 + c \Im(z^2 (c \ov\tau + d))}{|c \tau + d|^2} \Big)
  \,+\,
  \frac{y (\lambda + l (\alpha d - \beta c) )^2}{|c \tau + d|^2}
\tx{.}
\end{multline}
Next, we insert the expression
\begin{gather*}
  \Im(z^2 (c \ov\tau + d))
=
  y \big( \alpha^2 c |\tau|^2 + 2 \alpha^2 d x + 2 \alpha \beta d - \beta^2 c \big)
\end{gather*}
into
\begin{multline*}
  y (\alpha d - \beta c)^2 + c \Im(z^2 (c \ov\tau + d))
\\
=
  y
  \big( \alpha^2 d^2 - 2 \alpha \beta c d + \beta^2 c^2
  \,+\,
  \alpha^2 c^2 (y^2 + x^2)
  +
  2 \alpha^2 c d x
  +
  2 \alpha \beta c d
  -
  \beta^2 c^2
  \big)
\\
=
  \alpha^2 y (d^2 + c^2 y^2 + c^2 x^2 + 2 c d x)
=
  \alpha^2 y |c \tau + d|^2
\tx{,}
\end{multline*}
and we find that~\eqref{eq:la:kohnen-limit-zero-m-eisenstein-series-indefinite-expansion-inner-expression} is equal to
\begin{gather*}
  l^2 \big( y' - \alpha^2 y \big)
  \,+\,
  \frac{y\, (\lambda + l (\alpha d - \beta c) )^2}{|c \tau + d|^2}
=
  \frac{l^2 \det(Y)}{y}
  \,+\,
  \frac{y\, (\lambda + l (\alpha d - \beta c) )^2}{|c \tau + d|^2}
\tx{.}
\end{gather*}
We employ this identity to rewrite~\eqref{eq:la:kohnen-limit-zero-m-eisenstein-series-indefinite-expansion} as follows:
\begin{gather}
\label{eq:la:kohnen-limit-zero-m-eisenstein-series-indefinite-expansion-rewrite}
  \frac{c_1}{\zeta(2k - 2)}\,
  \sum_{\substack{\begin{psmatrix} a & b \\ c & d \end{psmatrix} \in \Gamma_\infty \backslash \SL{2}(\ZZ) \\ l \in \ZZ_{> 0},\, \lambda \in \ZZ}}
  (c \tau + d)^{k - 1} |c \tau + d|^{1 - 2k}\,
  \Big(
  \frac{l^2 \det(Y)}{y}
  \,+\,
  \frac{y\, (\lambda + l (\alpha d - \beta c) )^2}{|c \tau + d|^2}
  \Big)^{1-k}
\tx{.}
\end{gather}

We state and prove Lemma~\ref{la:kohnen-limit-zero-m-eisenstein-series:fourier-transform-lambda} after finishing this proof of Lemma~\ref{la:kohnen-limit-zero-m-eisenstein-series}. Lemma~\ref{la:kohnen-limit-zero-m-eisenstein-series:fourier-transform-lambda} (with $a \leadsto 0$, $b \leadsto 1-k$, $x \leadsto l^2 \det(Y) \slash y$, and $y \leadsto y \slash |c \tau + d|^2$) allows us to apply Poisson summation with respect to~$\lambda$ in~\eqref{eq:la:kohnen-limit-zero-m-eisenstein-series-indefinite-expansion-rewrite} to find that
\begin{multline}
\label{eq:la:kohnen-limit-zero-m-eisenstein-series:transformed-expression-xi-zero}
  \frac{c_1 \pi^{\frac{1}{2}} \Gamma(k - \frac{3}{2})}{\zeta(2k - 2)\, \Gamma(k-1)}\,
  \sum_{\substack{\begin{psmatrix} a & b \\ c & d \end{psmatrix} \in \Gamma_\infty \backslash \SL{2}(\ZZ) \\ l \in \ZZ_{> 0}}} \hspace{-2em}
  (c \tau + d)^{k - 1} |c \tau + d|^{1 - 2k}\,
  \Big( \frac{l^2 \det(Y)}{y} \Big)^{\frac{3}{2}-k}
  \Big( \frac{y}{|c \tau + d|^2} \Big)^{-\frac{1}{2}}
\\
=
  \frac{c_1 \pi^{\frac{1}{2}} \Gamma(k - \frac{3}{2})}{\Gamma(k-1)}\,
  \frac{\zeta(2k-3)}{\zeta(2k - 2)}\,
  \det(Y)^{\frac{3}{2}-k} y^{-1}\,
  \sum_{\ga \in \Gamma_\infty \backslash \SL{2}(\ZZ)} y^{k-1} \big|_{1-k}\,\ga
\tx{,}
\end{multline}
arising from $\xi = 0$, and
\begin{multline}
\label{eq:la:kohnen-limit-zero-m-eisenstein-series:transformed-expression-xi-nonzero}
  \frac{2 \pi i\, c_1}{\zeta(2k-2)}\,
  \sum_{\substack{\begin{psmatrix} a & b \\ c & d \end{psmatrix} \in \Gamma_\infty \backslash \SL{2}(\ZZ) \\ l \in \ZZ_{> 0},\, \xi \in \ZZ \setminus \{0\}}}
  \frac{y^{1-k}\, (c \tau + d)^{k - 1}\, e\big( \xi l (\alpha d - \beta c) \big)}
       {|c \tau + d|\, \exp\big( 2 \pi |c \tau + d| \det(Y)^{\frac{1}{2}} |\xi| l \slash y \big)}
\\
  \sum_{n_2 = 0}^{k-2}
  \frac{(1-k)_{n_2} 2^{1-k-n_2}}{n_2! (k-2-n_2)!}
  \Big( \frac{2 \pi \xi}{i} \Big)^{k-2-n_2}\,
  \Big( \frac{\sgn(\xi) l}{i} \frac{|c \tau + d| \det(Y)^{\frac{1}{2}}}{y} \Big)^{1-k-n_2}
\tx{,}
\end{multline}
arising from $\xi \ne 0$.

We next argue that~\eqref{eq:la:kohnen-limit-zero-m-eisenstein-series:transformed-expression-xi-zero} contributes exclusively to~$e^{\sk\,\pi}_{k,0}$ and~\eqref{eq:la:kohnen-limit-zero-m-eisenstein-series:transformed-expression-xi-nonzero} only to~$e^{\sk\,\indef}$. To this end, we inspect their asymptotic behavior as $y' \ra \infty$ (as opposed to $y' \ra 0$, which appears in Definition~\eqref{eq:def:kohnen-limit-zero-m-indefinite} of the Kohnen limit process).

Fourier terms for $\begin{psmatrix} 0 & 0 \\ 0 & 0 \end{psmatrix}_{(0)}$, for $(\begin{psmatrix} 0 & 0 \\ 0 & 0 \end{psmatrix}_{(1)}, 0_{(0)})$, for $(\begin{psmatrix} 0 & 0 \\ 0 & 0 \end{psmatrix}_{(1)}, 0_{(1)})$, and for semi-definite~$T = \begin{psmatrix} n & 0 \\ 0 & 0 \end{psmatrix}$ with $n \ne 0$ have polynomial asymptotic behavior as $y' \ra \infty$. Their contributions cannot cancel, because they either feature different $n$ or because their leading asymptotics are mutually different. For this reason, we infer that $e^{\sk\,\pi}_{k,0}$ does not receive contributions from~\eqref{eq:la:kohnen-limit-zero-m-eisenstein-series:transformed-expression-xi-nonzero}, since it decays faster than any polynomial as $y' \ra \infty$.

On the other hand, both $a^\sk_{k,0}(\begin{psmatrix} 0 & 0 \\ 0 & 0 \end{psmatrix}_{(1)}, \nb; Y)$ for $\nb \ne 0$ and $a^\sk_k(T Y)$ for indefinite~$T$ decay exponentially in~$\sqrt{y'}$. In the former case this follows from the exponential decay of the $K$-Bessel function that appears in~\eqref{eq:def:fourier-coefficients-skew-zero} and the behavior of $\yb$ with respect to~$y'$ in~\eqref{eq:def:Y-coordinates}. The latter one follows from Shimura's estimates in~\cite{shimura-1982}. This exponential decay cannot be canceled by any of the Fourier series coefficients that contribute to~$e^{\sk\,\pi}_{k,0}$. As a consequence, we find that $e^{\sk\,\indef}_{k,0}$ does not receive contributions from~\eqref{eq:la:kohnen-limit-zero-m-eisenstein-series:transformed-expression-xi-zero}.

We are now in position to establish expressions~\eqref{eq:la:kohnen-limit-zero-m-eisenstein-series-semi-definite-0} and~\eqref{eq:la:kohnen-limit-zero-m-eisenstein-series-semi-definite-1}. We have already determined the Fourier expansions of $(E^\sk_k)_{(0)}$ and $(E^\sk_k)_{(1)}$. Recall that $(E^\sk_k)_{(0)}(Z) = 1$, and hence it contributes~$y^{k-\frac{1}{2}}$ to $\Klim[0,0]^\sk_{k,0}(e^\sk_{k,0})$. We split the contribution of~$(E^\sk_k)_{(1)}$ to~$\Klim[0,0]^\sk_{k,0}(e^\sk_{k,0})$ into two parts. The first summand in~\eqref{eq:fourier-coefficients-siegel-eisenstein-series-(1)-type-skew-first-summand} yields a contribution of
\begin{gather*}
  c_1y^{\frac{1}{2}}=\frac{(-1)^{\frac{1-k}{2}} 2^{2-k} \pi^{\frac{1}{2}} \Gamma(k-1)}
       {\Gamma(k - \frac{1}{2})}
  \frac{\zeta(k-1)}{\zeta(k)}
  y^{\frac{1}{2}}
\end{gather*}
via the Fourier series expansion in~\eqref{eq:fourier-coefficients-siegel-eisenstein-series-(1)-type-skew-first-summand} and Lemma~\ref{la:kohnen-limit-zero-m-semi-definite-fourier-coefficients}. Recall that the second and third summands in~\eqref{eq:fourier-coefficients-siegel-eisenstein-series-(1)-type-skew} only contribute to~$e^\sk_{k,0}$ if~$U = \begin{psmatrix} 1 & 0 \\ 0 & 1 \end{psmatrix}$. Lemma~\ref{la:kohnen-limit-zero-m-semi-definite-fourier-coefficients} shows that their contributions are given by
\begin{multline*}
  \frac{(-1)^{\frac{1-k}{2}} 2^{1-k} \pi^{\frac{1}{2}}}
       {\Gamma(\frac{1}{2})\, \zeta(k)}
  \sum_{n = 1}^\infty
  \frac{\sigma_{k-1}(n)}{n^{k-\frac{1}{2}}}\,
  (4 \pi n y)^{\frac{k-1}{2}}
  W_{\frac{1-k}{2}, \frac{k-1}{2}}(4 \pi n y) e(n x)
\\
  +\,
  \frac{(-1)^{\frac{1-k}{2}} 2^{1-k} \pi^{\frac{1}{2}}}
       {\Gamma(k-\frac{1}{2})\, \zeta(k)}
  \sum_{n = 1}^\infty
  \frac{\sigma_{k-1}(n)}{n^{k-\frac{1}{2}}}\,
  (4 \pi n y)^{\frac{k-1}{2}}
  W_{\frac{k-1}{2}, \frac{k-1}{2}}(4 \pi n y) e(-n x)
\tx{.}
\end{multline*}
Lemma~\ref{la:elliptic-eisenstein-series-fourier-expansion} shows that the contributions to $\Klim[0,0]^\sk_{k,0}(e^\sk_{k,0})$ that we have discovered so-far precisely account for $E_{1-k}\big( k-\tfrac{1}{2}, \tau \big)$. This proves~\eqref{eq:la:kohnen-limit-zero-m-eisenstein-series-semi-definite-0}, since neither~\eqref{eq:la:kohnen-limit-zero-m-eisenstein-series:transformed-expression-xi-zero} nor~\eqref{eq:la:kohnen-limit-zero-m-eisenstein-series:transformed-expression-xi-nonzero} contribute to~$\Klim[0,0]^\sk_{k,0}(e^\sk_{k,0})$. 

We now consider~\eqref{eq:la:kohnen-limit-zero-m-eisenstein-series-semi-definite-1}. Observe that $(E^\sk_k)_{(0)}$ does not contribute to $\Klim[0,1]^\sk_{k,0}(e^\sk_{k,0})$ in~\eqref{eq:la:kohnen-limit-zero-m-eisenstein-series-semi-definite-1}. Recall that the contribution of $(E^\sk_k)_{(1)}$ arises solely from the second term in~\eqref{eq:fourier-coefficients-siegel-eisenstein-series-(1)-type-skew-first-summand}, which (using again Lemma~\ref{la:kohnen-limit-zero-m-semi-definite-fourier-coefficients}) is given by
\begin{gather}
\label{eq:la:kohnen-limit-zero-m-eisenstein-series-semi-definite-1:contribution-(0)}
  \frac{(-1)^{\frac{1-k}{2}} 2^{2-k} \pi^{\frac{1}{2}} \Gamma(k-1)}{\Gamma(k - \frac{1}{2})}\,
  \frac{\zeta(k-1)}{\zeta(k)}\;
  \frac{2^{4-2k} \pi \Gamma(2k-3)}{\Gamma(k-1)^2}\,
  \frac{\zeta(2k-3)}{\zeta(2k-2)}
	\text{.}
\end{gather}
We already argued that~\eqref{eq:la:kohnen-limit-zero-m-eisenstein-series:transformed-expression-xi-nonzero} does not contribute to~$\Klim[0,1]^\sk_{k,0}(e^\sk_{k,0})$. On the other hand, the right-hand side of~\eqref{eq:la:kohnen-limit-zero-m-eisenstein-series:transformed-expression-xi-zero} has polynomial asymptotic behavior of order~$y^{\prime\,\frac{3}{2}-k}$ as~$y' \ra \infty$. Hence it contributes to $\Klim[0,1]^\sk_{k,0}(e^\sk_{k,0})$, and this contribution equals
\begin{gather}
\label{eq:la:kohnen-limit-zero-m-eisenstein-series-semi-definite-1:contribution-(1)}
  \frac{(-1)^{\frac{1-k}{2}} 2^{2-k} \pi^{\frac{1}{2}} \Gamma(k-1)}{\Gamma(k-\frac{1}{2})}\,
  \frac{\zeta(k-1)}{\zeta(k)}\;
  \frac{\pi^{\frac{1}{2}} \Gamma(k - \frac{3}{2})}{\Gamma(k-1)}\,
  \frac{\zeta(2k-3)}{\zeta(2k-2)}\,
  \sum_{\begin{psmatrix} 1 & 0 \\ 0 & 1 \end{psmatrix} \ne \gamma = \begin{psmatrix} a & b \\ c & d \end{psmatrix} \in \Gamma_\infty \backslash \SL{2}(\ZZ)}\hspace{-2em}
  1 \big|_{k-1}\,\ga
\tx{.}
\end{gather}
Now, insert the relation of $\Gamma$-functions
\begin{gather*}
  \Gamma(2k - 3)
=
  2^{2k - 4} \frac{\Gamma(k-1) \Gamma(k-\frac{3}{2})}{\Gamma(\frac{1}{2})}
\end{gather*}
in~\eqref{eq:la:kohnen-limit-zero-m-eisenstein-series-semi-definite-1:contribution-(0)} to recognize that the sum of~\eqref{eq:la:kohnen-limit-zero-m-eisenstein-series-semi-definite-1:contribution-(0)} and~\eqref{eq:la:kohnen-limit-zero-m-eisenstein-series-semi-definite-1:contribution-(1)} matches the right-hand side of~\eqref{eq:la:kohnen-limit-zero-m-eisenstein-series-semi-definite-1}. 

It remains to establish the expression for the indefinite Kohnen limit process in~\eqref{eq:la:kohnen-limit-zero-m-eisenstein-series-indefinite}. Let $\alpha, \beta\in \frac{1}{N}\ZZ$ for $N\geq 1$. If either of the following limits on the left or the right-hand side exists, then the limits are equal, and we have
\begin{multline*}
  \lim_{y' \ra 0}
  y^{\prime\,k-\frac{1}{2}}
  \Bigg( \ov{
  e^{\sk\,\indef}_{k,0}
  \big|^\sk_k\, \up\big(\begin{psmatrix} 1 & 0 \\ 0 & 1 \end{psmatrix}, (\alpha,\beta,0) \big)
  \Big)\, (\tau, 0, \tau')
  } \Bigg)
\\
=
  \lim_{y' \ra \alpha^2 y}
  (y' - \alpha^2 y)^{k-\frac{1}{2}}\,
  \ov{
  e^{\sk\,\indef}_{k,0}(\tau, \alpha \tau + \beta, \tau')
  }
\tx{.}
\end{multline*}
Thus, it suffices to examine the leading asymptotic behavior of~\eqref{eq:fourier-coefficients-siegel-eisenstein-series-(1)-type-skew-first-summand:indefinite-contribution} and~\eqref{eq:la:kohnen-limit-zero-m-eisenstein-series:transformed-expression-xi-nonzero} as~$y' \ra \alpha^2 y$.

We first consider~\eqref{eq:fourier-coefficients-siegel-eisenstein-series-(1)-type-skew-first-summand:indefinite-contribution}. If $y' \ra \alpha^2 y$, then $\taub \ra \alpha$. Therefore, we need to determine the asymptotic expansion of~\eqref{eq:fourier-coefficients-siegel-eisenstein-series-(1)-type-skew-first-summand:indefinite-contribution} at the cusp $\alpha \in \QQ$. Let $\breve\ga \in \SL{2}(\ZZ)$ be a transformation that sends~$\infty$ to~$\alpha$, and write $\breve{c}$ and $\breve{d}$ for the bottom entries of~$\breve{\ga}$. Note that $\breve{c}$ is the denominator of~$\alpha$ and hence does not vanish. Then the modular invariance of the elliptic Eisenstein series of weight~$0$ implies that
\begin{multline}
\label{eq:fourier-coefficients-siegel-eisenstein-series-(1)-type-skew-first-summand:indefinite-contribution-transformed}
  \Big(
  E_0(k-1,\taub)
  \,-\,
  \yb^{k-1}
  \,-\,
  \frac{2^{4-2k} \pi \Gamma(2k-3)}
       {\Gamma(k-1)^2}
  \frac{\zeta(2k-3)}{\zeta(2k-2)}\,
  \yb^{2-k}
  \Big) \Big|_0\, \breve\ga \breve\ga^{-1}
\\
=
  E_0(k-1,\taub) \big|_0\, \breve\ga^{-1}
  \,-\,
  \yb^{k-1}
  \,-\,
  \frac{2^{4-2k} \pi \Gamma(2k-3)}
       {\Gamma(k-1)^2}
  \frac{\zeta(2k-3)}{\zeta(2k-2)}\,
  \yb^{2-k}
\tx{.}
\end{multline}
As~$\taub \ra \alpha$ (i.e.,\@ $\yb \ra 0$), the leading asymptotics of the second and third terms in~\eqref{eq:fourier-coefficients-siegel-eisenstein-series-(1)-type-skew-first-summand:indefinite-contribution-transformed} are given by~$\yb^{k-1}$ and~$\yb^{2-k}$, respectively. The first term in~\eqref{eq:fourier-coefficients-siegel-eisenstein-series-(1)-type-skew-first-summand:indefinite-contribution-transformed} has leading asymptotic $(\breve{\ga}^{-1} \yb)^{k-1}$, which dominates the other aymptotics. When combining this fact with the contribution of $\det(Y)^{\frac{1-k}{2}}$, one finds that~\eqref{eq:fourier-coefficients-siegel-eisenstein-series-(1)-type-skew-first-summand:indefinite-contribution} grows polynomially of order $\frac{2 - k}{2} + \frac{1 - k}{2} = \frac{3}{2} - k$ in $y' - \alpha^2 y$ as~$y' \ra \alpha^2 y$. This growth is compensated by the factor $y^{\prime\,k-\frac{1}{2}}$ in the defining equation~\eqref{eq:def:kohnen-limit-zero-m-indefinite} of the indefinite Kohnen limit process. We conclude that~\eqref{eq:fourier-coefficients-siegel-eisenstein-series-(1)-type-skew-first-summand:indefinite-contribution} does not contribute to the Kohnen limit process $\Klim[N]^\sk_{k,0} ( e^\sk_{k,0} )$.

We next consider the contribution of~\eqref{eq:la:kohnen-limit-zero-m-eisenstein-series:transformed-expression-xi-nonzero} to the indefinite Kohnen limit process. To this end, we rearrange~\eqref{eq:la:kohnen-limit-zero-m-eisenstein-series:transformed-expression-xi-nonzero} as follows:
\begin{multline}
\label{eq:la:kohnen-limit-zero-m-eisenstein-series:transformed-expression-xi-nonzero:polylogarithm}
  \sum_{n_2 = 0}^{k-2}
  \frac{c_1\, (-1)^{n_2+1} 2^{-2n_2} \pi^{k-1-n_2} (1-k)_{n_2}}
       {n_2! (k-2-n_2)!\; \zeta(2k-2)}
\\
  \sum_{\substack{\begin{psmatrix} a & b \\ c & d \end{psmatrix} \in \Gamma_\infty \backslash \SL{2}(\ZZ) \\ l \in \ZZ_{> 0}}}
  \frac{y^{\frac{1-k+n_2}{2}}\, (c \tau + d)^{k - 1}}
       {l^{-1+k+n_2}\, |c \tau + d|^{k+n_2}}
  (y' - \alpha^2 y)^{\frac{1-k-n_2}{2}}\, 
\\
  \sum_{\sgn(\xi) \in \{\pm 1\}}
  \sgn(\xi)
  \sum_{|\xi| = 1}^\infty
  \frac{|\xi|^{k-2-n_2}\, e\big( \sgn(\xi) |\xi| l (\alpha d - \beta c) \big)}
       {\exp\big( 2 \pi |c \tau + d| \det(Y)^{\frac{1}{2}} |\xi| l \slash y \big)}
\tx{.}
\end{multline}
Note that the sum over~$|\xi|$ equals the polylogarithm
\begin{gather}
\label{eq:la:kohnen-limit-zero-m-eisenstein-series:polylogarithm}
  \Li{2-k+n_2}\Bigg(
  \frac{e\big( \sgn(\xi) l (\alpha d - \beta c) \big)}
       {\exp\big( 2 \pi |c \tau + d| \det(Y)^{\frac{1}{2}} l \slash y \big)}
  \Bigg)
\tx{.}
\end{gather}
When $y' \ra \alpha^2 y$, then the argument tends to $e( \sgn(\xi) l (\alpha d - \beta c))$. In particular, if $l (\alpha d - \beta c) \not\in \ZZ$, then \eqref{eq:la:kohnen-limit-zero-m-eisenstein-series:polylogarithm} converges. If $l (\alpha d - \beta c) \in \ZZ$, then $e\big( \sgn(\xi) l (\alpha d - \beta c) \big) = 1$ and the asymptotic growth of~\eqref{eq:la:kohnen-limit-zero-m-eisenstein-series:polylogarithm} as $y' \ra \alpha^2 y$ is given by
\begin{align*}
&
  \Big( 1 - \exp\big( - 2 \pi |c \tau + d| \det(Y)^{\frac{1}{2}} l \slash y \big) \Big)^{k-1-n_2}\;
  \Li{2-k+n_2}\Big(
  \exp\big( - 2 \pi |c \tau + d| \det(Y)^{\frac{1}{2}} l \slash y \big)
  \Big)
\\&\hspace{22em}
  \cdot\,
  \Big( 1 - \exp\big( - 2 \pi |c \tau + d| \det(Y)^{\frac{1}{2}} l \slash y \big) \Big)^{1-k+n_2}
\\
={}&
  \big( (1 - h)^{k-1-n_2} \Li{2-k+n_2}(h) \big)_{h = 1}\;
  (-1)^{k-1-n_2}\,
  \Big( \exp\big( - 2 \pi |c \tau + d| \det(Y)^{\frac{1}{2}} l \slash y \big) - 1 \Big)^{1-k+n_2}
\\
\asymp{}&
   (2-k+n_2)!\;
  \big( 2 \pi |c \tau + d| (y' - \alpha^2 y)^{\frac{1}{2}} l \slash y^{\frac{1}{2}} \big)^{1-k+n_2}
\tx{.}
\end{align*}
This is independent of~$\sgn(\xi)$ and hence the sum over $\sgn(\xi)$ in~\eqref{eq:la:kohnen-limit-zero-m-eisenstein-series:transformed-expression-xi-nonzero:polylogarithm} cancels. Consider the case $l (\alpha d - \beta c) \not\in \ZZ$, in which case the polylogarithm in~\eqref{eq:la:kohnen-limit-zero-m-eisenstein-series:polylogarithm} is regular at $y' = \alpha^2 y$, and takes the value $\Li{2-k+n_2}(e( \sgn(\xi) l (\alpha d - \beta c) ))$. When inserting this into~\eqref{eq:la:kohnen-limit-zero-m-eisenstein-series:transformed-expression-xi-nonzero:polylogarithm}, we find that its leading asymptotic behavior as $y' \ra \alpha^2 y$ arises from $n_2 = k-2$, and equals
\begin{multline}
\label{eq:la:kohnen-limit-zero-m-eisenstein-series:transformed-expression-xi-nonzero:asymptotic}
  \big( y' - \alpha^2 y \big)^{\frac{3}{2}-k}\;
  \frac{c_1\, (-1)^{k-1} 2^{4-2k} \pi (1-k)_{k-2}}
       {(k-2)!\, \zeta(2k-2)}
\\
  \sum_{\substack{\begin{psmatrix} a & b \\ c & d \end{psmatrix} \in \Gamma_\infty \backslash \SL{2}(\ZZ) \\ l \in \ZZ_{> 0} \\ l (\alpha d - \beta c) \not\in \ZZ}}
  \frac{y^{-\frac{1}{2}}\, (c \tau + d)^{k-1}}
       {l^{2k-3}\, |c \tau + d|^{2k-2}}
  \sum_{\sgn(\xi) \in \{\pm 1\}}
  \sgn(\xi)\, \Li{0}\big(e\big( \sgn(\xi) l (\alpha d - \beta c) \big)\big)
\tx{.}
\end{multline}
This already implies convergence of the Kohnen limit process in~\eqref{eq:la:kohnen-limit-zero-m-eisenstein-series-indefinite}, since $y'^{k-\frac{3}{2}}$ in~\eqref{eq:def:kohnen-limit-zero-m-indefinite} is compensated by the corresponding power of~$(y' - \alpha^2 y)$ in~\eqref{eq:la:kohnen-limit-zero-m-eisenstein-series:transformed-expression-xi-nonzero:asymptotic}.

Observe that $\Li{0}(h) = h / (1 - h)$ for all $h \in \CC \setminus \{1\}$.  We now use the relation
\begin{gather*}
  \frac{h}{1 - h} - \frac{h^{-1}}{1 - h^{-1}}
=
  \frac{2}{1 - h} - 1
\end{gather*}
to rewrite the sum over $\sgn(\xi)$ in~\eqref{eq:la:kohnen-limit-zero-m-eisenstein-series:transformed-expression-xi-nonzero:asymptotic}, which allows us to replace~\eqref{eq:la:kohnen-limit-zero-m-eisenstein-series:transformed-expression-xi-nonzero:asymptotic} by
\begin{multline}
\label{eq:la:kohnen-limit-zero-m-eisenstein-series:transformed-expression-xi-nonzero:asymptotic-removed-xi}
  (y' - \alpha^2 y)^{\frac{3}{2}-k}\;
  \frac{c_1\, (-1)^{k-1} 2^{4-2k} \pi (1-k)_{k-2}}
       {(k-2)!\, \zeta(2k-2)}
\\
  \sum_{\substack{\begin{psmatrix} a & b \\ c & d \end{psmatrix} \in \Gamma_\infty \backslash \SL{2}(\ZZ) \\ l \in \ZZ_{> 0} \\ l (\alpha d - \beta c) \not\in \ZZ}}
  \Big( \frac{2}{1 - e\big( l (\alpha d - \beta c) \big)} - 1 \Big)
  \frac{y^{-\frac{1}{2}}\, (c \tau + d)^{k-1}}
       {l^{2k-3}\, |c \tau + d|^{2k-2}}
\tx{.}
\end{multline}
We next insert~$c_1$ and express the sum over~$\begin{psmatrix} a & b \\ c & d \end{psmatrix}$ in terms of an elliptic slash action.
\begin{multline}
\label{eq:la:kohnen-limit-zero-m-eisenstein-series:transformed-expression-xi-nonzero:asymptotic-removed-xi-elliptic-slash}
  y^{\frac{1}{2}-k}
  (y' - \alpha^2 y)^{\frac{3}{2}-k}\;
  \frac{(-1)^{\frac{-1-k}{2}} 2^{6-3k} \pi^{\frac{3}{2}}\, \Gamma(k)}{\Gamma(k-\frac{1}{2})}\,
  \frac{\zeta(k-1)}{\zeta(k) \zeta(2k-2)}
\\
  \sum_{\substack{\begin{psmatrix} a & b \\ c & d \end{psmatrix} \in \Gamma_\infty \backslash \SL{2}(\ZZ) \\ l \in \ZZ_{> 0} \\ l (\alpha d - \beta c) \not\in \ZZ}}
  \Big( \frac{2}{1 - e\big( l (\alpha d - \beta c) \big)} - 1 \Big)
  \frac{y^{k-1}}{l^{2k-3}} \big|_{1-k}\,
  \begin{psmatrix} a & b \\ c & d \end{psmatrix}
\end{multline}

Recall that $\alpha, \beta \in \frac{1}{N} \ZZ$. In particular, the condition $l (\alpha d - \beta c) \not\in \ZZ$ depends only on~$c$ and~$d$ mod~$N$. This allows us to express the sum in~\eqref{eq:la:kohnen-limit-zero-m-eisenstein-series:transformed-expression-xi-nonzero:asymptotic-removed-xi-elliptic-slash} in terms of the following Eisenstein series (for $s \in \CC$, $\kappa \in \ZZ$, and $2 \Re(s) + \kappa > 2$)
\begin{gather*}
   E_{\kappa,\Gamma_1(N)}(s,\tau)
:=
  \sum_{\ga \in \Gamma_\infty \backslash \Gamma_1(N)} y^s \big|_\kappa\, \ga
\text{,}
\end{gather*}
where
\begin{gather*}
  \Gamma_1(N)
:=
  \big\{
  \begin{psmatrix} a & b \\ c & d \end{psmatrix} \in \SL{2}(\ZZ) \,:\,
  c \equiv 0 \pmod{N}, a \equiv d \equiv 1 \pmod{N}
  \big\}
\tx{.}
\end{gather*}
Indeed, the value of $c,d \pmod{N}$ for~$\ga = \begin{psmatrix} a & b \\ c & d \end{psmatrix} \in \SL{2}(\ZZ)$ is determined by the associated coset $\Gamma_1(N) \ga$:
\begin{multline*}
  \sum_{\substack{\begin{psmatrix} a & b \\ c & d \end{psmatrix} \in \Gamma_\infty \backslash \SL{2}(\ZZ) \\ l \in \ZZ_{> 0} \\ l (\alpha d - \beta c) \not\in \ZZ}}
  \Big( \frac{2}{1 - e\big( l (\alpha d - \beta c) \big)} - 1 \Big)
  \frac{y^{k-1}}{l^{2k-3}} \big|_{1-k}\,\ga
\\
=
  \sum_{\substack{c,d \in \ZZ \slash N \ZZ \\ \gcd(c,d,N) = 1}}
   E_{1-k,\Gamma_1(N)}(k-1,\tau)
  \big|_{1-k}\,\begin{psmatrix} \ast & \ast \\ c & d \end{psmatrix}\,
  \sum_{\substack{l \in \ZZ_{>0} \\ l (\alpha d - \beta c) \not\in \ZZ}}
  \Big( \frac{2}{1 - e\big( \ov{l} (\alpha d - \beta c) \big)} - 1 \Big)
  \frac{1}{l^{2k-3}}
\tx{,}
\end{multline*}
where $\begin{psmatrix} \ast & \ast \\ c & d \end{psmatrix}$ is any matrix in~$\SL{2}(\ZZ)$ whose bottom row is congruent to~$\begin{psmatrix} c & d \end{psmatrix}$ mod~$N$.
We can express the remaining sum over~$l$ in terms of the Hurwitz~$\zeta$-function as follows:
\begin{gather*}
  \sum_{\substack{l \in \ZZ_{>0} \\ l (\alpha d - \beta c) \not\in \ZZ}}
  \Big( \frac{2}{1 - e\big( \ov{l} (\alpha d - \beta c) \big)} - 1 \Big)
  \frac{1}{l^{2k-3}}
=
  \sum_{\substack{\ov{l} = 1 \\ \ov{l} (\alpha d - \beta c) \not\in \ZZ}}^N
  \Big( \frac{2}{1 - e\big( \ov{l} (\alpha d - \beta c) \big)} - 1 \Big)
  \frac{\zeta\big( 2k-3, \frac{\ov{l}}{N} \big)}{N^{2k - 3}}
	\text{.}
\end{gather*}
Since $E_{1-k, \Gamma_1(N)}(k-1, \tau) |_{1-k} \begin{psmatrix} \ast & \ast \\ c & d \end{psmatrix}$ equals the $(c,d)$\thdash\ component of the vector-valued Eisenstein series $E_{1-k,N}(k-1,\tau) = y^{k-1} \ov{E_{k-1,N}(\tau)}$, this completes the proof of Lemma~\ref{la:kohnen-limit-zero-m-eisenstein-series}.
\end{proof}

We state the next lemma in greater generality than required for the proof of Lemma~\ref{la:kohnen-limit-zero-m-eisenstein-series}, since we will need it also in the third part of this series of papers.
\begin{lemma}
\label{la:kohnen-limit-zero-m-eisenstein-series:fourier-transform-lambda}
Let $a, b \in \ZZ$ such that $a \ge 0$ and $a + 2b \le 0$, and assume that $x, y > 0$ are real. Then
\begin{gather*}
  \lambda^a (x + y \lambda^2)^b
\end{gather*}
is square-integrable with respect to $\lambda \in \RR$, and its Fourier transform $\cF\big[ \lambda^a (x + y \lambda^2)^b \big](\xi)$ equals
\begin{gather*}
  \frac{2 \pi i y^b}{\exp\big( 2 \pi \sqrt{\tfrac{x}{y}}\, |\xi| \big)}
  \sum_{\substack{n_1, n_2, m \ge 0 \\ n_1 + n_2 + m = -1 - b}} \!\!\!
  \frac{a_{n_1} b_{n_2} 2^{b - n_2}}{n_1! n_2! m!}
  \Big( \frac{2 \pi \xi}{i} \Big)^m\,
  \Big( \frac{\sgn(\xi)}{i} \sqrt{\tfrac{x}{y}} \Big)^{a + b - n_1 - n_2}
\tx{,}\quad
  \tx{if $\xi \ne 0$,}
\end{gather*}
where $a_{n_1}$, etc.\ are Pochhammer symbols; and
\begin{gather*}
  \cF\big[
  \lambda^a (x + y \lambda^2)^b
  \big](0)
\;=\;
  \frac{(1 + (-1)^a) \Gamma(\frac{1 + a}{2}) \Gamma(\frac{-1-a-2b}{2})}{2 \Gamma(-b)}
  x^{\frac{1 + a}{2} + b} y^{\frac{-1-a}{2}}
\tx{.}
\end{gather*}
\end{lemma}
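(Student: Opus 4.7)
\smallskip

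The plan is to separate the statement into the case $\xi=0$ and $\xi\ne 0$ and treat them by direct integration and contour integration respectively. First I would address integrability: setting $\alpha := \sqrt{x/y}$, the function $\lambda^a(x+y\lambda^2)^b$ is continuous on $\RR$ and behaves like $y^b|\lambda|^{a+2b}$ as $|\lambda|\to\infty$, so the hypotheses $a\ge 0$ and $a+2b\le 0$ (with $a,b\in\ZZ$ and $b<0$, forced in the nontrivial case) give $L^2$\nbd membership and justify the use of the Fourier transform on $\RR$.

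For the case $\xi=0$ I would substitute $\lambda=\alpha\,t$ to obtain
\begin{gather*}
  \int_\RR \lambda^a(x+y\lambda^2)^b\,d\lambda
=
  x^{\tfrac{1+a}{2}+b} y^{-\tfrac{1+a}{2}}
  \int_\RR t^a (1+t^2)^b\,dt
\tx{.}
\end{gather*}
The integrand is odd when $a$ is odd, accounting for the factor $1+(-1)^a$; when $a$ is even the substitution $u=t^2/(1+t^2)$ reduces the remaining integral to the beta integral $B\bigl(\tfrac{a+1}{2},\tfrac{-1-a-2b}{2}\bigr)/\Gamma(-b)\cdot\Gamma(-b)$, which yields the quotient of gamma functions in the statement.

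For $\xi\ne 0$ I would extend $\lambda \mapsto \lambda^a(x+y\lambda^2)^b = y^b\lambda^a (\lambda-i\alpha)^b(\lambda+i\alpha)^b$ meromorphically to $\CC$, noting that the only poles are at $\lambda=\pm i\alpha$ of order $-b$. Closing the contour in the half-plane matching the sign of $\xi$, so that $e^{\pm 2\pi i\xi\lambda}$ decays, applies the residue theorem and expresses the Fourier transform as $\pm 2\pi i$ times the residue at the enclosed pole. The residue is computed through the Laurent expansion in the local coordinate $\mu=\lambda\mp i\alpha$: each factor
\begin{gather*}
  \lambda^a
=
  \sum_{n_1\ge 0} \tfrac{a_{n_1}}{n_1!}\,(\pm i\alpha)^{a-n_1}\mu^{n_1}
\tx{,}\quad
  (\lambda\pm i\alpha)^b
=
  (\pm 2i\alpha)^b\sum_{n_2\ge 0}\tfrac{b_{n_2}}{n_2!}\,(\pm 2i\alpha)^{-n_2}\mu^{n_2}
\tx{,}\quad
  e^{\pm 2\pi i\xi\lambda}
=
  e^{-2\pi\alpha|\xi|}\sum_{m\ge 0}\tfrac{(\pm 2\pi i\xi)^m}{m!}\mu^m
\end{gather*}
contributes its own index, and the residue condition $n_1+n_2+m+b=-1$ produces precisely the constraint $n_1+n_2+m=-1-b$ together with the multinomial factor $a_{n_1}b_{n_2}/(n_1!n_2!m!)$ and the product of the prefactors $y^b(\pm 2i\alpha)^b(\pm 2i\alpha)^{-n_2}(\pm i\alpha)^{a-n_1}(\pm 2\pi i\xi)^m$.

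Finally I would simplify the prefactor to the stated form. Combining $(\pm 2i\alpha)^b(\pm 2i\alpha)^{-n_2}=2^{b-n_2}(\pm i\alpha)^{b-n_2}$ consolidates the power of~$2$, and the remaining factors collapse to $(\pm i\alpha)^{a+b-n_1-n_2}(\pm 2\pi i\xi)^m$; rewriting $\pm i\alpha = \sgn(\xi)\alpha/i$ and $\pm 2\pi i\xi = 2\pi\xi/i$ (after checking the sign of the contour and of~$\xi$) unifies the two cases $\xi>0$ and $\xi<0$ into the single expression in the lemma. The main obstacle will be precisely this bookkeeping of signs and powers of~$i$: between the Fourier convention, the orientation of the two contours, and the signs picked up from expanding $(\lambda\pm i\alpha)^b$ at opposite poles, the combinatorics must be handled carefully so that the two contour contributions collapse into one formula depending only on $|\xi|$ and $\sgn(\xi)$.
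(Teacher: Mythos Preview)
Your proposal is correct and follows essentially the same approach as the paper: a direct beta-integral computation for $\xi=0$ and a contour-integration/residue argument for $\xi\ne 0$, with your version spelling out the Laurent expansion at the relevant pole in more detail than the paper's brief sketch. The sign and power-of-$i$ bookkeeping you flag is indeed the only delicate point, and your acknowledgment of it is appropriate.
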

\begin{proof}
The case of $\xi = 0$ is a straightforward computation. If $\xi \ne 0$, say $\xi > 0$ write the integral
\begin{gather*}
  \int_\RR \lambda^a \big(x + y \lambda^2 \big)^b\, e(\lambda \xi) \, d\!\lambda
\end{gather*}
as a sum of a two path integrals. The first path consists of lines from $-\infty$ to~$-B$ and from $B$ to $\infty$ and an arc in the lower half plane with baseline from~$-B$ to~$B$. The second one is the corresponding semi-circle with baseline from~$-B$ to~$B$. As $B \ra \infty$, the first path integral can be estimated directly and the second one can be computed using the residue theorem.
\end{proof}

\section{The Maa\ss\ lift of harmonic Maa\ss-Jacobi forms}
\label{sec:maass-lift}

We have determined most Fourier series coefficients of~$E^\sk_k$. In Section~\ref{ssec:skew-eisenstein-series-fourier-expansion}, we use the Kohnen limit processes to examine the remaining parts of its Fourier series expansion.  This naturally leads to a definition of a Hecke-like operator~$V_0$ (defined in Section~\ref{ssec:hecke-operator-V-0}), which maps an abstract Fourier-Jacobi term of index~$1$ to an abstract elliptic function. The image under~$V_0$ essentially captures the $0$\thdash\ Fourier series coefficient of its preimage and its Fourier coefficients of isotropic index.  
Finally, in Section~\ref{ssec:proof-of-skew-maass-lift} we present the proof of Theorem~\ref{mainthm:skew-maass-lift}.

\subsection{The Fourier expansion of skew Eisenstein series}
\label{ssec:skew-eisenstein-series-fourier-expansion}

In Section~\ref{sec:kohnen-limit-nonzero-m}, we explored the Kohnen limit process on Fourier-Jacobi coefficients~$e^\sk_{k,m}$ of~$E^\sk_k$ for~$m \ne 0$. Injectivity of~$\Klim^\sk_{k,m}$ for $m \ne 0$ applied to Fourier coefficients allowed us to recover the Fourier coefficients of $E^\sk_k$ of index~$T = \begin{psmatrix} n & r \slash 2 \\ r \slash 2 & m \end{psmatrix}$, if $m \ne 0$. In this section, we complete the study of Fourier coefficients of~$E^\sk_k$, by inspecting its $0$\thdash\ Fourier-Jacobi coefficient.

Propositions~\ref{prop:fourier-jacobi-coefficient-zero-m-eisenstein-series-semi-definite-horizontal} and~\ref{prop:fourier-jacobi-coefficient-zero-m-eisenstein-series-semi-definite-vertial} use the Kohnen limit process to derive the semi-definite part of the Fourier series expansion of~$E^\sk_k$. The indefinite part is then exhibited in Proposition~\ref{prop:fourier-jacobi-coefficient-zero-m-eisenstein-series-indefinite}.

\begin{proposition}
\label{prop:fourier-jacobi-coefficient-zero-m-eisenstein-series-semi-definite-horizontal}
We have the following Fourier series expansion of\/~$\Klim[0]^{\sk\,\pi}_{k,0}\big( e^\sk_{k,0}(Z) \big)$:
\begin{align*}
&
  a^\sk_k\big(\begin{psmatrix} 0 & 0 \\ 0 & 0 \end{psmatrix}_{(0)}; Z \big)
  \,+\,
  \frac{(-1)^{\frac{1-k}{2}} 2^{2-k} \pi^{\frac{1}{2}} \Gamma(k - 1)}
       {\Gamma(k - \frac{1}{2})}\,
  \frac{\zeta(k - 1)}{\zeta(k)}\,
  a^\sk_k\big(\begin{psmatrix} 0 & 0 \\ 0 & 0 \end{psmatrix}_{(1)}, 0_{(0)}; Z \big)
\\+\;{}&
  \frac{(-1)^{\frac{1-k}{2}} (2 \pi)^k}{\Gamma(\frac{1}{2}) \zeta(k)}\,
  \sum_{n = 1}^\infty
  \sigma_{k-1}(n)
  a^\sk_k\big(\begin{psmatrix} n & 0 \\ 0 & 0 \end{psmatrix} Z \big)
  \,+\,
  \frac{(-1)^{\frac{1-k}{2}} (2 \pi)^k}{\Gamma(k-\frac{1}{2}) \zeta(k)}
  \sum_{n = 1}^\infty
  \sigma_{k-1}(n)\,
  a^\sk_k\big(\begin{psmatrix} -n & 0 \\ 0 & 0 \end{psmatrix}_{(0)} Z \big)
\\[.5\baselineskip]
+\;{}&
  \frac{(-1)^{\frac{k-1}{2}} 2^{2-k} \pi\, \Gamma(k-\frac{3}{2})}
       {\Gamma(k - \frac{1}{2})}\,
  \frac{\zeta(k-1) \zeta(2k-3)}{\zeta(k) \zeta(2k-2)}\,
  a^\sk_k\big(\begin{psmatrix} 0 & 0 \\ 0 & 0 \end{psmatrix}_{(1)}, 0_{(1)}; Z \big)
\\+\;{}&
  \frac{2^{5-2k} \pi^2\, \Gamma(k-\frac{3}{2})}
       {\Gamma(k-1) \Gamma(k - \frac{1}{2})}\,
  \frac{\zeta(2k-3)}{\zeta(k) \zeta(2k-2)}\,
  \sum_{n = 1}^\infty
  \sigma_{2-k}(n)\,
  a^\sk_k\big(\begin{psmatrix} -n & 0 \\ 0 & 0 \end{psmatrix}_{(1)} Z \big)
\tx{.}
\end{align*}
\end{proposition}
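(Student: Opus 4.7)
The plan is to apply the definition of $\Klim[0]^{\sk\,\pi}_{k,0}$ as the sum of two partial projections, reduce each to a Kohnen limit already computed in Lemma~\ref{la:kohnen-limit-zero-m-eisenstein-series}, and then push the resulting Fourier data through the two adjoints. By~\eqref{eq:def:kohnen-limit-zero-m-semi-definite-projection} we have
\begin{gather*}
  \Klim[0]^{\sk\,\pi}_{k,0}(e^\sk_{k,0})
=
  \Klim[0,0]^{\sk\,\#}_{k,0}\big( \Klim[0,0]^\sk_{k,0}(e^\sk_{k,0}) \big)
  \,+\,
  \Klim[0,1]^{\sk\,\#}_{k,0}\big( \Klim[0,1]^\sk_{k,0}(e^\sk_{k,0}) \big)
\tx{,}
\end{gather*}
and Lemma~\ref{la:kohnen-limit-zero-m-eisenstein-series} evaluates the two inner Kohnen limits as $E_{1-k}(k-\tfrac{1}{2},\tau)$ and as the explicit scalar multiple of $E_{k-1}(\tau)$ in~\eqref{eq:la:kohnen-limit-zero-m-eisenstein-series-semi-definite-1}, respectively.

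For the first summand I would expand $E_{1-k}(k-\tfrac{1}{2},\tau)$ in Fourier series using Lemma~\ref{la:elliptic-eisenstein-series-fourier-expansion} with $\kappa = 1-k$ and $s = k-\tfrac{1}{2}$, so that the constant-in-$x$ part becomes $y^{k-1/2}$ plus a $y^{1/2}$ contribution whose coefficient (after simplifying $\pi/\Gamma(\tfrac{1}{2})$ to $\pi^{1/2}$) matches the $a^\sk_k(\begin{psmatrix} 0 & 0 \\ 0 & 0 \end{psmatrix}_{(1)}, 0_{(0)}; Z)$ coefficient in the statement. The non-constant Fourier terms split into two sums over $n\geq 1$ with Whittaker indices $\pm(1-k)/2$ and divisor sums $\sigma_{k-1}(n)$, distinguished by their Gamma factors $\Gamma(\tfrac{1}{2})$ or $\Gamma(k-\tfrac{1}{2})$. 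Applying~\eqref{eq:def:kohnen-limit-zero-m-adjoint-semi-definite-0} term by term then yields the first four displayed summands in the proposition.

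For the second summand I would use the holomorphic case of Lemma~\ref{la:elliptic-eisenstein-series-fourier-expansion} for $E_{k-1}(\tau)$, apply~\eqref{eq:def:kohnen-limit-zero-m-adjoint-semi-definite-1} (which conjugates each Fourier coefficient and rescales the $n$-th one by $(4\pi n)^{2-k}$), and multiply by the constant from~\eqref{eq:la:kohnen-limit-zero-m-eisenstein-series-semi-definite-1}. Since $k$ is odd, $i^{k-1} = (-1)^{(k-1)/2}$ is real, and combined with $(-1)^{(1-k)/2}(-1)^{(k-1)/2} = 1$ and the divisor identity $\sigma_{k-2}(n)/n^{k-2} = \sigma_{2-k}(n)$, the constants collapse to the form stated in the last two summands of the proposition.

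The proof is an organized assembly of results already available and contains no genuine obstacle. The delicate point throughout is the bookkeeping of $\Gamma$-values, zeta values, powers of~$2$ and~$\pi$, and the sign factors produced by complex conjugation of~$(2\pi i)^{k-1}$; care is required there, but no new ideas enter.
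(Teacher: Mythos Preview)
Your proposal is correct and coincides with the paper's first proof: write $\Klim[0]^{\sk\,\pi}_{k,0}(e^\sk_{k,0})$ as the sum of the two projections, invoke Lemma~\ref{la:kohnen-limit-zero-m-eisenstein-series} to identify $\Klim[0,0]^\sk_{k,0}(e^\sk_{k,0})$ and $\Klim[0,1]^\sk_{k,0}(e^\sk_{k,0})$ as elliptic Eisenstein series, expand via Lemma~\ref{la:elliptic-eisenstein-series-fourier-expansion}, and read off the Siegel Fourier coefficients through the adjoints~\eqref{eq:def:kohnen-limit-zero-m-adjoint-semi-definite-0} and~\eqref{eq:def:kohnen-limit-zero-m-adjoint-semi-definite-1}. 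The paper phrases the matching step in the reverse direction (write the general Fourier expansion with unknown coefficients, apply $\Klim[0,j]^\sk_{k,0}$ term by term via Lemma~\ref{la:kohnen-limit-zero-m-semi-definite-fourier-coefficients}, then compare), but the content is identical.

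It is worth noting that the paper also supplies an independent second proof as a cross-check, which bypasses Lemma~\ref{la:kohnen-limit-zero-m-eisenstein-series} entirely: it recovers the semi-definite Fourier coefficients of $E^\sk_k$ from the Kohnen limit processes for $m\ne 0$ (Lemmas~\ref{la:kohnen-limit-positive-m-eisenstein-series} and~\ref{la:kohnen-limit-negative-m-eisenstein-series}) together with the $\rot\bigl(\begin{psmatrix}0&1\\1&0\end{psmatrix}\bigr)$-invariance of $E^\sk_k$, and then pins down the three remaining $\begin{psmatrix}0&0\\0&0\end{psmatrix}$ coefficients by the modularity of $\Klim[0,0]^\sk_{k,0}(e^\sk_{k,0})$ and $\Klim[0,1]^\sk_{k,0}(e^\sk_{k,0})$ from Proposition~\ref{prop:kohnen-limit-zero-m-semi-definite-convergence}. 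Your route (and the paper's first proof) is more direct; the second proof serves as a consistency check on the rather intricate computation behind Lemma~\ref{la:kohnen-limit-zero-m-eisenstein-series}.
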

\begin{remark}
In light of the technical complications that we have encountered when computing the Kohnen limit process for Eisenstein series, we decided to provide two independent proofs of Proposition~\ref{prop:fourier-jacobi-coefficient-zero-m-eisenstein-series-semi-definite-horizontal} as a cross-check. The first proof employs the Kohnen limit process as determined in Lemma~\ref{la:kohnen-limit-zero-m-eisenstein-series}. The second one avoids the use of Lemma~\ref{la:kohnen-limit-zero-m-eisenstein-series} and instead recurses to the Kohnen limit processes of nonzero index.
\end{remark}

\begin{proof}[First proof of Proposition~\ref{prop:fourier-jacobi-coefficient-zero-m-eisenstein-series-semi-definite-horizontal}]
Consider the Fourier expansion of~$\Klim[0]^{\sk\,\pi}_{k,0} \big( e^\sk_{k,0}(Z) \big)$ in its general form:
\begin{align*}
&
  c \big( E^\sk_k;\begin{psmatrix} 0 & 0 \\ 0 & 0 \end{psmatrix}_{(0)} \big)
  a^\sk_k\big(\begin{psmatrix} 0 & 0 \\ 0 & 0 \end{psmatrix}_{(0)}; Z \big)
  \;+\;
  c \big( E^\sk_k;\begin{psmatrix} 0 & 0 \\ 0 & 0 \end{psmatrix}_{(1)}, 0_{(0)} \big)
  a^\sk_k\big(\begin{psmatrix} 0 & 0 \\ 0 & 0 \end{psmatrix}_{(1)}, 0_{(0)}; Z \big)
\\+\;{}&
  \sum_{n = 1}^\infty
  c \big( E^\sk_k;\begin{psmatrix} n & 0 \\ 0 & 0 \end{psmatrix} \big)
  a^\sk_k\big(\begin{psmatrix} n & 0 \\ 0 & 0 \end{psmatrix} Z \big)
  \;+\;
  \sum_{n = 1}^\infty
  c \big( E^\sk_k;\begin{psmatrix} -n & 0 \\ 0 & 0 \end{psmatrix}_{(0)} \big)
  a^\sk_k\big(\begin{psmatrix} -n & 0 \\ 0 & 0 \end{psmatrix}_{(0)} Z \big)
\\[.5\baselineskip]
+\;{}&
  c \big( E^\sk_k;\begin{psmatrix} 0 & 0 \\ 0 & 0 \end{psmatrix}_{(1)}, 0_{(1)} \big)
  a^\sk_k\big(\begin{psmatrix} 0 & 0 \\ 0 & 0 \end{psmatrix}_{(1)}, 0_{(1)}; Z \big)
  \;+\;
  \sum_{n = 1}^\infty
  c \big( E^\sk_k;\begin{psmatrix} -n & 0 \\ 0 & 0 \end{psmatrix}_{(1)} \big)
  a^\sk_k\big(\begin{psmatrix} -n & 0 \\ 0 & 0 \end{psmatrix}_{(1)} Z \big)
\tx{.}
\end{align*}
Recall from Lemma~\ref{la:kohnen-limit-zero-m-semi-definite-fourier-coefficients} the explicit expressions for the Kohnen limit processes applied to Fourier series coefficients. We obtain
\begin{align*}
&
  \Klim[0,0]^{\sk\,\pi}_{k,0} \big( e^\sk_{k,0}(Z) \big)
=
  \Klim[0,0]^{\sk\,\pi}_{k,0} \Big(
  \Klim[0]^{\sk\,\pi}_{k,0} \big( e^\sk_{k,0}(Z) \big)
  \Big)
=
\\&\qquad
\hphantom{+\;}
  c \big( E^\sk_k;\begin{psmatrix} 0 & 0 \\ 0 & 0 \end{psmatrix}_{(0)} \big)\,
  y^{k-\frac{1}{2}}
  \;+\;
  c \big( E^\sk_k;\begin{psmatrix} 0 & 0 \\ 0 & 0 \end{psmatrix}_{(1)}, 0_{(0)} \big)
  y^{\frac{1}{2}}
\\&\qquad
+\;
  \sum_{n = 1}^\infty
  ( 4 \pi n )^{\frac{1}{2}-k}\,
  c \big( E^\sk_k;\begin{psmatrix} n & 0 \\ 0 & 0 \end{psmatrix} \big)\,
  ( 4 \pi n y )^{\frac{k-1}{2}}
  W_{\frac{1-k}{2}, \frac{k-1}{2}}\big( 4 \pi n y \big) e(n x)
\\&\qquad
+\;
  \sum_{n = 1}^\infty
  ( 4 \pi n )^{\frac{1}{2}-k}\,
  c \big( E^\sk_k;\begin{psmatrix} -n & 0 \\ 0 & 0 \end{psmatrix}_{(0)} \big)\,
  ( 4 \pi n y )^{\frac{k-1}{2}}
  W_{\frac{k-1}{2}, \frac{k-1}{2}}\big( 4 \pi n y \big) e(-n x)
\tx{,}
\\&
  \Klim[0,1]^{\sk\,\pi}_{k,0} \big( e^\sk_{k,0}(Z) \big)
=
  \Klim[0,1]^{\sk\,\pi}_{k,0} \Big(
  \Klim[0]^{\sk\,\pi}_{k,0} \big( e^\sk_{k,0}(Z) \big)
  \Big)
=
\\&\qquad
\hphantom{+\;}
  \ov{c \big( E^\sk_k;\begin{psmatrix} 0 & 0 \\ 0 & 0 \end{psmatrix}_{(1)}, 0_{(1)} \big)}
  \;+\;
  \sum_{n = 1}^\infty
  (4 \pi n)^{k-2}
  \ov{c \big( E^\sk_k;\begin{psmatrix} -n & 0 \\ 0 & 0 \end{psmatrix}_{(1)} \big)}\,
  e(n \tau)
\tx{.}
\end{align*}

Lemma~\ref{la:kohnen-limit-zero-m-eisenstein-series} expresses $\Klim[0,0]^\sk_{k,0} \big( e^\sk_{k,0} \big) (\tau)$ and~$\Klim[0,1]^\sk_{k,0} \big( e^\sk_{k,0} \big) (\tau)$ in terms of elliptic Eisenstein series. Their Fourier series expansions are given by Lemma~\ref{la:elliptic-eisenstein-series-fourier-expansion} with $\kappa \leadsto 1 - k$, $s \leadsto k - \frac{1}{2}$ and $\kappa \leadsto k - 1$, $s \leadsto 0$, respectively. Comparing these Fourier series coefficients with the above ones implies the statement of Proposition~\ref{prop:fourier-jacobi-coefficient-zero-m-eisenstein-series-semi-definite-horizontal}.
\end{proof}
\begin{proof}[Second proof of Proposition~\ref{prop:fourier-jacobi-coefficient-zero-m-eisenstein-series-semi-definite-horizontal}]
We start with the contributions to Fourier series coefficients of positive semi-definite index. If $m>0$, then Lemma~\ref{la:kohnen-limit-positive-m-eisenstein-series} asserts that 
\begin{gather*}
  \Klim^\sk_{k,m}\big( e^\sk_{k,m} \big)
=
  \frac{(-1)^{\frac{1-k}{2}} (2 \pi)^k}{\Gamma(\frac{1}{2}) \zeta(k)}\,
  E^\Jsk_{k,1} \big|^\Jsk_{k,1}\, V_m
\tx{.}
\end{gather*}
Note that the constant coefficient of~$E^\Jsk_{k,1}$ equals~$1$, and Lemma~\ref{la:kohnen-limit-positive-m-semi-definite-fourier-coefficients} implies that
\begin{gather*}
  \Klim^\sk_{k,m}\Big(
  c\big( E^\sk_k; \begin{psmatrix} 0 & 0 \\ 0 & m \end{psmatrix} \big)\,
  a^\sk_k\big(\begin{psmatrix} 0 & 0 \\ 0 & m \end{psmatrix} Z \big)
  \Big)
=
  c\big( E^\sk_k; \begin{psmatrix} 0 & 0 \\ 0 & m \end{psmatrix} \big)
\text{.}
\end{gather*}
Thus, the explicit formula for the action of $V_m$ on Fourier series coefficients in~\eqref{eq:V-hecke-operator-coefficient-formula} yields that
\begin{gather}
  c\big( E^\sk_k; \begin{psmatrix} 0 & 0 \\ 0 & m \end{psmatrix} \big)
=
  \frac{(-1)^{\frac{1-k}{2}} (2 \pi)^k}{\Gamma(\frac{1}{2}) \zeta(k)}\,
  \sigma_{k-1}(m)
\tx{.}
\end{gather}
Observe that $E^\sk_k$ is invariant under the slash action of $\rot(\begin{psmatrix} 0 & 1 \\ 1 & 0 \end{psmatrix})$, which allows us to confirm the contribution to the Fourier series coefficients of index~$\begin{psmatrix} m & 0 \\ 0 & 0 \end{psmatrix}$ with~$m > 0$:
\begin{gather}
\label{eq:prop:fourier-jacobi-coefficient-zero-m-eisenstein-series-semi-definite-horizontal:second-proof:fourier-coefficient-semi-definite-positive}
  c\big( E^\sk_k; \begin{psmatrix} m & 0 \\ 0 & 0 \end{psmatrix} \big)
=
  (-1)^{k-1}
  c\big( E^\sk_k; \begin{psmatrix} 0 & 0 \\ 0 & m \end{psmatrix} \big)
=
  \frac{(-1)^{\frac{k-1}{2}} (2 \pi)^k}{\Gamma(\frac{1}{2}) \zeta(k)}\,
  \sigma_{k-1}(m)
\tx{.}
\end{gather}
Since $k$ is odd, i.e., $(-1)^{k-1} = 1$, this contribution coincides with the corresponding coefficient in Proposition~\ref{prop:fourier-jacobi-coefficient-zero-m-eisenstein-series-semi-definite-horizontal}.

We employ a similar argument to determine Fourier series coefficients of negative semi-de\-fi\-nite, degenerate index, but we must take the multiplicity-two of such coefficients into account. Specifically, if $m<0$, then Lemma~\ref{la:kohnen-limit-negative-m-semi-definite-fourier-coefficients} shows that
\begin{gather}
\label{eq:prop:fourier-jacobi-coefficient-zero-m-eisenstein-series-semi-definite-horizontal:second-proof:klim-negative-m}
\begin{aligned}
  \Klim^\sk_{k,m}\Big(
  c\big( E^\sk_k; \begin{psmatrix} 0 & 0 \\ 0 & m \end{psmatrix}_{(0)} \big)\,
  a^\sk_k\big(\begin{psmatrix} 0 & 0 \\ 0 & m \end{psmatrix}_{(0)} Z \big)
  \Big)
&=
  \ov{c\big( E^\sk_k; \begin{psmatrix} 0 & 0 \\ 0 & m \end{psmatrix}_{(0)} \big)}\,
  y^{k-1}
\tx{,}
\\
  \Klim^\sk_{k,m}\Big(
  c\big( E^\sk_k; \begin{psmatrix} 0 & 0 \\ 0 & m \end{psmatrix}_{(1)} \big)\,
  a^\sk_k\big(\begin{psmatrix} 0 & 0 \\ 0 & m \end{psmatrix}_{(1)} Z \big)
  \Big)
&=
  (4 \pi |m|)^{k-\frac{3}{2}}
  \ov{c\big( E^\sk_k; \begin{psmatrix} 0 & 0 \\ 0 & m \end{psmatrix}_{(1)} \big)}\,
  y^{\frac{1}{2}}
\tx{.}
\end{aligned}
\end{gather}
The Kohnen limit process on Fourier-Jacobi coefficients of~$E^\sk_k$ for negative index~$m < 0$ is given in Lemma~\ref{la:kohnen-limit-negative-m-eisenstein-series}:
\begin{gather*}
  \Klim^\sk_{k,m}\big( e^\sk_{k,m} \big)
=
  |m|^{k-1}\,
  \frac{(-1)^{\frac{k-1}{2}} (2 \pi)^k}{\Gamma(k-\frac{1}{2}) \zeta(k)}\,
  E^\cJsk_{2-k,1} \big|^\Jsk_{2-k, 1}\, V_{|m|}
\tx{.}
\end{gather*}
The constant coefficient of~$E^\cJsk_{2-k,1}$ is given in Lemma~\ref{la:jacobi-eisenstein-series-zeroth-fourier-coefficient}. We combine that coefficient with the formula for the action of~$V_{|m|}$ on powers of~$y$ in Lemma~\ref{eq:V-hecke-operator-real-analytic-coefficient-formula} to find that
\begin{multline*}
  c\big( \Klim^\sk_{k,m}\big( e^\sk_{k,m} \big);\, 0, 0;\, y \big)
\,=\,
  \frac{(-1)^{\frac{k-1}{2}} (2 \pi)^k}{\Gamma(k-\frac{1}{2}) \zeta(k)}\,
  \Big(
  \sigma_{k-1}(|m|)\,
  y^{k-1}
\\
  +\,
  \frac{(-1)^{\frac{k-1}{2}} 2^{2-k} \pi^{\frac{1}{2}}\, \Gamma(k-\frac{3}{2})}{\Gamma(k-1)}\,
  \frac{\zeta(2k-3)}{\zeta(2k-2)}\,
  |m|^{k-\frac{3}{2}} \sigma_{2-k}(|m|)\,
  y^{\frac{1}{2}}
  \Big)
\tx{.}
\end{multline*}
Therefore, using~\eqref{eq:prop:fourier-jacobi-coefficient-zero-m-eisenstein-series-semi-definite-horizontal:second-proof:klim-negative-m}, we have 
\begin{gather}
\label{eq:prop:fourier-jacobi-coefficient-zero-m-eisenstein-series-semi-definite-horizontal:second-proof:fourier-coefficient-semi-definite-negative}
\begin{aligned}
  c\big( E^\sk_k; \begin{psmatrix} 0 & 0 \\ 0 & m \end{psmatrix}_{(0)} \big)\,
&=
  \frac{(-1)^{\frac{1-k}{2}} (2 \pi)^k}{\Gamma(k-\frac{1}{2}) \zeta(k)}\,
  \sigma_{k-1}(|m|)
\tx{,}
\\
  c\big( E^\sk_k; \begin{psmatrix} 0 & 0 \\ 0 & m \end{psmatrix}_{(1)} \big)\,
&=
  \frac{(-1)^{k-1} 2^{5-2k} \pi^2\, \Gamma(k-\frac{3}{2})}
       {\Gamma(k-1) \Gamma(k-\frac{1}{2})}\,
  \frac{\zeta(2k-3)}{\zeta(k) \zeta(2k-2)}\,
  \sigma_{2-k}(|m|)
\tx{.}
\end{aligned}
\end{gather}
Considering the action of $\rot(\begin{psmatrix} 0 & 1 \\ 1 & 0 \end{psmatrix})$ as before, we obtain the Fourier series coefficient of~$e^\sk_{k,0}$ of negative semi-definite, degenerate index.

It remains to determine the Fourier series coefficients of index~$\begin{psmatrix} 0 & 0 \\ 0 & 0 \end{psmatrix}_{(0)}$, $(\begin{psmatrix} 0 & 0 \\ 0 & 0 \end{psmatrix}_{(1)}, 0_{(0)})$, and $(\begin{psmatrix} 0 & 0 \\ 0 & 0 \end{psmatrix}_{(1)}, 0_{(1)})$. To this end, we employ the modularity of the image of the semi-definite Kohnen limit process for index~$m = 0$, as stated in Proposition~\ref{prop:kohnen-limit-zero-m-semi-definite-convergence}.  

The next computation is similar to one we did in the first proof of Proposition~\ref{prop:fourier-jacobi-coefficient-zero-m-eisenstein-series-semi-definite-horizontal}.  However, we will not use the expressions for $\Klim[0,0]^\sk_{k,0}(e^\sk_{k,0})$ and $\Klim[0,1]^\sk_{k,0}(e^\sk_{k,0})$ from Lemma~\ref{la:kohnen-limit-zero-m-eisenstein-series}. Lemma~\ref{la:kohnen-limit-zero-m-semi-definite-fourier-coefficients} provides explicit formulas for the Kohnen limit process applied to Fourier series. Together with formulas~\eqref{eq:prop:fourier-jacobi-coefficient-zero-m-eisenstein-series-semi-definite-horizontal:second-proof:fourier-coefficient-semi-definite-positive} and~\eqref{eq:prop:fourier-jacobi-coefficient-zero-m-eisenstein-series-semi-definite-horizontal:second-proof:fourier-coefficient-semi-definite-negative}, it yields the following expression for the sum of $\Klim[0,0]^\sk_{k,0}(e^\sk_{k,0})$ and $\Klim[0,1]^\sk_{k,0}(e^\sk_{k,0})$:
\begin{align*}
&
  c\big( E^\sk_k; \begin{psmatrix} 0 & 0 \\ 0 & 0 \end{psmatrix}_{(0)} \big)\,
  y^{k-\frac{1}{2}}
  +
  c\big( E^\sk_k; \begin{psmatrix} 0 & 0 \\ 0 & 0 \end{psmatrix}_{(1)}, 0_{(0)} \big)\,
  y^{\frac{1}{2}}
\\+\;{}&
  \frac{(-1)^{\frac{1-k}{2}} 2^{1-k} \pi^{\frac{1}{2}}}
       {\Gamma(\frac{1}{2}) \zeta(k)}\,
  \sum_{n = 1}^\infty
  n^{\frac{1}{2}-k}
  \sigma_{k-1}(n)\,
  (4 \pi n y)^{\frac{k-1}{2}}
  W_{\frac{1-k}{2}, \frac{k-1}{2}}(4 \pi n y) e(n x)
\\+\;{}&
  \frac{(-1)^{\frac{1-k}{2}} 2^{1-k} \pi^{\frac{1}{2}}}
       {\Gamma(k-\frac{1}{2}) \zeta(k)}
  \sum_{n = 1}^\infty
  n^{\frac{1}{2}-k}
  \sigma_{k-1}(n)\,
  (4 \pi n y)^{\frac{k-1}{2}}
  W_{\frac{k-1}{2}, \frac{k-1}{2}}(4 \pi n y) e(n x)
\tx{,}
\\[.5\baselineskip]
&
  \ov{c\big( E^\sk_k; \begin{psmatrix} 0 & 0 \\ 0 & 0 \end{psmatrix}_{(1)}, 0_{(1)} \big)}\,
  +
  \frac{2 \pi^k\, \Gamma(k-\frac{3}{2})}
       {\Gamma(k-1) \Gamma(k - \frac{1}{2})}\,
  \frac{\zeta(2k-3)}{\zeta(k) \zeta(2k-2)}\,
  \sum_{n = 1}^\infty
  \sigma_{k-2}(n)\,
  e(n \tau)
\tx{.}
\end{align*}

Proposition~\ref{prop:kohnen-limit-zero-m-semi-definite-convergence} asserts that $\Klim[0,0]^\sk_{k,0}(e^\sk_{k,0})$ is an elliptic Maa\ss\ form of weight~$1-k$ and eigenvalue $\frac{1}{2} (k - \frac{1}{2})$ with respect to the weight~$1-k$ hyperbolic Laplace operator. In particular, its $0$\thdash\ Fourier series coefficient is determined by nonzero ones. Comparing the first two series above with the Fourier series expansion of the elliptic Eisenstein series in~Lemma~\ref{la:elliptic-eisenstein-series-fourier-expansion} then gives the coefficients of index~$\begin{psmatrix} 0 & 0 \\ 0 & 0 \end{psmatrix}_{(0)}$ and $(\begin{psmatrix} 0 & 0 \\ 0 & 0 \end{psmatrix}_{(1)}, 0_{(0)})$.  Similarly, Proposition~\ref{prop:kohnen-limit-zero-m-semi-definite-convergence} also states that $\Klim[0,1]^\sk_{k,0}(e^\sk_{k,0})$ is a holomorphic elliptic modular form of weight~$k-1$. Comparing the third series above with the Fourier series expansion of the holomorphic Eisenstein series then provides the expression for the Fourier series coefficient of index~$(\begin{psmatrix} 0 & 0 \\ 0 & 0 \end{psmatrix}_{(1)}, 0_{(1)})$. This completes the second proof of Proposition~\ref{prop:fourier-jacobi-coefficient-zero-m-eisenstein-series-semi-definite-horizontal}.
\end{proof}

The next proposition addresses the remaining coefficients of index~$\begin{psmatrix} 0 & 0 \\ 0 & 0 \end{psmatrix}$. The proof exploits the fact that elliptic Maa\ss\ forms of weight~$0$ and spectral parameter~$(k-1)$ (i.e., eigenvalue $(k-1) (2-k)$ with respect to the hyperbolic Laplace operator) are unique up to multiplication by scalars, and are determined by their $0$\thdash\ Fourier series coefficient---see also Proposition~\ref{prop:multiplicity-one-for-skew-harmonic-fourier-coefficients-zero}.
\begin{proposition}
\label{prop:fourier-jacobi-coefficient-zero-m-eisenstein-series-semi-definite-vertial}
For $\nb \ne 0$, we have
\begin{gather}
  c\big( E^\sk_k;\, \begin{psmatrix} 0 & 0 \\ 0 & 0 \end{psmatrix}_{(1)}, \nb \big)
=
  \frac{(-1)^{\frac{1-k}{2}} 2^{2-k} \pi^{k-1}}
       {\Gamma(k - \frac{1}{2})}\,
  \frac{\zeta(k-1)}{\zeta(k) \zeta(2k-2)}\,
  |\nb|^{1-k} \sigma_{2k - 3}(|\nb|)
\tx{.}
\end{gather}
\end{proposition}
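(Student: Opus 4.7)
The plan is to identify the relevant part of the Fourier series of $E^\sk_k$ of index $\begin{psmatrix} 0 & 0 \\ 0 & 0 \end{psmatrix}_{(1)}$ with a scalar multiple of the weight-$0$ elliptic Eisenstein series $E_0(k-1,\taub)$, and then read off the Fourier coefficients at nonzero $\nb$ using the explicit expansion already recorded in Proposition~\ref{prop:multiplicity-one-for-skew-harmonic-fourier-coefficients-zero}.

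First, I would observe that $E^\sk_k$ is invariant under the full Siegel modular group and, in particular, under $\rot(\GL{2}(\ZZ))$. By Proposition~\ref{prop:rot-slash-action-on-zeroth-fourier-coefficient}, this action preserves $\det(Y)$ and acts on $\taub$ by M\"obius transformations (with an overall sign $\det(U)^{k-1}=1$ since $k$ is odd). The two Fourier terms of index $\begin{psmatrix} 0 & 0 \\ 0 & 0 \end{psmatrix}$ indexed by $(0)$ and $(1)$ are distinguished by the power of $\det(Y)$ they carry (see~\eqref{eq:def:fourier-coefficients-skew-zero}), so each of them is separately invariant under $\rot(\GL{2}(\ZZ))$. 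In particular, the coefficient $c\big(E^\sk_k;\, \begin{psmatrix} 0 & 0 \\ 0 & 0 \end{psmatrix}_{(1)}, \bullet;\, \taub \big)$ satisfies the invariance hypothesis of the last part of Proposition~\ref{prop:multiplicity-one-for-skew-harmonic-fourier-coefficients-zero}, and therefore equals a scalar multiple of $E_0(k-1,\taub)$.

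Next, I would pin down this scalar by matching the constant term. The identification~\eqref{eq:prop:multiplicity-one-for-skew-harmonic-fourier-coefficients-zero:eisenstein-coefficients} gives $c\big(\begin{psmatrix} 0 & 0 \\ 0 & 0 \end{psmatrix}_{(1)}, 0_{(0)} \big) = c\big(\begin{psmatrix} 0 & 0 \\ 0 & 0 \end{psmatrix}_{(1)}, \bullet \big)$, and Proposition~\ref{prop:fourier-jacobi-coefficient-zero-m-eisenstein-series-semi-definite-horizontal} already computes
\[
  c\big(E^\sk_k;\, \begin{psmatrix} 0 & 0 \\ 0 & 0 \end{psmatrix}_{(1)}, 0_{(0)} \big)
  \;=\;
  \frac{(-1)^{\frac{1-k}{2}} 2^{2-k} \pi^{\frac{1}{2}}\,\Gamma(k-1)}{\Gamma(k-\tfrac{1}{2})}\,
  \frac{\zeta(k-1)}{\zeta(k)}.
\]
Substituting this scalar into the third line of~\eqref{eq:prop:multiplicity-one-for-skew-harmonic-fourier-coefficients-zero:eisenstein-coefficients} yields
\[
  c\big(E^\sk_k;\, \begin{psmatrix} 0 & 0 \\ 0 & 0 \end{psmatrix}_{(1)}, \nb \big)
  \;=\;
  \frac{(-1)^{\frac{1-k}{2}} 2^{2-k} \pi^{\frac{1}{2}}\,\Gamma(k-1)}{\Gamma(k-\tfrac{1}{2})}\,
  \frac{\zeta(k-1)}{\zeta(k)}\cdot
  \frac{\pi^{k-\frac{3}{2}}}{\Gamma(k-1)\,\zeta(2k-2)}\,
  |\nb|^{1-k}\sigma_{2k-3}(|\nb|),
\]
and the $\Gamma(k-1)$ factors cancel, giving exactly the asserted formula.

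There is essentially no technical obstacle left: the multiplicity result in Proposition~\ref{prop:multiplicity-one-for-skew-harmonic-fourier-coefficients-zero} does the bulk of the work, and the normalizing constant is supplied by Proposition~\ref{prop:fourier-jacobi-coefficient-zero-m-eisenstein-series-semi-definite-horizontal}. The only point that requires a brief check is that the Fourier terms of type $(0)$ and of type $(1)$ transform independently under $\rot(\GL{2}(\ZZ))$, which follows from the explicit powers of $\det(Y)$ appearing in~\eqref{eq:def:fourier-coefficients-skew-zero} together with the covariance formula of Proposition~\ref{prop:rot-slash-action-on-zeroth-fourier-coefficient}.
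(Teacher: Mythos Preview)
Your proof is correct and follows essentially the same approach as the paper: both use the $\rot(\GL{2}(\ZZ))$-invariance of $E^\sk_k$ together with Proposition~\ref{prop:multiplicity-one-for-skew-harmonic-fourier-coefficients-zero} to identify $c\big(E^\sk_k;\,\begin{psmatrix}0&0\\0&0\end{psmatrix}_{(1)},\bullet;\,\taub\big)$ with a scalar multiple of $E_0(k-1,\taub)$, and then fix the scalar via the constant term from Proposition~\ref{prop:fourier-jacobi-coefficient-zero-m-eisenstein-series-semi-definite-horizontal}. Your additional remark about separating the $(0)$ and $(1)$ contributions by their $\det(Y)$-powers is a helpful clarification that the paper leaves implicit.
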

\begin{proof}
Observe that $E^\sk_k$ is invariant under the slash action of~$\rot(\GL{2}(\ZZ))$, and Proposition~\ref{prop:multiplicity-one-for-skew-harmonic-fourier-coefficients-zero} implies that $c(E^\sk_k;\, \begin{psmatrix} 0 & 0 \\ 0 & 0\end{psmatrix}_{(1)}, \bullet;\, \taub)$ is an elliptic Maa\ss\ form of weight~$0$ and eigenvalue $(k-1) (2-k)$ with respect to the hyperbolic Laplace operator. In particular, it is determined by its constant coefficient (see~\eqref{eq:prop:multiplicity-one-for-skew-harmonic-fourier-coefficients-zero:eisenstein}), which by Proposition~\ref{prop:fourier-jacobi-coefficient-zero-m-eisenstein-series-semi-definite-horizontal} equals 
\begin{multline*}
  \frac{(-1)^{\frac{1-k}{2}} 2^{2-k} \pi^{\frac{1}{2}} \Gamma(k - 1)}
       {\Gamma(k - \frac{1}{2})}\,
  \frac{\zeta(k - 1)}{\zeta(k)}\,
  a^\sk_k\big(\begin{psmatrix} 0 & 0 \\ 0 & 0 \end{psmatrix}_{(1)}, 0_{(0)}; Z \big)
\\
  +\,
  \frac{(-1)^{\frac{k-1}{2}} 2^{2-k} \pi\, \Gamma(k-\frac{3}{2})}
       {\Gamma(k - \frac{1}{2})}\,
  \frac{\zeta(k-1) \zeta(2k-3)}{\zeta(k) \zeta(2k-2)}\,
  a^\sk_k\big(\begin{psmatrix} 0 & 0 \\ 0 & 0 \end{psmatrix}_{(1)}, 0_{(1)}; Z \big)
\tx{.}
\end{multline*}
In other words, we have (see Lemma~\ref{la:elliptic-eisenstein-series-fourier-expansion})
\begin{gather*}
  c \big( E^\sk_k;\, \begin{psmatrix} 0 & 0 \\ 0 & 0\end{psmatrix}_{(1)}, \bullet;\, \taub \big)
=
  \frac{(-1)^{\frac{1-k}{2}} 2^{2-k} \pi^{\frac{1}{2}} \Gamma(k - 1)}
       {\Gamma(k - \frac{1}{2})}\,
  \frac{\zeta(k - 1)}{\zeta(k)}\,
  E_0(k-1, \taub)
\tx{.}
\end{gather*}
We finish the proof by applying~\eqref{eq:prop:multiplicity-one-for-skew-harmonic-fourier-coefficients-zero:eisenstein-coefficients} of Proposition~\ref{prop:multiplicity-one-for-skew-harmonic-fourier-coefficients-zero}.
\end{proof}

Finally, it remains to examine Fourier coefficients of indefinite index. We recover these coefficients from~$e^\sk_{k,-1}$ via the invariance of~$E^\sk_k$ under the action of $\rot(\GL{2}(\ZZ))$.
\begin{proposition}
\label{prop:fourier-jacobi-coefficient-zero-m-eisenstein-series-indefinite}
Let $T = \begin{psmatrix} n & r \slash 2 \\ r \slash 2 & 0 \end{psmatrix}$ be indefinite (i.e., \@ $r \ne 0$). Then
\begin{gather*}
  c\big( E^\sk_k; T \big)
=
  \sum_{d \isdiv \gcd(n,r)}
  d^{k-1}\, c\big( E^\sk_k;\, \begin{psmatrix} 0 & r \slash 2d \\ r \slash 2d & 1 \end{psmatrix} \big)
=
  \sum_{d \isdiv \gcd(n,r)}
  d^{k-1}\, c\big( E^\sk_k;\, \begin{psmatrix} 0 & r \slash 2d \\ r \slash 2d & -1 \end{psmatrix} \big)
\tx{.}
\end{gather*}
\end{proposition}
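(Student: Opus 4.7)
The plan is to exploit the invariance of $E^\sk_k$ under $\rot(\GL{2}(\ZZ))$ to reduce the coefficient of $T$ to one whose index has bottom-right entry of any prescribed sign, and then convert that to a Fourier coefficient of $e^\sk_{k,\pm 1}$ via Lemmas~\ref{la:kohnen-limit-positive-m-eisenstein-series} and~\ref{la:kohnen-limit-negative-m-eisenstein-series}. Since $k$ is odd and $\det U = \pm 1$ for $U \in \GL{2}(\ZZ)$, the skew slash action of $\rot(U)$ trivializes to $(F \big|^\sk_k\, \rot(U))(Z) = F(UZ\rT U)$. Comparing Fourier expansions therefore yields $c(E^\sk_k;\, T) = c(E^\sk_k;\, \rT U T U)$ for every $U \in \GL{2}(\ZZ)$.

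I would first apply the swap $U_{\mathrm{swap}} = \begin{psmatrix} 0 & 1 \\ 1 & 0 \end{psmatrix}$, which sends $T = \begin{psmatrix} n & r/2 \\ r/2 & 0 \end{psmatrix}$ to $\begin{psmatrix} 0 & r/2 \\ r/2 & n \end{psmatrix}$, and then the shear $U_c = \begin{psmatrix} 1 & c \\ 0 & 1 \end{psmatrix}$, which a direct computation shows sends $\begin{psmatrix} 0 & r/2 \\ r/2 & n \end{psmatrix}$ to $\begin{psmatrix} 0 & r/2 \\ r/2 & n+cr \end{psmatrix}$. In particular, $c(E^\sk_k;\, T)$ depends only on the residue class of $n$ modulo $r$. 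Since $r \ne 0$, for each desired sign I can choose $c \in \ZZ$ so that $n_\pm := n + cr$ satisfies $n_+ > 0$ and $n_- < 0$, respectively; note that $\gcd(n_\pm, r) = \gcd(n, r)$ throughout. The case $n=0$ is accommodated by any $c \ne 0$.

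Recognizing $c\big(E^\sk_k;\, \begin{psmatrix} 0 & r/2 \\ r/2 & n_\pm \end{psmatrix}\big) = c(e^\sk_{k, n_\pm};\, 0, r)$, I then invoke $e^\sk_{k, n_+} = e^\sk_{k,1} \big|^\sk_k V_{n_+}$ from Lemma~\ref{la:kohnen-limit-positive-m-eisenstein-series} and $e^\sk_{k, n_-} = e^\sk_{k,-1} \big|^\sk_k V_{|n_-|}$ from Lemma~\ref{la:kohnen-limit-negative-m-eisenstein-series}. The Hecke coefficient formula~\eqref{eq:V-hecke-operator-coefficient-formula-abstract-fourier-jacobi-coefficient}, evaluated at $(n',r) = (0,r)$ so that the condition $d \amid \gcd(0, r, |n_\pm|) = \gcd(n,r)$ emerges, expands each side as $\sum_{d \amid \gcd(n,r)} d^{k-1} c(e^\sk_{k,\pm 1};\, 0, r/d)$, and these are by definition $\sum_{d \amid \gcd(n,r)} d^{k-1} c\big(E^\sk_k;\, \begin{psmatrix} 0 & r/(2d) \\ r/(2d) & \pm 1 \end{psmatrix}\big)$, yielding both claimed identities simultaneously.

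The only technical point is that~\eqref{eq:V-hecke-operator-coefficient-formula-abstract-fourier-jacobi-coefficient} is stated only for $\wtd\phi_1 \in \rmA\rmJ^\sk_{k,1}$, whereas I need its analogue for $\wtd\phi_{-1} \in \rmA\rmJ^\sk_{k,-1}$; this should be an immediate transcription of the standard proof via $\up(\GaJ)$-cosets, as the combinatorics of $V_l$ for $l > 0$ is insensitive to the sign of the Jacobi index. The main conceptual obstacle is the initial realization that, although the two sums in the statement are genuinely distinct term by term (the matrices $\begin{psmatrix} 0 & r/(2d) \\ r/(2d) & 1 \end{psmatrix}$ and $\begin{psmatrix} 0 & r/(2d) \\ r/(2d) & -1 \end{psmatrix}$ typically lie in different $\GL{2}(\ZZ)$-orbits), one uses the shear $U_c$ to access both positive and negative representatives of the bottom-right entry within a single $\rot(\GL{2}(\ZZ))$-orbit of $T$.
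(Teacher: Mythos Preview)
Your proof is correct and follows essentially the same route as the paper: both arguments use $\rot(\GL{2}(\ZZ))$-invariance (swap plus a shear to adjust the bottom-right entry to a nonzero value of the desired sign) and then apply the relation $e^\sk_{k,m} = e^\sk_{k,\pm 1}\big|^\sk_k V_{|m|}$ from Lemmas~\ref{la:kohnen-limit-positive-m-eisenstein-series} and~\ref{la:kohnen-limit-negative-m-eisenstein-series} together with the Hecke coefficient formula~\eqref{eq:V-hecke-operator-coefficient-formula-abstract-fourier-jacobi-coefficient}. The only cosmetic differences are that the paper shears first (phrased as invariance under the embedded Heisenberg group, giving a shift $n \mapsto n + 2hr$) and then swaps, and it treats the second equality as ``analogous'' rather than in parallel; your observation that~\eqref{eq:V-hecke-operator-coefficient-formula-abstract-fourier-jacobi-coefficient} must be transcribed to index~$-1$ is exactly the content of that omitted analogous argument.
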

\begin{proof}
We prove the first equality, but omit the proof of the second one, since its proof is analogous. For simplicity, if $T = \begin{psmatrix} n & r \slash 2 \\ r \slash 2 & m \end{psmatrix}$ is indefinite, we write $c(n,r,m) := c(E^\sk_k;\, T)$ for the Fourier series coefficients of~$E^\sk_k$. If $n > 0$, then the invariance of~$E^\sk_k$ under the action of $\rot( \begin{psmatrix} 0 & 1 \\ 1 & 0 \end{psmatrix} )$ and the formula $e^\sk_{k,n} = e^\sk_{k,1} | V_n$ in Lemma~\ref{la:kohnen-limit-positive-m-eisenstein-series} in conjunction with~\eqref{eq:V-hecke-operator-coefficient-formula-abstract-fourier-jacobi-coefficient} imply that
\begin{gather*}
  c(n,r,0)
=
  c(0,r,n)
=
  \sum_{d \isdiv\gcd(r, n)}
  d^{k-1}
  c\big( 0, \frac{r}{d}, 1 \big)
\tx{.}
\end{gather*}
For general~$n$, since $r \ne 0$, the invariance of~$E^\sk_k$ under the Heisenberg subgroup of the embedded Jacobi group yields that 
\begin{gather*}
  c(n,r,0)
=
  c(n + 2 h r, r, 0)
=
  \sum_{d \isdiv \gcd(r, n + 2hr)}
  d^{k-1}
  c\big( 0, \frac{r}{d}, 1 \big)
=
  \sum_{d \isdiv \gcd(r, n)}
  d^{k-1}
  c\big( 0, \frac{r}{d}, 1 \big)
\tx{,}
\end{gather*}
where $h \in \ZZ$ is chosen in such a way that $n + 2h r > 0$.
\end{proof}

\subsection{The Hecke operator \texpdf{$V_0$}{V0}}
\label{ssec:hecke-operator-V-0}

In this section, we define an operator $V_0$ that maps $\wtd\phi_{1} \in \rmA\rmJ^\sk_{k,1}$ and $\wtd\phi_{-1} \in \rmA\rmJ^\sk_{k,-1}$ to abstract elliptic functions. It maps the $1$-st and $(-1)$-st Fourier-Jacobi coefficients of~$E^\sk_k$ to~$e^\sk_{k,0}$.

Let $\wtd\phi_1 \in \rmA\rmJ^\sk_{k,1}$ with Fourier series expansion
\begin{gather*}
  \wtd\phi_1(\tau, z, \tau')
\;=\;
  \sum_{n,r \in \ZZ}
  c\big(\wtd\phi_1;\, n, r \big)\,
  a^\sk_k\big( \begin{psmatrix} n & r \slash 2 \\ r \slash 2 & 1 \end{psmatrix} Z \big)
	\text{,}
\end{gather*}
and $\wtd\phi_{-1} \in \rmA\rmJ^\sk_{k,-1}$ with Fourier series expansion
\begin{multline*}
  \wtd\phi_1(\tau, z, \tau')
\;=\;
  \sum_{\substack{n,r \in \ZZ \\ 4n + r^2 \ne 0}}
  c\big(\wtd\phi_{-1};\, n, r \big)\,
  a^\sk_k\big( \begin{psmatrix} n & r \slash 2 \\ r \slash 2 & -1 \end{psmatrix} Z \big)
\\
  +\,
  \sum_{\substack{n,r \in \ZZ \\ 4n + r^2 = 0}}
  c\big(\wtd\phi_{-1};\, n_{(0)}, r \big)\,
  a^\sk_k\big( \begin{psmatrix} n & r \slash 2 \\ r \slash 2 & -1 \end{psmatrix}_{(0)} Z \big)
  \,+\,
  \sum_{\substack{n,r \in \ZZ \\ 4n + r^2 = 0}}
  c\big(\wtd\phi_{-1};\, n_{(1)}, r \big)\,
  a^\sk_k\big( \begin{psmatrix} n & r \slash 2 \\ r \slash 2 & -1 \end{psmatrix}_{(1)} Z \big)
\tx{.}
\end{multline*}
We set
\begin{multline}
\label{eq:def:V-0-skew-harmonic-positive-jacobi-index}
  \big( \wtd\phi_1 \big|^\sk_k\, V_0 \big)(Z)
\;:=\;
  \frac{(-1)^{\frac{k-1}{2}} \Gamma(\frac{1}{2}) \zeta(k)}{(2 \pi)^k}\,
  c\big(\wtd\phi_1;\, 0, 0 \big)\, \wtd{e}^\sk_{k,0}(Z)
\\
  +\;
  \sum_{\substack{n \in \ZZ \\ r \in \ZZ \setminus \{0\}}}
  \Big(
  \sum_{d \isdiv \gcd(n,r)}
  d^{k-1}
  c\big(\wtd\phi_1;\, 0, r \slash d \big)\,
  \Big)\,
  a^\sk_k\big( \begin{psmatrix} n & r \slash 2 \\ r \slash 2 & 0 \end{psmatrix} Z \big)
\end{multline}
and
\begin{multline}
\label{eq:def:V-0-skew-harmonic-negative-jacobi-index}
  \big( \wtd\phi_{-1} \big|^\sk_k\, V_0 \big)(Z)
\;:=\;
  \frac{(-1)^{\frac{1-k}{2}} \Gamma(k-\frac{1}{2}) \zeta(k)}{(2 \pi)^k}\,
  c\big(\wtd\phi_{-1};\, 0_{(0)}, 0 \big)\, \wtd{e}^\sk_{k,0}(Z)
\\
  +\;
  \sum_{\substack{n \in \ZZ \\ r \in \ZZ \setminus \{0\}}}
  \Big(
  \sum_{d \isdiv \gcd(n,r)}
  d^{k-1}
  c\big(\wtd\phi_{-1};\, 0, r \slash d \big)\,
  \Big)\,
  a^\sk_k\big( \begin{psmatrix} n & r \slash 2 \\ r \slash 2 & 0 \end{psmatrix} Z \big)
\tx{,}
\end{multline}
where
\begin{gather}
\label{eq:def:V-0-skew-harmonic-eisenstein-contribution}
  \wtd{e}^\sk_{k,0}(Z)
=
  \sum_{T = \begin{psmatrix} n & 0 \\ 0 & 0 \end{psmatrix}}
  c\big( E^\sk_k;\, T;\, Y \big) e(T X)
\tx{.}
\end{gather}
The series~\eqref{eq:def:V-0-skew-harmonic-positive-jacobi-index} and~\eqref{eq:def:V-0-skew-harmonic-negative-jacobi-index} converge absolutely, and their Fourier series coefficients are of moderate growth in the sense of~\eqref{eq:def:abstract-elliptic-function-skew-moderate-growth}. Observe that the appearance of~$\wtd{e}^\sk_{k,0}$ in~\eqref{eq:def:V-0-skew-harmonic-positive-jacobi-index} and~\eqref{eq:def:V-0-skew-harmonic-negative-jacobi-index} parallels the appearance of Eisenstein series in the definition of the Hecke-like operator~$V_0$ of~\cite{eichler-zagier-1985}.

We close this section with a remark.
\begin{remark}
From Proposition~\ref{prop:fourier-jacobi-coefficient-zero-m-eisenstein-series-indefinite}, we infer that
\begin{gather*}
  e^\sk_{k,0}
=
  e^\sk_{k,1} \big|^\sk_k\, V_0
=
  e^\sk_{k,-1} \big|^\sk_k\, V_0
\tx{.}
\end{gather*}

It is natural to ask whether $\wtd\phi_1 |^\sk_k\, V_0$ and~$\wtd\phi_{-1} |^\sk_k\, V_0$ are invariant under the embedded Jacobi group for general $\wtd\phi_1$ and $\wtd\phi_{-1}$.  In the case of Jacobi index~$-1$, this is easy to answer in the affirmative. Observe that $\cJ^\sk_{2-k,1}$ is spanned by the Jacobi-Eisenstein series $E^{\cJ\sk}_{2-k,1}$ in~\eqref{eq:def:almost-skew-harmonic-jacobi-eisenstein-series}. Note that
\begin{gather}
\label{eq:v0-kohnen-limit-process-negative}
  \Klim^\sk_{k,-1} :\,
  \rmA\rmJ^\sk_{k,-1}
\lra
  \cJ^\sk_{2-k,1}
\end{gather}
is injective by Proposition~\ref{prop:kohnen-limit-negative-m-injective}.  Moreover, Lemma~\ref{la:kohnen-limit-negative-m-eisenstein-series} provides a preimage of~$E^{\cJ\sk}_{2-k,1}$, and hence~\eqref{eq:v0-kohnen-limit-process-negative} is an isomorphism. The invariance of $e^\sk_{k,0}$ under the embedded Jacobi group then settles the case of
\begin{gather*}
  \rmA\rmJ_{k,-1} \big|^\sk_k\, V_0
=
  \lspan\, \CC e^\sk_{k,0}
\tx{.}
\end{gather*}

The case of Jacobi index~$1$ is more subtle and requires a detailed analysis, since $\rmJ^\sk_{k,1}$ is isomorphic to~$\rmA\rmJ^\sk_{k,1}$ by Proposition~\ref{prop:inverse-kohnen-limit-positive-m}, and since (in general) $\rmJ^\sk_{k,1}$ contains cusp forms. We postpone the answer to this question for~$m = 1$ to a sequel.
\end{remark}

\subsection{Proof of Theorem~\ref{mainthm:skew-maass-lift}}
\label{ssec:proof-of-skew-maass-lift}

Let $\phi \in \bbJ_{3-k,1}$, and recall from the statement of Theorem~\ref{mainthm:skew-maass-lift} that
\begin{gather}
\label{eq:def:repeated:skew-maass-lift-maass-jacobi}
  \MAsk_k(\phi)
\;=\;
  \sum_{m = 0}^\infty
  \Klim^{-1}_{k,-1}(\phi) \big|^\sk_k\, V_m
  \,-\,
  4 (k-2)!\,
  \sum_{m = 1}^\infty
  \Klim^{-1}_{k,1}(\xi^\rmJ_{3-k}\,\phi) \big|^\sk_k\, V_m
\tx{.}
\end{gather}

In Remark~\ref{rem:almost-skew-harmonic-jacobi-forms} we pointed out that $\bbJ_{3-k,1}$ is spanned by $E^\bbJ_{3-k,1}$. Hence it suffices to demonstrate that~\eqref{eq:def:repeated:skew-maass-lift-maass-jacobi} defines a skew-harmonic Maa\ss-Siegel form in the case that $\phi=E^\bbJ_{3-k,1}$. Specifically, we first prove that 
\begin{gather}
\label{eq:thm:skew-maass-lift:lift-equality}
  \MAsk_k\big( E^\bbJ_{3-k,1} \big)
\;=\;
  \frac{(-1)^{\frac{1-k}{2}} \Gamma(k-\frac{1}{2})\, \zeta(k)}{(2 \pi)^k}\,
  E^\sk_k
\end{gather}
by comparing Fourier-Jacobi coefficients.  We then complete the proof of Theorem~\ref{mainthm:skew-maass-lift} by showing that every~$F \in \rmM^\sk_k$ is a multiple of~$E^\sk_k$.

Let $m > 0$. Proposition~\ref{prop:kohnen-limit-positive-m-injective} asserts that $\Klim^\sk_{k,m}$ is injective.  Hence it suffices to show that the Kohnen limit processes of the $m$\thdash\ Fourier-Jacobi coefficients in~\eqref{eq:thm:skew-maass-lift:lift-equality} coincide. Lemma~\ref{la:kohnen-limit-positive-m-eisenstein-series} shows that $\Klim^\sk_{k,m}$ applied to the $m$\thdash\ Fourier-Jacobi coefficient on the right-hand side of~\eqref{eq:thm:skew-maass-lift:lift-equality} gives:
\begin{gather*}
  \frac{\Gamma(k-\frac{1}{2})}{\Gamma(\frac{1}{2})}\,
  E^\Jsk_{k,1} \big|^\Jsk_{k,1}\, V_m
\tx{.}
\end{gather*}

Applying $\Klim^\sk_{k,m}$ to the $m$\thdash\ Fourier-Jacobi coefficient of the left-hand side of~\eqref{eq:thm:skew-maass-lift:lift-equality} yields
\begin{multline*}
  \Klim^\sk_{k,m}\Big(
  \Klim^{-1}_{k,1}\Big(
  - 4 \Gamma(k-1)\,
  \xi^\rmJ_{3-k}\,E^\bbJ_{3-k,1}
  \Big) \Big|^\sk_k\, V_m
  \Big)
\\
=
  \Klim^\sk_{k,1}\Big(
  \Klim^{-1}_{k,1}\Big(
  - 4 \Gamma(k-1)\,
  \xi^\rmJ_{3-k}\,E^\bbJ_{3-k,1}
  \Big)
  \Big)
  \Big|^\Jsk_{k,1}\, V_m
=
  - 4 \Gamma(k-1)\,
  \xi^\rmJ_{3-k}\,E^\bbJ_{3-k,1}
  \big|^\Jsk_{k,1}\, V_m
\tx{.}
\end{multline*}
The first equality follows from the intertwining of $\Klim^\sk_{k,m}$ and $V_m$ in~\eqref{eq:kohnen-limit-positive-m-covariance-hecke-operators} of Proposition~\ref{prop:kohnen-limit-positive-m-covariance}, while the second equality is justified by Proposition~\ref{prop:inverse-kohnen-limit-positive-m}. Moreover,
\begin{gather*}
  - 4 \Gamma(k-1)\,
  \xi^\rmJ_{3-k}\,E^\bbJ_{3-k,1}
=
  - 4 \Gamma(k-1)\,
  \frac{-\pi^{-\frac{1}{2}}\,\Gamma(k-\frac{1}{2})}{4\Gamma(k-1)}\,
  E^\Jsk_{k,1}
=
  \frac{\Gamma(k-\frac{1}{2})}{\Gamma(\frac{1}{2})}\,
  E^\Jsk_{k,1}
\end{gather*}
by~\eqref{eq:almost-skew-harmonic-eisenstein-series-xi-image}, and we find that Fourier-Jacobi coefficients of positive index in~\eqref{eq:thm:skew-maass-lift:lift-equality} agree.

Let $m < 0$. It suffices again to compare Kohnen limit processes of Fourier-Jacobi coefficients. Lemma~\ref{la:kohnen-limit-negative-m-eisenstein-series} shows that $\Klim^\sk_{k,m}$ applied to the $m$\thdash\ Fourier-Jacobi coefficient on the right-hand side of~\eqref{eq:thm:skew-maass-lift:lift-equality} produces:
\begin{gather*}
  |m|^{k-1}\,
  E^\cJsk_{2-k,1} \big|_{2-k, 1} V_{|m|}
\tx{.}
\end{gather*}

Applying $\Klim^\sk_{k,m}$ to the $m$\thdash\ Fourier-Jacobi coefficient of the left-hand side of~\eqref{eq:thm:skew-maass-lift:lift-equality} leads to
\begin{multline*}
  \Klim^\sk_{k,m}\Big(
  \Klim^{-1}_{k,-1}\big( E^\bbJ_{3-k,1} \big) \big|^\sk_k\, V_m
  \Big)
=
  \Klim^\sk_{k,m}\Big(
  \Klim^{-1}_{k,-1}\big( E^\cJsk_{2-k,1} \big) \big|^\sk_k\, V_m
  \Big)
\\
=
  |m|^{k-1}
  \Klim^\sk_{k,1}\Big(
  \Klim^{-1}_{k,-1}\big( E^\cJsk_{2-k,1} \big)
  \Big)
  \big|^\Jsk_{2-k,1}\, V_m
=
  |m|^{k-1}
  E^\cJsk_{2-k,1}
  \big|^\Jsk_{2-k,1}\, V_m
	\text{.}
\end{multline*}
The first equality follows from~\eqref{eq:def:inverse-kohnen-limit-negative-m-maass-jacobi} and~\eqref{eq:almost-skew-harmonic-eisenstein-series-raising-image}, the second one from~\eqref{eq:kohnen-limit-negative-m-covariance-hecke-operators} in Proposition~\ref{prop:kohnen-limit-negative-m-covariance}, while the third equality is justified by Proposition~\ref{prop:inverse-kohnen-limit-negative-m}. We conclude that Fourier-Jacobi coefficients of negative index in~\eqref{eq:thm:skew-maass-lift:lift-equality} agree, too.

It remains to examine the $0$\thdash\ Fourier-Jacobi coefficients in~\eqref{eq:thm:skew-maass-lift:lift-equality}. Observe~\eqref{eq:def:inverse-kohnen-limit-negative-m} to find that
\begin{gather*}
  c\Big( \Klim^{-1}_{k,-1}\big( E^\cJsk_{2-k,1} \big);\, 0_{(0)}, 0 \Big)
=
  c\big( E^\cJsk_{2-k,1};\, 0_{(0)}, 0 \big)
=
  1
\tx{.}
\end{gather*}
Hence the contribution of the first summand in 
the defining equation~\eqref{eq:def:V-0-skew-harmonic-negative-jacobi-index} of~$V_0$, which appear in
\begin{gather*}
  \Klim^{-1}_{k,-1}(E^\bbJ_{3-k,1}) \big|^\sk_k\, V_0
=
  \Klim^{-1}_{k,-1}(E^\cJ_{3-k,1}) \big|^\sk_k\, V_0
\end{gather*}
on the left-hand side of~\eqref{eq:thm:skew-maass-lift:lift-equality}, matches the Fourier series coefficients of index~$\begin{psmatrix} n & 0 \\ 0 & 0 \end{psmatrix}$ on the right-hand side of~\eqref{eq:thm:skew-maass-lift:lift-equality}. Invariance of $E^\sk_k$ under~$\rot(\GL{2}(\ZZ))$ shows that the corresponding contribution of the second summand in~\eqref{eq:def:V-0-skew-harmonic-negative-jacobi-index} matches the Fourier series coefficients of index~$\begin{psmatrix} n & r \slash 2 \\ r \slash 2 & 0 \end{psmatrix}$ for~$r \ne 0$ on the right-hand side of~\eqref{eq:thm:skew-maass-lift:lift-equality}. This finishes our proof of~\eqref{eq:thm:skew-maass-lift:lift-equality}.

Finally, we show that every~$F \in \rmM^\sk_k$ is a multiple of~$E^\sk_k$. Consider $F= \sum_m \wtd\phi_m(\tau,z,\tau') \in \rmM^\sk_k$.  Replacing $F$  by
\begin{gather*}
  F
  \,-\,
  c\big(F;\, \begin{psmatrix} 0 & 0 \\ 0 & 0 \end{psmatrix}_{(0)} \big) E^\sk_k
\tx{,}
\end{gather*}
allows us to assume that the Fourier series coefficient of~$F$ of index~$\begin{psmatrix} 0 & 0 \\ 0 & 0 \end{psmatrix}_{(0)}$ vanishes. We then accomplish our goal by showing that all Fourier series coefficients of~$F$ vanish.

From Lemma~\ref{la:kohnen-limit-zero-m-semi-definite-fourier-coefficients}, we infer that the~$0$\thdash\ Fourier series coefficient of~$\Klim[0,0]^\sk_{k,0}(\wtd\phi_0)$ equals
\begin{gather*}
  c\big(F;\, \begin{psmatrix} 0 & 0 \\ 0 & 0 \end{psmatrix}_{(0)} \big)
  y^{k - \frac{1}{2}}
  \,+\,
  c\big(F;\, \begin{psmatrix} 0 & 0 \\ 0 & 0 \end{psmatrix}_{(1)} \big)
  y^{\frac{1}{2}}
=
  c\big(F;\, \begin{psmatrix} 0 & 0 \\ 0 & 0 \end{psmatrix}_{(1)} \big)
  y^{\frac{1}{2}}
\tx{.}
\end{gather*}
Proposition~\ref{prop:kohnen-limit-zero-m-semi-definite-convergence} asserts that~$\Klim[0,0]^\sk_{k,0}(\wtd\phi_0)$ is an elliptic Maa\ss\ form of weight~$1-k$ and eigenvalue $\frac{1}{2} (k-\frac{1}{2})$ with respect to the weight~$1-k$ hyperbolic Laplace operator. Theorem~31 of~\cite{maass-1964} shows that the corresponding space is one-dimensional and spanned by the Eisenstein series~$E_{1-k}(k-\frac{1}{2}, \tau)$ defined in~\eqref{eq:def:elliptic-eisenstein-series}. The $0$-th Fourier series coefficient of~$E_{1-k}(k-\frac{1}{2}, \tau)$ features the nonvanishing term~$y^{k-\frac{1}{2}}$. Hence $\Klim[0,0]^\sk_{k,0}(\wtd\phi_0)$ vanishes identically, and for all $n \in \ZZ_{< 0}$ we have that
\begin{gather*}
  c\big( F;\, \begin{psmatrix} n & 0 \\ 0 & 0 \end{psmatrix}_{(0)} \big)
=
  c\big( \Klim[0,0]^\sk_{k,0}(\wtd\phi_0);\, n \big)
=
  0
\tx{.}
\end{gather*}

Let $T = \begin{psmatrix} n & r \slash 2 \\ r \slash 2 & m \end{psmatrix}$ be degenerate with~$m < 0$. Then $T[U] = \begin{psmatrix} n' & 0 \\ 0 & 0 \end{psmatrix}$ for some $U \in \GL{2}(\ZZ)$. Since $m$ is negative, $T$ is negative semidefinite, and hence $n' < 0$. Moreover, $F$ (as a Siegel modular form) is invariant under the slash action of $\rot(U)$. Therefore, 
\begin{gather*}
 c(F;\, T_{(0)})
=
 \det(U)^{k-1}
 c\big(F;\, \begin{psmatrix} n' & 0 \\ 0 & 0 \end{psmatrix}_{(0)} \big)
=
  0
\tx{.}
\end{gather*}

Proposition~\ref{prop:kohnen-limit-negative-m-convergence} guarantees that $\Klim^\sk_{k,m}(\wtd\phi_m) \in \cJ^\sk_{2-k,|m|}$. To show that $\Klim^\sk_{k,m}(\wtd\phi_m)$ vanishes, it suffices by Lemma~\ref{la:almost-skew-harmonic-jacobi-form-vanishing} to ensure the vanishing of its Fourier series coefficients of index~$(n,r)$ with $4|m|n - r^2 = 0$. 
Any such $n, r$ yields a degenerate $T = \begin{psmatrix} n & r \slash 2 \\ r \slash 2 & m \end{psmatrix}$ as above, so that~\eqref{eq:la:kohnen-limit-negative-m-semi-definite-fourier-coefficients} implies that
\begin{gather*}
  c\big( \Klim^\sk_{k,m}(\wtd\phi_m);\, n_{(0)}, r \big)
=
  c(F;\, T_{(0)})
=
  0
\tx{.}
\end{gather*}
Thus, $\Klim^\sk_{k,m}(\wtd\phi_m)$ vanishes for all $m < 0$.

Let $T$ be an indefinite Fourier index. Then there exists a $U \in \GL{2}(\ZZ)$ such that the bottom-right entry~$m$ of~$T[U] = \begin{psmatrix} n & r \slash 2 \\ r \slash 2 & m \end{psmatrix}$ is negative. Since
\begin{gather*}
  c\big( \Klim^\sk_{k,m}(\wtd\phi_m);\, n, r \big)
=
  0
\tx{,}
\end{gather*}
Proposition~\ref{prop:kohnen-limit-negative-m-injective} implies that
\begin{gather*}
  c(F;\, T)
=
  \det(U)^{k-1}\,
  c(F;\, T[U])
=
  0
\end{gather*}
for all indefinite~$T$.

Consider $T = \begin{psmatrix} n & r \slash 2 \\ r \slash 2 & m \end{psmatrix}$ with~$m > 0$. Proposition~\ref{prop:kohnen-limit-positive-m-convergence} asserts that $\Klim^\sk_{k,m}(\wtd\phi_m)\in\rmJ^\sk_{k,m}$.  We just established that~$c(F;\, T) = 0$ for indefinite~$T$. Fourier series coefficients of~$F$ for positive definite~$T$ vanish by Proposition~\ref{prop:multiplicity-one-for-skew-harmonic-fourier-coefficients-nonzero-nonnegative}. Hence the Fourier series expansion of~$\Klim^\sk_{k,m}(\wtd\phi_m)$ is supported on $(n,r)$ with $4 m n - r^2 = 0$. The theta decomposition for skew-holomorphic Jacobi forms then implies that~$\Klim^\sk_{k,m}(\wtd\phi_m)$ vanishes for positive~$m$.

The vanishing of~$\Klim^\sk_{k,m}(\wtd\phi_m)$ for all $m < 0$ together with the injectivity of the Kohnen limit process in Proposition~\ref{prop:kohnen-limit-negative-m-injective} shows that
\begin{gather*}
  c\big( F;\, \begin{psmatrix} m & 0 \\ 0 & 0 \end{psmatrix}_{(1)} \big)
=
  (-1)^{k-1}\,
  c\big( F;\, \begin{psmatrix} 0 & 0 \\ 0 & m \end{psmatrix}_{(1)} \big)
=
  0
\tx{.}
\end{gather*}
Lemma~\ref{la:kohnen-limit-zero-m-semi-definite-fourier-coefficients} then implies that
\begin{gather*}
  \Klim[0,1]^\sk_{k,0}(\wtd\phi_0)
=
  \ov{c\big( F;\, \begin{psmatrix} 0 & 0 \\ 0 & 0 \end{psmatrix}_{(1)}, 0_{(1)} \big)}
  \,+\,
  \sum_{n = 1}^\infty
  (4 \pi n)^{k-2}
  \ov{c\big( F;\, \begin{psmatrix} -n & 0 \\ 0 & 0 \end{psmatrix}_{(1)} \big)}
  e(n\tau)
=
  \ov{c\big( F;\, \begin{psmatrix} 0 & 0 \\ 0 & 0 \end{psmatrix}_{(1)}, 0_{(1)} \big)}
\tx{.}
\end{gather*}
Recall from Proposition~\ref{prop:kohnen-limit-zero-m-semi-definite-convergence} that $\Klim[0,1]^\sk_{k,0}(\wtd\phi_0)$ is a holomorphic elliptic modular form of weight~$k-1 > 0$, and hence it must vanish. In particular, 
\begin{gather*}
  c\big( F;\, \begin{psmatrix} 0 & 0 \\ 0 & 0 \end{psmatrix}_{(1)}, 0_{(1)} \big)
=
  0
\tx{.}
\end{gather*}

It remains to show that $c(F;\, \begin{psmatrix} 0 & 0 \\ 0 & 0 \end{psmatrix}_{(1)}, \bullet;\, \taub) = 0$. Since it is an elliptic modular form of weight~$0$ and eigenvalue $(k-1) (2-k)$ with respect to the hyperbolic Laplace operator by Proposition~\ref{prop:multiplicity-one-for-skew-harmonic-fourier-coefficients-zero}, it is a multiple of the Eisenstein series~$E_0(k-1,\taub)$ by~\eqref{eq:prop:multiplicity-one-for-skew-harmonic-fourier-coefficients-zero:eisenstein}. Equation~\eqref{eq:prop:multiplicity-one-for-skew-harmonic-fourier-coefficients-zero:sub-fourier-expansion} shows that the $0$\thdash\ Fourier series coefficient of~$c(F;\, \begin{psmatrix} 0 & 0 \\ 0 & 0 \end{psmatrix}_{(1)}, \bullet;\, \taub)$ is given by
\begin{gather*}
  c\big( F;\, \begin{psmatrix} 0 & 0 \\ 0 & 0 \end{psmatrix}_{(1)}, 0_{(0)} \big)\,
  \yb^{k-1}
  \,+\,
  c\big( F;\, \begin{psmatrix} 0 & 0 \\ 0 & 0 \end{psmatrix}_{(1)}, 0_{(1)} \big)\,
  \yb^{2-k}
=
  0
\text{,}
\end{gather*}
and hence $c(F;\, \begin{psmatrix} 0 & 0 \\ 0 & 0 \end{psmatrix}_{(1)}, \bullet ;\, \taub) = 0$.

We conclude that~$F = 0$, which completes the proof of Theorem~\ref{mainthm:skew-maass-lift}.

\renewbibmacro{in:}{}
\renewcommand{\bibfont}{\normalfont\small\raggedright}
\renewcommand{\baselinestretch}{.8}

\Needspace*{4em}
\begin{multicols}{2}
\printbibliography[heading=none]%
\end{multicols}

\Needspace*{4\baselineskip}
\noindent
\rule{\textwidth}{0.15em}

{\noindent\small
Martin Raum\\
Chalmers tekniska högskola och G\"oteborgs Universitet,
Institutionen för Matematiska vetenskaper,
SE-412 96 Göteborg, Sweden\\
E-mail: \url{martin@raum-brothers.eu}\\
}%

\vspace{2ex}

{\noindent\small
Olav K. Richter\\
Department of Mathematics,
University of North Texas,
Denton, TX 76203,
USA\\
E-mail: \url{richter@unt.edu}
}

\end{document}

